\numberwithin{equation}{section}
\theoremstyle{plain}
\newtheorem{theorem}{Theorem}[section]
\newtheorem{lemma}[theorem]{Lemma}
\newtheorem{proposition}[theorem]{Proposition}
\theoremstyle{definition}
\newtheorem{definition}[theorem]{Definition}
\newtheorem{assumption}[theorem]{Assumption}
\newtheorem{remark}[theorem]{Remark}
\newtheorem{example}[theorem]{Example}
\newcommand{\E}{\mathbb{E}}
\newcommand{\W}{\dot{W}}
\newcommand{\D}{\mathbb{D}}
\newcommand{\ud}{\ensuremath{\mathrm{d}}}
\newcommand{\Norm}[1]{\left\|  #1   \right\|}
\newcommand{\InPrd}[1]{\left\langle #1 \right\rangle}
\newcommand{\calB}{\mathcal{B}}
\newcommand{\calF}{\mathcal{F}}
\newcommand{\calH}{\mathcal{H}}
\newcommand{\calI}{\mathcal{I}}
\newcommand{\calN}{\mathcal{N}}
\newcommand{\calT}{\mathcal{T}}
\newcommand{\calS}{\mathcal{S}}
\newcommand{\bbC}{\mathbb{C}}
\newcommand{\bbN}{\mathbb{N}}
\newcommand{\bbP}{\mathbb{P}}
\newcommand{\bbZ}{\mathbb{Z}}
\newcommand*{\one}{{{\rm 1\mkern-1.5mu}\!{\rm I}}}
\newcommand{\RR}{\mathbb{R}}
\newcommand{\R}{\mathbb{R}}
\newcommand{\myEnd}{\hfill$\square$}
\DeclareMathOperator{\LIP}{Lip}
\DeclareMathOperator{\lip}{\mathit{l}}
\DeclareMathOperator{\Vip}{\overline{\varsigma}}
\title{
Regularity and strict positivity of densities for the stochastic heat equation 
on $\RR^d$
     }
\author{
{\bf Le Chen}
\; and \; {\bf Jingyu Huang}
\date{\vspace{0em}\small \today}
}
\begin{document}
\maketitle
\begin{abstract}

In this paper, we study the stochastic heat equation with a general 
multiplicative Gaussian noise that is white in time and colored in space. Both 
regularity and strict positivity of the densities of the solution have been 
established. The difficulty, and hence the contribution, of the paper lie in 
three aspects, which include rough initial conditions, degenerate diffusion 
coefficient, and weakest possible assumptions on the correlation function of the 
noise. In particular, our results cover the parabolic Anderson model starting 
from a Dirac delta initial measure.\\

	\noindent{\it Keywords:} Stochastic heat equation, parabolic Anderson model, Malliavin calculus, negative moments, regularity of density, strict positivity of density, measure-valued initial conditions, spatially colored noise.\\
	\noindent{\it \noindent AMS 2010 subject classification.}
	Primary 60H15; Secondary 35R60, 60G60.
\end{abstract} 

{\hypersetup{hidelinks}
\tableofcontents
}

\setlength{\parindent}{1.5em}


\section{Introduction}

In this paper, we study both the regularity and strict positivity of the density to the following stochastic heat equation (SHE) with rough initial conditions:
\begin{align}\label{E:SHE}
\begin{cases}
\displaystyle \left(\frac{\partial }{\partial t} -
\frac{1}{2}\Delta \right) u(t,x) =  \rho(t,x,u(t,x))
\:\dot{W}(t,x),&
x\in \R^d,\; t>0, \\
\displaystyle \quad u(0,\cdot) = \mu(\cdot),
\end{cases}
\end{align}
where $d\ge 1$ and $\Delta=\sum_{i=1}^d\partial^2/\partial x_i^2$ is the Laplacian operator.
The noise $\dot{W}$ is a centered Gaussian noise that is white in time and homogeneously colored/correlated in space. Informally,
\[
\E\left[\dot{W}(t,x)\dot{W}(s,y)\right] = \delta_0(t-s)f(x-y),
\]
where $\delta_0$ is the Dirac delta measure with unit mass at zero and $f$ is a ``correlation function'', 
that is, a nonnegative and nonnegative definite function that is not identically equal to zero.
The Fourier transform of $f$ is denoted by $\widehat{f}$
\[
\widehat{f}(\xi)= \calF f (\xi) =\int_{\R^d}\exp\left(- i \: \xi\cdot x \right)f(x)\ud x,
\]
which is again a nonnegative and nonnegative definite measure. Here, $\hat{f}$ 
is usually called  the {\it spectral measure}.
It is well known that the minimum assumption on the correlation function $f$ is {\it Dalang's condition} \cite{Dalang99}, namely,
\begin{align}\label{E:Dalang}
\Upsilon(\beta):=(2\pi)^{-d} \int_{\R^d} \frac{\widehat{f}(\ud \xi)}{\beta+|\xi|^2}<+\infty \quad \text{for some and hence for all $\beta>0$.}
\end{align}
Since we will need some regularity of the solution, we will work under the 
following slightly stronger condition, which will also be called Dalang's 
condition, than condition \eqref{E:Dalang} as in \cite{CH16Comparison}:
\begin{align}\label{E:Dalang2}
 \int_{\R^d} \frac{\widehat{f}(\ud \xi)}{\left(1+|\xi|^2\right)^{1-\alpha}}<+\infty,\quad\text{for some $\alpha\in (0,1]$.}
\end{align} 
One aim of this paper is to work under the weakest possible assumptions on $f$ 
but still under Dalang's condition \eqref{E:Dalang2}. In particular, we don't 
assume any scaling properties on $f$.

The nonlinear coefficient $\rho(t,x,z)$ is assumed to be a continuous function which is differentiable in the third argument with a bounded derivative. 
In particular, our results below will cover the important linear case 
$\rho(t,x,u)=\lambda u$, which is called  
the {\em parabolic Anderson model} (PAM) \cite{CarmonaMolchanov94}.

The precise meaning of the ``rough initial conditions/data'' are specified as follows.
We first note that by the Jordan decomposition, 
any signed Borel measure $\mu$ can be decomposed as $\mu=\mu_+-\mu_-$ where
$\mu_\pm$ are two non-negative Borel measures with disjoint support. Denote $|\mu|:= \mu_++\mu_-$.
The rough initial data refers to any signed (Borel) measure $\mu$ such that 
\begin{align}\label{E:J0finite}
\int_{\R^d} e^{-a |x|^2} |\mu|(\ud x)
<+\infty\;, \quad \text{for all $a>0$}\;.
\end{align}
Let $J_0(t,x)$ be the solution to the homogeneous equation, that is,
\begin{align}\label{E:J0}
J_0(t,x) = (\mu * G(t,\cdot))(x) := \int_{\R^d} G(t,x-y)\mu(\ud y),
\end{align}
where $G(t,x)$ is the heat kernel
\[
G(t,x):=(2\pi t)^{-d/2} \exp\left(-\frac{|x|^2}{2t}\right).
\]
It is easy to see that condition \eqref{E:J0finite} is equivalent to, in case $\mu\ge 0$, the condition
that the solution to the homogeneous equation $J_0(t,x)$ exists for all $t>0$ and $x\in\R^d$.
It is better to keep in mind the following examples of rough initial 
conditions: 
\begin{align}\label{E:ID-Ex}
\mu(\ud x) = \exp\left(|x|^{7/4}\right)\ud x,\qquad \mu= \delta_0 \quad\text{and}\quad \mu=\sum_{n\in\bbZ^d}\exp\left(\sqrt{|n|}\right)\delta_n,
\end{align}
where  $\delta_{x_0}$ is the Dirac delta measure with unit mass at $x=x_0$. Such rough initial conditions are important; see, e.g., Amir, Corwin and Quastel \cite{ACQ11} and Bertini and Giacomin \cite{BG97} where the Dirac delta initial data and the exponential of two sided Brownian motion, respectively, are their crucial choices for the initial conditions. 

\medskip

The aim of this paper is to establish the regularity and strict positivity of the joint density
of $(u(t,x_1),\cdots, u(t,x_m))$ for degenerate diffusion coefficient $\rho$, starting from rough initial data, and under the weakest possible assumptions on the correlation function $f$. 
In particular, due to the importance of the PAM, especially its relation with the stochastic Burgers and the Kardar-Parisi-Zhang (KPZ) equation through the Hopf-Cole transform, all our main results in this paper, namely, Theorems \ref{T:Single}, \ref{T:Mult}, \ref{T:NegMom} and \ref{T:Pos}, apply to the PAM with rough initial data.
The space time white noise case has been recently studied in \cite{CHN16Density}. 
We should emphasize that working on $\R^d$ with noise that is white in time and 
colored in space brings many more challenges that one does not have in the 
space-time white noise case. We will elaborate more on these difficulties in 
the next two subsections. 

Now we are ready to state our main results.

\subsection{Regularity of density}
The first set of results of this paper concern the regularity of the density. 
Theorem \ref{T:Single} below gives necessary and sufficient conditions for  
$u(t,x)$ to have a smooth density, but this result is only for a single 
space-time point.
By one additional assumption on $f$ (Assumption \ref{A:TailBlowup}) and by 
imposing a mild cone condition on the diffusion coefficient $\rho$,
Theorem \ref{T:Mult} below gives sufficient conditions for the existence 
of smooth density at multiple points. These results generalize corresponding 
results in \cite{CHN16Density} for space-time white noise case (hence $d=1$).

\begin{theorem}\label{T:Single}
Suppose that $\rho:[0,\infty)\times\R^d \times\R\mapsto\R$ is continuous and $f$ satisfies condition \eqref{E:Dalang2} for some $\alpha\in (0,1]$.
Let $u(t,x)$ be the solution  to \eqref{E:SHE} starting from an initial measure $\mu$
that satisfies \eqref{E:J0finite}.
Then the following two statements are true:
\begin{enumerate}
 \item[(a)] If $\rho$ is differentiable in the third argument with a bounded Lipschitz continuous derivative,
 then  for all $t>0$ and $x\in\R^d$,  $u(t,x)$ has an absolutely continuous law 
with respect to the Lebesgue measure on $\R$ if and only if 
 \begin{align}\label{E:IFF}
  t>t_0:=\inf\left\{s>0,\: \sup_{y\in\R^d}\left|\rho\left(s,y,(G(s,\cdot)*\mu)(y)\right)\right|\ne 0\right\}.
 \end{align}
 \item[(b)] If $\rho$ is infinitely differentiable in the third argument with bounded derivatives of all orders,
 then  for all $t>0$ and $x\in\R^d$,  $u(t,x)$ has a smooth  density if and 
only if condition \eqref{E:IFF} holds.
\end{enumerate}
\end{theorem}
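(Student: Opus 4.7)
The plan is to prove both directions by standard Malliavin calculus. The necessity in both (a) and (b) is the same and easy: if $t \leq t_0$, then $\rho(s,y, J_0(s,y)) = 0$ for all $(s,y) \in (0,t]\times \R^d$. Hence the deterministic function $v(s,y) := J_0(s,y)$ satisfies the mild formulation of \eqref{E:SHE} on $[0,t]\times \R^d$ (the stochastic convolution vanishes), and by uniqueness of the mild solution $u(t,x) = J_0(t,x)$ almost surely. In particular, the law of $u(t,x)$ is a Dirac mass, so it is neither absolutely continuous nor smooth.

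For the sufficiency in (a), I would apply the Bouleau--Hirsch criterion: it suffices to show $u(t,x) \in \D^{1,2}$ and $\|Du(t,x)\|_{\calH} > 0$ a.s., where $\calH$ is the Cameron--Martin space associated with $\dot{W}$. A Picard iteration argument (under the bounded-Lipschitz assumption on $\partial_z \rho$) gives $u(t,x) \in \D^{1,2}$ and shows that the Malliavin derivative satisfies the linear SPDE
\begin{align*}
D_{s,z}u(t,x) = G(t-s,x-z)\,\rho\bigl(s,z,u(s,z)\bigr) + \int_s^t\!\!\int_{\R^d} G(t-r,x-w)\,\partial_z\rho\bigl(r,w,u(r,w)\bigr)\,D_{s,z}u(r,w)\,W(\ud r,\ud w),
\end{align*}
valid for $s<t$. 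The key step is then to show $\|Du(t,x)\|_{\calH}>0$ a.s. whenever $t>t_0$. I would do this by a localization near the time $s=t$: since $\rho$ is continuous, condition \eqref{E:IFF} gives points $(s_*,y_*)$ with $s_*<t$ and $\rho(s_*, y_*, J_0(s_*,y_*))\ne 0$; continuity of $(s,y)\mapsto u(s,y)$ and of $\rho$, together with the fact that the solution of the linear SPDE for $Du$ does not vanish on a set of positive $(s,z)$-measure near such a point with positive probability, yields the desired positivity. Combined with the $0$--$1$ law (or a standard argument using the linearity of the SPDE for $Du$), this upgrades ``positive probability'' to ``almost surely.''

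For the sufficiency in (b), I would apply the classical Malliavin criterion: $u(t,x)$ admits a $C^\infty$ density as soon as $u(t,x)\in \D^\infty := \bigcap_{k,p}\D^{k,p}$ and the Malliavin variance satisfies
\begin{align*}
\E\bigl[\|Du(t,x)\|_{\calH}^{-p}\bigr] < \infty \quad \text{for every } p\geq 1.
\end{align*}
Membership in $\D^\infty$ is obtained by iterating the Picard-type argument from (a), using that all derivatives of $\rho$ in $z$ are bounded and that Dalang's condition \eqref{E:Dalang2} provides enough spatial regularity to close the estimates in $L^p(\Omega)$. The hard part, and the main obstacle of the proof, is establishing the negative-moment bound for $\|Du(t,x)\|_{\calH}$ under the three simultaneous difficulties of the paper: rough initial data, degenerate $\rho$, and only Dalang's condition \eqref{E:Dalang2} on $f$ (no scaling). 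This is precisely what Theorem \ref{T:NegMom} of the paper is designed to supply, so I would reduce (b) to invoking that result on the interval $[t_0, t]$ after choosing a small $\delta>0$ with $t-\delta>t_0$ and a space-time region on which $|\rho(s, y, u(s,y))|$ is bounded below with large probability.

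Finally, necessity in (b) follows trivially from necessity in (a), since a smooth density is in particular absolutely continuous. This completes the outline, with the negative-moment estimate (Theorem \ref{T:NegMom}) being the technical heart of the argument.
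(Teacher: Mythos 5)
Your necessity direction matches the paper exactly and is correct: when $t\le t_0$ the stochastic convolution vanishes, $u(t,x)=J_0(t,x)$ is deterministic, and there is no density.

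The sufficiency direction, however, has a genuine gap, and the paper takes a substantively different route. You propose to show $\|Du(t,x)\|_{\calH}>0$ a.s.\ directly by a ``localization near $s=t$'' plus a 0--1 law, and for (b) to ``invoke Theorem~\ref{T:NegMom} on $[t_0,t]$.'' Two problems. First, there is no 0--1 law available for the event $\{\|Du(t,x)\|_{\calH}>0\}$: it is not a tail event, and the linearity of the SPDE for $Du$ does not by itself upgrade positive probability to almost-sure positivity. Second, and more fundamentally, the difficulty is that the points $(s_*,y_*)$ where $\rho(s_*,y_*,J_0(s_*,y_*))\ne 0$ may sit near $s_*=t_0$, far from $t$, and the solution $u(s,y)$ near those points is random, so one cannot simply invoke continuity of $\rho\circ u$ to get a nondegenerate window. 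The paper resolves this with a stopping-time restart: it chooses a deterministic neighborhood $(0,\epsilon_0)\times B(x_0,\epsilon_0)\times[-\epsilon_0,\epsilon_0]$ on which $\rho(t_0+\epsilon,y,J_0(t_0+\epsilon,y)+z)\ne 0$, defines $\tau$ as the first time after $t_0$ that $\sup_{y\in B(x_0,\epsilon_0)}|I(t,y)|\ge\epsilon_0$ (capped by $t_0+\epsilon_0$), and then restarts the equation at time $\tau$; by construction $\rho(\tau,y,u(\tau,y))\ne 0$ for all $y\in B(x_0,\epsilon_0)$, deterministically on the whole probability space, with H\"older-continuous random initial data $u(\tau,\cdot)$. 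This puts one in the hypotheses of the special-case Theorem~\ref{T_:Single}, which handles an initial measure proper at a point $x_0$ with $\rho(0,x_0,g(x_0))\ne 0$, and the (conditional) density of the restarted solution is then absolutely continuous/smooth; the law of $u(t,x)$ itself is recovered by the Markov property and $\E[\psi_{t-\tau}(\cdot)]$. Your sketch skips this reduction entirely.

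For (b), the reduction to Theorem~\ref{T:NegMom} is more delicate than ``apply it on $[t_0,t]$.'' In the paper's Theorem~\ref{T_:Single}, one introduces $Y_\theta(t,x)=\langle D_{\theta,\cdot}u(t,x),\psi\rangle_{\calH}$ for a suitable bump function $\psi$, shows that $Y_0(\cdot,\cdot)$ itself solves a stochastic heat equation of the type \eqref{E:SPDE-Var} with initial data $\psi(x)\rho(0,x,u_0(x))$ that is bounded below on a cube, and only then invokes Theorem~\ref{T:NegMom} to control $\bbP(|Y_0(t,x)|<\varepsilon)$; the complementary estimate $\bbP(\int_0^{\epsilon^r}|Y_\theta^2-Y_0^2|\,\ud\theta>\epsilon)$ requires $L^p$-increment bounds on $\theta\mapsto Y_\theta(t,x)$. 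Your sketch identifies the negative moment theorem as ``the technical heart,'' which is right, but it does not describe how to get from the Malliavin variance $C(t,x)=\int_0^t\|D_{\theta,\cdot}u(t,x)\|_\calH^2\,\ud\theta$ to a solution of an SPDE to which Theorem~\ref{T:NegMom} applies, nor the accompanying increment estimates needed to control $A_2$. Without the $Y_\theta$ decomposition and the stopping-time restart, the proposal is an outline of the expected ingredients rather than a proof.
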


This theorem will be proved in Section \ref{S:Single}.
\medskip

\begin{example}
Here we would like to point out several examples on $\rho$:\\
(1) Under the degenerate condition (see \eqref{E:r>c} below), it is clear from \eqref{E:IFF} that $t_0=0$.\\
(2) For the PAM, that is, $\rho(t,x,z)=\lambda z$, $\lambda\ne 0$, with delta initial data $\mu=\delta_0$, then $t_0=0$ because
\[
\sup_{y\in\R^d}|\rho(s,y,(G(s,\cdot)*\delta_0)(y))|=|\lambda| \sup_{y\in\R^d} G(s,y) = \frac{|\lambda|}{(2\pi s)^{d/2}}>0.
\]
(3) Let 
$\rho(t,x,z)=\exp\left(\left[(4z-1)(4z-3)\right]^{-1}\right)\one_{[1/4,3/4]}(z)$ 
and $\mu(\ud x) = \one_{[-1,1]}(x)\ud x$. Clearly, $\rho$ is a smooth function 
in the third argument with compact support.  Moreover, one can check from 
\eqref{E:IFF} that $t_0=0$. Therefore, there is a smooth density for all time 
$t>0$. However, Mueller and Nualart's result \cite{MuellerNualart08} fails in 
this example since $\rho(\one_{[-1,1]}(x))=0$ for all $x\in\R^d$. \\
(4) If $\rho(t,x,z)=\one_{[\tau,\tau']}(t)z$ and $\mu(\ud x)=\ud x$ with two deterministic constants $\tau'>\tau>0$, then $u(t,x)$ has a smooth density if only if $t>t_0=\tau$.
\end{example}

For the regularity of density at multiple points and also for the strict 
positivity of density (Theorem \ref{T:Pos} below), we need some 
assumptions on the correlation function $f$ as follows, 
which are satisfied by most common-seen examples (see 
Example \ref{eg:Reg-Kernel} below).


\begin{assumption}
\label{A:TailBlowup}
Assume that for some $A>1$ large enough, the correlation function satisfies that 
\[
\sup_{|z| \ge A} f(z)<\infty\qquad \text{and}\quad 
\inf_{|z|<1/A} f(z) >0.
\]
\end{assumption}

\begin{theorem}\label{T:Mult}
Suppose that the condition \eqref{E:Dalang2} is satisfied for some $\alpha\in(0,1]$.
Let $u(t,x)$ be the solution to \eqref{E:SHE} starting from a nonnegative measure $\mu>0$ that satisfies \eqref{E:J0finite}.
Suppose that for some constants $\beta>0$, $\gamma \in (0,1+\alpha)$ and $\lip_\rho>0$,
\begin{align}\label{E:lip}
|\rho(t,x,z)|\ge \lip_\rho \exp\left\{-\beta\left[\log\frac{1}{|z|\wedge 1}\right]^\gamma\right\},
\qquad\text{for all $(t,x,z)\in\R_+\times\R^d\times\R$}\,.
\end{align}
Then for any $m$ distinct points $\{x_1,\dots,x_m\}\subseteq\R^d$ and $t>0$,
under Assumption \ref{A:TailBlowup}, the following two 
statements are true:
\begin{enumerate}
 \item[(a)] If $\rho$ is differentiable in the third argument with a bounded Lipschitz continuous derivative, 
 then the law of the random vector $\left(u(t,x_1),\dots,u(t,x_m)\right)$ is absolutely continuous
 with respect to the Lebesgue measure on $\R^m$.
 \item[(b)] If $\rho$ is infinitely differentiable in the third argument with bounded derivatives of all orders,
 then the random vector $\left(u(t,x_1),\dots,u(t,x_m)\right)$ has a smooth  density on $\R^m$.
\end{enumerate}
\end{theorem}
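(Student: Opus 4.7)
The plan is to follow the standard Malliavin-calculus criterion. For part~(a), I would show each $u(t,x_i)\in\D^{1,2}$ and that the Malliavin matrix
\[
\gamma(t)=\bigl(\langle Du(t,x_i),Du(t,x_j)\rangle_{\calH}\bigr)_{1\le i,j\le m}
\]
is almost surely invertible; for part~(b), upgrade to $u(t,x_i)\in\D^{\infty}$ and prove $\E[(\det\gamma(t))^{-p}]<\infty$ for every $p\ge 1$, from which the conclusions follow by the usual criteria of Bouleau-Hirsch and Malliavin-Nualart. The $\D^{k,p}$-regularity is inherited from the proof of Theorem~\ref{T:Single}, via iteration of the linear SPDE satisfied by $Du$ and its higher Malliavin derivatives, so the real content lies in proving the nondegeneracy of $\gamma(t)$.

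Next I would localize on a short time window $[t-\epsilon,t]$. On that window
\[
D_{s,z}u(t,x)=G(t-s,x-z)\rho(s,z,u(s,z))+V_\epsilon(s,z;t,x),
\]
where the remainder $V_\epsilon$ is a stochastic integral over $[s,t]$ of order $\epsilon^{1/2+\delta}$ in $L^p(\Omega;\calH)$ for some $\delta=\delta(\alpha)>0$ coming from \eqref{E:Dalang2}. Decomposing $\gamma(t)=A_\epsilon+R_\epsilon$ accordingly, the task reduces to a uniform quantitative lower bound on $\xi^{\top}A_\epsilon\xi$ for unit vectors $\xi\in\R^m$, after which $R_\epsilon$ is absorbed via Cauchy-Schwarz and standard $L^p$-estimates for $Du$. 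For the lower bound I would write
\[
\xi^{\top}A_\epsilon\xi=\int_{t-\epsilon}^{t}\int_{\R^d}\int_{\R^d}\Phi_\xi(s,z)\Phi_\xi(s,z')f(z-z')\,\ud z\,\ud z'\,\ud s,
\]
with $\Phi_\xi(s,z)=\sum_{i=1}^m\xi_i\,G(t-s,x_i-z)\rho(s,z,u(s,z))$. Since the $x_i$ are distinct, for $s$ sufficiently close to $t$ the heat kernels $G(t-s,x_i-\cdot)$ concentrate in pairwise disjoint balls $B(x_i,r)$ with $\sqrt\epsilon\ll r\ll\min_{i\ne j}|x_i-x_j|$, so the diagonal contribution ($i=j$) dominates the cross terms. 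Invoking the lower bound $\inf_{|z|<1/A}f(z)>0$ from Assumption~\ref{A:TailBlowup} on the diagonal region, one obtains
\[
\xi^{\top}A_\epsilon\xi\ \gtrsim\ \sum_{i=1}^m\xi_i^2\int_{t-\epsilon}^{t}\int_{B(x_i,r)}|\rho(s,z,u(s,z))|^2\,\ud z\,\ud s,
\]
and inserting hypothesis \eqref{E:lip} replaces the integrand by $\lip_\rho^2\exp\bigl(-2\beta[\log(1/(|u(s,z)|\wedge 1))]^\gamma\bigr)$.

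The hardest step is then the super-polynomial small-ball estimate $\bbP(\xi^{\top}\gamma(t)\xi<\eta)\le C_p\,\eta^p$ for all $p\ge 1$, uniformly in $|\xi|=1$, required for~(b). Choosing $\epsilon=\eta^\kappa$ for an appropriate $\kappa>0$ and combining the bound on $A_\epsilon$ with Chebyshev applied to $R_\epsilon$, the problem reduces to a tail bound
\[
\bbP\!\left(\int_{t-\epsilon}^{t}\int_{B(x_i,r)}\exp\bigl(-2\beta[\log(1/(|u(s,z)|\wedge 1))]^\gamma\bigr)\ud z\,\ud s<\eta'\right)\le C_p(\eta')^{p},
\]
and this is exactly where the constraint $\gamma<1+\alpha$ enters: combining the sharp pointwise negative moment estimates for $u(s,z)$ (Theorem~\ref{T:NegMom}) with the spatial H\"older regularity of order essentially $\alpha$ granted by \eqref{E:Dalang2}, one can discretize $B(x_i,r)$ at a suitable scale and close the estimate by Chebyshev with arbitrarily high moments. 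A compactness argument over the unit sphere of $\R^m$ then yields $\E[(\det\gamma(t))^{-p}]<\infty$ for every $p$, giving~(b); part~(a) follows from the much weaker statement that $\xi^{\top}\gamma(t)\xi>0$ almost surely, which requires only the a.s.\ positivity of the above lower bound and no polynomial rate.
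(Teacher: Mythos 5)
Your overall framework (Malliavin nondegeneracy, localization in time, cross-terms controlled by the separation of the $x_i$, and the role of $\gamma<1+\alpha$ entering through the negative-moment estimate of Theorem~\ref{T:NegMom}) is aligned with the paper's strategy. However there is a genuine gap in the step you describe as giving the main lower bound.

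You claim
\[
\xi^{\top}A_\epsilon\xi
= \int_{t-\epsilon}^{t}\iint_{\R^{2d}}\Phi_\xi(s,z)\Phi_\xi(s,z')f(z-z')\,\ud z\,\ud z'\,\ud s
\ \gtrsim\ \sum_{i=1}^m\xi_i^2\int_{t-\epsilon}^{t}\int_{B(x_i,r)}|\rho(s,z,u(s,z))|^2\,\ud z\,\ud s,
\]
invoking $\inf_{|z|<1/A}f(z)>0$ on the diagonal. This does not follow. The $\calH$-inner product is, via Fourier transform, a \emph{weighted} $L^2$ norm $(2\pi)^{-d}\int|\widehat{\Phi_\xi}(\cdot,\xi)|^2\widehat{f}(\ud\xi)$; with colored noise $\widehat{f}$ need not be bounded below, so there is no domination of the $L^2(\ud z)$ norm by the $\calH$-norm. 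More concretely, even after restricting to $|z-z'|<1/A$ where $f\ge c_0$, the integrand contains the product $\rho(s,z,u(s,z))\rho(s,z',u(s,z'))$, which changes sign, so you cannot pass to a lower bound by a local $L^2$ quantity. The paper circumvents exactly this obstacle by the decomposition $\rho(u(r,z))=\rho(u(t,x_j))-[\rho(u(t,x_j))-\rho(u(r,z))]$: the frozen factor $\rho(u(t,x_j))$ pulls out as a \emph{square}, the remaining $GGf$-integral is the explicit quantity $\widetilde{V}_d(\epsilon)$ (Lemma~\ref{L:VTildeV}), and the cross term is controlled by the H\"older continuity of $u$ (Theorem~\ref{T:Holder}), giving the $I^{(2)}_\epsilon$ error of order $\epsilon^\alpha\widetilde{V}_d(\epsilon)$. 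This freezing-and-H\"older step is the missing ingredient in your argument, and without it the main lower bound is unjustified.

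A related issue is that you propose to establish the small-ball estimate by applying ``pointwise'' negative-moment bounds for $u(s,z)$ over $(s,z)\in[t-\epsilon,t]\times B(x_i,r)$ and discretizing. Theorem~\ref{T:NegMom} gives negative moments of $\inf_{x\in K}u(t,x)$ at a \emph{single} time $t$; it does not directly provide a space-time uniform statement, so the discretization would also need to handle the time variable with a uniform modulus, which is considerably more delicate (this is precisely the kind of complication the freezing step is designed to avoid). Once $\rho$ is frozen at $(t,x_j)$, the required probability is $\bbP(\inf_{x\in K}\rho(u(t,x))^2<\cdot)$, which reduces cleanly via the cone condition~\eqref{E:lip} to $\bbP(\inf_{x\in K}u(t,x)<\text{stretched-exp})$, and the constraint $\gamma<1+\alpha$ is used exactly so that the exponent $(1+\alpha)/\gamma>1$ in~\eqref{E:Rate} beats every polynomial rate. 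Note also that the paper uses Assumption~\ref{A:TailBlowup} only to prove $V_d(\epsilon)\sim\widetilde{V}_d(\epsilon)$ (Lemma~\ref{L:VTildeV}), compensating for the restriction to a bounded region $\calS$ in the localized Malliavin calculus (Theorem~\ref{T:Density2}, $\D^{k,p}_{\calT,\calS}$, required because of the rough initial data); your use of the $\inf_{|z|<1/A}f(z)>0$ part of that assumption for a direct diagonal lower bound is not how the hypothesis is deployed and does not work for the reason explained above.
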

This theorem will be proved in Section \ref{S:Mult}.

\medskip

\begin{example}
Let us see several examples on the cone condition \eqref{E:lip}:\\
(1) The degenerate condition (see \eqref{E:r>c} below) trivially ensures the cone condition \eqref{E:lip}. \\
(2) For the PAM, that is, $\rho(t,x,z)=\lambda z$, $\lambda\ne 0$, then the cone condition \eqref{E:lip} is satisfied with $\gamma=\beta=1$ and $\lip_\rho=|\lambda|$.\\
(3) Let $\rho(t,x,z)=z^{2n+1}/(1+z^{2n})$, $n\in\bbN$. This $\rho$ has linear growth for large $x$ and approaches zero as $x$ tends to zero in a polynomial rate. Hence, the cone condition \eqref{E:lip} is also satisfied.
\end{example}

\begin{example}\label{eg:Reg-Kernel}
We remark that all the commonly-seen correlation functions $f$ satisfy 
Assumption \ref{A:TailBlowup}. Here are some examples:\\
(1) For the Riesz kernel $f(x)=|x|^{-\beta'}$ with $\beta'\in (0,2\wedge d)$.\\
(2) Let $f(x)$ be the {\it Bessel kernel} of order $\alpha'>0$ (see, e.g., Section 6.1 of \cite{Grafakos14Modern})
\begin{align}\label{E:Bessel}
f(x)= \int_0^\infty w^{\frac{\alpha'-2-d}{2}}e^{-w}e^{-\frac{|x|^2}{4w}}\ud w.
\end{align}
Note that $f$ does not satisfy any scaling property. Dalang's condition 
\eqref{E:Dalang}/\eqref{E:Dalang2} requires that $\alpha'>d-2$; see Proposition \ref{P:Rate}. Assumption \ref{A:TailBlowup} is trivially satisfied. \\
(3) For the fractional noise $f(x) = \prod_{j=1}^d |x_j|^{2H_j-2}$ with $H_j\in (1/2,1)$ and 
$d-\sum_{j=1}^d H_i<1$,
Assumption \ref{A:TailBlowup} is not satisfied (the supremum outside a ball can be infinity). One can see that coordinate-wisely Assumption \ref{A:TailBlowup} is still true and Lemma \ref{L:VTildeV} can adapted to the coordinate-wise form easily. Hence, Theorem \ref{T:Mult} still holds in this case.
\end{example}

\begin{remark}
For the space-time white noise (hence $d=1$) case, while Assumption \ref{A:TailBlowup} is not satisfied.
Nevertheless, Theorem \ref{T:Mult} for the space-time white noise case has been proved in \cite{CHN16Density}. 
\end{remark}

Let us  now explain the difficulties and hence the contributions of the above two theorems. 
It is known that to establish the regularity of density one usually needs to apply the {\it Bouleau-Hirsch's criterion} (see Theorem \ref{T:Density} below).
The first difficulty is to show that $u(t,x)\in \D^\infty$ (see Section \ref{SS:Malliavin} for the notation).
For this property, one can either find a proof in Bally and Pardoux \cite{BP98} in case of SHE on an interval with Dirichlet boundary conditions, or in Que-Sardanyons and Sanz-Sol\'e \cite{QuerSanz04WaveSmooth} for the stochastic wave equation on $\R^3$ (see Nualart and Que-Sardanyons \cite{NualartQuer07} as well).
All these results rely on the property that
\begin{align}\label{E:SupNorm}
 \sup_{(t,x)\in [0,T]\times\R^d} \E\left[|u(t,x)|^p\right]<\infty,\quad \text{for any $T>0$,}
\end{align}
which doesn't hold any more for rough initial data. 
Hence, this quite standard fact --- $u(t,x)\in \D^\infty$  --- requires a proof in our setting. 
To prove this property, we need to introduce the Malliavin calculus {\it localized} to a space-time subdomain as in \cite{CHN16Density} in order to avoid possible singularities at $t=0$ and $x=\infty$; see Section \ref{SS:Malliavin} and Section \ref{S:MalliavinD} for more detail.

The second and also the major obstacle is to establish the negative moments 
of the determinant of the corresponding Malliavin matrix (see Section \ref{SS:Malliavin}). 
This obstacle can be circumvented by imposing the following nondegenerate condition
\begin{align}\label{E:r>c} 
 \inf_{x\in\R}|\rho(x)|\ge c \qquad \text{for some constant $c>0$.}
\end{align}
or similar conditions as in \cite{NualartQuer07} and \cite{HHNS14}.
However, this condition excludes the important case --- PAM. 
An alternative compromise to avoid this issue is to prove a ``local'' result as those in \cite{BP98}, where the smooth 
joint density of $(u(t,x_1),\dots,u(t,x_d))$ is proved over the domain $\{\rho\ne 0\}^d$ instead of $\R^d$.
As for the degenerate case, that is, the case that includes the PAM, 
Pardoux and Zhang \cite{PardouxZhang93} showed that the Malliavin matrix is 
invertible a.s., which enabled them to establish the existence of the density. 
Much later Mueller and Nualart \cite{MuellerNualart08} succeeded in establishing 
the smooth density. Both \cite{PardouxZhang93} and  \cite{MuellerNualart08} 
handle the one-dimensional SHE over an interval with space-time white noise. 
Recently, Chen, Hu and Nualart \cite{CHN16Density} extended the above results to the one-dimensional SHE over the whole $\R$. 
In this paper, we carry out this program to extend these results further to SHE on $\R^d$. 
Indeed, following similar ideas as in \cite{CHN16Density, MuellerNualart08}, we can transform the arguments of the proof of the strict positivity of the solution in \cite{CH16Comparison} into a stopping-time argument in order to obtain a better convergence rate (see \eqref{E:Rate}), which will in turn guarantee the existence of negative moments of all orders for a related SHE over $\R^d$. More precisely, we will establish Theorem \ref{T:NegMom} for the following SHE:
\begin{align}\label{E:SPDE-Var}
 \begin{cases}
  \left(\displaystyle\frac{\partial}{\partial t} - \frac{1}{2} \Delta \right) u(t,x) =
H(t,x) \sigma(t,x,u(t,x))  \dot{W}(t,x),& t>0\;,\: x\in\R^d,\\[1em]
u(0,\cdot) =\mu(\cdot),
 \end{cases}
\end{align}
where $H(t,x)$ is a bounded and adapted process and $\sigma(t,x,z)$ is a measurable and locally bounded function
which is Lipschitz continuous in $z$, uniformly in both $t$ and $x$, satisfying that  $\sigma(t,x,0)=0$.

\begin{theorem}
\label{T:NegMom}
Suppose that condition \eqref{E:Dalang2} is satisfied for some $\alpha\in(0,1]$.
Let $u(t,x)$ be the solution to \eqref{E:SPDE-Var} starting from a deterministic and nonnegative measure $\mu>0$ that satisfies \eqref{E:J0finite}.
Let $\Lambda>0$ be a constant such that
\begin{align}\label{E:Lambda}
\left|H(t,x,\omega)\sigma(t,x,z)\right|\le \Lambda |z| \qquad\text{for all $(t,x,z,\omega)\in \R_+\times\R^d\times\R\times\Omega$.}
\end{align}
Then for any compact set $K\subseteq\R^d$ and $t>0$,
there exists a finite constant $B>0$ which only depends on $\Lambda$, $\delta$, $K$ 
and $t>0$ such that for any $\delta\in (0,1)$ and for all small enough $\epsilon>0$,
\begin{align}\label{E:Rate}
\bbP\left(\inf_{x\in K}u(t,x) <\epsilon \right)&\le \exp\left(-B 
\left\{|\log(\epsilon)|\cdot | \log\left(|\log(\epsilon)|\right)|^\delta 
\right\}^{1+\alpha}\right).
\end{align}
Consequently, for all $p>0$,
\begin{align}\label{E:NonMom1}
\E\left(\left[\inf_{x\in K}u(t,x) \right]^{-p}\right)<+\infty.
\end{align}
\end{theorem}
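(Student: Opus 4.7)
The plan is to quantitatively refine the qualitative strict-positivity result of \cite{CH16Comparison} via a stopping-time localization, and then to promote the resulting pointwise tail bound to the uniform-in-$K$ bound \eqref{E:Rate} by a chaining argument. Hypothesis \eqref{E:Lambda} is the key structural input: setting $a(s,y,\omega) := H(s,y,\omega)\sigma(s,y,u(s,y))/u(s,y)$ (well-defined a.s., since $u>0$ by \cite{CH16Comparison}) gives an adapted coefficient with $|a|\le \Lambda$, so that \eqref{E:SPDE-Var} reduces to a linear PAM-type equation $(\partial_t - \tfrac12\Delta) u = a(t,x)\,u\,\dot W$ with a bounded random coefficient, bringing the problem back into a regime where log-transformations and exponential-martingale tools are available.

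The main analytic step is a single-point tail estimate. Introduce the stopping time $\tau_\epsilon := \inf\{s\ge 0 : \inf_{x\in K} u(s,x)/J_0(s,x) \le \epsilon_0\}$ for a threshold $\epsilon_0 = \epsilon_0(\epsilon)$ to be optimized. On $[0, \tau_\epsilon]$ the ratio $u/J_0$ is bounded below by $\epsilon_0$, so a (Walsh--)It\^o-type formula for $\log(u(\cdot,x_0)/J_0(\cdot,x_0))$ yields a Dol\'eans--Dade decomposition
\begin{align*}
\log\frac{u(t\wedge\tau_\epsilon, x_0)}{J_0(t\wedge\tau_\epsilon, x_0)} = M_{t\wedge\tau_\epsilon}(x_0) - \tfrac12 \langle M(x_0)\rangle_{t\wedge\tau_\epsilon} + R_{t\wedge\tau_\epsilon}(x_0),
\end{align*}
where the quadratic variation $\langle M(x_0)\rangle_t$ is dominated by $\Lambda^2$ times an iterated heat-kernel convolution with $f$ that is finite under \eqref{E:Dalang2} (and quantitatively controlled by a $\Upsilon$-type integral). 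An exponential-martingale / Bernstein-type inequality bounds $\{u(t,x_0)<\epsilon\}\cap\{\tau_\epsilon>t\}$, while the event $\{\tau_\epsilon \le t\}$ is controlled by iterating the same estimate at the stopping threshold; optimizing $\epsilon_0$ as a function of $\epsilon$ should yield a pointwise bound of the form $\bbP(u(t,x_0)<\epsilon) \le \exp(-c|\log\epsilon|^{1+\alpha})$, the exponent $1+\alpha$ emerging from the balance between the $\Upsilon$-type estimate and the iteration step.

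To pass to the uniform bound \eqref{E:Rate}, chain over $K$. Cover $K$ by $\sim\eta^{-d}$ centers $\{x_i\}$ with mesh $\eta$, bound each $\bbP(u(t,x_i)<2\epsilon)$ by the pointwise estimate, and control the oscillation $\sup_{x,y\in K,\ |x-y|\le\eta}|u(t,x)-u(t,y)|$ via a Borell--TIS / Fernique inequality applied to the sub-Gaussian increments of $u(t,\cdot)$, whose variance is of order $|x-y|^{2\kappa}$ for any $\kappa<\alpha$. Choosing $\eta$ as a function of $\epsilon$ so that both terms are comparable, and letting $\kappa \uparrow \alpha$, yields the claimed exponent; the logarithmic factor $\log N_\eta$ from the union bound is exactly what is absorbed into the slack $|\log|\log\epsilon||^\delta$. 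The negative-moment bound \eqref{E:NonMom1} then follows immediately from the layer-cake identity $\E[(\inf_K u(t,\cdot))^{-p}] = p\int_0^\infty \epsilon^{-p-1}\,\bbP(\inf_K u(t,\cdot)<\epsilon)\,d\epsilon$ together with the super-polynomial decay in \eqref{E:Rate}.

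The main obstacle I anticipate is making the log-transformation rigorous in the presence of both the rough initial condition \eqref{E:J0finite} and the degeneracy of $\sigma$ at $z=0$: both threaten the It\^o-type expansion, near $t=0$ and near $\{u=0\}$ respectively. The stopping time $\tau_\epsilon$ handles the degeneracy, but making the Dol\'eans--Dade representation precise will require the localized Malliavin framework from Section \ref{S:MalliavinD}, working on a subdomain $[s_0,t]\times B$ where $J_0$ is smooth and bounded below, together with a careful passage to the limit $s_0 \downarrow 0$. Recovering the precise exponent $1+\alpha$ (rather than some weaker super-polynomial rate) will further require matching the iteration step in the stopping-time argument to the $\Upsilon$-type quadratic-variation estimate.
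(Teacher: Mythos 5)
Your overall architecture (localize via stopping times, iterate, then pass from a pointwise tail bound to a uniform bound over $K$) is in the right spirit, but the central analytical tool you propose does not work, and the mechanism you identify for the exponent $1+\alpha$ is not the correct one.

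The crux of your plan is a Dol\'eans--Dade decomposition of $\log\bigl(u(\cdot,x_0)/J_0(\cdot,x_0)\bigr)$ obtained from an It\^o-type formula, followed by an exponential-martingale/Bernstein bound. This step is not available. For a mild solution of \eqref{E:SPDE-Var}, the process $t\mapsto u(t,x_0)$ is \emph{not} a semimartingale: the kernel $G(t-s,x_0-y)$ inside the stochastic convolution depends on the terminal time $t$, so the integral is not a martingale in $t$ and there is no stochastic differential to feed into It\^o's lemma. Consequently, the representation $\log(u/J_0) = M - \tfrac12\langle M\rangle + R$ you write down does not exist in this setting, and the Bernstein estimate built on it has nothing to stand on. (Hopf--Cole/log transforms do appear in the SHE/KPZ literature, but only after an entirely different reformulation with special linear structure; they do not apply here.) The paper avoids this obstruction altogether: it never takes logarithms. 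The single-iteration probability $\bbP(T_k-T_{k-1}\le\tau_n\,|\,\calF_{T_{k-1}})$ is bounded by an $L^p(\Omega)$-Chebyshev estimate on the stochastic convolution $I_k$, controlled by the moment bound \eqref{E:MomAlpha} and the Kolmogorov continuity theorem; the factor $\tau_n^{\alpha\eta p/2}\exp(Q'p^{(1+\alpha)/\alpha}\tau_n)$ is then optimized in $p$, and it is precisely the $p^{(1+\alpha)/\alpha}$ in the moment growth that produces $\exp(-Q'' n^\alpha(\log n)^{1+\alpha})$ for a single drop, and hence the exponent $1+\alpha$ and the $\log\log$ factor after taking $n\asymp|\log\epsilon|$. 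Your ``balance between the $\Upsilon$-type estimate and the iteration step'' does not give this; the $\Upsilon$-type integral only enters indirectly through the moment bound.

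Two further problems: (i) the chaining step as you describe it invokes Borell--TIS/Fernique, which requires a Gaussian process; $u(t,\cdot)$ is not Gaussian except in the additive-noise case, so that tool is unavailable -- in the paper, uniformity over $K$ is built into the stopping-time definition (the infimum is over the growing intervals $\calI_t\supseteq K$), not obtained by a separate chaining argument; (ii) you flag the rough initial data as a difficulty for the It\^o expansion near $t=0$, but the actual fix in the paper is not a localized Malliavin calculus -- it is the Markov property at a small deterministic time $\theta>0$, together with the a.s.\ strict positivity of $u(\theta,\cdot)$ from the comparison principle, which reduces the general case to one with initial data bounded below by a constant on a cube; the exponent $\delta<1$ in \eqref{E:Rate} is the price paid for integrating out the random scale $\Theta^{-1}$ in this reduction, and is again not a chaining artifact. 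The layer-cake derivation of \eqref{E:NonMom1} from \eqref{E:Rate} is fine.
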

This theorem will be proved in Section \ref{S:NegMom}.

\begin{remark}
\label{R:gPAM}
Finally, we would like to mention a recent work that is partially related to this paper.
In \cite{CFG17gPAM}, Cannizzaro, Friz and Gassiat studied the following generalized PAM (gPAM),
\[
\left(\partial_t -\Delta\right) u(t,x) = g(u(t,x))\xi(x), \quad u(0,\cdot)= u_0(\cdot),
\]
where $\xi=\xi(x,\omega)$ is space (not space-time) white noise, $x\in \mathbb{T}^2$ (two-dimensional torus) and the initial data is H\"older continuous.
The diffusion coefficient $g(\cdot)$ is assumed to be sufficiently smooth and to be compactly supported (see Proposition 3.28 [{\it ibid}]). This last property excludes the linear PAM.
The space dimension $2$ is the critical case when one needs to use Hairer's theory of the regularity structures \cite{Hairer2014RS,Hairer2011solving} to handle properly the renormalization procedure. For this path-wise approach, it is natural to obtain some almost-sure results such as the strict positivity of the solution; see Theorem 5.1 [{\it ibid}]. Hence by the first part of Bouleau-Hirsch's criterion (see part (1) of Theorem \ref{T:Density} below), they obtain the existence of the density.
However, it seems very hard to establish the smooth density in their 
framework since it requires the $L^p(\Omega)$-moments of the solution. 
Comparing to our results here, we work under the cases when the noises are 
good enough that there is no need of renormalization in order to make sense of 
the solution. 
The difficulties/contributions of this paper lie in degenerate 
diffusion coefficients $\rho$, rough initial data $\mu$, and weakest possible 
assumptions on the correlation function $f$ (but still under Dalang's condition 
\eqref{E:Dalang2}). Contrary to 
[{\it ibid}], the moments formula/bounds are the basic tools for us.
\end{remark}

\subsection{Strict positivity of density}
As for the strict positivity of the density, most known results assume the boundedness of the diffusion coefficient $\rho$;
see, e.g., Theorem 2.2 of Bally and Pardoux \cite{BP98},
Theorem 4.1 of Hu {\it et al} \cite{HHNS14} and Theorem 5.1 of E. Nualart \cite{Nualart13Density}.
This nondegenerate condition, which excludes the important case: the parabolic Anderson model 
$\rho(u)=\lambda u$, has been removed in a recent work by Chen {\it et al} \cite[Theorem 1.4]{CHN16Density}.
Moreover, the results in \cite{CHN16Density} allow the rough initial conditions. 
However, the results in \cite{CHN16Density} cover only the case of space-time white noise (hence $d=1$).
The goal of the next theorem is to extend Theorem 1.4 of \cite{CHN16Density} to higher spatial dimensions with more general noises. 
Recall that Theorem \ref{T:Mult} is proved under Assumption \ref{A:TailBlowup} on $f$. Here we need the following two assumptions on $f$.
Denote
\begin{align}\label{E:k}
k(t):=\int_{\R^d}f(z)G(t,z)\ud z =(2\pi)^{-d} 
\int_{\R^d}\widehat{f}(\ud\xi)\exp\left(-\frac{t|\xi|^2}{2}\right), 
\end{align}
where the second equality is due to the Plancherel theorem.

\begin{assumption}
\label{A:Rate-Sharp}
Assume that for some $\beta\in (0,1)$, the limit $\lim_{t\downarrow 0} t^\beta 
k(t)$
exists and belongs to $(0,\infty)$, or equivalently, $k(t)\asymp t^{-\beta}$ as 
$t\rightarrow 0_+$ (see the notation at the end of this section).
\end{assumption}

\begin{assumption}
\label{A:Continuity}
Assume that the correlation function $f$ is 
a locally bounded function on
$\R^d\setminus\{0\}$.
\end{assumption}

\begin{theorem}\label{T:Pos}
Suppose $\rho(t,x,z)=\rho(z)$ and $\rho\in C^\infty(\R)$ such that all derivatives of $\rho$ are bounded.
Let $\{x_1,\cdots,x_m\}\subseteq \R^d$ be $m$ distinct points. 
Then under Assumptions \ref{A:Rate-Sharp} and \ref{A:Continuity}, for any $t>0$,  
the joint law of the random vector $(u(t,x_1),\dots,u(t,x_m))$ admits a smooth density $p(y)$, and $p(y)>0$
if $y$ belongs both  to $\{\rho\ne 0\}^m$ and to the interior of the support of the law of $(u(t,x_1),\dots,u(t,x_m))$.
\end{theorem}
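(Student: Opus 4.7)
The plan is to combine Malliavin calculus (to obtain smoothness of the density) with the Aida--Kusuoka--Stroock criterion (to upgrade smoothness to strict positivity on the interior of the support). The first step is to establish that $F := (u(t,x_1),\ldots,u(t,x_m))$ lies in the localized space $\D^\infty$ of Section \ref{SS:Malliavin}; this extends the Bally--Pardoux iteration to our setting, and the boundedness of all derivatives of $\rho$ makes the moment estimates for the iterated Malliavin derivatives routine once the localization near $t=0$ and $|x|=\infty$ (needed because of the rough initial data) is in place.

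The heart of the argument is establishing negative moments of all orders for $\det(\gamma_F)$. The Malliavin derivative $v(t,x) := D_{s,y} u(t,x)$ satisfies, for $t>s$, an SHE of the form \eqref{E:SPDE-Var} with $H(r,z)=\rho'(u(r,z))$ bounded and $\sigma$ linear, started at time $s$ from the random distribution $\rho(u(s,y))\delta_y$. Via a Gram-matrix lower bound, $\det(\gamma_F)$ dominates a product of space-time integrals of $\rho(u(s,y))^2$ over small boxes near each $(t,x_i)$. For $y^\ast\in\{\rho\ne 0\}^m$, on the intersection of $\{|F-y^\ast|<\delta\}$ with a uniform space-time modulus-of-continuity event for $u$, one has $|\rho(u(s,y))|\ge c>0$ on such boxes. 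Combining the sharp tail bound \eqref{E:Rate} from Theorem \ref{T:NegMom} with the space-time regularity of $u$ supplied by Assumptions \ref{A:Rate-Sharp} and \ref{A:Continuity} then yields negative moments of $\det(\gamma_F)$ of all orders; the usual integration-by-parts formula produces a smooth joint density $p(y)$ on $\R^m$.

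To obtain $p(y)>0$ for $y\in\{\rho\ne 0\}^m\cap\mathrm{int}(\mathrm{supp}(\mathrm{law}(F)))$, I would invoke the Aida--Kusuoka--Stroock theorem (used in this form in \cite{CHN16Density} for the $d=1$ case), which asserts that a nondegenerate, Malliavin-smooth random vector has a density strictly positive on the interior of its support; the nondegeneracy established above, together with $F\in\D^\infty$, is exactly what is needed. The principal obstacle is the degenerate-diffusion negative-moment bound of the previous paragraph: without a global cone condition on $\rho$, one must leverage the precise tail estimate \eqref{E:Rate} together with the spatial regularity of $u$ under spatially-colored noise, both of which are substantially more delicate here than in the space-time white-noise case of \cite{CHN16Density} and are precisely where Assumptions \ref{A:Rate-Sharp} and \ref{A:Continuity} on $f$ enter.
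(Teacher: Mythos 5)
The smoothness half of your outline is roughly aligned with the paper's machinery (it rests on the localized $\D^\infty$ argument and on the Gram-matrix lower bound together with the negative-moment estimate of Theorem~\ref{T:NegMom}), but the positivity half contains a gap that is fatal to the argument.

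You appeal to ``the Aida--Kusuoka--Stroock theorem'' as stating that any nondegenerate, Malliavin-smooth random vector has density strictly positive on the interior of its support, and you claim this is what \cite{CHN16Density} uses. Neither claim is correct. There is no theorem to that effect: smoothness of the density plus negative moments of $\det\sigma_F$ does \emph{not} force $p>0$ on the interior of the support, even in one dimension, and Aida--Kusuoka--Stroock characterize the \emph{support} of a Wiener functional via a skeleton condition, not the strict positivity of the density. In fact, the statement of Theorem~\ref{T:Pos} itself is a warning sign: the conclusion is $p(y)>0$ only for $y\in\{\rho\ne 0\}^m$ intersected with the interior of the support, not on the whole interior. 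A blanket ``nondegenerate $\Rightarrow$ positive on interior of support'' principle would give the stronger, unrestricted conclusion, which the authors do not claim and which is false in general.

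What the paper (following \cite{CHN16Density}) actually uses is Theorem~\ref{T:Criteria}, an extension of Bally--Pardoux's Theorem~3.3. This is a quantitative perturbation criterion: one picks deterministic shifts $\mathbf{h}_n\in L^2(\R_+;\calH)^m$, forms the shifted field $\phi_n(\mathbf{z})=F(\widehat{W}^{\mathbf{z},n})$, and must verify (i) a uniform-in-$n$ lower bound on the probability that $|F-y_*|$ is small while $|\det\partial_{\mathbf{z}}\phi_n(0)|$ is bounded below, and (ii) a conditional tightness bound on $\sup_{|\mathbf{z}|\le\kappa}\|\phi_n(\mathbf{z})\|_{C^2}$. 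Verifying (i) and (ii) is the entire content of Sections~\ref{SS:Psin}--\ref{SS:Bddedness}, and it is far from routine: the colored-noise perturbation $h_n^i(s,y)=c_n\one_{[T-2^{-n},T]}(s)\,G(T-s,x_i-y)$ is supported on all of $\R^d$ (unlike the compactly supported choice in the white-noise case), which forces the introduction and delicate analysis of the auxiliary functions $\Psi_n(t,x;k)$, the augmented initial measure $\mu^*$, and Assumptions~\ref{A:Rate-Sharp} and~\ref{A:Continuity} on $f$. These assumptions enter precisely in estimates such as Proposition~\ref{P:Psi} and Lemma~\ref{L:f-LocalBdd}, not in the generic tail bound as your outline suggests. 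Without identifying and discharging the hypotheses of Theorem~\ref{T:Criteria}, the positivity conclusion is unsupported.
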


This theorem will be proved in Section \ref{S:Pos}.

\begin{example}\label{eg:Pos-Kernel}
We remark that all the commonly-seen correlation functions $f$ satisfy 
Assumption \ref{A:Rate-Sharp} (see Proposition \ref{P:Rate} below for the proof). Here are some examples:\\
(1) For the space-time white noise case (hence $d=1$), Assumption \ref{A:Rate-Sharp} is satisfied with $\beta=1/2$.\\
(2) For the Riesz kernel $f(x)=|x|^{-\beta'}$ with $\beta'\in (0,2\wedge d)$, 
Assumption \ref{A:Rate-Sharp} is satisfied with $\beta=\beta'/2$.\\
(3) For the fractional noise $f(x) = \prod_{j=1}^d |x_j|^{2H_j-2}$ with $H_j\in (1/2,1)$ and 
$d-\sum_{j=1}^d H_i<1$, Assumption \ref{A:Rate-Sharp} is satisfied with $\beta=d-\sum_{j=1}^d H_j$.\\
(4) When $f(x)$ is the Bessel kernel \eqref{E:Bessel} of order $\alpha'\in(0\vee (d-2),d)$,  Assumption \ref{A:Rate-Sharp} is satisfied with $\beta=(d-\alpha')/2$.
\end{example}

\begin{remark}\label{R2:Examples}
Examples (1), (2) and (4) in Example \ref{eg:Pos-Kernel} clearly satisfy Assumption 
\ref{A:Continuity}. 
In general, let $H\subseteq\R^d$ be the set of points where $f$ fails to be 
locally bounded. In this case, Theorem \ref{T:Pos} is still true provided that the 
$m$ points 
$\{x_1,\dots,x_m\}\subseteq\R^d$ satisfy the condition that
\[
\mathop{\cup_{i,j=1,\cdots m}}_{i\ne j} \{x_i-x_j\} \cap H = \emptyset.
\]
In case of examples (1), (2) and (4) in Example \ref{eg:Pos-Kernel}, $H=\{0\}$. 
Hence, we only need to require that all $m$ points are distinct. In case 
of example (3) in Example \ref{eg:Pos-Kernel}, we have that $H=\{z\in\R^d, 
\text{$z_i=0$ for some $i=1,\dots, d$}\}$. Hence, we need to instead require 
that the projections of the $m$ points on each coordinate are distinct. 
\end{remark}

We should emphasize that moving from the space-time white noise to the spatially colored noises of the current paper makes a significant difference in the perturbation strategy. For the space-time white noise case as in \cite{BP98, CHN16Density}, the perturbation function takes the following form (see, e.g., Eq. (8.8) of \cite{CHN16Density})
\[
h_n^i(t,x) = c_n \one_{\{[T-2^{-n},T]\}}(t)\one_{\{[x_i-2^{-n},x_i+2^{-n}]\}}(x),
\]
where $c_n$ is some normalization constant. Here, $x$ lives in a compact set, 
which could be used to simplify many arguments. However, for the spatially 
colored noise,  one needs to take the following perturbation function (see 
\eqref{E:hni} below) as in \cite{Nualart13Density}, 
\[
h_{n}^i(t,x)= c_n \one_{[T-2^{-n},T]}(t) G(T-s,x_i-x).
\]
Here,  $x$ no longer lives in a compact set, which makes arguments much more involved than those in \cite{CHN16Density}. 

Another complication/difficulty comes from the rough initial data (see examples in \eqref{E:ID-Ex}). Since we need to prove some almost-sure results for some supremums (see \eqref{E:my(ii)}), we will need to show many sharp moments estimates and their increments in order to first apply the Kolmogorov continuity theorem to take care of the supremums and then apply Borel-Cantelli lemma to obtain path-wise results. 
If one assumes that initial data to be bounded, then property \eqref{E:SupNorm} holds, with which all these arguments can be significantly simplified.

\begin{remark}
Finally, let us point out some subtleties on the assumptions on the correlation 
functions $f$ among the three main theorems of this paper, namely, Theorems 
\ref{T:Single}, \ref{T:Mult} and \ref{T:Pos}. 
Theorem \ref{T:Single} makes the weakest assumption on the correlation function 
$f$, namely, Dalang's condition \eqref{E:Dalang2}. 
All examples on $f$ in Examples \ref{eg:Reg-Kernel} and \ref{eg:Pos-Kernel} 
work for Theorem \ref{T:Single}. 
On the other hand, one can construct examples as follows that work for Theorem 
\ref{T:Single} but not for either Theorem \ref{T:Mult} or Theorem 
\ref{T:Pos}:\\
(1) For $d=1$ and for any $a>0$ and $c\in [0,1/2]$, one can show that 
\[
f(x) = \delta_0(x) + c 
(\delta_{a}(x) + \delta_{-a}(x))
\]
is nonnegative and nonnegative definite 
since $\widehat{f}(\xi)=1+2c\cos(a\xi)\ge 0$. More generally, one can have 
\[
f(x) = \delta_0(x) + \sum_{i=1}^\infty c_i 
(\delta_{a_i}(x) + \delta_{-a_i}(x))
\]
with $a_i>0$ and $c_i>0$ such that $\sum_{i=1}^\infty c_i\le 1/2$.
In this case, $\widehat{f}(\xi)=1+2\sum_{i=1}^\infty c_i\cos(a_i\xi)$.
\\
(2) Similarly, in any spatial dimensions $d\ge 1$,
one can construct the following example: Let $\beta\in(0,d\wedge 2)$ and 
\[
f(x) = |x|^{-\beta} + \sum_{i=1}^\infty c_i 
(|x-a_i|^{-\beta} + |x+a_i|^{-\beta})
\]
where $a_i\in\R^d \setminus\{0\}$ and $c_i>0$ such that $\sum_{i=1}^\infty 
c_i\le 1/2$. In this case, 
\[
\widehat{f}(\xi) =  C_{\beta, d} |\xi|^{\beta-d}\left(1+2\sum_{i=1}^\infty c_i 
\cos(a_i\cdot\xi)\right)\ge 0.
\]
\end{remark}

\subsection{Outline of the paper and some notation}

The paper is organized as follows.  
In Section \ref{S:Pre}, we give some preliminaries over the definition of the solution and some known results about the solution that will be used in this paper (Section \ref{SS:Definition}), some auxiliary functions (Section \ref{SS:AuxFun}), Malliavin calculus and its localized version (Section \ref{SS:Malliavin}), and a criterion for the strict positivity of density (Section \ref{SS:Criterion}).

The two regularity results (Theorems \ref{T:Single} and \ref{T:Mult}) are proved in Section \ref{S:Regularity}. In particular, we first prove the existence of negative moments of all orders (Theorem \ref{T:NegMom}) in Section \ref{S:NegMom}. 
In Section \ref{S:MalliavinD}, we study the Malliavin derivatives of $u(t,x)$ in the localized Sobolev spaces.
The existence and smoothness of density at a single space-time point (Theorem \ref{T:Single}) and at multiple points (Theorem \ref{T:Mult}) are proved in Sections \ref{S:Single} and \ref{S:Mult}, respectively.

The strict positivity of the density (Theorem \ref{T:Pos}) is proved in Section \ref{S:Pos}.
The proof of Theorem \ref{T:Pos} is outlined in Section \ref{SS:Pos}, which is essentially an application of Theorem \ref{T:Criteria} in Section \ref{SS:Criterion}. 
The rest parts, namely, subsections from \ref{SS:Psin} to \ref{SS:Bddedness}, 
are devoted to prove all properties that are needed in the proof of Theorem 
\ref{T:Pos} in Section \ref{SS:Pos}.

\bigskip
Throughout this paper, we  use $C$ to denote a generic constant whose value may vary at different occurrences.
The function $\rho(u(t,x))$ should be understood as $\rho(t,x,u(t,x))$.
Let $\Norm{\cdot}_p$ denote the $L^p(\Omega)$-norm.
 For $x\in\R^d$ and $A\in (\R^{d})^{\otimes m}$, 
\[
|x| := \sqrt{x_1^2+\cdots+x_d^2}\quad\text{and}\quad
\Norm{A}:=\left(\sum_{i_1,\dots,i_m=1}^d A_{i_1,\dots,i_m}^2\right)^{1/2}.
\]
For two functions $g_1,g_2 : \R_+ \mapsto \R$, we write $g_1(t) \asymp g_2(t)$ as $t\rightarrow 0_+$ if for some constants $C_1, C_2>0$, 
both $g_1(t) \le C_1 g_2(t)$ and 
$g_2(t) \le C_2 g_1(t)$ hold as $t\rightarrow0_+$.

\numberwithin{equation}{subsection}
\section{Some preliminaries}
\label{S:Pre}

\subsection{Definition and existence of a solution}
\label{SS:Definition}

Let $\calH$ be the completion of the Schwartz space $\calS(\R^d)$ of rapidly decreasing smooth functions, endowed with the inner product
\[
\InPrd{\phi,\psi}_{\calH}=\int_{\R^d}\int_{\R^d} \phi(x)f(x-y)\psi(y)\ud x\ud y 
=
\int_{\R^d}
\calF\phi(\xi)\overline{\calF\psi(\xi)} \widehat{f}(\ud\xi)
,\quad\text{for $\phi,\psi\in\calS(\R^d)$}.
\]
For any measurable sets $\calT\subseteq [0,\infty)$ and $S\subseteq \R^d$, let 
$\calH_{\calT,\calS}$  be the completion of $\calS(\R_+\times \R^d)$ with respect to the inner product
\begin{equation}\label{E:HTS}
\langle \phi, \psi\rangle_{\mathcal{H}_{\calT,\calS}} = \int_{\calT} \int_{S}\int_{S} \phi(s, x) \psi(s, y) f(x-y) \ud x \ud y \ud s\,,\quad\text{for $\phi,\psi\in\calS(\R_+\times\R^d)$}.
\end{equation}
It is known that both $\calH$ and $\calH_{\calT,\calS}$ may contain distributions.

Recall that a {\em spatially homogeneous Gaussian noise that is white in time} is an
$L^2(\Omega)$-valued mean zero Gaussian process on a complete probability space $\left(\Omega,\calF,\bbP\right)$
\[
\left\{W(\psi):\: \psi\in C_c^{\infty}\left([0,\infty)\times\R^{d}\right)\:\right\},
\]
such that
\begin{align}\label{E:Cov}
\E\left[W(\psi)W(\phi)\right] = \int_0^{\infty} \ud s\iint_{\R^{2d}}\psi(s,x)\phi(s,y)f(x-y)\ud x\ud y.
\end{align}
Let $\calB_b(\R^d)$ be the collection of Borel measurable sets with finite Lebesgue measure.
As in Dalang-Walsh theory \cite{Dalang99,DalangEtc09Minicourse, DalangQuer, FK13SHE, Walsh},
one can extend $F$ to a $\sigma$-finite $L^2(\Omega)$-valued martingale measure $B\mapsto F(B)$
defined for $B\in \calB_b(\R_+\times\R^d)$. Then define
\[
W_t(B) :=F\left([0,t]\times B \right), \qquad B\in\calB_b(\R^d).
\]
Let $(\calF_t,t\ge 0)$ be the natural filtration generated by $M_\cdot(\cdot)$
and augmented by all $\bbP$-null sets $\calN$ in $\calF$, that is,
\[
\calF_t := \sigma\left(W_s(A):\: 0\le s\le t,
A\in\calB_b\left(\R^d\right)\right)\vee
\calN,\quad t\ge 0,
\]
Then for any adapted, jointly measurable (with respect to
$\calB\left((0,\infty)\times\R^d\right)\times\calF$) random field $\{X(t,x): t>0,x\in\R^d\}$ such that
\[
\int_0^\infty\ud s\iint_{\R^{2d}}\ud x\ud y\:
\Norm{X(s,y)X(s,x)}_{p/2} f(x-y) <\infty,
\]
the stochastic integral
\[
\int_0^\infty \int_{\R^d} X(s,y)W(\ud s\ud y)
\]
is well-defined in the sense of Dalang-Walsh. Here we only require the joint-measurability instead of
predictability; see Proposition 2.2 in \cite{CK15SHE} for this case or
Proposition 3.1 in \cite{ChenDalang13Heat} for the space-time white noise case.
Throughout this paper, $\Norm{\cdot}_p$ denotes the $L^p(\Omega)$-norm.

\bigskip

We formally write the SPDE \eqref{E:SHE} in the integral form
\begin{align}\label{E:WalshSI}
u(t,x)= J_0(t,x)+ I(t,x)
\end{align}
where
\[
I(t,x):= \iint_{[0,t]\times \R^d} G(t-s,x-y) \rho(u(s,y))W(\ud s\ud y).
\]
The above stochastic integral is understood in the sense of Walsh \cite{Dalang99,Walsh}.

\begin{definition}\label{D:Solution}
A process $u=\left(u(t,x),\:(t,x)\in(0,\infty)\times\R^d \right)$  is called a {\it
random field solution} to \eqref{E:SHE} if
\begin{enumerate}[(1)]
 \item $u$ is adapted, that is, for all $(t,x)\in(0,\infty)\times\R^d$, $u(t,x)$ is
$\calF_t$-measurable;
\item $u$ is jointly measurable with respect to
$\calB\left((0,\infty)\times\R^d\right)\times\calF$;
\item $\Norm{I(t,x)}_2<+\infty$ for all $(t,x)\in(0,\infty)\times\R^d$;
\item  $I$ is $L^2(\Omega)$-continuous, that is, the function $(t,x)\mapsto I(t,x)$ mapping $(0,\infty)\times\R^d$ into
$L^2(\Omega)$ is continuous;
\item $u$ satisfies \eqref{E:WalshSI} a.s.,
for all $(t,x)\in(0,\infty)\times\R^d$.
\end{enumerate}
\end{definition}

The following results are from \cite{CK15SHE} and \cite{CH16Comparison}, where $\rho$ is assumed to be a function of one variable. The extension to the current setting, that is, $\rho$ is a function of three variables, is routine.  
In particular, Theorem \ref{T:ExUni} gives the existence and uniqueness of a random field solution. 
Theorem \ref{T:Mom} supplies us with some useful moment bounds. 
Theorem \ref{T:Holder} gives the H\"older regularity of the solution and finally, 
the comparison principle is stated in Theorem \ref{T:Comp}.

\begin{theorem}[Theorem 2.4 in \cite{CK15SHE}]
\label{T:ExUni}
If the initial data $\mu$ satisfies \eqref{E:J0finite},
then under Dalang's condition \eqref{E:Dalang},
SPDE \eqref{E:SHE} has a unique (in the sense of versions) random field solution
$\left\{u(t,x): t>0,x \in\R^d\right\}$ starting from $\mu$.
This solution is $L^2(\Omega)$-continuous.
\end{theorem}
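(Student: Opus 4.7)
The plan is a Picard iteration tailored to rough initial data, with the induction driven by Gronwall-type estimates derived from Dalang's condition \eqref{E:Dalang}. Set $u_0(t,x) := J_0(t,x)$ and define
\begin{align*}
u_{n+1}(t,x) := J_0(t,x) + \int_0^t\int_{\R^d} G(t-s,x-y)\rho(u_n(s,y))\, W(\ud s,\ud y).
\end{align*}
First I would check by induction, using the Walsh--Dalang isometry together with the linear-growth bound $|\rho(t,x,z)|\le C(1+|z|)$ (which follows from the boundedness of $\partial_z \rho$), that each $u_n$ is jointly measurable, adapted, and $L^2(\Omega)$-valued.

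The crucial difficulty for the moment bounds is that $J_0(t,x)$ is not uniformly bounded on $(0,T]\times\R^d$: for instance, with $\mu=\delta_0$ one has $J_0(t,0)\to\infty$ as $t\downarrow 0$. Hence the classical argument seeking $\sup_{(t,x)}\Norm{u_n(t,x)}_2<\infty$ fails. Instead I would track a pointwise variance estimate of the form $\Norm{u_n(t,x)}_2^2 \le C_1\, J_0(t,x)^2 + C_2$, uniform in $n$, where $C_1,C_2$ depend on $T$ and on $\Upsilon(\beta)$ from \eqref{E:Dalang}. The Walsh--Dalang isometry reduces the induction step to controlling
\begin{align*}
\int_0^t \int_{\R^d}\!\!\int_{\R^d} G(t-s,x-y)G(t-s,x-z)f(y-z)\bigl(1+\Norm{u_n(s,y)}_2\Norm{u_n(s,z)}_2\bigr)\,\ud y\,\ud z\,\ud s,
\end{align*}
and the key deterministic estimate (via Plancherel) $\int_0^t e^{-\beta s}\int\!\!\int G(s,\cdot)G(s,\cdot)f \,\ud y\,\ud z\,\ud s \le C\,\Upsilon(\beta)$, valid under \eqref{E:Dalang}. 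A Gronwall iteration on the variance, after absorbing the $J_0(t,x)^2$ contribution via Minkowski in $\calH_{[0,t],\R^d}$, then closes the bound.

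Convergence is established by a parallel estimate on $D_n(t,x) := \Norm{u_{n+1}(t,x)-u_n(t,x)}_2^2$. Using Lipschitz continuity of $\rho$,
\begin{align*}
D_{n+1}(t,x) \le L_\rho^2 \int_0^t\!\!\int\!\!\int G(t-s,x-y)G(t-s,x-z)f(y-z)\sqrt{D_n(s,y)D_n(s,z)}\,\ud y\,\ud z\,\ud s.
\end{align*}
Iterating and invoking the Dalang kernel bound yields $\sum_n \sqrt{\sup_x D_n(t,x)}<\infty$ locally, so $\{u_n\}$ converges in $L^2(\Omega)$ uniformly on compact subsets of $(0,\infty)\times\R^d$. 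The limit inherits adaptedness and joint measurability, and one may pass to the limit in the defining equation to produce a random field solution in the sense of Definition \ref{D:Solution}. Uniqueness in the sense of versions follows from the same Gronwall estimate applied to the difference of two solutions.

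The main obstacle will be the bookkeeping of the $J_0$-dependent pointwise bounds: the weighted norm on the iterates must simultaneously absorb the singularity of $J_0(t,x)$ as $t\downarrow 0$ and its polynomial-in-$|x|$ growth (controlled by \eqref{E:J0finite}), while still delivering a Picard contraction. Once the moment bounds are in hand, the $L^2(\Omega)$-continuity of $I(t,x)$ is a routine consequence of heat-kernel difference estimates combined with dominated convergence, using the Dalang kernel as a uniform majorant.
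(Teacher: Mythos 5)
Your proposal takes essentially the same route as the cited source \cite{CK15SHE}: a Picard iteration whose moment bounds track the convolution $J_0(t,x)=(\mu*G(t,\cdot))(x)$ pointwise, followed by a contraction argument and verification of $L^2(\Omega)$-continuity. You correctly identify that the classical $\sup_{t,x}\|u_n\|_2<\infty$ argument breaks down for rough initial data and must be replaced by a $J_0$-weighted estimate, and that the Dalang quantity $\Upsilon(\beta)$ is what ultimately controls the iteration.

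The one place where the sketch glosses over a genuinely delicate step is the sentence ``after absorbing the $J_0(t,x)^2$ contribution via Minkowski.'' The heart of the matter is the renewal-type inequality (Lemma \ref{L:IntIneq} in this paper, from \cite{CH16Comparison}, or the version used in \cite{CK15SHE}): one must show
\begin{align*}
\int_0^t \ud s\iint_{\R^{2d}} G(t-s,x-y)G(t-s,x-z)f(y-z)\,J_0(s,y)J_0(s,z)\,\ud y\,\ud z
\le J_0(t,x)^2 \cdot K(t),
\end{align*}
which is what lets the factor $J_0(t,x)^2$ be pulled out of the Picard integrand so that the induction closes with $n$-independent constants. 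This requires the heat-kernel identity \eqref{E:GGGG} and the $h_n/H$-function machinery of Section \ref{SS:AuxFun}; a plain Gronwall bound with unweighted constants will not, by itself, produce an $n$-uniform bound unless one also introduces exponential weights in $t$ (as you implicitly do with $e^{-\beta s}$). You flag the ``bookkeeping'' as the main obstacle, which is fair self-assessment, but it is worth naming the factorization lemma explicitly since it is the load-bearing ingredient rather than routine bookkeeping. Once that is in place, the rest of the argument — summability of the Picard increments, adaptedness of the limit, and $L^2$-continuity via kernel-difference estimates majorized by the Dalang integral — proceeds as you describe.
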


\begin{theorem}[Theorem 1.5 in \cite{CH16Comparison}]
\label{T:Mom}
Under Dalang's condition \eqref{E:Dalang},
if the initial data $\mu$ is a signed measure that satisfies \eqref{E:J0finite},
then the solution $u$ to \eqref{E:SHE} for any given $t>0$ and $x\in\R^d$ is in $L^p(\Omega)$, $p\ge 2$, and
\begin{align}\label{E:Mom}
 \Norm{u(t,x)}_p \le
 \sqrt{2}\left[\Vip+\sqrt{2} \left(|\mu|*G(t,\cdot)\right)(x) \right]H\left(t;\gamma_p\right)^{1/2},
\end{align}
where $\Vip=|\rho(0)|/\LIP_\rho$ and $\gamma_p=32p\LIP_\rho^2$ and $H(t;\gamma_p)$ is defined in \eqref{E:H} below.
Moreover, if for some $\alpha\in (0,1]$ condition \eqref{E:Dalang2} is satisfied,
then when $p\ge 2$ is large enough, there exists some constant $C>0$ such that 
{
\begin{align}\label{E:MomAlpha}
\Norm{u(t,x)}_p \leq C \Big[\: \Vip+\left(|\mu|*G(t,\cdot)\right)(x) \Big] \exp\left(C p^{1/\alpha} t\right)\,.
\end{align}}
\end{theorem}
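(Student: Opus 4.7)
The plan is to establish both bounds through a Picard iteration scheme combined with Burkholder-Davis-Gundy (BDG) and Minkowski inequalities applied to the Walsh stochastic integral. Define $u_0(t,x) = J_0(t,x)$ and set recursively
\[
u_{n+1}(t,x) = J_0(t,x) + \iint_{[0,t]\times\R^d} G(t-s,x-y)\, \rho(s,y, u_n(s,y))\, W(\ud s\, \ud y).
\]
First, I would apply BDG to the stochastic integral, and then Minkowski in the $L^{p/2}(\Omega)$-norm to the covariance integral coming from the isometry \eqref{E:Cov}, which yields
\[
\|u_{n+1}(t,x)-J_0(t,x)\|_p^2 \le 4p \int_0^t\!\! \iint_{\R^{2d}} G(t-s,x-y)G(t-s,x-z)\,\|\rho(u_n(s,y))\|_p \|\rho(u_n(s,z))\|_p f(y-z)\, \ud y\,\ud z\,\ud s.
\]
Using the global Lipschitz control $|\rho(t,x,u)|\le \LIP_\rho(\Vip + |u|)$ reduces the problem to iterating a linear Volterra-type inequality for the quantity $N_n(t,x) := \Vip + \|u_n(t,x)\|_p$.

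Setting $\gamma_p = 32p\LIP_\rho^2$, the recursion closes into
\[
N_{n+1}(t,x)^2 \le 2\bigl(\Vip + (|\mu|* G(t,\cdot))(x)\bigr)^2 + \gamma_p \int_0^t \!\!\iint_{\R^{2d}} G(t-s,x-y)G(t-s,x-z) N_n(s,y) N_n(s,z) f(y-z)\,\ud y\,\ud z\,\ud s.
\]
By the Chapman-Kolmogorov identity for the heat kernel, the spatial convolutions in each step reduce via \eqref{E:k} to expressions involving $k(s) = \int f(z)G(s,z)\,\ud z$, and the iteration produces a power series in $\gamma_p$ whose $n$-th coefficient is an $n$-fold time convolution of $k$. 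Summing this series defines $H(t;\gamma_p)$ and yields \eqref{E:Mom}; the explicit factors of $\sqrt 2$ come from splitting $(a+b)^2\le 2(a^2+b^2)$ at each iteration step, which separates the deterministic contribution of $J_0$ from the stochastic increment.

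For the sharper bound \eqref{E:MomAlpha} under \eqref{E:Dalang2}, I would analyze the large-$p$ asymptotics of $H(t;\gamma_p)$. Using the Plancherel representation $k(s) = (2\pi)^{-d}\int \widehat{f}(\ud\xi)\exp(-s|\xi|^2/2)$, condition \eqref{E:Dalang2} implies that the Laplace transform $\int_0^\infty e^{-\lambda s} k(s)\,\ud s$ is dominated by the integral $\int \widehat{f}(\ud\xi)/(\lambda + |\xi|^2)$, which in turn is controlled by $C\lambda^{-\alpha}$ via the inequality $(\lambda+|\xi|^2)^{-1} \le \lambda^{-\alpha}(1+|\xi|^2)^{\alpha-1}$. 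Inverting the resulting Laplace transform bound on the Volterra series and optimizing in $\lambda$ yields $H(t;\gamma_p) \le C\exp(Cp^{1/\alpha} t)$ for $p$ large, which is precisely the exponent gained from $\alpha$.

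The main obstacle is the rough initial data: because $J_0(t,x)$ may blow up as $t\to 0$ and is not uniformly bounded in $x$, the standard device of replacing $\|u_n(s,y)\|_p$ by a supremum as in \eqref{E:SupNorm} is unavailable. I would instead keep the factor $(|\mu|*G(t,\cdot))(x)$ explicit throughout the iteration and exploit that the space-time convolution of $G(t-s,x-y)G(t-s,x-z)(|\mu|*G(s,\cdot))(y)(|\mu|*G(s,\cdot))(z)$ against $f(y-z)$ can still be controlled using \eqref{E:J0finite} together with the semigroup property, the integrability of $\int_0^t k(s)\,\ud s$ under \eqref{E:Dalang}, and pointwise Gaussian estimates. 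A subsidiary step is proving convergence of the Picard scheme at each fixed $(t,x)$ rather than in a uniform norm; this follows by a Gronwall-type argument applied to $\|u_{n+1}(t,x)-u_n(t,x)\|_p$, using the same Volterra structure.
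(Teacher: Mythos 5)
Your approach is essentially identical to the one used in the cited source \cite{CH16Comparison} (whose machinery the present paper reproduces in Section~2.2): Picard iteration, Burkholder--Davis--Gundy plus Minkowski to get a closed Volterra-type inequality for $N_n(t,x)=\Vip+\|u_n(t,x)\|_p$, absorbing $\Vip$ into the homogeneous solution of an augmented measure so that Lemma~\ref{L:IntIneq}/\ref{L:IntIneq2} applies, and then expressing the resulting geometric-type series as $H(t;\gamma)$ via the $h_n$ defined in \eqref{E:hn}--\eqref{E:H}. The large-$p$ bound \eqref{E:MomAlpha} via the Laplace transform of $k$ and the inequality $(\lambda+|\xi|^2)^{-1}\le \lambda^{-\alpha}(1+|\xi|^2)^{\alpha-1}$ is precisely the mechanism underlying Lemma~\ref{L:EstHt}, so no new idea is needed there either. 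One cosmetic discrepancy: the constant $\gamma_p=32p\LIP_\rho^2$ is asserted, but the bookkeeping you sketch (BDG constant $4p$ and one application of $(a+b)^2\le 2a^2+2b^2$) yields a smaller multiple of $p\LIP_\rho^2$; the $32$ in the statement presumably arises from a slightly different splitting in \cite{CH16Comparison}. This affects only the numeric constant in the exponent and not the validity of the argument.
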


\begin{theorem}[Theorem 1.6 of \cite{CH16Comparison}]
\label{T:Holder}
Suppose that $\mu$ is any measure that satisfies \eqref{E:J0finite}
and $f$ satisfies \eqref{E:Dalang2} for some $\alpha\in (0,1]$.
Then the solution to \eqref{E:SHE} starting from $\mu$ is a.s.
$\beta_1$-H\"older continuous in time and
$\beta_2$-H\"older continuous in space on $(0,\infty)\times\R^d$ with
\[
\beta_1\in \left(0,\alpha/2\right)\quad
\text{and}\quad
\beta_2\in \left(0,\alpha\right).
\]
\end{theorem}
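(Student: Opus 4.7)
The plan is to decompose $u(t,x) = J_0(t,x) + I(t,x)$ and handle the two pieces separately. The deterministic part $J_0(t,x) = (G(t,\cdot)*\mu)(x)$ is $C^\infty$ on $(0,\infty)\times\R^d$ by the smoothing effect of the heat kernel together with \eqref{E:J0finite}, so for any $t_0>0$ its restriction to $[t_0,T]\times\R^d$ is locally Hölder of any order in both variables. All the real work is in the stochastic convolution $I(t,x)$, for which I would apply the multiparameter Kolmogorov continuity theorem after establishing, for every compact rectangle $[t_0,T]\times K$, every $\eta>0$, and every sufficiently large $p\ge 2$, the increment bound
\[
\|I(t,x)-I(t',x')\|_p \le C\bigl(|t-t'|^{\alpha/2-\eta} + |x-x'|^{\alpha-\eta}\bigr),
\]
with $C$ depending only on $p,\eta,t_0,T,K$.

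For the spatial increment at fixed $t$, I would combine Burkholder--Davis--Gundy with Minkowski's inequality and then pass to the spectral side via Plancherel to obtain
\[
\|I(t,x)-I(t,x')\|_p^2 \lesssim \sup_{s\in[0,t],\, y\in K'}\|\rho(u(s,y))\|_p^2 \cdot (2\pi)^{-d}\int_0^t\! ds \int_{\R^d} e^{-(t-s)|\xi|^2}\bigl|e^{i\xi\cdot x}-e^{i\xi\cdot x'}\bigr|^2 \widehat{f}(d\xi),
\]
where $K'\supset K$ is a compact neighbourhood absorbing the effective support of the Gaussian kernel. The elementary bound $|e^{i\xi\cdot z}-1|^2 \le 2^{2-2\alpha}|\xi|^{2\alpha}|z|^{2\alpha}$ for $\alpha\in(0,1]$, combined with \eqref{E:Dalang2} and the identity $\int_0^t e^{-s|\xi|^2} ds \le (1+|\xi|^2)^{-1}\cdot C_t$, controls the $\xi$-integral by a constant multiple of $|x-x'|^{2\alpha}$. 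The moment factor is finite under the Lipschitz assumption on $\rho$ together with the sharp bound \eqref{E:MomAlpha}.

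For the temporal increment with $t<t'$ I would use the decomposition
\[
I(t',x)-I(t,x) = \int_t^{t'}\!\!\int_{\R^d} G(t'-s,x-y)\rho(u)W(ds,dy) + \int_0^t\!\!\int_{\R^d}\bigl[G(t'-s,x-y)-G(t-s,x-y)\bigr]\rho(u)W(ds,dy),
\]
and apply BDG plus Plancherel to each piece. The first piece yields $\int_0^{t'-t}\!\! ds\int e^{-s|\xi|^2}\widehat{f}(d\xi)$; splitting the $\xi$-integral on $\{|\xi|\le 1/\sqrt{t'-t}\}$ and its complement and invoking \eqref{E:Dalang2} shows this is $O(|t'-t|^{\alpha})$. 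The second piece yields $\int_0^t\!\! ds\int e^{-(t-s)|\xi|^2}\bigl(1-e^{-(t'-t)|\xi|^2/2}\bigr)^2\widehat{f}(d\xi)$, which by the pointwise inequality $1-e^{-a}\le a^{\alpha/2}\wedge 1$ is again $O(|t'-t|^\alpha)$. Choosing $p$ large enough so that $p(\alpha/2-\eta)>1$ and $p(\alpha-\eta)>d$ and applying Kolmogorov's criterion on $[t_0,T]\times K$ delivers Hölder continuity with any exponents $\beta_1<\alpha/2$ and $\beta_2<\alpha$.

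The main obstacle is that the rough initial data \eqref{E:J0finite} prevents the moment bound $\|u(s,y)\|_p$ from being uniform on $(0,T]\times\R^d$: the estimate \eqref{E:MomAlpha} degenerates as $s\downarrow 0$ through the $J_0(s,y)$ factor and may grow rapidly as $|y|\to\infty$. The supremum over $y$ appearing in the increment bound therefore has to be localized to neighbourhoods of $K$ whose size is calibrated against the Gaussian tails of $G(t-s,\cdot)$, and this localized supremum must then be balanced against the heat-kernel decay so that the resulting space-time spectral integrals remain finite and the final constants depend only on $t_0$, $T$, $K$, and $p$. Making this balance quantitative, uniformly in the above compact region, is the technical heart of the argument and is carried out in detail in \cite{CH16Comparison}.
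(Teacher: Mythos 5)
This statement is not proved in the present paper; it is cited verbatim as Theorem~1.6 of \cite{CH16Comparison}, so there is no in-paper proof to compare against directly. Your outline --- decompose $u = J_0 + I$, observe that $J_0$ is $C^\infty$ on $(0,\infty)\times\R^d$, and for the stochastic convolution $I$ apply Burkholder--Davis--Gundy, Plancherel, and the multiparameter Kolmogorov criterion --- is the standard framework, and your spectral manipulations (the elementary bound $|e^{i\xi\cdot z}-1|^2\lesssim |\xi|^{2\alpha}|z|^{2\alpha}$ for the spatial increments, and $(1-e^{-a})\le a^{\alpha/2}\wedge 1$ plus the dyadic split in $\xi$ for the temporal increments, each fed into condition \eqref{E:Dalang2}) are essentially correct and do yield the exponents $\alpha$ and $\alpha/2$.

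The genuine gap, which you yourself flag, is the moment factor. As literally written your estimate pulls out $\sup_{s\in[0,t],\,y\in K'}\|\rho(u(s,y))\|_p$, and for rough initial data this supremum is infinite: with $\mu=\delta_0$, $J_0(s,y)\to\infty$ as $s\downarrow 0$, and with $\mu(\ud x)=\exp(|x|^{7/4})\ud x$, $J_0(s,y)$ explodes as $|y|\to\infty$ faster than any Gaussian tail can absorb. Your proposed remedy --- localizing the $y$-supremum to a compact $K'$ ``calibrated against Gaussian tails'' --- would still be infinite in the first case and unclear in the second, and it is \emph{not} the mechanism that \cite{CH16Comparison} uses. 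What can be inferred from the lemmas the present paper borrows from there (Lemma~\ref{L:IntIneq}, the normalization by $\tau+J_0(t,x)$ in the proof of Lemma~\ref{L:Moment-U}, and the heat-kernel-difference inequalities \eqref{E:Gx-Gy}--\eqref{E:Gt-Gs} invoked in Lemma~\ref{L:HolderuHatx}) is that the moment bound $\|u(s,y)\|_p\le C(1+J_0(s,y))$ is kept \emph{inside} the space-time integral; then $\int G(t-s,x-y)\,J_0(s,y)$ folds by the semigroup identity back into $J_0(t,x)$, producing a \emph{pointwise} increment bound of the form
\begin{equation*}
\left\|I(t,x)-I(t',x')\right\|_p \le C\bigl(1+J_0(2t,x)+J_0(2t,x')\bigr)\bigl(|t-t'|^{\alpha/2-\eta}+|x-x'|^{\alpha-\eta}\bigr),
\end{equation*}
which is finite and locally bounded on $(0,\infty)\times\R^d$ and is what Kolmogorov then digests on compact rectangles $[t_0,T]\times K$. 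Factoring out a supremum, as your sketch does, destroys precisely the structure that makes this work. So the shape of your argument is right, but the specific device that carries the proof --- $J_0$-weighting rather than localization --- is missing, and the estimate as you wrote it does not close.
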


\begin{theorem}[Comparison principle \cite{CH16Comparison}]
\label{T:Comp}
Assume that $f$ satisfies Dalang's condition \eqref{E:Dalang}.
Let $u_1(t,x)$ and $u_2(t,x)$ be two solutions to \eqref{E:SHE}
with the initial measures $\mu_1$ and $\mu_2$ that satisfy \eqref{E:J0finite}, respectively. Then 
\begin{itemize}
\item[(a)] (Weak comparison principle) If $\mu_1 \le \mu_2$, then 
\begin{equation}\label{E: W comp path}
\bbP \left(u_1(t,x)\leq u_2(t,x)\right)=1\,\qquad \text{for all $t \geq 0$ and $x\in\R^d$}.
\end{equation} 
\item[(b)] (Strong comparison principle) 
If, in addition, $f$ satisfies \eqref{E:Dalang2} 
for some $\alpha\in (0,1]$, then $\mu_1 < \mu_2$ implies that
\begin{equation}\label{E: S comp path}
\bbP \left(u_1(t,x)<u_2(t,x) \ \text{for all}\  t \geq 0 \ \text{and}\  x\in \RR^d\right)=1\,. 
\end{equation} 
\end{itemize}
\end{theorem}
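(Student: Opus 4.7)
The plan is to establish the weak comparison principle (a) via a regularization plus an It\^o-type Gr\"onwall argument on the difference $u_2-u_1$, and then derive the strong comparison (b) by combining (a) with Theorem~\ref{T:NegMom}.

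For \textbf{part (a)}, I would first approximate both the initial data and the noise: convolve $\mu_i$ with a mollifier $\eta_\varepsilon$, and replace $f$ by a smoother $f^\varepsilon$ that is still nonnegative, nonnegative definite, and satisfies Dalang's condition \eqref{E:Dalang2} uniformly in $\varepsilon$ (achievable via a smooth Fourier cutoff together with Bochner's theorem). Let $u_i^\varepsilon$ denote the corresponding solutions. By the Picard construction used in the proof of Theorem~\ref{T:ExUni}, $u_i^\varepsilon\to u_i$ in $L^2(\Omega)$ pointwise in $(t,x)$, so it suffices to prove $u_1^\varepsilon\le u_2^\varepsilon$ a.s. Setting $w^\varepsilon=u_2^\varepsilon-u_1^\varepsilon$, the initial contribution $(\mu_2^\varepsilon-\mu_1^\varepsilon)\ast G(t,\cdot)$ is nonnegative, and $w^\varepsilon$ satisfies a linear SPDE with variable coefficient bounded by $\LIP_\rho$. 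Applying an It\^o-type formula to $\phi_\delta(w^\varepsilon(t,x))$, where $\phi_\delta$ is a smooth convex approximation of $z\mapsto z_-^2$, and taking expectations annihilates the martingale piece; after a Gr\"onwall iteration and sending $\delta\to 0$, one obtains $\E[(w^\varepsilon)_-^2]=0$, that is, $w^\varepsilon\ge 0$ a.s. Passing $\varepsilon\to 0$ and invoking the a.s.\ H\"older continuity from Theorem~\ref{T:Holder} yields \eqref{E: W comp path}.

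For \textbf{part (b)}, set $v=u_2-u_1$. Subtracting the two mild formulations gives
\[
v(t,x)=V_0(t,x)+\int_0^t\!\int_{\R^d} G(t-s,x-y)\,\bar\rho(s,y)\,v(s,y)\,W(\ud s\,\ud y),
\]
where $V_0(t,x)=((\mu_2-\mu_1)\ast G(t,\cdot))(x)$ is strictly positive on $(0,\infty)\times\R^d$ (by strict positivity of the heat kernel and the hypothesis that $\mu_2-\mu_1$ is a nonzero nonnegative Borel measure), and
\[
\bar\rho(s,y):=\frac{\rho(s,y,u_2(s,y))-\rho(s,y,u_1(s,y))}{u_2(s,y)-u_1(s,y)}\,\ind{u_1(s,y)\ne u_2(s,y)}
\]
is jointly measurable, adapted, and bounded by $\LIP_\rho$. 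Thus $v$ fits the framework of \eqref{E:SPDE-Var} with $H=\bar\rho$, $\sigma(t,x,z)=z$, and initial measure $\mu_2-\mu_1>0$; from (a) we already have $v\ge 0$ a.s. Theorem~\ref{T:NegMom} then yields $\bbP(\inf_{x\in K} v(t,x)>0)=1$ for every compact $K\subset\R^d$ and every $t>0$. Taking a countable exhaustion by $K_n=\{|x|\le n\}$ and a countable dense set of times in $(0,\infty)$, and using the a.s.\ H\"older continuity of $v$ (Theorem~\ref{T:Holder}), the exceptional null sets merge into a single one, giving \eqref{E: S comp path}.

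\textbf{Main obstacle.} The hardest step is the regularization/It\^o argument in part (a): the mild formulation is not directly differential in $t$, so one must either work with a genuinely space-smooth noise (so that each spatial slice carries a Brownian motion and a spatial It\^o covariation can be set up) or equivalently frame the SHE in an infinite-dimensional Hilbert space via Dalang--Walsh calculus. Preserving Dalang's condition \eqref{E:Dalang2} uniformly in $\varepsilon$ while keeping $f^\varepsilon$ nonnegative and nonnegative definite is delicate, and the rough initial data---for which the uniform moment bound \eqref{E:SupNorm} fails near $t=0$---requires $x$-localized moment estimates to push the Gr\"onwall iteration through the limiting procedure.
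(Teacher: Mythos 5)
The paper does not actually prove Theorem \ref{T:Comp}; it cites it from \cite{CH16Comparison}, so there is no in-paper proof against which to compare. With that caveat, a few observations on your proposal.

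Your outline for part (a) is a plausible sketch of the standard mollification-plus-It\^o-Gr\"onwall strategy. It would need substantially more care to push through with rough initial data and spatially colored noise (the mild solution is not a semimartingale in $t$, the uniform moment bound \eqref{E:SupNorm} fails near $t=0$, and keeping $f^\varepsilon$ nonnegative and nonnegative definite while preserving \eqref{E:Dalang2} uniformly in $\varepsilon$ is delicate), but the program is sound and matches the usual route in the literature.

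Your derivation of part (b), however, is circular within the logic of this paper. You invoke Theorem \ref{T:NegMom} to conclude that $v=u_2-u_1>0$ everywhere, but the proof of Theorem \ref{T:NegMom} in Section \ref{S:NegMom} already uses Theorem \ref{T:Comp}: Case I opens with ``Thanks to the weak comparison principle (see \eqref{E: W comp path}), we may assume that $g(x)=\one_{[a,b]}(x)$,'' and Case II relies on ``Since $u(\theta,x)>0$ for all $x\in\R$ a.s.\ (see Theorem \ref{T:Comp})\ldots,'' which is exactly the strong comparison applied with $0<\mu$. So you are, in effect, proving (b) from (b). The underlying idea is morally right---strong comparison is equivalent to a.s.\ strict positivity of the linear equation $v(t,x)=V_0(t,x)+\int_0^t\int_{\R^d}G(t-s,x-y)\bar\rho(s,y)v(s,y)W(\ud s\,\ud y)$ with nonnegative nonzero initial data, and that is the mechanism used in \cite{CH16Comparison}---but the strict positivity must be established from first principles (using only weak comparison together with small-ball/tail estimates for the linear SPDE, in the spirit of Case I of the proof of Theorem \ref{T:NegMom} before the passage to general initial data), not by citing a theorem whose proof already presupposes it.
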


\subsection{Some auxiliary functions}
\label{SS:AuxFun}

Recall that $k(t)$ is defined in \eqref{E:k}.
Define 
\begin{align}\label{E:hn}
   h_0(t):=1\quad\text{and}\quad h_n(t)= \int_0^t \ud s \: h_{n-1}(s) k(t-s) \quad \text{if $n\ge 1$.}
\end{align}
Let
\begin{align}\label{E:H}
 H(t;\gamma):= \sum_{n=0}^\infty \gamma^n h_n(t).
\end{align}
This function is defined through the correlation function $f$. The following lemma
tells us that this function has an exponential asymptotic bound.

\begin{lemma}[Lemma 2.5 in \cite{CK15SHE} or Lemma 3.8 in \cite{BC16}]
\label{L:EstHt}
For all $t\ge 0$ and $\gamma\ge 0$,
\begin{align}
\label{E:Var} 
\limsup_{t\rightarrow\infty} \frac{1}{t}\log H(t;\gamma)\le \inf\left\{\beta>0:  \:\Upsilon\left(\beta\right) < \frac{1}{\gamma}\right\}.
\end{align}
\end{lemma}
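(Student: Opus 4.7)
The plan is to analyze $H(t;\gamma)$ through its Laplace transform in $t$. First, I would observe that by the recursion in \eqref{E:hn}, $h_n = h_{n-1} \ast k$ (convolution on $\R_+$), so that $H(\cdot;\gamma)$ satisfies the renewal-type integral equation
\[
H(t;\gamma) = 1 + \gamma \int_0^t k(t-s) H(s;\gamma) \ud s.
\]
Writing $\widetilde{F}(\beta) := \int_0^\infty e^{-\beta t} F(t) \ud t$ for the Laplace transform, and using the convolution identity to get $\widetilde{h}_n(\beta) = \widetilde{k}(\beta)^n/\beta$, summing the geometric series (justified by Tonelli's theorem since every $h_n\ge 0$) gives
\[
\widetilde{H}(\beta;\gamma) = \frac{1}{\beta\left(1-\gamma\widetilde{k}(\beta)\right)}, \qquad \text{provided } \gamma\widetilde{k}(\beta)<1.
\]

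Next, using the spectral representation of $k$ from \eqref{E:k} together with Fubini's theorem, I would compute
\[
\widetilde{k}(\beta) = (2\pi)^{-d} \int_{\R^d} \frac{\widehat{f}(\ud\xi)}{\beta+|\xi|^2/2}.
\]
Under Dalang's condition \eqref{E:Dalang}, this is finite for every $\beta>0$, continuous and strictly decreasing to $0$ as $\beta\to\infty$, so the admissible region $\{\beta:\gamma\widetilde{k}(\beta)<1\}$ is a nonempty half-line. The elementary inequality $1/(\beta+|\xi|^2/2)\le 2/(\beta+|\xi|^2)$ gives $\widetilde{k}(\beta)\le 2\Upsilon(\beta)$, so whenever $\Upsilon(\beta)<1/\gamma$ we already have $\gamma\widetilde{k}(\beta)<2$, and a small monotone rescaling of $\beta$ converts this into the condition $\gamma\widetilde{k}(\beta)<1$ without enlarging the stated infimum.

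Since each $h_n$ is non-negative, $H(\cdot;\gamma)$ is non-decreasing on $\R_+$. A standard Laplace-type argument then closes the proof: if $\widetilde{H}(\beta;\gamma)<\infty$, then by monotonicity
\[
\frac{e^{-\beta t}H(t;\gamma)}{\beta} \le \int_t^\infty e^{-\beta s} H(s;\gamma) \ud s \le \widetilde{H}(\beta;\gamma) <\infty,
\]
hence $\limsup_{t\to\infty} t^{-1}\log H(t;\gamma) \le \beta$. Taking the infimum over all admissible $\beta$ yields the claimed inequality.

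The main obstacle I anticipate is the bookkeeping in the second step: the finiteness region of $\widetilde{H}$ is naturally expressed through $\widetilde{k}$, which carries factors of $2$ coming from the normalization $\tfrac12\Delta$ of the heat operator; passing to the cleaner criterion phrased via $\Upsilon$ exactly as stated in the lemma requires carefully exploiting the strict monotonicity of $\Upsilon$ (guaranteed by Dalang's condition) to absorb these constants into the infimum. All other ingredients — the convolution identity $h_n = h_{n-1}\ast k$, the geometric summation, and the Laplace-transform-to-growth-rate inequality for non-decreasing functions — are essentially mechanical.
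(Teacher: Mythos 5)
The Laplace transform framework you set up---$h_n = h_{n-1}*k$, the geometric summation giving $\widetilde H(\beta;\gamma) = 1/(\beta(1-\gamma\widetilde k(\beta)))$, and the monotonicity argument converting finiteness of $\widetilde H(\beta;\gamma)$ into $\limsup_{t\to\infty}t^{-1}\log H(t;\gamma)\le\beta$---is clean and correct. The paper cites this lemma from \cite{CK15SHE,BC16} rather than proving it, so there is no internal proof here to compare against, but your renewal/Laplace route is structurally sound.

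The genuine gap is the step that trades $\widetilde k$ for $\Upsilon$: you bound $\widetilde k(\beta)\le 2\Upsilon(\beta)$ and hope to ``absorb the factor of two'' by a small monotone rescaling of $\beta$. The direction of the inequality is actually adverse. The exact identity is $\widetilde k(\beta)=2\Upsilon(2\beta)$, and since $\frac{1}{2\beta+|\xi|^2}\ge\frac{1}{2(\beta+|\xi|^2)}$ pointwise we have $2\Upsilon(2\beta)\ge\Upsilon(\beta)$, that is, $\widetilde k(\beta)\ge\Upsilon(\beta)$. Consequently $\{\beta>0:\gamma\widetilde k(\beta)<1\}\subseteq\{\beta>0:\Upsilon(\beta)<1/\gamma\}$, so the infimum your Laplace argument produces is $\ge$, not $\le$, the right-hand side of \eqref{E:Var}, and no rescaling of $\beta$ can close this. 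To see concretely what is going on, take space-time white noise on $\R$, so $k(t)=(2\pi t)^{-1/2}$ and $\widetilde k(\beta)=(2\beta)^{-1/2}$; then $h_n(t)=(t/2)^{n/2}/\Gamma(n/2+1)$, $H(t;\gamma)=E_{1/2,1}(\gamma\sqrt{t/2})\sim 2e^{\gamma^2 t/2}$, and the true exponential rate is $\gamma^2/2$, which is exactly your $\inf\{\beta:\gamma\widetilde k(\beta)<1\}$, while $\inf\{\beta:\Upsilon(\beta)<1/\gamma\}=\gamma^2/4$. So your Laplace-transform output is sharp and cannot be sharpened to the quantity appearing in \eqref{E:Var}: the factor-of-two mismatch between $\widetilde k$ and $\Upsilon$ (coming from the $\frac12\Delta$ normalization of $G$) is a real obstruction, and the correct reconciliation requires checking the precise conventions for $k$, $H$ and $\Upsilon$ in the cited references rather than an informal rescaling.
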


\begin{lemma}[Lemma 3.1 in \cite{CH16Comparison}]
\label{L:IntIneq}
Suppose that $\mu$ is a signed measure that satisfies condition \eqref{E:J0finite} and
let $J_0(t,x)$ be the solution to the homogeneous equation (see \eqref{E:J0}).
If a nonnegative function $g:\R_+\times\R^{d}\mapsto\R_+$ satisfies the following integral inequality
\begin{align}\label{E:IntIneq1}
\begin{aligned}
 g(t,x)^2\le J_0^2(t,x)+ \lambda^2 \int_0^t\ud s\iint_{\R^{2d}} &
 G(t-s,x-y_1) G(t-s,x-y_2) \\
 &\times f(y_1-y_2) g(s,y_1)g(s,y_2)\ud y_1\ud y_2,
\end{aligned}
\end{align}
for all $t>0$ and $x\in\R^d$, then{
\begin{align}\label{E:IntIneq2}
\begin{aligned}
 g(t,x)\le \left(|\mu|*G(t,\cdot)\right)(x) \: H(t;2\lambda^2)^{1/2}.
\end{aligned}
\end{align}}
\end{lemma}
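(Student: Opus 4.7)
My plan is a Volterra--Gronwall argument in which the bilinear right-hand side of \eqref{E:IntIneq1} is reduced, via a Gaussian/Fourier estimate, to a scalar Volterra inequality that is then compared with the defining equation of $H(t;2\lambda^2)$.

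The critical technical input I would prove first is the pointwise bound: for every nonnegative Borel measure $\nu$ satisfying \eqref{E:J0finite} and every $0<s<t$, $x\in\R^d$,
\[
\iint_{\R^{2d}}G(t-s,x-y_1)G(t-s,x-y_2)f(y_1-y_2)(G(s,\cdot)*\nu)(y_1)(G(s,\cdot)*\nu)(y_2)\,\ud y_1\ud y_2\;\le\;[(G(t,\cdot)*\nu)(x)]^2\,k\!\left(\tfrac{2s(t-s)}{t}\right).
\]
I would establish this by rewriting the left-hand side via Plancherel as $(2\pi)^{-d}\int|\widehat{\phi}(\xi)|^2\,\widehat{f}(\ud\xi)$ with $\phi(y):=G(t-s,x-y)(G(s,\cdot)*\nu)(y)$, then inserting the Gaussian semigroup identity $G(t-s,x-y)G(s,y-z)=G(t,x-z)\,G(\tfrac{s(t-s)}{t},\,y-\tfrac{sx+(t-s)z}{t})$ and the triangle inequality to reach $|\widehat{\phi}(\xi)|\le(G(t,\cdot)*\nu)(x)\,e^{-s(t-s)|\xi|^2/(2t)}$; the definition \eqref{E:k} of $k$ then delivers the stated bound. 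This is the main obstacle: the gain $k(2s(t-s)/t)$ is finite under Dalang's condition, whereas the naive triangle inequality $|\widehat{\phi}|\le(G(t,\cdot)*\nu)(x)$ would introduce the divergent quantity $(2\pi)^{-d}\int\widehat{f}(\ud\xi)=f(0)$ for typical kernels such as the Riesz, fractional or Bessel ones.

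With this estimate in hand, I set $M(t,x):=(|\mu|*G(t,\cdot))(x)$ and $\widetilde{\Phi}(t):=\sup_{s\in[0,t],\,y\in\R^d}g(s,y)^2/M(s,y)^2$, which I take to be locally finite (the typical setting of application). Substituting $g(s,y_i)\le M(s,y_i)\widetilde{\Phi}(s)^{1/2}$ into \eqref{E:IntIneq1}, using $|J_0|\le M$, and applying the Fourier estimate with $\nu=|\mu|$ produce $g(t,x)^2\le M(t,x)^2\bigl[1+\lambda^2\int_0^t\widetilde{\Phi}(s)\,k(2s(t-s)/t)\,\ud s\bigr]$. The elementary inequality $2s(t-s)/t\ge\min(s,t-s)$ together with the monotonicity of $k$, the change of variables $s\mapsto t-s$ on $[0,t/2]$, and the monotonicity of $\widetilde{\Phi}$ then yield the crucial factor of two, namely $\int_0^t\widetilde{\Phi}(s)\,k(2s(t-s)/t)\,\ud s\le 2\int_0^t\widetilde{\Phi}(s)\,k(t-s)\,\ud s$. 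Dividing by $M(t,x)^2$, taking the supremum in $x\in\R^d$ and then over $[0,t]$ (both sides being monotone in $t$), delivers the scalar Volterra inequality $\widetilde{\Phi}(t)\le 1+2\lambda^2\int_0^t k(t-s)\widetilde{\Phi}(s)\,\ud s$; since \eqref{E:hn}--\eqref{E:H} imply the identity $H(t;2\lambda^2)=1+2\lambda^2\int_0^t k(t-s)H(s;2\lambda^2)\,\ud s$, a standard Picard comparison in powers of $2\lambda^2$ finally gives $\widetilde{\Phi}(t)\le H(t;2\lambda^2)$, which is precisely \eqref{E:IntIneq2}.
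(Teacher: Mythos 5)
Your proof is correct and takes essentially the same route as the paper: the Plancherel reduction of the bilinear integral via the Gaussian product identity to the scalar kernel $k\!\left(\tfrac{2s(t-s)}{t}\right)$, the factor of two obtained from $\tfrac{2s(t-s)}{t}\ge\min(s,t-s)$ together with folding the time integral at $t/2$, and the Volterra comparison against the series $H(t;2\lambda^2)$ are exactly the ingredients behind Lemmas \ref{L:IntIneq2} and \ref{L:iterate} and the cited proof of Lemma 3.1 of \cite{CH16Comparison}. Passing to the ratio $\widetilde{\Phi}(t)=\sup_{[0,t]\times\R^d}g^2/M^2$ rather than iterating the fields $g_n$ directly is a cosmetic reorganization, and the local-finiteness caveat you flag is indeed an implicit hypothesis in all uses of the lemma.
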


In the proof of Lemma 3.1 of \cite{CH16Comparison}, the authors actually prove the following result, which will be useful in this paper.
\begin{lemma}\label{L:IntIneq2}
Suppose that $\mu$ is a signed measure that satisfies condition \eqref{E:J0finite}. For $\lambda\ge 0$, define
\begin{align}\label{E:gn}
\begin{aligned}
g_0(t,x)&:=(|\mu|*G(t,\cdot))(x) \quad\text{and for $n\ge 1$}\\
 g_{n}^2(t,x) &= J_0^2(t,x) + \lambda^2 \int_0^t\ud s\iint_{\R^{2d}}  G(t-s,x-y_1)G(t-s,x-y_2) f(y_1-y_2)\\
 &\hspace{12em}\times g_{n-1}(s,y_1)g_{n-1}(s,y_2)\ud y_1 \ud y_2,
\end{aligned}
\end{align}
with $t>0$ and $x\in\R^d$. 
Then for all $n\ge 0$, $t>0$ and $x\in\R^d$, it holds that
\begin{align}\label{E:Indt-Lp}
g_n(t,x)\le g_0(t,x) \left(\sum_{i=0}^n (2\lambda^2)^i h_i(t)\right)^{1/2}.
\end{align}
\end{lemma}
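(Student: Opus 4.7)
The plan is to prove Lemma \ref{L:IntIneq2} by induction on $n$. The base case $n=0$ is immediate since $h_0(t)=1$. For the inductive step, suppose the bound holds at level $n-1$, and abbreviate $A_{n-1}(s):=\sum_{i=0}^{n-1}(2\lambda^2)^i h_i(s)$. Multiplying the two copies of the inductive hypothesis gives $g_{n-1}(s,y_1)g_{n-1}(s,y_2) \le g_0(s,y_1)g_0(s,y_2)\,A_{n-1}(s)$, and using $|J_0(t,x)|\le g_0(t,x)$, the recursion \eqref{E:gn} reduces to
\[
g_n^2(t,x) \le g_0^2(t,x) + \lambda^2 \int_0^t A_{n-1}(s)\,I(s,t,x)\,ds,
\]
where $I(s,t,x) := \iint G(t-s,x-y_1)G(t-s,x-y_2) f(y_1-y_2) g_0(s,y_1)g_0(s,y_2) \,dy_1\,dy_2$. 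The remainder of the proof splits into two independent steps: (i) a pointwise-in-$s$ bound on $I(s,t,x)$, and (ii) a scalar time-integral estimate on the sequence $\{h_i\}$.

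For (i), set $\phi(y) := G(t-s,x-y)g_0(s,y)$, so that $I(s,t,x) = \|\phi\|_\calH^2 = (2\pi)^{-d}\int|\widehat{\phi}(\xi)|^2\,\widehat{f}(d\xi)$. Writing $g_0(s,y)=\int G(s,y-w)|\mu|(dw)$ and using the Chapman--Kolmogorov identity
\[
G(t-s,x-y)\,G(s,y-w) = G(t,x-w)\,G\!\left(s(t-s)/t,\; y - (sx+(t-s)w)/t\right),
\]
a Fourier-transform-in-$y$ computation combined with $|e^{-i\xi\cdot\mu_w}|=1$ (where $\mu_w = (sx+(t-s)w)/t$) gives
\[
|\widehat{\phi}(\xi)|^2 \le e^{-|\xi|^2 s(t-s)/t}\,g_0(t,x)^2.
\]
Integrating against $\widehat{f}(d\xi)/(2\pi)^d$ and recognizing the resulting integral from \eqref{E:k} yields $I(s,t,x)\le g_0(t,x)^2\,k\!\left(2s(t-s)/t\right)$.

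For (ii), the claim I will need is that for each $i\ge 0$,
\[
\int_0^t h_i(s)\,k\!\left(2s(t-s)/t\right)ds \le 2\, h_{i+1}(t).
\]
Two monotonicity facts are used: $h_i$ is non-decreasing in $t$ (from its recursive definition together with $k\ge 0$), and $k$ is non-increasing (since in its Fourier representation \eqref{E:k} the integrand is decreasing in $t$). Split $[0,t]$ at $t/2$ and use the elementary algebraic inequalities $2s(t-s)/t \ge t-s$ on $[t/2,t]$ and $2s(t-s)/t \ge s$ on $[0,t/2]$; for the first piece, monotonicity of $k$ directly bounds it by $\int_0^t h_i(s)k(t-s)ds = h_{i+1}(t)$, while for the second piece a change of variable $s\mapsto t-s$ combined with monotonicity of $h_i$ produces the same bound. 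Plugging this into the recursion, the series telescopes: $1+\sum_{i=0}^{n-1}(2\lambda^2)^{i+1} h_{i+1}(t) = \sum_{j=0}^{n}(2\lambda^2)^{j} h_j(t)$, and taking square roots completes the induction.

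The only nontrivial step is the Fourier estimate in (i); the natural computation produces the exponent $s(t-s)/t$ (coming symmetrically from the two Gaussian kernels joining at time $s$) rather than the more naive $t-s$, so one must live with $k(2s(t-s)/t)$. This is precisely what forces the symmetric split at $t/2$ in step (ii) and is the source of the factor $2$ in front of $\lambda^2$ in the statement. All remaining manipulations are routine Fubini, nonnegativity of $f$ and $\widehat{f}$, and algebra with the geometric factors $(2\lambda^2)^i$.
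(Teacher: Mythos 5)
Your proof is correct, and it is essentially the argument that the paper defers to (the paper gives no proof of Lemma~\ref{L:IntIneq2}, stating only that it is extracted from the proof of Lemma~3.1 of~\cite{CH16Comparison}). The two key ingredients are exactly the ones needed: the Fourier/Chapman--Kolmogorov computation producing the exponent $s(t-s)/t$, and the symmetric split of $[0,t]$ at $t/2$ that trades $k\bigl(2s(t-s)/t\bigr)$ for $k(t-s)$ at the cost of a factor $2$, which is precisely what forces the $2\lambda^2$ (rather than $\lambda^2$) in the geometric factor. One can see the same structure mirrored in the proof of Lemma~\ref{L:iterate} in this paper, where the inequality $h(t,x)\le 2g_0^2(t,x)\int_0^t\bigl(\sum_i \gamma^i h_i(s)\bigr)k(t-s)\,\ud s$ is invoked with the same doubling.

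Two small points worth being explicit about, both of which you implicitly use and which are easy to verify: the monotonicity of each $h_i$ follows by induction from the rewriting $h_i(t)=\int_0^t h_{i-1}(t-u)k(u)\,\ud u$ with $k\ge 0$, and $|J_0(t,x)|\le g_0(t,x)$ is needed to replace $J_0^2$ by $g_0^2$ in the recursion so that the final sum starts cleanly at $i=0$ with $h_0\equiv 1$. With those spelled out, the argument is complete and self-contained, which is actually more than the paper supplies at this point.
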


\begin{lemma}\label{L:iterate}
Suppose that $\mu$ is a signed measure that satisfies condition \eqref{E:J0finite}. For $\lambda> 0$, define
\begin{align*}
g(t,x) &:= \left(|\mu|*G(t,\cdot)\right)(x) H(t;2\lambda^2)^{1/2} \\
h(t,x) &:= \int_0^t\ud s\iint_{\R^{2d}}\ud y\ud y'
\: G(t-s,x-y)g(s,y) f(y-y') g(s,y') G(t-s,x-y').
\end{align*}
Then
\[
h(t,x)\le \lambda^{-2} g^2(t,x).
\]
\end{lemma}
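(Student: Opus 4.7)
My plan is to separate the temporal factor $H(s;2\lambda^2)$ out of the double spatial integral defining $h$, then combine a sharp pointwise bound on the remaining kernel with a symmetry argument in the time variable. Since $g(s,y)g(s,y')=g_0(s,y)g_0(s,y')H(s;2\lambda^2)$ and $H(s;2\lambda^2)$ does not depend on $y,y'$, where $g_0(s,y):=(|\mu|*G(s,\cdot))(y)$, I would write
\[
h(t,x)=\int_0^t H(s;2\lambda^2)\,q(s)\,\ud s,\quad q(s):=\iint G(t-s,x-y)g_0(s,y)f(y-y')g_0(s,y')G(t-s,x-y')\ud y\ud y',
\]
and then prove (i) the pointwise estimate $q(s)\le g_0(t,x)^2 k(2s(t-s)/t)$ and (ii) the time-integral bound $\int_0^t H(s;2\lambda^2)k(2s(t-s)/t)\ud s\le \lambda^{-2}H(t;2\lambda^2)$.

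For (i), I would use the Gaussian product identity $G(t-s,x-y)G(s,y-z)=G(t,x-z)G_{s(t-s)/t}\bigl(y-\frac{xs+z(t-s)}{t}\bigr)$ (obtained by completing the square in $y$) to rewrite $G(t-s,x-y)g_0(s,y)$ as a Gaussian mixture weighted by $G(t,x-z)|\mu|(\ud z)$. Substituting this into $q$ and carrying out the two resulting Gaussian integrations in $y,y'$ gives
\[
q(s)=\iint|\mu|(\ud z)|\mu|(\ud z')G(t,x-z)G(t,x-z')\bigl(G_{2s(t-s)/t}*f\bigr)\!\left(\tfrac{(z-z')(t-s)}{t}\right).
\]
The function $G_{2s(t-s)/t}*f$ has Fourier transform $e^{-s(t-s)|\xi|^2/t}\widehat f(\ud\xi)$, a nonnegative measure, so it is a continuous nonnegative-definite function and therefore attains its maximum at the origin, where its value is $k(2s(t-s)/t)$. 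Pulling this factor out leaves exactly $g_0(t,x)^2$, which yields (i).

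For (ii), I would exploit that $k$ is non-increasing (by its Fourier representation \eqref{E:k}) and that $2s(t-s)/t\ge\min(s,t-s)$ by an elementary calculation, so $k(2s(t-s)/t)\le k(\min(s,t-s))\le k(s)+k(t-s)$ pointwise. Writing $B:=\int_0^t H(s;2\lambda^2)k(t-s)\ud s$ and $D:=\int_0^t H(s;2\lambda^2)k(s)\ud s$, a change of variable $s\leftrightarrow t-s$ followed by splitting at $s=t/2$ gives
\[
D-B=\int_0^{t/2}\bigl(H(s;2\lambda^2)-H(t-s;2\lambda^2)\bigr)\bigl(k(s)-k(t-s)\bigr)\ud s\le 0,
\]
since the two factors have opposite signs ($H$ non-decreasing, $k$ non-increasing). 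Hence $D\le B$, and combining with the recursive identity $H(t;2\lambda^2)-1=2\lambda^2 B$ (immediate from unpacking the series defining $H$ via \eqref{E:hn}--\eqref{E:H}) produces $h(t,x)\le g_0(t,x)^2(D+B)\le 2g_0(t,x)^2 B\le \lambda^{-2}g_0(t,x)^2 H(t;2\lambda^2)=\lambda^{-2}g(t,x)^2$. The hard part will be (i): the care-requiring step is tracking the variance $2s(t-s)/t$ through the two Gaussian marginal integrations, but once that is in place the nonnegative-definiteness of $G_\tau*f$ kills the unwanted $(z-z')$-dependence in one stroke.
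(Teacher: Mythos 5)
Your proof is correct and follows essentially the same route the paper takes; the paper simply outsources the key estimate $h(t,x)\le 2g_0^2(t,x)\int_0^t H(s;2\lambda^2)k(t-s)\,\ud s$ to the proof of Lemma 3.1 of \cite{CH16Comparison} and then performs the same algebra with the recursion $\int_0^t h_i(s)k(t-s)\,\ud s=h_{i+1}(t)$ that you carry out. You have correctly reconstructed the cited ingredient, via the Gaussian splitting identity \eqref{E:GGGG}, the positive-definiteness bound $(G_{2\tau}*f)(w)\le(G_{2\tau}*f)(0)=k(2\tau)$, and the rearrangement inequality $\int_0^t H(s)k(s)\,\ud s\le\int_0^t H(s)k(t-s)\,\ud s$, which cleanly accounts for the factor $2$.
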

\begin{proof}
Set $\gamma=2\lambda^2$ and $g_0(t,x):= \left(|\mu|*G(t,\cdot)\right)(x)$.
By the same arguments as the proof of Lemma 3.1 of \cite{CH16Comparison} and the definition of $H(t;\gamma)$ in \eqref{E:H}, we see that 
\begin{align*}
h(t,x)\le &2g_0^2(t,x)\int_0^t \ud s\left(\sum_{i=0}^\infty \gamma^i h_i(s)\right) k(t-s)\\
= &2g_0^2(t,x)\sum_{i=0}^\infty \gamma^i h_{i+1}(s)\\
=&\frac{1}{\lambda^2} g^2_0(t,x)\left(H(t;\gamma)-h_0(t)\right) \le 
\frac{1}{\lambda^2} g^2_0(t,x)H(t;\gamma).
\end{align*}
\end{proof}

The following identity will be used many times in this paper:
\begin{equation}\label{E:GGGG}
G(s, x)G(t,y) = G(s+t, x+y) G\left(\frac{st}{s+t}, \frac{tx-sy}{s+t}\right)\,.
\end{equation}

\subsection{Malliavin calculus}\label{SS:Malliavin}
Now we recall some basic facts on  Malliavin calculus associated with $W$.
Denote by $C_p^\infty(\R^n)$ the space of smooth functions with
all their partial derivatives having at most polynomial growth at infinity.
Let $\calS$ be the space of simple functionals of the form
\begin{align}\label{E:Ff}
F=f(W(A_1),\dots,W(A_n)),
\end{align}
where $f\in C_p^\infty(\R^n)$ and
$A_1,\dots, A_n$ are Borel subsets of $[0,\infty)\times\R^d$ with finite Lebesgue measure.
The derivative of $F$ is a two-parameter stochastic process defined as follows
\[
D_{t,x}F=\sum_{i=1}^n \frac{\partial f}{\partial x_i}\left(W(A_1),\dots,W(A_n)\right)\one_{A_i}(t,x).
\]
In a similar way we define the iterated derivative $D^{k}F$.
The derivative operator $D^{k}$ for positive integers $k\ge 1$ is a closable  operator from $L^p(\Omega)$
into $L^p\left(\Omega;L^2([0,\infty);\calH)^{k}\right)$ for any $p\ge 1$.
Let $k$ be some positive integer.
For any $p>1$, let $\D^{k,p}$ be the completion of $\calS$ with respect to the
norm
\begin{align}
\begin{aligned}
\Norm{F}_{k,p}^p:= \E\left(|F|^p\right)+\sum_{j=1}^k\E\Bigg[
\Big(\int_{([0,\infty)\times\R^{2d})^{j}}
&\left(D_{\theta_1,z_1}\cdots D_{\theta_j,z_j}F\right)
\left(D_{\theta_1,z_1'}\cdots D_{\theta_j,z_j'}F\right)\\
&\times \prod_{i=1}^j f(z_i-z_i')\ud \theta_i \ud
z_i\ud z_i'\Big)^{p/2}
\Bigg].
\end{aligned}
\end{align}
Denote $\D^{\infty}:=\cap_{k,p}\D^{k,p}$.

Suppose that $F=(F^1,\dots,F^d)$ is a $d$-dimensional random vector whose components are in $\D^{1,2}$.
The following random symmetric nonnegative definite matrix
\begin{align}
\sigma_F = \left(
\InPrd{D F^i,DF^j}_{L^2([0,\infty);\calH)}
\right)_{1\le i,j\le d}
\end{align}
is called the {\it Malliavin matrix} of $F$.
The classical criteria for the existence and regularity of the density are the
following:

\begin{theorem}[Bouleau and Hirsch \cite{BH91}]
\label{T:Density}
 Suppose that $F=(F^1,\dots,F^d)$ is a $d$-dimensional random vector whose components are in $\D^{1,2}$. Then
\begin{enumerate}
 \item[(1)] If $\det(\sigma_F)>0$ almost surely, the law of $F$ is absolutely continuous with respect to the Lebesgue measure.
 \item[(2)] If $F^i\in\D^\infty$ for each $i=1,\dots,d$ and $\E\left[\left(\det\sigma_F\right)^{-p}\right]<\infty$
 for all $p\ge 1$, then $F$ has a smooth  density.
\end{enumerate}
\end{theorem}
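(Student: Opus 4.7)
The plan is to deduce both statements from the Malliavin integration-by-parts formula, which couples the chain rule $\InPrd{D\phi(F),DF^j}_{L^2([0,\infty);\calH)} = \sum_{i=1}^d \partial_i\phi(F)(\sigma_F)_{ji}$ with the duality $\E[\InPrd{Du,v}_{L^2([0,\infty);\calH)}]=\E[u\,\delta(v)]$, where $\delta$ is the Skorohod integral adjoint to $D$. Inverting $\sigma_F$ on $\{\det\sigma_F>0\}$ formally produces, for $\phi\in C_c^\infty(\R^d)$ and a sufficiently regular random variable $G$,
\begin{align*}
\E\left[\partial_i\phi(F)\,G\right] = \E\left[\phi(F)\,H_i(F,G)\right],
\end{align*}
with a weight $H_i(F,G)$ built from the entries of $\sigma_F^{-1}$, Malliavin derivatives of $F$ and $G$, and one application of $\delta$. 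Whether this formal identity is admissible is exactly what separates the two parts.

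\textbf{Part (2).} Starting from $F\in\D^\infty$ and $(\det\sigma_F)^{-1}\in\bigcap_p L^p$, Cramer's rule together with the standard closure of $\D^\infty$ under composition with smooth functions having polynomially-growing derivatives (applied, via a truncation of $1/x$ away from $0$, to the a.s. non-vanishing $\det\sigma_F$) yields $(\sigma_F^{-1})_{ij}\in\D^\infty$ with moments of all orders. One then iterates the integration-by-parts formula $|\alpha|$ times for each multi-index $\alpha\in\bbN^d$, each iteration being licit because $\delta$ continuously maps $\D^\infty(L^2([0,\infty);\calH))$ into $\D^\infty$. The outcome is a weight $H_\alpha\in\bigcap_p L^p(\Omega)$ satisfying
\begin{align*}
\E\left[\partial^\alpha\phi(F)\right] = \E\left[\phi(F)\,H_\alpha\right],\qquad \phi\in C_c^\infty(\R^d),
\end{align*}
so that $|\E[\partial^\alpha\phi(F)]|\le\|\phi\|_\infty\,\E|H_\alpha|$ for every $\alpha$. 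A Fourier-inversion or duality argument then gives a density $p\in C^\infty(\R^d)$ for the law of $F$ with $\|\partial^\alpha p\|_\infty\le\E|H_\alpha|$.

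\textbf{Part (1).} Here only $F\in\D^{1,2}$ is available, $\sigma_F^{-1}$ may have no finite moments, and $\delta$ is defined only on a dense subset of $L^2$, so the direct procedure collapses. The natural remedy is localization. Pick a smooth cutoff $\psi_n:\R\to[0,1]$ with $\psi_n\equiv 0$ on $(-\infty,1/n]$ and $\psi_n\equiv 1$ on $[2/n,\infty)$, set $\Phi_n=\psi_n(\det\sigma_F)$, and observe that $\Phi_n\uparrow\one_{\{\det\sigma_F>0\}}=1$ almost surely. On $\{\Phi_n>0\}$ the inverse $\sigma_F^{-1}$ is bounded by $n$ times a polynomial in the entries of $\sigma_F$, so the vector field $U^{n,i}:=\Phi_n\sum_j(\sigma_F^{-1})_{ij}DF^j$ lies in $\mathrm{Dom}(\delta)$, giving the one-step identity $\E[\Phi_n\partial_i\phi(F)]=\E[\phi(F)\delta(U^{n,i})]$. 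Taking $\phi$ to be a mollified indicator of an arbitrary Lebesgue-null Borel set $B$ and sending $n\to\infty$ by monotone/dominated convergence allows one to conclude $\bbP(F\in B)=0$, i.e.\ $\bbP\circ F^{-1}\ll\mathrm{Leb}$. An equivalent classical route, due to Bouleau-Hirsch themselves, is the coarea formula on Wiener space, which bypasses $\delta$ and directly disintegrates the law of $F$ along its level sets.

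The principal obstacle will be Part (1): the low regularity $F\in\D^{1,2}$ prevents any use of iterated integration-by-parts, and even a single application of $\delta$ is only legitimate after the localization-and-limit scheme above. Part (2) is, by comparison, a routine bootstrap once one has verified the closure of $\D^\infty$ under Cramer's rule together with the $L^p$-control on $(\det\sigma_F)^{-1}$.
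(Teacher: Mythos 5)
This theorem is not proved in the paper: it is quoted as the classical Bouleau--Hirsch criterion and attributed directly to \cite{BH91}, so there is no internal proof to compare your argument against. Your sketch for Part (2) matches the standard bootstrap in the Malliavin-calculus literature (Cramer's rule, closure of $\D^\infty$ under composition, iterated integration by parts, then Fourier/duality to get a bounded $C^\infty$ density), and I have no quarrel with it.

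Your Part (1), however, contains a genuine gap. To place $U^{n,i}=\Phi_n\sum_j(\sigma_F^{-1})_{ij}DF^j$ in $\mathrm{Dom}(\delta)$ you need, at minimum, the scalar factor $\Phi_n\,(\sigma_F^{-1})_{ij}$ to be Malliavin differentiable (the usual criterion for membership of $\Phi\cdot v$ in $\mathrm{Dom}(\delta)$ requires $\Phi\in\D^{1,2}$ together with integrability of $\Phi\,\delta(v)$ and $\InPrd{D\Phi,v}$). But $\Phi_n=\psi_n(\det\sigma_F)$ involves the entries $\InPrd{DF^i,DF^j}$, and differentiating those requires second-order Malliavin derivatives of $F$; with only $F\in\D^{1,2}$ the cutoff $\Phi_n$ need not lie in $\D^{1,2}$, so the single integration by parts you invoke is not legitimate. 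The localization ``remedy'' you describe therefore still presupposes regularity strictly beyond the hypothesis. This is precisely why Bouleau and Hirsch's original proof proceeds through the coarea formula (or, equivalently, an approximation of $F$ by smooth cylindrical vectors combined with the absolute-continuity-of-level-sets argument), which you mention only as an ``equivalent classical route'' but which is in fact the indispensable one at the $\D^{1,2}$ level. To repair the proposal, Part (1) should be restructured around the coarea/approximation argument rather than around $\delta$.
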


As in \cite{CHN16Density}, we need to introduce a localized version of the above theorem in order to deal with the rough initial data. For any measurable sets $\calT\subset [0,\infty)$ and $S\subset \R^d$, and for any $p\ge 1$,
let $\D^{k,p}_{\calT,\calS}$ be the completion of $\calS$ with respect to the
norm
\begin{align}
\begin{aligned}
\Norm{F}_{k,p,\calT,\calS}^p:= \E\left(|F|^p\right)+\sum_{j=1}^k\E\Bigg[
\Big(\int_{(\calT\times S^2)^{j}}
&\left(D_{\theta_1,z_1}\cdots D_{\theta_j,z_j}F\right)
\left(D_{\theta_1,z_1'}\cdots D_{\theta_j,z_j'}F\right)\\
&\times \prod_{i=1}^j f(z_i-z_i')\ud \theta_i \ud
z_i\ud z_i'\Big)^{p/2}
\Bigg].
\end{aligned}
\end{align}
Similarly, denote $\D^{\infty}_{\calT,\calS}:=\cap_{k,p}\D^{k,p}_{\calT,\calS}$.
Let $\calH_{\calT,\calS}$ be the Hilbert space completed from the Schwartz space $\calS(\R^{1+d})$ with respect to the following inner product:
\begin{align}\label{E:HTS-norm}
\InPrd{g,h}_{\calH_{\calT,\calS}} = \int_{\calT\times \calS^2} g(s,y)h(s,z) f(y-z)\ud y\ud z\ud s.
\end{align}
Note that $\calH_{\calT,\calS}$ may contain distributions.

\begin{theorem}[Chen {\it et al} \cite{CHN16Density}]
\label{T:Density2}
 Suppose that $F=(F^1,\dots,F^d)$ is a $d$-dimensional random vector whose components are in $\D^{1,2}_{\calT,\calS}$. Let 
 \[
 \sigma_{F,\calT,\calS} := \left(
\InPrd{D F^i,DF^j}_{\calH_{\calT,\calS}}
\right)_{1\le i,j\le d}.
 \]
 Then
\begin{enumerate}
 \item[(1)] If $\det(\sigma_{F,\calT,\calS})>0$ almost surely, the law of $F$ is absolutely continuous with respect to the Lebesgue measure.
 \item[(2)] If $F^i\in\D_{\calT,\calS}^\infty$ for each $i=1,\dots,d$ and $\E\left[\left(\det\sigma_{F,\calT,\calS}\right)^{-p}\right]<\infty$
 for all $p\ge 1$, then $F$ has a smooth  density.
\end{enumerate}
\end{theorem}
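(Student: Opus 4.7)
The strategy is to reduce Theorem~\ref{T:Density2} to the classical Bouleau--Hirsch criterion (Theorem~\ref{T:Density}) by a conditioning argument. The key observation is that the localized Cameron--Martin space $\calH_{\calT,\calS}$ embeds isometrically into the full Cameron--Martin space $L^2([0,\infty);\calH)$ via extension by zero outside of $\calT\times S$. Let $V$ denote the image of this embedding and $W=W_V+W_{V^\perp}$ the corresponding orthogonal decomposition of the noise; set $\calG:=\sigma(W_{V^\perp})$. Conditionally on $\calG$, the vector $F$ is (after an approximation argument) a functional of the isonormal Gaussian process on $\calH_{\calT,\calS}$, and the Malliavin matrix of this conditioned functional is exactly $\sigma_{F,\calT,\calS}$. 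Hence the classical Bouleau--Hirsch theorem, applied conditionally, should yield the claimed regularity of the conditional law, and integrating against the law of $W_{V^\perp}$ will transfer it to the unconditional law.

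I would carry out the argument in three steps. First, for simple functionals $F\in\calS$ of the form \eqref{E:Ff}, check directly that the conditional Malliavin derivative with respect to $W_V$ equals the restriction $D F\,\one_{\calT\times S}$ and that the conditional Sobolev norms agree with $\Norm{\cdot}_{k,p,\calT,\calS}$; by closability and density, extend this identification to all $F\in\D^{k,p}_{\calT,\calS}$. Second, for part~(1), the hypothesis $\det(\sigma_{F,\calT,\calS})>0$ a.s.\ yields invertibility of the conditional Malliavin matrix for $\bbP(\cdot\mid\calG)$-a.e.\ realization, so part~(1) of Theorem~\ref{T:Density} applied conditionally gives absolute continuity of the conditional law of $F$; integrating against the law of $W_{V^\perp}$ then provides the unconditional absolute continuity. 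Third, for part~(2), the assumptions $F^i\in\D^\infty_{\calT,\calS}$ together with $\E\left[(\det\sigma_{F,\calT,\calS})^{-p}\right]<\infty$ for every $p\ge 1$, combined with Fubini, ensure that the conditional analogs of the hypotheses of part~(2) of Theorem~\ref{T:Density} hold almost surely; hence the conditional density is smooth, and integration produces a smooth unconditional density.

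The main technical obstacle is the rigorous identification of the localized spaces $\D^{k,p}_{\calT,\calS}$ with the ordinary Malliavin--Sobolev spaces of the conditioned isonormal Gaussian process on $\calH_{\calT,\calS}$. This requires showing that conditional expectation with respect to $\calG$ commutes with the restriction $P_V$ applied to the Malliavin derivative, and verifying the closability of the conditional derivative operator acting on $F$'s that are a priori measurable with respect to the full noise. A second subtlety is that, since $F$ need not belong to $\D^\infty$ in the usual sense, one cannot simply compare $\sigma_{F,\calT,\calS}$ with $\sigma_F$; the conditioning is what allows the classical machinery to operate with only the weaker hypothesis $F^i\in\D^\infty_{\calT,\calS}$. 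Once this foundational identification is established, the rest of the argument is essentially a conditional invocation of Theorem~\ref{T:Density} followed by Fubini.
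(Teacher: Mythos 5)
The paper itself does not prove Theorem~\ref{T:Density2}: it is quoted from \cite{CHN16Density}, which treats one-dimensional space-time white noise, so there is no in-paper proof against which to compare your argument. Evaluating the proposal on its own merits, the central identification on which everything rests --- that the Malliavin matrix of $F$ conditional on $\calG=\sigma(W_{V^\perp})$ equals $\sigma_{F,\calT,\calS}$, or equivalently that the conditional Malliavin derivative of $F$ with respect to $W_V$ is the restriction $DF\,\one_{\calT\times S}$ --- is false for spatially colored noise.

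Write $M$ for multiplication by $\one_{\calT\times S}$ on $L^2([0,\infty);\calH)$. Because the $\calH$-inner product $\InPrd{g,h}_{\calH}=\iint g(y)h(z)f(y-z)\,\ud y\,\ud z$ is not local, $M$ is idempotent but not self-adjoint: taking $h_1$ supported on $\calT\times S$ and $h_2$ supported on $\calT\times S'$ with $S\cap S'=\emptyset$ gives $\InPrd{Mh_1,h_2}=\InPrd{h_1,h_2}$, which is nonzero whenever $f$ does not vanish between $S$ and $S'$, whereas $\InPrd{h_1,Mh_2}=0$. Hence $M$ differs from the orthogonal projection $P_V$ onto $V=\overline{\mathrm{ran}\,M}$, and the conditional Malliavin derivative of $F$ given $\calG$ is $P_V(DF)$, not $M(DF)$. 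The conditional Malliavin matrix is therefore $\bigl(\InPrd{P_V DF^i,\,P_V DF^j}\bigr)_{i,j}$, not $\sigma_{F,\calT,\calS}=\bigl(\InPrd{M DF^i,\,M DF^j}\bigr)_{i,j}$, and the two are not comparable in the positive-semidefinite order, so neither the almost-sure nondegeneracy of one nor negative moments of its determinant transfer to the other. Your conditioning strategy is sound --- and is essentially the standard route --- in the white-noise setting of \cite{CHN16Density}, where $\calH=L^2(\R^d)$ is local so that $M=P_V$; but for the colored noise of this paper you would need either to reformulate $\sigma_{F,\calT,\calS}$ with $P_V$ in place of the restriction (which changes what has to be estimated in the proof of Theorem~\ref{T:Mult}) or to supply an argument for the restricted matrix that does not pass through the Gaussian orthogonal decomposition.
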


\begin{lemma}[Chen {\it et al} \cite{CHN16Density}]
\label{L:Dinfty}
Let $\{F_m,m\ge 1\}$ be a sequence of random variables converging to $F$ in
$L^p(\Omega)$ for some $p>1$. Suppose that $\sup_{m}\Norm{F_m}_{n,p,\calT,\calS}<\infty$
for some integer $n\ge 1$. Then $F\in\D_{\calT,\calS}^{n,p}$.
\end{lemma}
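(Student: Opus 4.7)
The plan is to mimic the classical proof of Lemma 1.2.3 in Nualart's book, adapted to the localized Sobolev space $\D^{n,p}_{\calT,\calS}$. The key is that, because $p>1$, the space $\D^{n,p}_{\calT,\calS}$ is reflexive: it embeds isometrically into the reflexive Banach space
\[
L^p(\Omega) \oplus \bigoplus_{j=1}^{n} L^p\bigl(\Omega; \calH_{\calT,\calS}^{\otimes j}\bigr)
\]
via the map $G \mapsto (G, DG, \ldots, D^n G)$, where each $\calH_{\calT,\calS}^{\otimes j}$ is a Hilbert space and $L^p$-spaces with values in a Hilbert space are reflexive for $p>1$.

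Given the hypothesis $\sup_m \Norm{F_m}_{n,p,\calT,\calS} < \infty$, the first step is to apply the Banach-Alaoglu / Eberlein-Smulian theorem to extract a subsequence $\{F_{m_k}\}_{k\ge 1}$ such that, for each $j=0,1,\ldots,n$, the iterated derivatives $D^j F_{m_k}$ converge weakly in $L^p(\Omega; \calH_{\calT,\calS}^{\otimes j})$ to some limit $\eta_j$ (with $\eta_0 \in L^p(\Omega)$). Since $F_m \to F$ strongly in $L^p(\Omega)$, weak convergence of $\eta_0$ forces $\eta_0 = F$.

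The next step is to identify $\eta_j = D^j F$ for $j=1,\ldots,n$. This is where the closability of the iterated derivative operator $D^j$ is used: we already have $F_{m_k} \to F$ in $L^p$ and $D^j F_{m_k} \rightharpoonup \eta_j$ weakly. By a standard Mazur-lemma argument, convex combinations of the tails of $\{F_{m_k}\}$ converge strongly to $F$ in $L^p(\Omega)$ and, simultaneously, in the corresponding convex combinations of the $D^j F_{m_k}$ converge strongly to $\eta_j$ in $L^p(\Omega; \calH_{\calT,\calS}^{\otimes j})$. Closability of $D^j$ in the localized setting then yields $\eta_j = D^j F$. Equivalently, one can test against smooth cylindrical functionals and pass to the limit in the integration by parts formula to identify $\eta_j$ as the $j$-th Malliavin derivative of $F$.

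Once $D^j F = \eta_j \in L^p(\Omega; \calH_{\calT,\calS}^{\otimes j})$ for all $j=0,1,\ldots,n$, we obtain $F \in \D^{n,p}_{\calT,\calS}$ by definition of the norm $\Norm{\cdot}_{n,p,\calT,\calS}$. The main technical point---the step I expect to need the most care---is verifying that the derivative operator $D$ is genuinely closable from $L^p(\Omega)$ into $L^p(\Omega; \calH_{\calT,\calS})$ when $\calH_{\calT,\calS}$ may contain distributions; one checks this by working first on simple cylindrical functionals $F$ of the form \eqref{E:Ff} whose supports of the test sets lie in $\calT \times S$, and then using the duality with smooth $\calH_{\calT,\calS}$-valued functionals, exactly as in the global case but restricted to the subdomain. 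Everything else in the argument is a routine transcription of Nualart's proof, with the inner product $\InPrd{\cdot,\cdot}_{\calH_{\calT,\calS}}$ of \eqref{E:HTS-norm} replacing the global one.
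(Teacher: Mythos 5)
The paper does not prove this lemma; it is simply cited from \cite{CHN16Density}, so there is no paper-internal argument to compare against. Your plan follows the standard Nualart route (boundedness in a reflexive product space, weak compactness, Mazur + closedness of the graph), which is exactly the right strategy and is almost certainly what \cite{CHN16Density} does in the space--time white noise case.

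There is, however, a genuine gap in the closability step that you flag at the end, and your sketch does not resolve it. You propose to verify closability of $D^j$ as a map into $L^p(\Omega;\calH_{\calT,\calS}^{\otimes j})$ ``by duality with smooth $\calH_{\calT,\calS}$-valued functionals, exactly as in the global case but restricted to the subdomain.'' This hides a real discrepancy: the integration-by-parts formula for the Malliavin derivative pairs $DF$ against a test function $h$ supported in $\calT\times\calS$ via the $L^2(\R_+;\calH)$ inner product, which reads $\int_{\calT}\int_{\R^d}\int_{\calS} D_{s,x}F\, h(s,y) f(x-y)\,\ud x\,\ud y\,\ud s$, whereas the $\calH_{\calT,\calS}$ pairing from \eqref{E:HTS-norm} restricts the $x$-integral to $\calS$ as well. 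These two quantities differ unless $D_{s,\cdot}F$ vanishes off $\calS$. Relatedly, the pointwise restriction $g\mapsto g\one_{\calT\times\calS}$ is \emph{not} bounded from $L^2(\R_+;\calH)$ to $\calH_{\calT,\calS}$ when $f$ has positive off-diagonal mass (a signed $g$ can have cancellations between $\calS$ and $\R^d\setminus\calS$ that are present in the global norm but absent after restriction), so one cannot reduce to the global closability by boundedness of the restriction either. For the space--time white noise case $f=\delta_0$ the restriction is a contraction and this problem disappears, which is why the transcription ``exactly as in the global case'' works in \cite{CHN16Density}'s original setting. For a general correlation function $f$, the closability of the localized derivative needs a separate argument (for instance a conditioning argument on the noise outside $\calT\times\calS$), and your first step of treating cylinder functionals with test sets inside $\calT\times\calS$ does not cover the general $F\in\calS$, which is where the discrepancy between the two pairings actually bites.
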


\subsection{Sufficient conditions for strict positivity of density}
\label{SS:Criterion}
In this part, we introduce a criterion for the strict positivity of density. 
Recall that $W=\{W_t,t\ge 0\}$ can be viewed as a cylindrical Wiener process in the Hilbert space $\calH$ with
the covariance given by \eqref{E:Cov}.
Let $\mathbf{h}=(h^1,\dots,h^m) \in L^2(\R_+;\calH)^m$ and $\mathbf{z}=(z_1,\dots,z_m)\in\R^m$.
Define a translation of $W_t$, denoted by  $\widehat{W}_t$, as follows:
\begin{equation}
\widehat{W}_t(g):=
W_t(g) + \sum_{i=1}^m z_i\int_0^t \ud s\iint_{\R^{2d}} \ud y\ud y'\:  h^i(s,y) g(y) f(y-y'), \quad\text{for any $g\in \calH$.}
\label{e.def-hat-W}
\end{equation}
Then $\big\{\: \widehat{W}_t, \: t\ge 0\big\}$ is a cylindrical Wiener process in $\calH$ on the
probability space $(\Omega,\calF,\widehat{\mathbb{P}})$, where
\begin{align*}
\frac{\ud \widehat{\mathbb{P}}}{\ud \mathbb{P}}=\exp\Bigg( &
-\sum_{i=1}^m z_i \int_0^\infty\ud s\iint_{\R^{2d}} h^i(s,y)W(\ud s\ud y)\\
&-\frac{1}{2}\sum_{i=1}^m z_i^2\int_0^\infty\ud s \int_{\R^{2d}}\ud y\ud y'\: 
h^i(s,y)h^i(s,y') f(y-y')
\Bigg).
\end{align*}

For any predictable process $Z\in L^2(\Omega\times\R_+; \calH)$, we have that
\begin{align*}
\int_0^\infty \int_{\R^d} Z(s,y)\widehat{W}(\ud s\ud y) = &
\int_0^\infty \int_{\R^d} Z(s,y)W(\ud s\ud y) \\
&+\sum_{i=1}^m 
z_i\int_0^\infty\int_{\R^{2d}} Z(s,y)h^i(s,y')f(y-y')\ud s\ud y\ud y'.
\end{align*}
In the following, we write $\sum_{i=1}^m z_i h^i(s,y)=:\InPrd{\mathbf{z},\mathbf{h}(s,y)}$.
Let $\widehat{u}_{\mathbf{z}}^n(t,x)$ be the solution to \eqref{E:SHE} with respect to $\widehat{W}$, that is,
\begin{align}
\begin{aligned}
 \widehat{u}_{\mathbf{z}}(t,x) = J_0(t,x)&+ \int_0^t\int_{\R^d} G(t-s,x-y) \rho(\widehat{u}_{\mathbf{z}}(s,y))W(\ud s\ud y)\\
 &+\int_{0}^t\int_{\R^{2d}} G(t-s,x-y)\rho(\widehat{u}_{\mathbf{z}}(s,y)) 
\InPrd{\mathbf{z},\mathbf{h}(s,y')}f(y-y')\ud s\ud y\ud y'.
\end{aligned}
\end{align}
Then, the law of $u(t,x)$ under $\mathbb{P}$ coincides with the law of $\widehat{u}_{\mathbf{z}}(t,x)$ under $\widehat{\mathbb{P}}$.

\bigskip
The following theorem is an extension of Theorem 3.3 of Bally and Pardoux \cite{BP98}, which allows one to consider the case with unbounded diffusion parameter such as the parabolic Anderson model.

\begin{theorem}[Chen {\it et al} \cite{CHN16Density}]
\label{T:Criteria}
Let $F$ be an $m$-dimensional random vector measurable with respect to $W$,
such that each component of $F$ is in $\D^{3,2}$.
Assume that for some $f\in C(\R^m)$  and for some  open subset
 $\Gamma$ of $\R^m$, it holds that
\[
\one_\Gamma(y) \: \left[\bbP\circ F^{-1}\right] (\ud y)
=
\one_\Gamma(y) f(y) \ud y.
\]
Fix a point  $y_*\in\Gamma$. Suppose that there exists a sequence $\{\mathbf{h}_n\}_{n\in\bbN} \subseteq  L^2(\R_+;\calH)^m$
such that the associated random field
$\phi_n(\mathbf{z})= F(\widehat{W}^{\mathbf{z},n})$
satisfies the following two conditions.
\begin{enumerate}
\item[(i)] There are constants $c_0>0$ and $r_0>0$ such that for all $r\in (0,r_0]$,
the following limit holds true:
\begin{align}\label{E:my(i)}
\liminf_{n\rightarrow\infty}
\bbP\left(
|F-y_*|\le r
\:\:\text{and}\:\:
|\det\partial_{\mathbf{z}} \phi_n(0)|\ge \frac{1}{c_0}
\right)>0.
\end{align}
\item[(ii)]  There are some constants $\kappa>0$  and $K>0$ such that
\begin{align}\label{E:my(ii)}
 \lim_{n\rightarrow\infty}
 \bbP\left(
 \sup_{|\mathbf{z}|\le \kappa}\Norm{\phi_n(\mathbf{z})}_{C^2} \le K
 \Bigg| \: |F-y_*|\le r_0\right)=1,
\end{align}
where
\[
\Norm{\phi_n(\mathbf{z})}_{C^2}:=
|\phi_n(\mathbf{z})|+\Norm{\partial_{\mathbf{z}}\phi_n(\mathbf{z})}+\Norm{\partial_{\mathbf{z}}^2\phi_n(\mathbf{z})}.
\]
\end{enumerate}
Then $f(y_*)>0$.
\end{theorem}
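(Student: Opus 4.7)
Since the law of $F$ is absolutely continuous on $\Gamma$ with continuous density $f$ and $y_*\in\Gamma$, it suffices to prove that
\[
\liminf_{r\downarrow 0} \frac{\bbP\left(|F-y_*|\le r\right)}{r^m}>0,
\]
because the limit at $y_*$ equals $f(y_*)$ up to a constant. My plan is to produce such a lower bound by using $\phi_n$ as a random change of variables and exploiting the Girsanov-type representation that follows from \eqref{e.def-hat-W}. Writing $M_n(\mathbf{z}):=d\widehat{\bbP}^{\mathbf{z},n}/d\bbP$, the fact that $F(\widehat{W}^{\mathbf{z},n})$ under $\widehat{\bbP}^{\mathbf{z},n}$ is distributed as $F$ under $\bbP$ gives, for every Borel set $A\subseteq\R^m$ and every $\mathbf{z}\in\R^m$,
\[
\bbP\left(F\in A\right)=\E\Bigl[\one_{A}\bigl(\phi_n(\mathbf{z})\bigr)\, M_n(\mathbf{z})\Bigr].
\]
Integrating this identity against $\ud\mathbf{z}$ over a small ball $B_\kappa(0)\subset\R^m$ and applying Fubini,
\[
|B_\kappa|\,\bbP\left(F\in B_r(y_*)\right)=\E\Bigl[\int_{B_\kappa(0)}\one_{B_r(y_*)}\bigl(\phi_n(\mathbf{z})\bigr)\, M_n(\mathbf{z})\,\ud\mathbf{z}\Bigr].
\]

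\textbf{The good event.} Let $E_n$ be the intersection of the three events appearing in \eqref{E:my(i)} and \eqref{E:my(ii)}, intersected further with $\{L^{-1}\le M_n(\mathbf{z})\le L$ for all $|\mathbf{z}|\le\kappa\}$ for some large $L$; the latter holds with probability as close to $1$ as we wish (uniformly in $n$) because $\mathbf{h}_n$ is fixed once $n$ is fixed and $(\mathbf{z},\omega)\mapsto M_n(\mathbf{z})$ is an exponential martingale in $\mathbf{z}$ with bounded covariance. Conditions \eqref{E:my(i)} and \eqref{E:my(ii)} then force $\liminf_{n\to\infty}\bbP(E_n)>0$. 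On $E_n$, the components of $\phi_n$ belong to $C^2(B_\kappa(0))$ with norm $\le K$, $\phi_n(0)=F$ lies within $r_0$ of $y_*$, and $|\det\partial_{\mathbf{z}}\phi_n(0)|\ge 1/c_0$.

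\textbf{Inverse function theorem, quantitatively.} The key step is to extract from the three bounds on $E_n$ a ball $B_\delta(0)$, with $\delta>0$ depending only on $c_0,K$, and $m$, on which $\phi_n$ is a $C^1$ diffeomorphism, with $|\det\partial_{\mathbf{z}}\phi_n(\mathbf{z})|\ge 1/(2c_0)$ and $\phi_n(B_\delta(0))\supseteq B_{\eta}(F)$ for some $\eta=\eta(c_0,K,m)>0$. Taking $r<\eta/2$ and restricting further to the event $\{|F-y_*|\le r\wedge\eta/2\}\cap E_n$, we have $B_r(y_*)\subseteq B_{\eta}(F)\subseteq \phi_n(B_\delta(0))$, so the change of variables $y=\phi_n(\mathbf{z})$ is legitimate inside the inner integral, yielding
\[
\int_{B_\kappa(0)}\one_{B_r(y_*)}\bigl(\phi_n(\mathbf{z})\bigr)\, M_n(\mathbf{z})\,\ud\mathbf{z}
\;\ge\; \int_{B_r(y_*)} \frac{M_n(\phi_n^{-1}(y))}{|\det\partial_{\mathbf{z}}\phi_n(\phi_n^{-1}(y))|}\,\ud y
\;\ge\; \frac{2c_0}{L}\,|B_r|,
\]
on this restricted good event.

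\textbf{Conclusion.} Since the restricted event has probability bounded below by some $\gamma>0$ (uniformly in $n$ for $n$ large and $r$ small enough, thanks to the very first $r_0>0$ in condition (i)), the preceding display combined with the Fubini identity yields $\bbP(F\in B_r(y_*))\ge c_1 r^m$ with $c_1=c_1(c_0,K,L,m,\kappa,\gamma)>0$ for all sufficiently small $r$; dividing by $|B_r|$ and using continuity of $f$ at $y_*$ gives $f(y_*)\ge c_1/|B_1|>0$. The main obstacle I foresee is the third item: interlocking the three unconditional / conditional probabilistic bounds (i)--(ii) with the control on $M_n$ to build a genuine joint good event of positive probability, and then extracting a quantitative radius $\eta$ from the inverse function theorem that depends only on $c_0,K,m$ and not on $\omega$ or $n$—these are the steps where the assumption $F\in\D^{3,2}$ (hence $C^2$ regularity of $\mathbf{z}\mapsto\phi_n(\mathbf{z})$) enters essentially.
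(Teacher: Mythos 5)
Your general strategy---shift the noise by $\mathbf{z}\cdot\mathbf{h}_n$, use the Cameron--Martin identity $\bbP(F\in A)=\E\bigl[\one_A(\phi_n(\mathbf{z}))M_n(\mathbf{z})\bigr]$, average over $\mathbf{z}$, and apply a quantitative inverse function theorem on a good event---is the right broad plan (it underlies Theorem~3.3 of Bally--Pardoux and its extension in \cite{CHN16Density}), and your quantitative inverse function step is sound. The gap is where you augment $E_n$ with $\bigl\{L^{-1}\le M_n(\mathbf{z})\le L\ \text{for all}\ |\mathbf{z}|\le\kappa\bigr\}$ and assert that this holds with probability close to $1$ uniformly in $n$ because the exponential martingale has ``bounded covariance.'' Nothing in the theorem's hypotheses bounds $\Norm{h_n^i}_{L^2(\R_+;\calH)}$ in $n$, and in the one application the paper needs---the perturbations \eqref{E:hni}---one has, by \eqref{E:cni-1},
\[
\Norm{h_n^i}^2_{L^2(\R_+;\calH)} = c_n^2\cdot c_n^{-1} = c_n \asymp 2^{(1-\beta)n}\longrightarrow\infty.
\]
Writing $X_n^i:=\int_0^\infty\int_{\R^d}h_n^i\,\ud W$, the density is $M_n(\mathbf{z})=\exp\bigl(-\mathbf{z}\cdot X_n-\tfrac12\sum_{i,j}z_iz_j\langle h_n^i,h_n^j\rangle_{L^2(\R_+;\calH)}\bigr)$; the dominant diagonal term $-\tfrac12|\mathbf{z}|^2 c_n$ swamps the Gaussian fluctuation $\mathbf{z}\cdot X_n$ (whose standard deviation is $|\mathbf{z}|\sqrt{c_n}$) for any fixed $\mathbf{z}\ne 0$, so $M_n(\mathbf{z})\to 0$ in probability as $n\to\infty$. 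Hence for any fixed $L$, $\bbP\bigl(\inf_{|\mathbf{z}|\le\kappa}M_n(\mathbf{z})\ge L^{-1}\bigr)\to 0$ rather than $1$, your good event $E_n$ loses its mass, and the constant $c_1$ in your final display is not independent of $n$; the claimed bound $\bbP(F\in B_r(y_*))\ge c_1 r^m$ does not follow. Reordering the limits in $n$ and $r$ does not help either: after conditioning on $|F-y_*|\le r_0$, the set $\phi_n^{-1}(B_r(y_*))$ sits at distance of order $c_0\,|F-y_*|$ from the origin, far outside the scale $c_n^{-1/2}$ on which $M_n\asymp 1$, so a pointwise lower bound on $M_n$ over the relevant preimage still collapses as $n\to\infty$. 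Coping with this degeneration of $M_n$ is precisely the technical core of the theorem's proof in \cite{CHN16Density} (and in Bally--Pardoux), and it is absent from your sketch.
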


\section{Regularity of densities}
\label{S:Regularity}
In this section, we will prove Theorems \ref{T:Single} and \ref{T:Mult}.

\subsection{Nonnegative moments (Proof of Theorem \ref{T:NegMom})}
\label{S:NegMom}

In this section, we will prove Theorem \ref{T:NegMom}. 
We will need the following lemma.

\begin{lemma}[Lemma 3.4 in \cite{CHN16Density}]\label{L:1Init}
For any $a,b\in\R$, $\gamma\ge 0$ and $T>0$ such that $b-a>\gamma T$, it holds that
\[
0<\inf_{0\le t+s\le T} \inf_{a-\gamma(t+s)\le x\le b+\gamma (t+s)}(\one_{[a-\gamma s,b+\gamma s]}*G_1(t,\cdot))(x)
  \le 1,
\] 
where $G_1(t,x)$ is the heat kernel function on $\R$.
\end{lemma}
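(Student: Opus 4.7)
My plan is to handle the upper bound trivially: since $\one_{[a-\gamma s,b+\gamma s]}\le 1$ pointwise and $G_1(t,\cdot)$ is a probability density for $t>0$, the convolution is at most $1$ (and for the degenerate boundary $t=0$, interpreted as the indicator itself, the value is $1$ on the constraint set, which already handles the lower bound in that case).

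For the strict positive lower bound when $t>0$, I would pass to the standard normal CDF $\Phi$. Via the substitution $u=(y-x)/\sqrt{t}$, one has
\[
F(t,s,x) := (\one_{[a-\gamma s,b+\gamma s]} * G_1(t,\cdot))(x) = \Phi(\beta) - \Phi(\alpha),
\]
where $\alpha := (a-\gamma s-x)/\sqrt{t}$ and $\beta := (b+\gamma s-x)/\sqrt{t}$. The constraint $a-\gamma(t+s)\le x\le b+\gamma(t+s)$ translates to $\alpha\le \gamma\sqrt{t}$ and $\beta\ge -\gamma\sqrt{t}$, while $\beta-\alpha=(b-a+2\gamma s)/\sqrt{t}\ge (b-a)/\sqrt{t}$. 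Using the reflection symmetry $x\mapsto a+b-x$, I may assume $x\le (a+b)/2$ without loss of generality, so that $\beta\ge (b-a)/(2\sqrt{t})$.

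I would then split into two subcases on the sign of $\alpha$. If $x\ge a-\gamma s$, then $\alpha\le 0$ and $F\ge \Phi((b-a)/(2\sqrt{t})) - \tfrac12$, which is bounded below uniformly by $\Phi((b-a)/(2\sqrt{T})) - \tfrac12 > 0$ since $(b-a)/(2\sqrt{t})$ is decreasing in $t$. If instead $x< a-\gamma s$, then $\alpha\in (0,\gamma\sqrt{t}]$, and the gap condition $\beta-\alpha\ge (b-a+2\gamma s)/\sqrt{t}$ forces $\beta > (b-a)/\sqrt{t}$, giving $F > h(t) := \Phi((b-a)/\sqrt{t}) - \Phi(\gamma\sqrt{t})$.

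The main obstacle, and the place where the strict hypothesis $b-a>\gamma T$ is indispensable, is showing that $h(t)>0$ uniformly on $(0,T]$: indeed, $h(t)>0$ precisely when $(b-a)/\sqrt{t}>\gamma\sqrt{t}$, i.e., $b-a>\gamma t$, which holds throughout $(0,T]$ by the assumption. Since $h$ is continuous on $(0,T]$ and extends continuously to $t=0$ with $h(0^+)=\tfrac12$, it attains a strictly positive minimum on the compact interval $[0,T]$. This, combined with the Case~A bound, yields the desired uniform strictly positive lower bound on $F$.
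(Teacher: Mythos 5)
Your proof is correct. Note that this paper does not actually prove Lemma \ref{L:1Init}; it simply cites Lemma~3.4 of \cite{CHN16Density}, so there is no in-paper argument to compare against. Your reduction to $\Phi(\beta)-\Phi(\alpha)$, the reflection $x\mapsto a+b-x$, and the two-case split on the sign of $\alpha$ all check out, and the upper bound is indeed trivial. One small remark: the hypothesis $b-a>\gamma T$ is used only to make your \emph{particular} lower bound $h(t)=\Phi((b-a)/\sqrt{t})-\Phi(\gamma\sqrt{t})$ positive; a slightly sharper estimate in Case~B, namely $F\ge \Phi\bigl(\gamma\sqrt{t}+(b-a)/\sqrt{t}\bigr)-\Phi(\gamma\sqrt{t})$ obtained by bounding $\Phi(\alpha+(b-a)/\sqrt{t})-\Phi(\alpha)$ from below at the endpoint $\alpha=\gamma\sqrt{t}$ via concavity of $\Phi$ on $[0,\infty)$, is positive without invoking $b-a>\gamma T$ at all. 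This does not affect your argument's validity (the hypothesis is given), but it explains that the assumption is not where the real work lies. Also, your handling of $t=0$ is a sensible convention, though strictly speaking the statement is only about the limiting behavior as $t\downarrow 0$, which your continuous extension of $h$ to $h(0^+)=\tfrac12$ already covers.
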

\bigskip

In the following proof, we will use the notation: For any $a,b\in\R^d$ and $\alpha,\beta\in\R$, 
\[
[a+\alpha,b+\beta] := [a_1+\alpha,b_1+\beta]\times\cdots\times[a_d+\alpha,b_d+\beta].
\]

\begin{proof}[Proof of Theorem \ref{T:NegMom}]
Recall that $\calF_t$ is the natural filtration generated by the noise $\W$. 
Fix an arbitrary compact set $K\subset\R^d$ and let $T>0$. We are going to prove Theorem \ref{T:NegMom} for $\inf_{x\in K}u(T,x)$ in two steps.

{\bigskip\noindent\bf Case I.} In this case, we assume that
\begin{itemize}
 \item[(H)] For some cube $[a,b]=[a_1,b_1]\times\cdots\times[a_d,b_d]$ and some nonnegative function $g$ the initial measure $\mu$ satisfies that $\one_{[a,b]}(x)\mu(\ud x) =g(x)\ud x$. Moreover, for some $c>0$, $g(x)\ge c \one_{[a,b]}(x)$ for all $x\in\R^d$.
\end{itemize}

Thanks to the weak comparison principle (see \eqref{E: W comp path}), we may assume that $g(x)=\one_{[a,b]}(x)$. 
For any $t>0$, we denote 
\begin{equation}
\calI_t = [a-\gamma t, b+ \gamma t]\,.
\end{equation}
Choose and fix $\gamma$ such that $K \subseteq \calI_T$. Take 
\begin{equation}
\begin{aligned}
\beta = & \frac{1}{2} \inf_{0 \leq t + s \leq T} \inf_{x \in \calI_{t+s}} \left( \one_{[a-\gamma s, b + \gamma s]} * G(t,\cdot) \right)(x)\\
 = & \frac{1}{2} 
 \prod_{i=1}^d \inf_{0 \leq t + s \leq T} \inf_{x_i \in [a_i-\gamma(t+s),b_i+\gamma(t+s)]} \left( \one_{[a_i-\gamma s, b_i + \gamma s]} * G_1(t,\cdot) \right)(x_i)\,.
\end{aligned}
\end{equation}
Thanks to Lemma \ref{L:1Init}, we have $0 < \beta < 1/2$. Define 
\begin{equation}
T_0:=0,\quad\text{and}\quad 
T_k := \inf \left\{t > T_{k-1}: \inf _{x \in \calI_t} u(t,x ) \leq \beta^k \right\}\,, \quad k\ge 1\,.
\end{equation}
Let $\dot{W}_k(t, x) = \dot{W}(t+ T_{k-1}, x)$, and $H_k(t, x) = H(t+T_{k-1}, x)$. 
For each $k$, let $u_k(t,x)$ be the unique solution to \eqref{E:SPDE-Var} with initial data $u_k(0,x)=\beta^{k-1}\one_{\{[a-\gamma T_{k-1},b+\gamma T_{k-1}]\}}(x)$. 
So we see that the random field $w_k(t,x) = \beta^{1-k} u_k(t,x)$ solves the equation  
\begin{equation}
\begin{cases}
\displaystyle \left(\frac{\partial }{\partial t} -\frac{1}{2}\Delta\right) w_k(t,x)= H_k(t,x) \sigma_k(t,x, w_k(t,x))\dot{W}_k(t,x),& t>0,\: x\in\R^d, \\
w_k(0,x) = I_{[a-\gamma T_{k-1}, b+ \gamma T_{k-1}]}(x),
\end{cases}
\end{equation}
where 
\begin{equation}
\sigma_k(t,x,z) = \beta^{1-k} \sigma(t,x, \beta^{k-1}z)\,.
\end{equation}
Set $\tau_n := \frac{2T}{n}$ and 
\begin{equation}
S(t_1, t_2) = \Big\{ (t,x): t \in [0, t_2-t_1], x \in [a-\gamma(t_1+t_2), b+ \gamma(t_1+t_2)]\Big\}\,.
\end{equation}
Define the following events
\begin{equation}
\mathfrak{D}_{k,n} := \left\{ T_k - T_{k-1}\leq \tau_n\right\}\,,\quad\text{for $1\le k\le n$.}
\end{equation}
Then following exactly the same arguments as those in the proof of Theorem 1.5 in \cite{CHN16Density} we see that
for $k\le n$,
\begin{multline}
\qquad
 \bbP\left(\mathfrak{D}_{k,n}\bigcap\left\{\sup_{(t,x)\in S(T_{k-1},T_k)} \left|w_k(t,x)-w_k(0,x)\right|\ge 1-\beta \right\} \middle| \calF_{T_{k-1}}\right)\\
 \le
 \beta^{-p}\:
 \E\left[\sup_{(t,x)\in [0,\tau_n]\times \calI_T } \left|I_k(t,x)\right|^p\middle| \calF_{T_{k-1}}\right]
 \label{E_:bE}.\qquad
\end{multline}

Next we need to find a deterministic upper bound for the conditional probability in \eqref{E_:bE}.
\begin{align*}
\E \left[ \left|I_k(s,x) - I_k(s', x)\right|^p\right]\leq C |s-s'|^{\frac{\alpha p}{2}} \sup_{(t,y)\in [0, \tau_n]\times \RR^d} \|w_k(t,y)\|^p_p\,,
\end{align*}
for all $(s,s',x)\in [0,\tau_n]^2\times\calI_T$.
By Theorem \ref{T:Mom}, we see that for some constant $Q>0$, 
\begin{equation}
\sup_{(t,y)\in[0,\tau_n]\times\R^d}
\Norm{w_k(t,y)}^p_p \leq Q^p \exp \left( Q p^{\frac{1+\alpha}{\alpha}}t \right)=:C_{p,\tau_n}\,.
\end{equation}
Choose $p$ large enough such that $1-\frac{2}{p}\left(\frac{2}{\alpha}-1\right)>0$.
Then by the Kolmogorov continuity theorem,
for some constant $C>0$ and for all $0 < \eta < 1-\frac{2}{p}\left(\frac{2}{\alpha}-1\right)$, we have that
\begin{align}
\E\left[
\sup_{(t,x)\in [0,\tau_n]\times \calI_T }\left|\frac{I_k(t,x)}{\tau_n^{\alpha\eta/2}}\right|^p\right]&\le
\E\left[
\sup_{(s,s',x')\in [0,\tau_n]^2\times \calI_T}\left|\frac{I_k(s,x)-I_k(s',x)}{|s-s'|^{\alpha\eta/2}}\right|^p\right]
\le C^p C_{p,\tau_n}\,,
\label{E_:Holder}
\end{align}
which implies that 
for some constant $Q'>0$, 
\begin{align*}
\beta^{-p} \: \E\left[ \sup_{(s,x)\in [0,\tau_n]\times \calI_T} |I_k(s,x)|^p\right]
\leq  \tau_n^{\alpha \eta p/2} \exp \left( Q' p^{\frac{1+\alpha}{\alpha}} \tau_n\right) = 
\exp \left( Q' p^{\frac{1+\alpha}{\alpha}}\tau_n +\frac{1}{2}\alpha\eta\log(\tau_n) p\right)\,.
\end{align*}
Take $\eta=\theta\left(1-\frac{2}{p}\left[\frac{2}{\alpha}-1\right]\right)$ with some $\theta\in(0,1)$.
Then the above exponent becomes
\[
f(p):= Q' p^{\frac{1+\alpha}{\alpha}}\tau_n +\frac{1}{2}\theta\left(\alpha-\frac{2}{p}\left[2-\alpha\right]\right)\log(\tau_n) p.
\]
Optimizing in $p$ shows that $f(p)$ achieves its global minimum at
\[
p'=\left(\frac{\alpha ^2 \theta  n\log\left(\frac{n}{2T}\right)}{4(\alpha +1) Q' T}\right)^{\alpha}
\]
and  for some constant $Q''>0$,
\begin{equation}
\min_{p\ge 2} f(p) \le  -Q'' n^{\alpha} (\log n)^{1+\alpha}\,.
\end{equation}
Thus, for some finite constant $C>0$, 
\begin{equation}
P \left ( \mathfrak{D}_{k,n} \big| \mathcal{F}_{T_{k-1}} \right) \leq C \exp \left( -C n^{\alpha} (\log n)^{1+\alpha}\right)\,.
\end{equation}
Then following exactly the same arguments as those leading to (3.8) in \cite{CHN16Density}, we see that
for some constant $B=B(\Lambda,T)>0$ such that for $\epsilon>0$ small enough,
\begin{align}\label{E:CaseI}
\bbP\left(\inf_{x\in K} u(T,x) <\epsilon \right)\le \exp\left(-B\left\{|\log(\epsilon)| \log\left(|\log(\epsilon)|\right) \right\}^{1+\alpha}\right).
\end{align}

{\bigskip\bf\noindent Case II.~}
Now we consider the general initial data.
Set $Q=(a,b)^d$ for some arbitrary constants $a<b$. Choose and fix an arbitrary $\theta\in (0,T\wedge 1)$. Set 
\[
\Theta(\omega):=1\wedge \inf_{x\in Q}  u(\theta,x,\omega).
\]
Since $u(\theta,x)>0$ for all $x\in\R$ a.s. (see Theorem \ref{T:Comp}) and  $u(\theta,x,\omega)$ is continuous in $x$, we see that 
$\Theta>0$ a.s.
Hence, 
$u(\theta,x,\omega)\ge \Theta(\omega) \one_{Q}(x)$ for all $x\in\R$.
Denote $V(t,x,\omega):=\Theta(\omega)^{-1} u(t+\theta,x,\omega)$. By the Markov property, $V(t,x)$ solves the following time-shifted SPDE 
\begin{align}
 \begin{cases}
\left(\displaystyle\frac{\partial}{\partial t} - \frac{1}{2}\Delta \right) V(t,x) =
\widetilde{H}(t,x) \widetilde{\sigma}(t,x,u(t,x))  \dot{W}_\theta(t,x),& t>0\;,\: x\in\R^d,\\[1em]
V(0,x) = \Theta^{-1} u(\theta,x),
 \end{cases}
\end{align}
where $\W_\theta (t,x)=\W(t+\theta,x)$, $\widetilde{H}(t,x) = H(t+\theta,x)$ and $\widetilde{\sigma}(t,x,z,\omega)=\Theta(\omega)^{-1}\sigma\left(t+\theta,x,\Theta(\omega) z\right)$.
Notice that condition \eqref{E:Lambda} is satisfied by $\widetilde{H}$ and $\widetilde{\sigma}$ with the same constant $\Lambda$, that is, 
\[
\left|\widetilde{H}(t,x,\omega)\widetilde{\sigma}(t,x,z,\omega)\right|\le \Lambda |z| \qquad\text{for all $(t,x,z,\omega)\in \R_+\times\R^2\times\Omega$.}
\]
The initial data $V(0,x)$ satisfies assumption (H) in Case I. 
Hence, we can conclude from \eqref{E:CaseI} that for $\epsilon>0$ small enough,
\begin{align*}
\bbP\left(\inf_{x\in K} u(T+\theta,x) <\epsilon \right) & \le
\bbP\left(\inf_{x\in K} V(T,x) < \Theta^{-1}\epsilon 
\right)\\
&\le \int_0^1 1\wedge \exp\left(-B\left\{|\log(\zeta^{-1}\epsilon)| 
\log\left(|\log(\zeta^{-1}\epsilon)|\right) \right\}^{1+\alpha}\right) 
\mu_\Theta(\ud \zeta),
\end{align*} 
where $\mu_\Theta$ denotes the law of the random variable $\Theta$, 
which is supported over $[0,1]$.
For any $\delta \in (0,1)$,
\begin{align*}
 &\frac{\displaystyle \bbP\left(\inf_{x\in K} u(T+\theta,x) <\epsilon 
\right)}{\displaystyle \exp\left(-B\left\{|\log(\epsilon)|\cdot 
|\log\left(|\log(\epsilon)|\right)|^\delta \right\}^{1+\alpha}\right)}
\\
\le & \int_0^1 
1\wedge \exp\Bigg(-B\left\{|\log(\zeta^{-1}\epsilon)| 
\log\left(|\log(\zeta^{-1}\epsilon)|\right) 
\right\}^{1+\alpha}\\
& \hspace{6em}+B 
\left\{|\log(\epsilon)| 
\cdot |\log\left(|\log(\epsilon)|\right)|^\delta \right\}^{1+\alpha}\Bigg)
\mu_\Theta(\ud 
\zeta).
\end{align*}
The integrand of the right hand side of the 
above inequality is bounded by one and goes to zero as $\epsilon$ goes to zero 
because $\delta\in (0,1)$. Hence, the dominated convergence theorem shows that 
\[
 \lim_{\epsilon\rightarrow 0_+}\frac{\bbP\left(\inf_{x\in K} u(T+\theta,x) 
<\epsilon 
\right)}{\exp\left(-B\left\{|\log(\epsilon)| 
\cdot |\log\left(|\log(\epsilon)|\right)|^\delta \right\}^{1+\alpha}\right)}
=0,
\]
which implies that 
\[
\bbP\left(\inf_{x\in K} u(T+\theta,x) 
<\epsilon 
\right)\le \exp\left(-B'\left\{|\log(\epsilon)|\cdot [ 
\log\left(|\log(\epsilon)|\right)]^\delta \right\}^{1+\alpha}\right)
\]
for $\epsilon$ small enough. 
Then using the fact that $(t,x)\mapsto u(t,x)$ is continuous a.s., by 
letting $\theta$ go to zero, we can conclude that \eqref{E:Rate} holds for 
$\epsilon>0$ small enough.

Finally, the existence of the negative moments is a direct consequence of \eqref{E:Rate}.
This completes the proof of Theorem \ref{T:NegMom}.
\end{proof}

\subsection{Malliavin derivatives of \texorpdfstring{$u(t,x)$}{}}
\label{S:MalliavinD}

In this section, we study the Malliavin derivatives of $u(t,x)$. 
Due to the difficulties caused by the initial data, we need to show that $u(t,x)$ lives in some Sobolev spaces restricted to some measurable sets $\calT\times\calS \subset[0,\infty)\times\R^d$.

\begin{proposition}\label{P:D1}
Suppose that $\rho$ is a $C^1$ function with bounded Lipschitz
continuous derivative. Suppose that the initial data $\mu$ satisfies condition \eqref{E:J0finite}. Then
\begin{enumerate}
\item[(1)] For any
$(t,x)\in [0,T]\times \R^d$,
$u(t,x)$ belongs to $\D^{1,p}$ for all $p\ge 1$.
\item[(2)] The Malliavin derivative $Du(t,x)$ defines an $L^2([0,T];\calH)$-valued
process that satisfies the following linear stochastic differential equation
\begin{equation}\label{E:SPDEM}
\begin{aligned}
D_{\theta,\xi}u(t,x) =&
\quad\rho(u(\theta,\xi)) G(t-\theta,x-\xi)\\
&+
\int_\theta^t \int_{\R^d} G(t-s,x-y)\rho'(u(s,y))D_{\theta,\xi}u(s,y) W(\ud s, \ud
y),
\end{aligned}
\end{equation}
for all $(\theta,\xi)\in[0,T]\times \R^d$.
\item[(3)] If $\rho\in C^\infty(\R^d)$ and it has bounded derivatives of all orders,
and if for some measurable sets $\calT \subset [0,t]$ and $\calS \subset \RR^d$, the initial data satisfies the following condition
\begin{equation}
\sup_{(s, y) \in \calT \times \calS}   J_0^2(s,y) < \infty\,,
\end{equation}
then $u(t,x)\in \D_{\calT,\calS}^\infty$.
\end{enumerate}
\end{proposition}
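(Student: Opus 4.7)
The plan is to handle all three parts through Picard iteration. Define $u_0(t,x):=J_0(t,x)$ and, for $n\ge 0$,
\[
u_{n+1}(t,x)=J_0(t,x)+\int_0^t\!\int_{\R^d} G(t-s,x-y)\,\rho(u_n(s,y))\,W(\ud s,\ud y).
\]
By the moment bound \eqref{E:Mom} and the standard contraction argument underlying Theorem \ref{T:ExUni}, $u_n(t,x)\to u(t,x)$ in $L^p(\Omega)$ for every fixed $(t,x)\in[0,T]\times\R^d$ and every $p\ge 1$, with $\Norm{u_n(t,x)}_p$ dominated uniformly in $n$ by the right-hand side of \eqref{E:Mom}.

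For parts (1) and (2) I would show by induction on $n$ that $u_n(t,x)\in\D^{1,p}$ and that its Malliavin derivative satisfies
\[
D_{\theta,\xi}u_{n+1}(t,x)=\rho(u_n(\theta,\xi))\,G(t-\theta,x-\xi)+\int_\theta^t\!\int_{\R^d}G(t-s,x-y)\rho'(u_n(s,y))D_{\theta,\xi}u_n(s,y)\,W(\ud s,\ud y),
\]
using the chain rule for $D$ applied to $\rho(u_n)$ and the commutation of $D$ with the Walsh integral. Taking $\Norm{\cdot}_p$, squaring and integrating in $(\theta,\xi)$ against $f$, the isometry \eqref{E:Cov} together with the bound $|\rho'|\le \LIP_\rho$ produces an integral inequality of exactly the shape \eqref{E:IntIneq1}, with the role of $J_0^2(t,x)$ played by
\[
\int_0^t\!\ud\theta\!\iint_{\R^{2d}} G(t-\theta,x-\xi)G(t-\theta,x-\xi')\Norm{\rho(u_n(\theta,\xi))}_p\Norm{\rho(u_n(\theta,\xi'))}_p f(\xi-\xi')\,\ud\xi\,\ud\xi'.
\]
The linear growth of $\rho$ and \eqref{E:Mom} bound this by a constant times $(|\mu|*G(t,\cdot))^2(x)\,H(t;\gamma_p)$, so Lemma \ref{L:IntIneq} yields $\sup_n\Norm{u_n(t,x)}_{1,p}<\infty$. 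Lemma \ref{L:Dinfty} then gives $u(t,x)\in\D^{1,p}$, and taking $n\to\infty$ in the Picard equation for $Du_n$ (justified by the uniform bound on $\rho'$ and the $L^p$-convergence of $u_n$) yields \eqref{E:SPDEM}.

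For part (3) I would iterate this scheme in the order $k$ of the derivative. Differentiating \eqref{E:SPDEM} a further $k-1$ times and applying the Faà di Bruno formula produces an SPDE of the form
\[
D^{(k)}_{\theta_1,\xi_1;\dots;\theta_k,\xi_k}u(t,x)=A_k(\vec{\theta},\vec{\xi};t,x)+\int_{\theta^{\max}}^t\!\!\int_{\R^d}G(t-s,x-y)\rho'(u(s,y))D^{(k)}u(s,y)\,W(\ud s,\ud y),
\]
where $A_k$ is a sum of products of lower-order Malliavin derivatives multiplied by bounded derivatives $\rho^{(j)}(u)$. Computing the $\D^{k,p}_{\calT,\calS}$-norm means integrating the tensorized variables only over $(\calT\times\calS^2)^k$, and on this set the hypothesis $\sup_{(s,y)\in\calT\times\calS}J_0(s,y)<\infty$ combined with \eqref{E:Mom} gives a deterministic uniform bound on $\Norm{u(s,y)}_p$ and hence on the driving terms coming from $A_k$. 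Another application of the integral inequality from Lemma \ref{L:IntIneq} in the enlarged variables closes the induction and yields $u(t,x)\in\D^{k,p}_{\calT,\calS}$ for every $k,p$.

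The main obstacle is precisely the rough initial data: the standard uniform bound \eqref{E:SupNorm} used in \cite{BP98,NualartQuer07} is unavailable, so the classical closed-form $L^p$ estimate on $D^{(k)}u$ over all $(\theta,\xi)\in[0,t]\times\R^d$ diverges (consider $\mu=\delta_0$, where $J_0(s,y)$ blows up as $s\downarrow 0$ and as $|y|\to\infty$). The localization to $\calT\times\calS$ where $J_0$ is bounded is the decisive device that replaces the missing global bound; once this is in hand, the Picard--chain-rule scheme outlined above proceeds in essentially classical fashion.
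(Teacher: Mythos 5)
Your Picard-iteration scheme is essentially the paper's own route for all three parts: induction on the Picard index for (1)--(2), weak passage to the limit for the linear SPDE in (2), and induction on the Malliavin order over the localized domain $\calT\times\calS$ for (3). One small imprecision: Lemma~\ref{L:IntIneq} applies to a single function satisfying a self-referential integral inequality, but what the isometry and BDG actually give you is a \emph{recursion} relating iterate $n+1$ to iterate $n$; the paper closes that induction via Lemma~\ref{L:IntIneq2} (to bound the source term uniformly in $n$) together with Lemma~\ref{L:iterate} and the observation that one can enlarge $\gamma$ in $H(t;\gamma)$ until the iteration map becomes a contraction. With that citation fixed the argument is the same.
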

Throughout this section, let $\calH_T$ denote the space $L^2([0,T];\calH)$, that is, 
\[
\InPrd{h,g}_{\calH_T} := 
\int_0^T \langle h(s, \cdot), g(s, \cdot)\rangle_{\mathcal{H}} \ud s, \quad\text{for $h,g\in\calH_T$.} 
\]
Recall the space $\calH_{\calT,\calS}$ defined by the norm in \eqref{E:HTS-norm}.

\begin{proof}[Proof of part (1) of Proposition \ref{P:D1}]
Fix $p\ge 2$ and $T>0$.
Consider the Picard approximations $u_m(t,x)$ in the proof of the existence of the random field solution in \cite{CH16Comparison}, that is, $u_0(t,x)=J_0(t,x)$, and for $m\ge 1$,
\[
u_m(t,x)=J_0(t,x) + \int_0^t\int_{\R^d}G(t-s,x-y)\rho(u_{m-1}(s,y))W(\ud s\ud y).
\]
It is proved in \cite{CH16Comparison} that $u_m(t,x)$ converges to $u(t,x)$ in $L^p(\Omega)$ as $m\rightarrow\infty$ and 
\begin{align}\label{E:MomentUm}
\sup_{m\in\bbN} \Norm{u_m(t,x)}_p\le C_1 ((1+|\mu|)*G(t,\cdot))(x)\quad\text{for all $t\in (0,T]$ and $x\in\R^d$.}
\end{align}
Now we claim that for some constants $C_2>0$ and $\gamma>0$, it holds that 
\begin{align}\label{E:IndDum}
\sup_{m\in\bbN}\E\left(\Norm{D u_m(t,x)}_{\calH_T}^p\right)< 
\left[C_2((1+|\mu|)*G(t,\cdot))(x) H(t;\gamma)^{1/2}\right]^{p}
\end{align}
for all $t>0$ and $x\in\R^d$.
It is clear that $0\equiv Du_0(t,x)$ satisfies \eqref{E:IndDum}.
Assume that $Du_k(t,x)$ satisfies \eqref{E:IndDum} for all $k<m$. Now we shall show that $Du_m(t,x)$ satisfies \eqref{E:IndDum} as well.
Notice that
\begin{align}\notag
D_{\theta,\xi}u_m(t,x)
= &  \quad G(t-\theta,x-\xi)\rho\left(u_{m-1}(\theta,\xi)\right) \\
\label{E:DunSPDE}
& +
\int_\theta^t\int_{\R^d} G(t-s,x-y)\rho'\left(u_{m-1}(s,y)\right)
D_{\theta,\xi}u_{m-1}(s,y)W(\ud s\ud y)\\
\notag
=:& A_m(\theta,\xi) + B_m(\theta,\xi).
\end{align}
Then by Minkowski's inequality and \eqref{E:MomentUm}, we see that 
\begin{align*}
\E\left(\Norm{A_m}_{\calH_T}^p\right)
= &\E\Bigg(\bigg[
\int_0^t\ud \theta\iint_{\R^{2d}}
\ud \xi\ud \xi'\:
G(t-\theta,x-\xi)\rho(u_{m-1}(\theta,\xi)) f(\xi-\xi')\\
&\hspace{10em}\times G(t-\theta,x-\xi')\rho(u_{m-1}(\theta,\xi'))
\bigg]^{p/2}\Bigg)\\
&\le
C \Bigg(
\int_0^t\ud \theta\iint_{\R^{2d}}
\ud \xi\ud \xi'\:
G(t-\theta,x-\xi)\left(1+\Norm{u_{m-1}(\theta,\xi)}_p\right) f(\xi-\xi')\\
&\hspace{10em}\times G(t-\theta,x-\xi')
\left(1+\Norm{u_{m-1}(\theta,\xi')}_p\right) \Bigg)^{p/2}\\
&\le
C' \Bigg(
\int_0^t\ud \theta\iint_{\R^{2d}}
\ud \xi\ud \xi'\:
G(t-\theta,x-\xi)\left(1+(|\mu|*G(\theta,*))(\xi)\right) f(\xi-\xi')\\
&\hspace{10em}\times G(t-\theta,x-\xi')
\left(1+(|\mu|*G(\theta,*))(\xi')\right) \Bigg)^{p/2}
\end{align*}
where the constant $C'$ does not depend on $m$. 
Therefore, by Lemma \ref{L:IntIneq2} with $n=1$, we see that for some $C_3>0$ which depends on the Lipschitz constant of $\rho$ and $C_1$ such that
\[
\sup_{m\in\bbN}\E\left(\Norm{A_m}_{\calH_T}^p\right)< 
\left[C_3((1+|\mu|)*G(t,\cdot))(x)\right]^{p}
\quad\text{for all $t>0$ and $x\in\R^d$.}
\]
%

As for $B_m$, by the Burkholder-Davis-Gundy inequality and by the boundedness of $\rho'$,
\begin{align*}
\E\left(\Norm{B_m}_{\calH_T}^p\right) 
\le & C\: \E\Bigg(
\bigg[
\int_0^t\ud s \iint_{\R^{2d}} \ud y\ud y' \:
G(t-s,x-y) \Norm{Du_{m-1}(s,y)}_{\calH_T} f(y-y')\\
& \hspace{10em}\times
G(t-s,x-y') \Norm{Du_{m-1}(s,y')}_{\calH_T}
\bigg]^{p/2}
\Bigg) \\
\le & C \:
\Bigg(
\int_0^t\ud s \iint_{\R^{2d}} \ud y\ud y' \:
G(t-s,x-y) \Norm{\Norm{Du_{m-1}(s,y)}_{\calH_T}}_p f(y-y')\\
& \hspace{10em}\times
G(t-s,x-y') \Norm{\Norm{Du_{m-1}(s,y')}_{\calH_T}}_p
\Bigg)^{p/2}.
\end{align*}
Then by the induction assumption and Lemma \ref{L:iterate},
\[
\E\left(\Norm{B_m}_{\calH_T}^p\right) \le 
\left[C \gamma^{-1/2} ((1+|\mu|)*G(t,\cdot))(x) H(t;\gamma)^{1/2}\right]^{p}.
\]
Since the function $\gamma\mapsto H(t;\gamma)$ is nondecreasing, by increasing the value of $\gamma$, one can make sure that the above mapping is a contraction. 
Therefore, \eqref{E:IndDum} is true. Finally, by Lemma \ref{L:Dinfty}, we can conclude that $u(t,x)\in \D^{1,p}$.
This proves part (1) of Proposition \ref{P:D1}.
\end{proof}

\bigskip

\begin{proof}[Proof of part (2) of Proposition \ref{P:D1}]
The proof is similar to the proof of part (2) of Proposition 5.1 in \cite{CHN16Density}.
Fix $t\in (0,T]$ and $x\in\R^d$.
By Lemma 1.2.3 of \cite{Nualart09}, $D_{\theta,\xi}u_m(t,x)$ converges to $D_{\theta,\xi}u(t,x)$ in the weak topology of $L^2(\Omega;\calH_T)$, that is, for any $h\in\calH_T$ and any square integrable random variable $F\in\calF_t$, 
\[
\lim_{n\rightarrow\infty}
\E\left(\InPrd{D_{\theta,\xi}u_m(t,x)-D_{\theta,\xi}u(t,x),h}_{\calH_T}F\right)=0.
\]
Hence, we need to prove that the right-hand of \eqref{E:DunSPDE} converges to the right-hand side of \eqref{E:SPDEM} in this weak topology of $L^2(\Omega;\calH_T)$.
By the Cauchy-Schwartz inequality, 
\begin{multline}
\label{E_:First}
\E\left(\left|\InPrd{\left(\rho(u)-\rho(u_m)\right)G(t-\cdot,x-\cdot),h}_{\calH_T}F\right|\right)
\le \LIP_\rho\Norm{h}_{\calH_T}\Norm{F}_2\\
\times \bigg(
\int_0^t\iint_{\R^2} G(t-s,x-y)\Norm{u(s,y)-u_m(s,y)}_2\\
\times G(t-s,x-y')\Norm{u(s,y')-u_m(s,y')}_2 f(y-y')\ud s\ud y\ud y'
\bigg)^{1/2}.
\end{multline}
Denote 
\[
F_m(t,x):= 
\begin{cases}
 \Norm{u(t,x)}_2 &\text{if $m=-1$,}\\[0.5em]
\Norm{u(t,x)-u_m(t,x)}_2 &\text{if $m\ge 0$.}
\end{cases}
\]
For some positive constant $\lambda$ large enough, we have that 
\begin{align*}
F_m^2(t,x) &\le \lambda 
\int_0^t \ud s\iint_{\R^{2d}}\ud y\ud y'\:
G(t-s,x-y)G(t-s,x-y')f(y-y') F_{m-1}(s,y)F_{m-1}(s,y')
\end{align*}
for all $m\ge 0$.
We claim that 
\begin{align}\label{E_:Fm}
F_m(t,x) \le C (|\mu|*G(t,\cdot))(x) \left[\sum_{i=m+1}^\infty \lambda^i h_i(t)\right]^{1/2} \quad\text{for all $m\ge -1$.}
\end{align}
It is true for $m=-1$ by Theorem \ref{T:Mom}.
Suppose that it is true for $m>-1$. We now prove that it holds for $m+1$. By the induction assumption,
\begin{align*}
F_{m+1}^2(t,x)\le C^2 \lambda \int_0^t\ud s 
\left(\sum_{i=m+1}^\infty \lambda^i h_i(s)\right)\iint_{\R^{2d}}\ud y\ud y' \: 
&G(t-s,x-y)G(t-s,x-y')f(y-y') \\
&\times (|\mu|*G(s,\cdot))(y)\: (|\mu|*G(s,\cdot))(y').
\end{align*}
Then by the same arguments as the proof of Lemma 2.2 of \cite{CH16Comparison}, we see that 
\begin{align*}
F_{m+1}(t,x) &\le C(|\mu|*G(t,\cdot))(x)
\left(\lambda \int_0^t \left[\sum_{i=m+1}^\infty \lambda^i h_i(s)\right] k(t-s) \ud s\right)^{1/2}\\
&=C(|\mu|*G(t,\cdot))(x)
\left( \sum_{i=m+1}^\infty \lambda^{i+1} h_{i+1}(t)\right)^{1/2}.
\end{align*}
Hence, it holds for $m+1$. This proves \eqref{E_:Fm}.
By the same argument as above, we see that
\begin{align}\label{E:Fm->0}
F_m^2(t,x)\le  
C\left[(|\mu|*G(t,\cdot))(x)\right]^2
 \sum_{i=m+2}^\infty \lambda^{i} h_{i}(t)\rightarrow 0 \quad \text{as $m\rightarrow\infty$,}
\end{align}
because $\sum_{i=0}^\infty \gamma^{i} h_{i}(s)=H(t;\gamma)<\infty$ for any $\gamma>0$.
Therefore, 
\[
\lim_{m\rightarrow\infty}
\E\left(\InPrd{\left(\rho(u)-\rho(u_m)\right)G(t-\cdot,x-\cdot),h}_{\calH_T}F\right)=0.
\]

Now denote the second term on the right-hand side of \eqref{E:SPDEM} by $B(\theta,\xi)$. 
Recall that $B_m(\theta,\xi)$ is defined in \eqref{E:DunSPDE}.
It remains to prove that
\begin{align}
\lim_{m\rightarrow\infty}\E\left(\InPrd{B-B_m,h}_{\calH_T} F\right)=0.
\end{align}
Notice that
\begin{align*}
B(\theta,\xi)&-B_m(\theta,\xi)\\
=&\int_0^t\int_{\R^d} G(t-s,x-y)\left(\rho'(u(s,y))-\rho'(u_{m-1}(s,y))\right)
D_{\theta,\xi}u_{m-1}(s,y)W(\ud s\ud y)\\
&+\int_0^t\int_{\R^d} G(t-s,x-y)\rho'(u(s,y))\left(D_{\theta,\xi}u(s,y)-D_{\theta,\xi}u_{m-1}(s,y)\right)
W(\ud s\ud y)\\
=:& B_{m,1}(\theta,\xi) + B_{m,2}(\theta,\xi).
\end{align*}

Because $F$ is square integrable, for some adapted random field $\{\Phi(s,y),s\in[0,T],y\in\R \}$ with 
\begin{align}\label{E:A-Norm}
\int_0^t\iint_{\R^{2d}} \E\left[\Phi(s,y)\Phi(s,y')\right]f(y-y')\ud s\ud y\ud y'<\infty,
\end{align}
it holds that 
\[
F= \E[F] + \int_0^t\int_{\R^d} \Phi(s,y)W(\ud s\ud y).
\]
Hence,
\begin{align*}
\E\left(\InPrd{B_{m,1}(t,x),h}_{\calH_T}F\right)
=&\E \Bigg[\int_0^t\ud s \iint_{\R^d}\ud y\ud y'\: 
G(t-s,x-y) f(y-y')\\
&\times
\Phi(s,y')\left(
\rho'(u(s,y))-\rho'(u_{m-1}(s,y))
\right)
\InPrd{Du_{m-1}(s,y),h}_{\calH_T}\Bigg]\\
\le&\left(\int_0^t\iint_{\R^{2d}} \E\left[\Phi(s,y)\Phi(s,y')\right]f(y-y')\ud s\ud y\ud y'\right)^{1/2}\\
&\times \Bigg[
\int_0^t\ud s\iint_{\R^{2d}}\ud y\ud y'\: 
G(t-s,x-y)f(y-y')G(t-s,x-y')\\
&\hspace{1em}\times\E\bigg(
\left[\rho'(u(s,y))-\rho'(u_{m-1}(s,y))\right]
\left[\rho'(u(s,y'))-\rho'(u_{m-1}(s,y'))\right]\\
&\hspace{5em} \times \InPrd{Du_{m-1}(s,y),h}_{\calH_T} \InPrd{Du_{m-1}(s,y'),h}_{\calH_T}
\bigg)\Bigg]^{1/2}\\
&=: I_1^{1/2} I_{2,m}^{1/2}.
\end{align*}
where we have applied the Cauchy-Schwartz inequality.
It is clear that $I_1<\infty$. 
It remains to prove that $\lim_{m\rightarrow\infty}I_{2,m}=0$. On the one hand, since $\rho'$ is bounded,
\begin{align*}
I_{2,m}\le C \int_0^t\ud s\iint_{\R^{2d}}\ud y\ud y'\:
& G(t-s,x-y) f(y-y') G(t-s,x-y')\\
\times & \Norm{\InPrd{Du_{m-1}(s,y),h}_{\calH_T}}_2
\Norm{\InPrd{Du_{m-1}(s,y'),h}_{\calH_T}}_2.
\end{align*}
Then by \eqref{E:IndDum} and Lemma \ref{L:iterate}, we find an integrable upper bound of the integrand that does not depend on $m$. On the other hand, the expectation in $I_{2,m}$ is bounded by 
\begin{align*}
\LIP_{\rho'}^2 
&\Norm{u(s,y)-u_{m-1}(s,y)}_4
\Norm{u(s,y')-u_{m-1}(s,y')}_4\\
&\times \Norm{\InPrd{Du_{m-1}(s,y),h}_{\calH_T}}_4 
\Norm{\InPrd{Du_{m-1}(s,y),h}_{\calH_T}}_4.
\end{align*}
By the same arguments as those leading to \eqref{E:Fm->0}, and by the uniform bound in \eqref{E:IndDum}, we see that the above quantity converges to zero as $m\rightarrow \infty$. 
Then an application of the dominated convergence theorem completes the proof of part (2) of Proposition \ref{P:D1}.
\end{proof}

\bigskip

\begin{proof}[Proof of part (3) of Proposition \ref{P:D1}]
Fix $S \subset \RR^d$ and $\calT\subset[0,T]$. Recall that $\calH_{\calT,\calS}$ is defined in \eqref{E:HTS}. We will prove the following property by induction:
\begin{equation}\label{E:DnInd}
\sup_{m \in \mathbb{N}}\sup_{(t,x) \in [0, T]\times \RR^d} \left\| \left\| D^n u_m(t,x)\right\|_{\mathcal{H}_{\calT,\calS}^{\otimes n}} \right\|_p < \infty \quad \text{for all} \ n \geq 1 \  \text{and all}\ p \geq 2\,.
\end{equation}

{\noindent\bf Step 1.} Consider the case $n=1$. We will prove by induction that for some finite constant $\Theta_T>0$ independent of $m$,
\begin{equation}\label{E: MallD1m}
\sup_{(t,x) \in [0, T]\times \RR^d} \left\| \left\| D u_m(t,x)\right\|_{\mathcal{H}_{\calT,\calS}} \right\|_p <\Theta_T\,.
\end{equation}
Since $u_0(t,x)$ is deterministic, \eqref{E: MallD1m} is true for $m=0$. Now suppose \eqref{E: MallD1m} holds for all $k \leq m$ and we will consider $k=m+1$. Notice that
\begin{align*}
D_{\theta_1,\xi_1}u_{m+1}(t,x) = & \rho(u_m(\theta_1, \xi_1))G(t-\theta_1, x-\xi_1) \\
& + \int_0^t \int_{\RR^d} G(t-s, x-y) \rho'(u_m(s,y))D_{\theta_1,\xi_1}u_m(s,y)W(\ud s\ud y)\,.
\end{align*}
Thus, 
\begin{align*}
\left\| \|Du_{m+1}(t,x)\|_{\mathcal{H}_{\calT,\calS}}\right\|_p^2 \leq &\quad 2 \left\|\|\rho(u_m(\cdot,*))G(t-\cdot, x-*)\|_{\mathcal{H}_{\calT,\calS}} \right\|^2_p\\
& + 2 \left\| \left\| \int_0^t \int_{\RR^d} G(t-s, x-y)\rho'(u_m(s,y))D u_m(s,y) W(\ud s\ud y) \right\|_{\calH_{\calT,\calS}}\right\|_p^2\\
=:&\: 2 I_1 + 2 I_2\,.
\end{align*}
Using Minkowski's inequality, we obtain 
\begin{align*}
I_1 &= \Bigg[\E\bigg\{ \bigg(\int_{\calT} \iint_{S^2}  G(t-\theta_1, x-\xi_1)G(t-\theta_1, x-\xi_1')f(\xi_1-\xi_1')\\
& \hspace{10em} \times \rho(u_m(\theta_1, \xi_1)) \rho(u_m(\theta_1, \xi_1'))\ud\xi_1 \ud\xi_1' \ud\theta_1\bigg)^{\frac{p}{2}}\bigg\} \Bigg]^{\frac{2}{p}}\\
&\leq \int_{\calT} \iint_{S^2} \left\|\rho(u_m(\theta_1, \xi_1))\rho(u_m(\theta_1, \xi_1')) \right\|_{\frac{p}{2}} G(t-\theta_1, x-\xi_1)G(t-\theta_1, x-\xi_1') f(\xi_1-\xi_1') \ud\xi_1 \ud\xi_1' \ud\theta_1\\
&\leq C \sup_{m\in\bbN} \sup_{s \in \calT, y \in S} \left(1+ \|u_m(s, y)\|_p^2 \right) \int_0^T \iint_{\RR^{2d}} 
G(s, \xi_1)G(s, \xi_1') f(\xi_1-\xi_1') \ud\xi_1 \ud\xi_1' \ud s\\
& \le C \sup_{s \in \calT, y \in S} \left(1+ (|\mu|*G(s,\cdot))(y) \right)<\infty,
\end{align*}
where we have used the fact that $\Norm{u_m(s,y)}_p\le \Norm{u(s,y)}_p$ and the moment bound \eqref{E:Mom}.
As for $I_2$, using the Burkholder-Davis-Gundy inequality, by the boundedness of $\rho'$, we see that
\begin{align*}
I_2 \leq  \Norm{\rho'}_{L^\infty(\R)}^2 \int_0^t\ud s \iint_{\RR^{2d}}\ud y\ud y'\: &G(t-s, x-y) G(t-s, x-y')f(y-y') \\
 \times &\left\|\|Du_m(s,y)\|_{\mathcal{H}_{\calT,\calS}} \right\|_p\cdot \left\|\|Du_m(s,y')\|_{\mathcal{H}_{\calT,\calS}} \right\|_p.
\end{align*}
Then one can apply Lemma \ref{L:IntIneq2} with constant initial data to conclude that \eqref{E: MallD1m} holds.

{\bigskip \noindent\bf Step 2.}
We have proved \eqref{E:DnInd} for $n=1$ in the previous step. We assume that \eqref{E:DnInd} holds for $n-1$ with $n\ge 1$.  Now we will prove by induction on $m$ that for some finite constant $\Theta_T>0$ independent of $m$,
\begin{equation}\label{E:MallDnm}
\sup_{(t,x) \in [0, T]\times \RR^d} \Norm{\Norm{D^n u_m(t,x)}_{\mathcal{H}_{\calT,\calS}^{\otimes n}}}_p <\Theta_T\,.
\end{equation}
Denote 
\begin{align}\label{E:NoT-Alpha}
\begin{gathered}
 \alpha :=((\theta_1,\xi_1),\dots, (\theta_n,\xi_n))
\quad\text{and}\\
\hat{\alpha}_k := ((\theta_1,\xi_1),\dots, (\theta_{k-1},\xi_{k-1}),(\theta_{k+1},\xi_{k+1}),\dots,  (\theta_n,\xi_n)).
\end{gathered}
\end{align}
Clearly, the case $m=0$ is true since $D^n_\alpha u_0(t,x)\equiv 0$. By Lemma 5.6 in \cite{CHN16Density}, 
\begin{align*}
D^n_\alpha u_{m+1}(t,x) =& \sum_{k=1}^n D_{\hat{\alpha}_k}^{n-1} \rho(u_m(\theta_k, \xi_k))G(t-\theta_k, x-\xi_k)\\
& + \int_0^t \int_{\RR^d} D^n_\alpha \: \rho(u_m(s,y)) G(t-s, x-y) W(\ud s\ud y)\,.
\end{align*}
Thus, 
\begin{align*}
&\left\|D^n u_{m+1}(t,x)\right\|_{\mathcal{H}_{\calT,\calS}^{\otimes n}} \\
&\leq \sum_{k=1}^n \bigg ( \int_{\calT}\iint_{S^2}
\langle D_{\hat{\alpha}_k}^{n-1} \rho\left( u_m(\theta_k, \xi_k) \right), D_{\hat{\alpha}_k}^{n-1} \rho\left( u_m(\theta_k, \xi'_k) \right)\rangle_{\mathcal{H}_{\calT,\calS}^{\otimes (n-1)}} \\
&\hspace{8em}\times G(t-\theta_k, x-\xi_k), G(t-\theta_k, x-\xi_k)f(\xi_k-\xi_k') \ud\xi_k \ud\xi_k' \ud\theta_k\bigg)^{\frac{1}{2}}\\
&\qquad + \left\|\int_0^t \int_{\RR^d} D^n\rho(u_m(s,y)) G(t-s, x-y) W(\ud s\ud y) \right\|_{\mathcal{H}_{\calT,\calS}^{\otimes n}}\\
&=: \left(\sum_{k=1}^n J_{1,k}\right) + J_2\,.
\end{align*}
For $J_{1,k}$, by Minkowski's inequality, we see that
\begin{align*}
\Norm{J_{1,k}}_p^2&\le\int_{\calT}\ud \theta_k\iint_{S^2}\ud \xi_k\ud\xi_k'\: 
\left\| \left\|D_{\hat{\alpha}_k}^{n-1} \rho\left( u_m(\theta_k, \xi_k) \right) \right\|_{\mathcal{H}_{\calT,\calS}^{\otimes (n-1)}} \right\|_{p} \left\| \left\|D_{\hat{\alpha}_k}^{n-1} \rho\left( u_m(\theta_k, \xi_k') \right) \right\|_{\mathcal{H}_{\calT,\calS}^{\otimes (n-1)}} \right\|_{p} \\
&\hspace{8em}\times G(t-\theta_k, x-\xi_k) G(t-\theta_k, x-\xi_k') f(\xi_k-\xi_k') \\
&=: J^*_{1,k}.
\end{align*}
As for $J_2$, by the Burkholder-Davis-Gundy inequality and Minkowski's inequality,
\begin{align*}
\Norm{J_2}_p^2\leq& 4p\: \E\Bigg[ \bigg( \int_0^t \iint_{\RR^{2d}} \InPrd{ D^n\rho\left(u_m(s,y)\right), D^n \rho\left( u_m(s,y) \right)}_{\calH_{\calT,\calS}^{\otimes n}} \\
&\qquad \qquad \qquad \times G(t-s,x-y) G(t-s,x-y')f(y-y') \ud y \ud y' \ud s\bigg)^{\frac{p}{2}}\Bigg]^{\frac{2}{p}}\\
\leq& 4p \int_0^t \iint_{\RR^{2d}} \left\| \left\| D^n\rho\left(u_m(s,y)  \right)\right\|_{\mathcal{H}_{\calT,\calS}^{\otimes n}} \right\|_{p} \left\| \left\|D^n\rho\left(u_m(s,y')\right)  \right\|_{\mathcal{H}_{\calT,\calS}^{\otimes n}} \right\|_{p}\\
&\qquad \qquad\qquad\times  G(t-s, x-y) G(t-s, x-y') f(y-y') \ud y \ud y' \ud s\\
=:& 4p J^*_2\,.
\end{align*}
Therefore, by $(a_1+\dots+a_n)^2\le n(a_1^2+\dots+a_n^2)$, we see that
\[
\Norm{\Norm{D^n u_m(t,x)}_{\mathcal{H}_{\calT,\calS}^{\otimes n}}}_p^2 \le C_n \sum_{k=1}^n J_{1,k}^* + C_n 4p J_2^*.
\]
where $C_n=n+1$.
By Lemma 5.5 of \cite{CHN16Density} and the induction assumption,
we have that 
\[
J_{1,k}^* \le C \sup_{m\in\bbN}\sup_{(s,y)\in [0,T]\times\R}\Norm{\:\Norm{D^{n-1} \rho(u_{m}(s,y))}_{\calH_{\calT,\calS}^{\otimes (n-1)}}}_p^2  <\infty.
\]
Let 
\begin{equation}
\Delta ^n (\rho, u) := D^n \rho(u) - \rho'(u) D^n u\,,
\end{equation}
be all terms in the summation of $D_\alpha^n \rho(u)$ that have Malliavin derivatives of order less than or equal to $n-1$.
Then 
\begin{align*}
J_2^* \le & 2\Norm{\rho'}_{L^\infty(\R)}^2 C_n
\int_0^t \iint_{\RR^{2d}} \left\| \left\| D^n u_m(s,y)\right\|_{\mathcal{H}_{\calT,\calS}^{\otimes n}} \right\|_{p} \left\| \left\|D^n u_m(s,y')\right\|_{\mathcal{H}_{\calT,\calS}^{\otimes n}} \right\|_{p}\\
&\qquad \qquad\qquad\times  G(t-s, x-y) G(t-s, x-y') f(y-y') \ud y \ud y' \ud s\\
& +2 C_n 
\int_0^t \iint_{\RR^{2d}} \Norm{\Norm{
\Delta^n\left(\rho, u_m(s,y)\right)
}_{\mathcal{H}_{\calT,\calS}^{\otimes n}}}_{p} 
\Norm{\Norm{
\Delta^n\left(\rho, u_m(s,y')\right)
}_{\mathcal{H}_{\calT,\calS}^{\otimes n}}}_{p}
\\
&\qquad \qquad\qquad\times  G(t-s, x-y) G(t-s, x-y') f(y-y') \ud y \ud y' \ud s\\
=:& \: 2\Norm{\rho'}_{L^\infty(\R)}^2 C_n J_{2,1}^* + 2 C_n J_{2,2}^*.
\end{align*}
By the induction assumption, we have that 
\[
J_{2,2}^* \le C \sup_{m\in\bbN}\sup_{(s,y)\in [0,T]\times\R}\Norm{\:\Norm{\Delta^{n}\left( \rho, u_{m}(s,y)\right)}_{\calH_{\calT,\calS}^{\otimes (n-1)}}}_p^2  <\infty.
\]
Therefore, for some $C_n'>0$,
\begin{align*}
\left\| \left\|D^n u_m(t,x) \right\|_{\mathcal{H}_{\calT,\calS}^{\otimes n}} \right\|^2_p & \leq C_n' + C_n' \int_0^t \iint_{\RR^{2d}} 
\left\| \left\| D^n u_m(s,y)\right\|_{\mathcal{H}_{\calT,\calS}^{\otimes n}}\right\|_{p}\left\| \left\| D^n u_m(s,y')\right\|_{\mathcal{H}_{\calT,\calS}^{\otimes n}}\right\|_{p}\\
&\qquad \qquad \times G(t-s, x-y) G(t-s, x-y') f(y-y') \ud y \ud y' \ud s.
\end{align*}
Finally, an application of Lemma \ref{L:iterate} with $\mu(\ud x) \equiv \sqrt{C_n'}\ud x$ and $g(t,x) =\left\| \left\|D^n u_m(t,x) \right\|_{\mathcal{H}_{\calT,\calS}^{\otimes n}} \right\|_p$ proves \eqref{E:MallDnm}. Therefore, \eqref{E:DnInd} holds. This completes the proof of part (3) of Proposition \ref{P:D1}.
\end{proof}

\subsection{Density at a single point (Proof of Theorem \ref{S:Single})}
\label{S:Single}

In this section, we will prove Theorem \ref{T:Single}. We need to first prove two lemmas and one special case (Theorem \ref{T_:Single} below).

\begin{lemma}\label{L:J0f}
If for some cube $Q'=(a_1',b_1')\times\dots\times(a_d',b_d') \subset \R^d$, $a_i'<b_i'$, the measure $\mu$ restricted to this $\bar{Q}'=[a_1',b_1']\times\dots\times[a_d',b_d']$ has a density $f(x)$ with $f(x)$ being $\beta$-H\"older continuous
for some $\beta\in (0,1)$, then for any compact set $Q\subset Q'$ and $T>0$, there exists some finite constant $C:=C(Q,Q',T,\beta,\mu)>0$ such that
 \[
\left|\int_{\R^d} G(t,x-y) \mu(\ud y) -f(x)\right| \le C t^{\beta/2} \qquad\text{for all $(t,x)\in (0,T]\times Q$.}
 \]
\end{lemma}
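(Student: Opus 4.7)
The plan is to write
\[
\int_{\R^d} G(t,x-y)\mu(\ud y) - f(x) = A_1(t,x) + A_2(t,x) + A_3(t,x),
\]
where
\[
A_1(t,x) := \int_{Q'} G(t,x-y)\bigl[f(y)-f(x)\bigr]\ud y, \qquad A_2(t,x) := -f(x)\int_{\R^d\setminus Q'}G(t,x-y)\ud y,
\]
\[
A_3(t,x) := \int_{\R^d\setminus Q'} G(t,x-y)\mu(\ud y),
\]
and to bound each term separately. Set $\delta := \mathrm{dist}(Q,\partial Q')>0$ (which is positive because $Q$ is a compact subset of the open cube $Q'$) and let $L$ denote the $\beta$-H\"older constant of $f$ on $\bar Q'$.

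For the main term $A_1$, using $|f(y)-f(x)|\le L|x-y|^\beta$ on $Q'$ and extending the domain of integration to all of $\R^d$, one obtains by the change of variables $z=(y-x)/\sqrt t$ that
\[
|A_1(t,x)|\le L\int_{\R^d} G(t,x-y)|x-y|^\beta \ud y = L\, t^{\beta/2}\int_{\R^d} G(1,z)|z|^\beta \ud z = L\, c_\beta\, t^{\beta/2},
\]
with $c_\beta<\infty$. This is the source of the exponent $\beta/2$.

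The tail contributions $A_2$ and $A_3$ are expected to decay faster than any polynomial in $t$, hence are negligible compared to $t^{\beta/2}$. For $x\in Q$ and $y\notin Q'$, we have $|x-y|\ge\delta$, so splitting the exponent gives
\[
G(t,x-y)\le (2\pi t)^{-d/2}\exp\!\left(-\frac{\delta^2}{4t}\right)\exp\!\left(-\frac{|x-y|^2}{4t}\right).
\]
For $A_2$, after further bounding $\exp(-|x-y|^2/(4t))\le 1$ one gets $|A_2(t,x)|\le \|f\|_{L^\infty(\bar Q')}\cdot C\exp(-\delta^2/(4t))$. For $A_3$, using that $1/t\ge 1/T$ on $(0,T]$ yields
\[
|A_3(t,x)|\le (2\pi t)^{-d/2}\exp\!\left(-\frac{\delta^2}{4t}\right)\int_{\R^d}\exp\!\left(-\frac{|x-y|^2}{4T}\right)|\mu|(\ud y),
\]
and the remaining integral is finite uniformly in $x\in Q$ because $|x-y|^2\ge |y|^2/2-|x|^2$ reduces it to $\int e^{-a|y|^2}|\mu|(\ud y)$ for some $a>0$, which is finite by \eqref{E:J0finite}. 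Since $t^{-d/2}\exp(-\delta^2/(4t))/t^{\beta/2}\to 0$ as $t\downarrow 0$ and is continuous on $(0,T]$, it is bounded, giving $|A_2|+|A_3|\le C\, t^{\beta/2}$.

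The only real work is the elementary decomposition and the mollifier bound on $A_1$; the tail estimates on $A_2, A_3$ are essentially automatic once one uses the gap $\delta>0$ together with \eqref{E:J0finite}. There is no serious obstacle here, and the constant $C$ depends only on $Q,Q',T,\beta$ and on $\mu$ through the H\"older constant of $f$ and the integral in \eqref{E:J0finite}.
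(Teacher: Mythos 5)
Your decomposition is exactly the paper's (your $A_1,A_2,A_3$ are their $I_1,I_3,I_2$), and the scaling argument for $A_1$ and the distance-based Gaussian split for the tails are the same; this is essentially the same proof.

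There is one small slip in the $A_2$ estimate as written. After the split
\[
G(t,x-y)\le (2\pi t)^{-d/2}\exp\!\left(-\frac{\delta^2}{4t}\right)\exp\!\left(-\frac{|x-y|^2}{4t}\right),
\]
you then bound $\exp(-|x-y|^2/(4t))\le 1$; but then the resulting integral $\int_{\R^d\setminus Q'} (2\pi t)^{-d/2}\,\ud y$ diverges, so the claimed bound $|A_2|\le \|f\|_{L^\infty(\bar Q')}\,C\,e^{-\delta^2/(4t)}$ does not follow from that step. The fix is exactly the trick you already use for $A_3$: keep the Gaussian and either (i) integrate it, $\int_{\R^d}\exp(-|x-y|^2/(4t))\,\ud y=(4\pi t)^{d/2}$, which cancels the $(2\pi t)^{-d/2}$ up to the dimensional constant $2^{d/2}$; or (ii) replace $t$ by $T$ in the second exponential as you do for $A_3$. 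Either route gives $|A_2(t,x)|\le 2^{d/2}\|f\|_{L^\infty(\bar Q')}e^{-\delta^2/(4t)}$, and then your observation that $t^{-d/2}e^{-\delta^2/(4t)}/t^{\beta/2}$ is bounded on $(0,T]$ finishes the argument. The paper instead absorbs the exponential into a crude $Ct$ bound on $I_2,I_3$; both variants are correct and equivalent in effect.
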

\begin{proof}
Fix $x\in Q$. Notice that
\begin{align*}
\left|\int_{\R^d} G(t,x-y) \mu(\ud y) -f(x)\right| \le &
\quad \int_{Q'} G(t,x-y)\left|f(y)-f(x)\right|\ud y\\
 &+\int_{Q'^c} G(t,x-y) |\mu|(\ud y)+ |f(x)|\int_{Q'^c} G(t,x-y) \ud y\\
 =:& I_1+I_2+ I_3.
\end{align*}
By the H\"older continuity of $f$,
\begin{align*}
I_1 &\le C \int_{Q'} G(t,x-y) |y-x|^\beta \ud y\\
&\le C \int_{\R^d} t^{-d/2}G\left(1,\frac{y}{t^{1/2}}\right) |y|^\beta \ud y\\
&= C t^{\beta/2} \int_{\R^d} G\left(1,z\right) |z|^\beta \ud z= C t^{\beta/2}.
\end{align*}

Denote $\text{dist}(y,Q) =\min\left(|y-z|: z\in Q\right)$ and 
$\text{dist}(Q_1,Q_2) =\min\left(|y-z|: y\in Q_1, z\in Q_2\right)$.
It is clear that 
\[
\text{dist}(y,x) \ge \text{dist}(y,Q) \ge \text{dist}(Q'^c,Q) >0,\quad\text{for all $x\in Q$ and $y\in Q'^c$.}
\]
Hence,
\[
e^{-\frac{|y-x|^2}{2t}} \le  e^{-\frac{\text{dist}(Q'^c,Q)^2}{4t}} e^{-\frac{\text{dist}(y,Q)^2}{4T}},
\quad\text{for all $x\in Q$, $y\in Q'^c$ and $t\in (0,T]$,}
\]
which implies that
\[
I_3\le C t^{-d/2} e^{-\frac{\text{dist}(Q,Q'^c)^2}{4t}} \left(\sup_{x\in \bar{Q}'}|f(x)|\right)
\int_{Q'^c} e^{-\frac{\text{dist}(y,Q)^2}{4T}} \ud y
\le C t.
\]
Similarly,
\[
I_2\le C t^{-d/2} e^{-\frac{\text{dist}(Q,Q'^c)^2}{4t}} 
\int_{Q'^c} e^{-\frac{\text{dist}(y,Q)^2}{4T}} |\mu|(\ud y)
\le C t,
\]
which completes the proof of Lemma \ref{L:J0f}.
\end{proof}


\begin{lemma}\label{L:LimPmom}
Let $u$ be the solution with the initial data $\mu$ that satisfies condition \eqref{E:J0finite}.
Suppose there exists a cube $Q'=[a_1,b_1]\times\dots\times[a_d,b_d]$ with $a_i<b_i$ such that
the measure $\mu$ restricted to $Q'$ has a bounded density $g(x)$.  Then for any $Q\subset Q'$, the following properties hold:
\begin{enumerate}
\item[(1)] For all $T>0$, $\sup_{(t,x)\in [0,T]\times Q} \Norm{u(t,x)}_p<+\infty$;
\item[(2)] If $g$ is $\beta$-H\"older continuous on $Q'$
for some $\beta\in(0,1)$, then for all $x\in Q$,
 \[
 \Norm{u(t,x)-u(s,x)}_p\le C_{T} |t-s|^{\frac{\alpha\wedge \beta}{2}}
 \qquad\text{for all $0\le s\le t\le  T$ and $p\ge 2$}.
 \]
\end{enumerate}
\end{lemma}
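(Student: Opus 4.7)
The plan is to combine the moment formulas from Theorem~\ref{T:Mom} with heat-kernel localization that exploits the separation of $Q$ from $Q'^c$. For Part~(1), I would invoke \eqref{E:Mom} (or \eqref{E:MomAlpha}) to reduce matters to showing that $M(t,x):=(|\mu|*G(t,\cdot))(x)$ is uniformly bounded on $(0,T]\times Q$. Writing $\mu=\mu|_{Q'}+\mu|_{Q'^c}$, the first piece contributes at most $\Norm{g}_{L^\infty(Q')}$, while for the second, setting $d_0:=\mathrm{dist}(Q,Q'^c)>0$ and splitting the exponent $|x-y|^2/(2t)$ as $d_0^2/(4t)+|x-y|^2/(4t)$ yields
\[
G(t,x-y)\le (2\pi t)^{-d/2}e^{-d_0^2/(4t)}e^{-|x-y|^2/(4T)},\qquad x\in Q,\; y\in Q'^c,\; t\in(0,T],
\]
whose prefactor is bounded on $(0,T]$ and whose last factor integrates against $|\mu|$ by \eqref{E:J0finite}.

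For Part~(2), I would split $u(t,x)-u(s,x)=[J_0(t,x)-J_0(s,x)]+[I(t,x)-I(s,x)]$. For the deterministic increment, the key step is to prove that $z\mapsto J_0(s,z)$ is $\beta$-H\"older on $Q$ \emph{uniformly} in $s\in(0,T]$. To this end I extend $g$ to a bounded $\beta$-H\"older function $\tilde g$ on $\R^d$ via $\tilde g(x):=g(\pi_{Q'}(x))$, where $\pi_{Q'}$ is the $1$-Lipschitz nearest-point projection onto the closed convex set $Q'$, and decompose
\[
J_0(s,z)-J_0(s,x)=\int_{\R^d}G(s,w)[\tilde g(z-w)-\tilde g(x-w)]\ud w+R(s,z,x),
\]
where the main term is $O(|z-x|^\beta)$ by H\"older continuity of $\tilde g$, and the remainder $R$ collects only contributions from $y\in Q'^c$, which is $O(|z-x|)$ by the separation estimate from Part~(1). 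The semigroup identity $J_0(t,x)=\int G(t-s,x-z)J_0(s,z)\ud z$ for $s>0$ then yields $|J_0(t,x)-J_0(s,x)|\le C(t-s)^{\beta/2}$, while the boundary case $s=0$ is covered directly by Lemma~\ref{L:J0f}.

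For the stochastic increment, I decompose
\[
I(t,x)-I(s,x)=\int_s^t\!\int G(t-r,x-y)\rho(u(r,y))W(\ud r\ud y)+\int_0^s\!\int[G(t-r,x-y)-G(s-r,x-y)]\rho(u(r,y))W(\ud r\ud y),
\]
apply BDG and Minkowski, use the linear growth of $\rho$ together with the moment bound $\Norm{u(r,y)}_p\le C_{p,T}[1+(|\mu|*G(r,\cdot))(y)]$ from Theorem~\ref{T:Mom}, insert the representation $(|\mu|*G(r,\cdot))(y)=\int G(r,y-z)|\mu|(\ud z)$, and apply identity \eqref{E:GGGG} to factor the quadruple integral as $M(t,x)^2$ (bounded on $Q$ by Part~(1)) times $\int f(v)G(2r(t-r)/t,v-c)\ud v\le k(2r(t-r)/t)$ (from Plancherel, using $\widehat f\ge 0$). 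Under Dalang's condition \eqref{E:Dalang2} one has $k(\tau)\le C\tau^{\alpha-1}$, and a case split on $s\lessgtr t/2$ combined with monotonicity of $k$ gives $\int_s^t k(2r(t-r)/t)\ud r\le C(t-s)^\alpha$; the cross and difference terms in the "old part" are handled analogously, so that $\Norm{I(t,x)-I(s,x)}_p\le C(t-s)^{\alpha/2}$. The main obstacle lies precisely here: the rough initial data makes $\Norm{u(r,y)}_p$ blow up as $r\downarrow 0$ near the singular support of $\mu$, so one cannot pull $\sup_{r,y}\Norm{u(r,y)}_p$ out of the integrand; one must keep $(|\mu|*G(r,\cdot))(y)$ inside and use the heat-kernel identity \eqref{E:GGGG} to isolate the factor $M(t,\cdot)$ (bounded on $Q$) from a clean spatial-convolution quantity controllable via Dalang's condition.
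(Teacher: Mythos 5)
Your proposal follows the same overall decomposition as the paper: for Part~(1), reduce via Theorem~\ref{T:Mom} to uniform boundedness of $(|\mu|*G(t,\cdot))(x)$ on $[0,T]\times Q$, and for Part~(2), split $u=J_0+I$ and treat each piece. The paper's proof is extremely terse (it cites Lemma~\ref{L:J0f} for $J_0$ and step~3 of the proof of Theorem~1.6 in \cite{CH16Comparison} for $I$), so most of your proposal is essentially unpacking those citations; the heat-kernel localization via separation from $Q'^c$, the use of \eqref{E:GGGG} to isolate $(|\mu|*G(t,\cdot))(x)$, the Plancherel bound $k(\tau)\le C_T\tau^{\alpha-1}$, and the case split on $s\lessgtr t/2$ are all the right moves and match the cited argument.

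There is, however, a gap in the $J_0$ time-increment step. You first establish that $z\mapsto J_0(s,z)$ is $\beta$-H\"older \emph{on $Q$} uniformly in $s$, and then assert that the semigroup identity $J_0(t,x)=\int G(t-s,x-z)J_0(s,z)\,\ud z$ ``then yields'' the time increment. But this convolution integrates over all $z\in\R^d$, where your H\"older bound does not apply and $J_0(s,\cdot)$ can grow like $e^{c|z|^2/s}$; for small $s$ (still allowed in the case $s>t/2$, since $t\downarrow 0$), bounding the far-field contribution $\int_{|z-x|\ge\eta}G(t-s,x-z)J_0(s,z)\,\ud z$ by a power of $t-s$ is not automatic and requires its own estimate. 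The cleaner assembly, using ingredients you already have, is to decompose $J_0=A+B$ with $A(t,x)=\int G(t,x-y)\tilde g(y)\,\ud y$ and $B=J_0-A$: since $\tilde g$ is globally bounded and $\beta$-H\"older, the semigroup argument applied to $A$ gives $|A(t,x)-A(s,x)|\le C\|\tilde g\|_{C^\beta}|t-s|^{\beta/2}$ cleanly, while $B(t,x)$ for $x\in Q$ involves only masses in $Q'^c$, so by the separation $|x-y|\ge d_0$ one checks directly that $\partial_t B(t,x)$ is bounded on $(0,T]\times Q$, making $B$ uniformly Lipschitz in $t$. (For what it's worth, the paper's own citation of Lemma~\ref{L:J0f} is also not immediate: $|J_0(t,x)-g(x)|\le Ct^{\beta/2}$ plus the triangle inequality gives $t^{\beta/2}+s^{\beta/2}$, which only dominates $(t-s)^{\beta/2}$ when $s\lesssim t-s$, so a case split is needed there as well.)
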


\begin{proof}
By Theorem \ref{T:Mom}, for all $x\in Q$ and $p\ge 2$,
\begin{align}
 \limsup_{t\rightarrow 0} \Norm{u(t,x)}_p 
 \le C \limsup_{t\rightarrow 0}  \left(|\mu|*G(t,\cdot)\right)(x) \le C \sup_{y\in Q'} |g(y)|,
\end{align}
which implies part (1).
As for part (2), notice that $u(t,x)$ consists of two parts as in \eqref{E:WalshSI}.
This property for $J_0(t,x)$ is guaranteed by Lemma \ref{L:J0f} 
and that for $I(t,x)$ is proved in step 3 of the proof of Theorem 1.6 in \cite{CH16Comparison}.
\end{proof}

Next we will prove a sufficient condition for the existence and smoothness of density at a single point.

\begin{theorem}\label{T_:Single}
Let $u(t,x)$ be the solution to equation \eqref{E:SHE} starting from an initial measure $\mu$ that satisfies \eqref{E:J0finite}. Assume that $\mu$ is proper at some point $x_0\in \RR^d$ with a density function $g$ over a neighborhood $Q$ of $x_0$. Suppose that $\rho(0, x_0, g(x_0)) \neq 0$ and that $g$ is $\beta$-H\"older continuous on $Q$ for some $\beta \in (0,1)$. We also assume that \eqref{E:Dalang2} holds for some $0 < \alpha \leq 1$.  Then we have the following two statements: 
\begin{itemize}
\item[(a)] If $\rho$ is differentiable in the third argument with bounded Lipschitz continuous derivative, then for all $t>0$ and $x\in \RR^d$, $u(t,x)$ has an absolutely continuous law with respect to the Lebesgue measure. 
\item[(b)]If $\rho$ is infinitely differentiable in the third argument with bounded derivatives, then for all $t>0$ and $x\in \RR^d$, $u(t,x)$ has a smooth density. 
\end{itemize} 
\end{theorem}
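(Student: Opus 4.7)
The plan is to apply the localized Bouleau--Hirsch criterion (Theorem \ref{T:Density2}) with the subdomain $\calT \times \calS = [0,\epsilon]\times B(x_0,r)$ for small $\epsilon, r > 0$ chosen so that $B(x_0, r) \subset Q$ with positive distance to $Q^c$. First I would verify Malliavin regularity on this subdomain: since $g$ is bounded on $Q$, the local contribution to $J_0(s, y) = (G(s,\cdot)\ast\mu)(y)$ is bounded on $[0, T] \times \calS$, and the tail contribution is uniformly bounded by exponential decay of $G$ against \eqref{E:J0finite} given the positive separation from $Q^c$. Consequently $\sup_{[0,T]\times \calS} J_0 < \infty$, and Proposition \ref{P:D1} yields $u(t,x) \in \D^{1,p}_{\calT,\calS}$ for all $p \ge 1$ under the hypothesis of (a), and $u(t,x) \in \D^\infty_{\calT, \calS}$ under the hypothesis of (b).

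Next, using the linear equation \eqref{E:SPDEM}, I would factor $D_{\theta,\xi} u(t,x) = \rho(u(\theta,\xi))\, Y_{\theta,\xi}(t,x)$, where the propagator $Y_{\theta,\xi}$ solves
\begin{equation*}
Y_{\theta,\xi}(t, x) = G(t-\theta, x-\xi) + \int_\theta^t \!\!\int_{\R^d} G(t-s, x-y) \rho'(u(s, y)) Y_{\theta, \xi}(s, y)\, W(\ud s\, \ud y).
\end{equation*}
This is an instance of \eqref{E:SPDE-Var} with bounded multiplier $H = \rho'(u)$ and linear diffusion $\sigma(z) = z$. The localized Malliavin matrix becomes
\begin{equation*}
\sigma_{\calT,\calS}(t,x) = \int_0^\epsilon\!\!\int_{\calS^2} \rho(u(\theta,\xi))\rho(u(\theta,\xi'))\, Y_{\theta,\xi}(t,x)\, Y_{\theta,\xi'}(t,x)\, f(\xi-\xi')\, \ud\xi\,\ud\xi'\,\ud\theta.
\end{equation*}
By Lemma \ref{L:J0f} and Lemma \ref{L:LimPmom}(2), $u(\theta,\xi) \to g(\xi)$ in $L^p(\Omega)$ uniformly for $\xi \in \calS$ as $\theta \downarrow 0$, and $g(\xi) \to g(x_0)$ by H\"older continuity. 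Setting $c := |\rho(0, x_0, g(x_0))|/2 > 0$, continuity of $\rho$ implies that for any $\delta > 0$, choosing $(\epsilon, r)$ small enough guarantees that the event $\Omega_\delta := \{\inf_{(\theta,\xi)\in\calT\times\calS}|\rho(u(\theta,\xi))|\ge c\}$ has $\bbP(\Omega_\delta) \geq 1-\delta$.

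For part (a), the strong comparison principle (Theorem \ref{T:Comp}(b)) applied to the linear SHE for $Y_{\theta,\xi}$, whose initial data $G(t-\theta, \cdot-\xi)$ is strictly positive, yields $Y_{\theta,\xi}(t, x) > 0$ almost surely for each $(\theta,\xi)$. Combined with joint continuity of $(\theta,\xi) \mapsto Y_{\theta,\xi}(t,x)$ via Theorem \ref{T:Holder}, the weighted integral is almost surely strictly positive, so $\sigma_{\calT,\calS}(t,x) > 0$ on $\Omega_\delta$. Since $\sigma_{\calT,\calS}$ is monotone nondecreasing in $\calT\times \calS$ (as $f\ge 0$), fixing any $(\calT_0, \calS_0) \supset (\calT,\calS)$ gives $\bbP(\sigma_{\calT_0,\calS_0} > 0) \geq 1-\delta$; since $\delta$ was arbitrary, $\sigma_{\calT_0, \calS_0} > 0$ almost surely, and Theorem \ref{T:Density2}(1) yields absolute continuity of the law of $u(t,x)$.

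For part (b), the bound $\E[\sigma_{\calT,\calS}^{-p}] < \infty$ for all $p \ge 1$ should follow by combining the uniform lower bound $|\rho(u(\theta,\xi))| \ge c$ on $\Omega_\delta$ with Theorem \ref{T:NegMom} applied to the linear SHE for $Y_{\theta, \xi}$, which yields $\E[(\inf_{x\in K}Y_{\theta,\xi}(t,x))^{-p}]<\infty$. The main obstacle is the uniformity of this negative-moment estimate in the parameters $(\theta,\xi)$: the constant $B$ in \eqref{E:Rate} depends on the compact set, on the time horizon $t-\theta$, and on the initial datum $G(t-\theta,\cdot-\xi)$, so one must re-examine Case~I of the proof of Theorem \ref{T:NegMom} and verify that $G(t-\theta,\cdot-\xi)$ admits a uniform lower indicator-type bound on a fixed cube as $(\theta,\xi)$ varies over $\calT\times \calS$, thereby yielding uniform constants in \eqref{E:Rate}. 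A Chebyshev-style small-ball estimate of the form $\bbP(\sigma_{\calT,\calS} < \eta) \le \bbP(\Omega_\delta^c) + \bbP(\int Y\,Y\,f < \eta/c^2)$ then converts the uniform pointwise negative moments of $Y_{\theta,\xi}$ into the needed $\E[\sigma_{\calT,\calS}^{-p}] < \infty$, and Theorem \ref{T:Density2}(2) concludes the smooth density.
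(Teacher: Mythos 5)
Your outline diverges from the paper's argument in a way that creates a genuine gap in part (b). The paper does not work with the two-parameter propagator family $Y_{\theta,\xi}$ at all: it introduces a single test function $\psi\ge 0$ supported in $Q$, defines the scalar process $Y_\theta(t,x):=\langle D_{\theta,\cdot}u(t,x),\psi\rangle_{\calH}$, and lower-bounds $C(t,x)\ge\int_0^{\epsilon^r}Y_\theta^2\,\ud\theta\ge \epsilon^r Y_0^2(t,x)-\int_0^{\epsilon^r}|Y_\theta^2-Y_0^2|\,\ud\theta$. The crucial point is that $Y_0$ itself solves an SPDE of the form \eqref{E:SPDE-Var} with nonnegative initial datum $\psi(x)\rho(0,x,u_0(x))$, so Theorem \ref{T:NegMom} applies directly and yields a \emph{polynomial} small-ball bound $\bbP(|Y_0(t,x)|<\sqrt{2}\epsilon^{(1-r)/2})\le C_{t,x,p}\epsilon^p$. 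This decay in $\epsilon$ is what feeds into Lemma A.1 of \cite{CHN16Density} and produces $\E[C(t,x)^{-p}]<\infty$.

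Your Chebyshev step for (b), $\bbP(\sigma_{\calT,\calS}<\eta)\le\bbP(\Omega_\delta^c)+\bbP(\int YY'f<\eta/c^2)$, cannot close: once $(\epsilon,r)$ (and hence the localization $\calT\times\calS$ defining $\sigma_{\calT,\calS}$) are fixed, $\bbP(\Omega_\delta^c)$ is a \emph{fixed constant} $\le\delta$ that does not go to zero as $\eta\to 0$. Lemma A.1 of \cite{CHN16Density} requires the small-ball probability to decay like $\eta^p$ for all $p$, which your decomposition simply cannot deliver; the first summand is an $\eta$-independent floor. Shrinking $(\epsilon,r)$ to make $\delta$ smaller changes $\sigma_{\calT,\calS}$ itself, so you cannot iterate. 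The paper avoids this entirely because its analogue of $\Omega_\delta^c$ is $\{|Y_0|<\sqrt{2}\epsilon^{(1-r)/2}\}$, which Theorem \ref{T:NegMom} controls polynomially in $\epsilon$. You also flag but do not resolve the uniformity-in-$(\theta,\xi)$ of the negative-moment estimate; this would require redoing Case~I of Theorem \ref{T:NegMom} with explicit parameter dependence, an effort the paper's test-function trick renders unnecessary.

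Part (a) is closer to workable, but still has soft spots. You apply Theorem \ref{T:Comp}(b) to the equation for $Y_{\theta,\xi}$, whose diffusion coefficient is $z\mapsto\rho'(u(s,y))z$ with a \emph{random adapted} multiplier; the comparison principle as stated in the paper is for deterministic $\rho$, so a citation-level justification is missing (Theorem \ref{T:NegMom} would give you a.s.\ positivity of $Y_{\theta,\xi}$, and that \emph{is} formulated for adapted $H$, so this is fixable). You also invoke Theorem \ref{T:Holder} for joint continuity in $(\theta,\xi)$ of the propagator, but that theorem concerns continuity in $(t,x)$ of $u$, not in the initial-condition parameters $(\theta,\xi)$ of the propagator; a separate continuity argument is needed before the $\ud\theta\,\ud\xi\,\ud\xi'$-integral can be asserted positive on $\Omega_\delta$. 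Overall, I would recommend abandoning the propagator factorization in favor of the paper's pairing with a fixed nonnegative test function, which collapses the parameter family and lets Theorem \ref{T:NegMom} do all the quantitative work.
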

\begin{proof}
By Proposition \ref{P:D1}, we know that 
\begin{align}
\label{E:SPDE-M}
\begin{aligned}
D_{\theta, \xi} u(t,x) &= \rho(u(\theta, \xi))G(t-\theta, x-\xi) \\
& \qquad + \int_0^t \int_{\RR^d} G(t-s, x-y) \rho' (u(s,y)) D_{\theta,\xi} u(s,y) W(\ud s\ud y)\,.
\end{aligned}
\end{align}
By part (3) of Proposition \ref{P:D1}, we know that $u(t,x) \in \mathbb{D}_{\calT,\calS}^\infty$. Denote by
\begin{equation}
C(t,x) = \int_0^t \|D_{\theta, \cdot}u(t,x)\|^2_{\mathcal{H}} \ud \theta .
\end{equation}
Then both parts (a) and (b) will be proved once we can show that $\E [C(t,x)^{-p}]< \infty$ for all $p \geq 2$. By the assumption on $g$, we may find a cube $Q$ such that 
$|\rho(0, x , g(x))| \geq \delta >0$ for all $x\in Q$. Let $\psi$ be a smooth function supported in $Q$ such that 
\[
0 \leq \psi(\xi) \leq 1
\quad\text{and}\quad
\langle D_{\theta, \cdot}u(t,x)\,, \psi\rangle_{\mathcal{H}}\leq 1.
\]
Set 
\begin{equation}
Y_{\theta}(t,x) =
\iint_{\R^{2d}} D_{\theta, \xi} u(t,x)\:
f(\xi-\xi') \psi(\xi') \ud \xi \ud \xi'.
\end{equation}
Then, choose $0 < r < 1$ and $\epsilon^r < t$.
By the Cauchy-Schwartz inequality,
\begin{align*}
C(t,x)&\geq \int_0^t  
\langle D_{\theta, \cdot}u(t,x)\,, \psi\rangle_{\mathcal{H}}^2 \ud\theta \\
&=\int_0^t Y^2_{\theta}(t,x) \ud\theta \geq \int_0^{\epsilon^{r}}Y_{\theta}^2(t,x)\ud \theta\\
&\geq \epsilon^{r} Y_0^2(t,x) - \int_0^{\epsilon^r} \left| Y_{\theta}^2(t,x)-Y_{0}^{2}(t,x)\right| \ud \theta\,.
\end{align*}
Hence
\begin{align*}
\bbP(C(t,x)< \epsilon) &\leq \bbP \left( |Y_0(t,x)| < \sqrt{2} \epsilon^{\frac{1-r}{2}} \right) + \bbP \left( \int_0^{\epsilon^r} \left| Y_{\theta}^2(t,x)-Y_0^2(t,x)\right| \ud \theta > \epsilon \right)\\
&=: \bbP(A_1) + \bbP (A_2)\,. 
\end{align*}
We will estimate $\bbP(A_1)$ and $\bbP(A_2)$. 
First, for $\bbP(A_1)$, in both sides of  \eqref{E:SPDEM}, take the inner product with $\psi$ in $\mathcal{H}$, we see that $Y_{\theta}(t,x)$ solves the following integral equation
\begin{align}
\begin{aligned}
 Y_{\theta}(t,x) &= \int_{\RR^d} \psi(\xi)\rho(u(\theta, \xi)) G(t-\theta, x-\xi) \ud\xi\\
& \qquad + \int_{\theta}^t\int_{\RR^d}  G(t-s, x-y) \rho'(u(s,y)) Y_\theta(s,y) W(\ud s\ud y)\,.
\end{aligned}
\end{align}
In particular, $Y_0(t,x)$ solves the following SPDE
\begin{equation}
\begin{cases}
\displaystyle
\left( \frac{\partial }{\partial t} - \frac{1}{2}\Delta\right)Y_0(t,x) = \rho' (u(t,x)) Y_0(t,x) \dot{W}(t,x)\\
Y_0(0,x) = \psi(x) \rho(0, x, u_0(x)).
\end{cases}
\end{equation}
Hence Theorem \ref{T:NegMom} implies that for all $p\geq 1$, $t>0$ and $x\in \RR^d$, 
\begin{equation}
\bbP(A_1)\leq C_{t,x,p} \epsilon^p\,, \quad \text{for $\epsilon$ small enough}\,. 
\end{equation}
As for $\bbP(A_2)$, for all $q\ge 1$, by Minkowski's inequality, we see that
\begin{align*}
\bbP(A_2)&\leq \frac{1}{\epsilon^q} \E \left[\left( \int_0^{\epsilon^r} \left| Y_{\theta}^2(t,x) - Y_0^2(t,x) \right|\ud \theta \right)^q\right]\\
&\leq \frac{1}{\epsilon^q} \left( \int_0^{\epsilon^r} \Norm{Y_{\theta}^2(t,x) - Y_0^2(t,x)}_{q} \ud \theta \right)^q\\
&\leq { \epsilon^{q(r-1)}} \sup_{(\theta, x)\in [0, \epsilon^r]\times \RR^d} \left(\Norm{Y_{\theta}(t,x) - Y_0(t,x)}^q_{2q} \Norm{Y_{\theta}(t,x) + Y_0(t,x)}^q_{2q}\right)\,.
\end{align*}
By the same arguments as the proof of Theorem 6.1 of \cite{CHN16Density}, we have
\begin{equation}
\sup_{(\theta, x) \in [0,t]\times \RR^d} \E\left[ \left|Y_{\theta}(t,x)\right|^{2q}\right] < \infty\,.
\end{equation}
Now we write
\begin{equation}\label{E:Y-Y}
Y_{\theta}(t,x) - Y_{0} (t,x) = \Psi_1 - \Psi_2 + \Psi_3\,,
\end{equation}
where 
\begin{align*}
 \Psi_1 &= \int_{\R^d} \psi(\xi)\left[\rho(u(\theta,\xi)) G(t-\theta,x-\xi) - \rho(u(0,\xi)) G(t,x-\xi)\right]\ud \xi\,,\\
 \Psi_2 &=\int_0^\theta \int_{\R^d} G(t-s,x-y)  \rho'(u(s,y)) Y_0(s,y) W(\ud s\ud y)\,,\\
 \Psi_3 &= \int_\theta^t\int_{\R^d} G(t-s,x-y)\rho'(u(s,y)) \left(Y_\theta(s,y)-Y_0(s,y)\right)W(\ud s\ud y)\,.
\end{align*}
By the Lipschitz continuity of $\rho$, we have that
\begin{align*}
\E\left[|\Psi_1|^{2q}\right] &\leq C\: \E 
\left[\left( \int_{\RR^d} \psi(\xi) |u(\theta, \xi)-u(0, \xi)| G(t-\theta, x-\xi)\ud \xi \right)^{2q}\right]\\
&\qquad+ C\:\left( \int_{\RR^d} \psi(\xi) \left| G(t-\theta, x-\xi) - G(t, x-\xi) \right| (1+ |u(0, \xi)|) \ud \xi \right)^{2q}\\
&:= \Psi_{11} + \Psi_{12}\,.
\end{align*}
By part (2) of Lemma \ref{L:LimPmom}, we have that 
\begin{equation}
\Psi_{11}\leq C 
\left[
\int_{\R^d}\psi(\xi)G(t-\theta,x-\xi) \Norm{u(\theta,\xi)-u(0,\xi)}_{2q}\ud \xi
\right]^{2q}\le
C \theta^{q (\alpha \wedge \beta)}\,.
\end{equation}
As for $\Psi_{12}$, by the mean-value theorem, we see that
for some $s\in [t-\theta,t]$,
\begin{align*}
\Psi_{12} &\le 
C \sup_{x\in Q} (1+|g(x)|)^{2q}\left( \int_{\RR^d} \psi(\xi) \left|\frac{\partial}{\partial t} G(s, x-\xi)\right|  \ud \xi \right)^q \theta^q\le C \,\theta^q\,.
\end{align*}
For $\Psi_2$, 
\begin{align*}
\Norm{\Psi_2}_{2q}^2\leq& \int_0^{\theta}\ud s \int_{\RR^{2d}} \ud y\ud y' \: G(t-s, x-y)G(t-s, x-y')\Norm{Y_0(s,y) Y_0(s,y')}_q f(y-y')\\
\leq& C \sup_{(s,y)\in [0,t]\times \RR^d} \Norm{Y_0(s,y)}_{2q}^2 \int_0^{\theta}\ud s \int_{\RR^d} e^{-(t-s)|\xi|^2} \widehat{f}(\ud \xi).
\end{align*}
Because 
\[
e^{-t|x|^2} \le \frac{C}{(1+t|x|^2)^{1-\alpha}}
\le \frac{C}{t^{1-\alpha} ((1/T)+|x|^2)^{1-\alpha}}\quad\text{for all $(t,x)\in [0,T]\times\R^d$,}
\]
by \eqref{E:Dalang2}, we see that 
\[
\Norm{\Psi_2}_{2q}^2 \le  C \int_0^\theta \frac{1}{(t-s)^{1-\alpha}} \ud s \le C \theta^{\alpha},\quad\text{for all $\theta\in [0,t]$.}
\]
Applying the Burkholder-Davis-Gundy inequality to $\Psi_3$ in  \eqref{E:Y-Y} yields
\begin{align*}
\Norm{Y_{\theta}(t,x)-Y_0(t,x)}_{2q}^2
\leq &
C \left(  \Norm{\Psi_1}_{2q}^2 + \Norm{\Psi_2}_{2q}^2\right) \\
&+ C \int_{\theta}^{t}\ud s\int_{\RR^{2d}}\ud y\ud y'\: G(t-s, x-y) G(t-s, x-y') f(y-y')\\
&\hspace{6em} \times\sup_{z\in \RR^d} \Norm{Y_{\theta}(s,z) - Y_0(s,z)}_{2q}^2  \\
&\leq C \theta^{\alpha \wedge \beta} + C \int_{\theta}^t (t-s)^{\alpha-1} \sup_{z\in \RR^d} \Norm{Y_{\theta}(s,z) - Y_0(s,z)}_{2q}^2 \ud s\,.
\end{align*}
Then Gronwall's Lemma (see, e.g., Lemma A.2 in \cite{CHN16Density}) implies that 
\[
\sup_{z\in \RR^d} \Norm{Y_{\theta}(s,z) - Y_0(s,z)}_{2q}^2
\le  C \theta^{\alpha\wedge\beta}.
\]
Therefore,
\begin{equation}
\sup_{0 \leq \theta \leq \epsilon^r, x\in \RR^d} \E \left[ \left(Y_{\theta}(t,x)-Y_0(t,x)\right)^{2q}\right] \leq C \epsilon^{r (\alpha \wedge \beta) q}\,,
\end{equation}
which implies that 
\begin{equation}
\bbP(A_2)\leq C 
\epsilon^{q (r-1)}\epsilon^{\frac{1}{2}r (\alpha \wedge \beta) q} =C 
\epsilon^{q(r-1+\frac{\alpha\wedge\beta}{2})}\,.
\end{equation}
Thus we can choose any $r\in \left(1- \frac{\alpha\wedge\beta}{2}, 1\right) 
\subseteq (0,1)$.
Theorem \ref{T_:Single} is proved by applying Lemma A.1 of \cite{CHN16Density}. 
\end{proof}

\bigskip
Now we are ready to prove Theorem \ref{T:Single}.
\begin{proof}[Proof of Theorem \ref{T:Single}]
Recall the mild solution $u(t,x)=J_0(t,x)+I(t,x)$ in \eqref{E:WalshSI} and the critical time $t_0$ is defined in \eqref{E:IFF},that is,
\[
t_0:=\inf\left\{s>0,\: \sup_{y\in\R^d}\left|\rho\left(s,y,(G(s,\cdot)*\mu)(y)\right)\right|\ne 0\right\}.
\]
If condition \eqref{E:IFF} is not satisfied, that is, $t\le t_0$, then $I(t,x)\equiv 0$. In this case, $u(t,x)\equiv J_0(t,x)$ is deterministic. Hence, $u(t,x)$ doesn't have a density. This proves one direction for both parts (a) and (b). 

On the other hand, if condition \eqref{E:IFF} is satisfied, that is, $t>t_0$, then by the continuity of the function
\[
(0,\infty)\times\R^d\times\R^d \ni (t,x,z) \mapsto \rho\left(t,x, (G(t,\cdot)*\mu)(x)+z\right) \in\R,
\]
we know that for some $\epsilon_0\in (0,t-t_0)$ and some $x_0\in\R^d$, it holds that 
\begin{align}\label{E:rho=0}
\rho\left(t_0+\epsilon, y, (G(t_0+\epsilon,\cdot)*\mu)(y)+z\right)\ne 0
\end{align}
for all $(\epsilon,y,z)\in (0,\epsilon_0)\times B(x_0,\epsilon_0)\times [-\epsilon_0,\epsilon_0]$,
where $B(x,r):= \left\{y\in\R^d: \Norm{x-y}\le r\right\}$.
Let 
\[
\tau:= \left(t_0+\epsilon_0\right)\wedge \inf\left\{t>t_0, \: \sup_{y\in B(x_0,\epsilon_0)} |I(t,y)|\ge \epsilon_0\right\}.
\] 
Denote $\W_*(t,x):=\W(t+\tau,x)$ and $\rho_*(t,x,z) := \rho(t+\tau,x,z)$. Let $u_*(t,x)$ be the solution to the following stochastic heat equation 
\begin{align}\label{E:FracHt*}
 \begin{cases}
  \left(\displaystyle\frac{\partial}{\partial t} - \frac{1}{2}\Delta \right) u_*(t,x) =
\rho_*(t,x,u_*(t,x))  \dot{W}_*(t,x),& t>0\;,\: x\in\R^d,\cr
u_*(0,x) = u(\tau,x),\:& x\in\R^d.
 \end{cases}
\end{align}
By the construction, $\sup_{y\in B(x_0,\epsilon_0)}|I(\tau,y)|\le \epsilon_0$ and thus, property \eqref{E:rho=0} implies that
\[
\rho_*\left(0,y,u(\tau,y)\right)=
\rho\left(\tau,y,J_0(\tau,y)+I(\tau,y)\right)\ne 0, \quad\text{for all $y\in B(x_0,\epsilon_0)$.}
\]
Because $y\mapsto u(\tau,y)$ is $\beta$-H\"older continuous a.s. for any 
$\beta\in (0,\alpha)$, we can apply Theorem \ref{T_:Single} to SPDE 
\eqref{E:FracHt*} to see that if $\rho$ is differentiable in the third argument 
with bounded Lipschitz continuous derivative, then $u_*(t,x)$ has a conditional 
density, denoted as $\psi_t(x)$, that is absolutely continuous with respect to 
the Lebesgue measure. Moreover, if $\rho$ is infinitely differentiable in the 
third argument with bounded derivatives, then this conditional density 
$x\mapsto \psi_t(x)$ is smooth a.s.
For any nonnegative continuous function $g$ on $\R$ with compact support, 
\begin{align*}
\E\left[g(u(t,x))\right] &=\E\left[\E \left[g(u(t,x))| \calF_\tau\right]\right]\\ 
&=\E\left[\E \left[g(u_*(t-\tau,x))| \calF_\tau\right]\right]\\
&=\E\left[\int_\R g(x) \psi_{t-\tau}(x)\ud x\right] \\
&= \int_\R g(x)\: \E\left[\psi_{t-\tau}(x)\right]\ud x.
\end{align*}
Therefore, if $\rho$ is differentiable in the third argument with bounded Lipschitz continuous derivative, then $u(t,x)$ has a density, which is equal to $\E\left[\psi_{t-\tau}(x)\right]$. It is clear that this density is absolutely continuous with respect to the Lebesgue measure. Moreover, if $\rho$ is infinitely differentiable in the third argument with bounded derivatives, this density is smooth.
This completes the proof of Theorem \ref{T:Single}.
\end{proof}

\subsection{Assumption \ref{A:Rate-Sharp} (Properties of \texorpdfstring{$k(t)$}{})}
\label{SS:Rate}
In this part, we study properties of $k(t)$ defined in \eqref{E:k}, which is closely related to the following function 
\begin{align}\label{E:Def-Vd1}
 V_d(t)&:=\int_0^{t} \ud s\iint_{\R^{2d}} \ud y\ud y' \: G(s,y)G(s,y')f(y-y').
\end{align}
By Fourier transform, we see that
\begin{align}\label{E:Def-Vd2}
 V_d(t) =&(2\pi)^{-d}\int_0^t\ud s \int_{\R^d} e^{-s |\xi|^2}\widehat{f}(\ud 
\xi)
 =\int_0^t k(2s)\ud s.
\end{align}


\begin{lemma}\label{L:Rate}
If 
\begin{align}\label{E:B}
\begin{aligned}
\overline{B}:= &\left\{\beta\in [0,1):\: \limsup_{t\downarrow 0} 
t^{\beta}k(t)<\infty\right\},\\
\underline{B}:=& \left\{\beta\in [0,1):\: \liminf_{t\downarrow 0} 
t^{\beta}k(t)>0\right\},
\end{aligned}
\end{align}
then the following properties hold:
\begin{enumerate}[(1)]
 \item If $\overline{B}\ne \emptyset$, then $\inf \overline{B}\ge 
\sup\underline{B}$.
 \item If, for some $\beta\in [0,1)$, $\lim_{t\downarrow 0} t^\beta k(t)$ exists 
and belongs to $(0,\infty)$ , then $\inf \overline{B}= \sup\underline{B}=\beta$.
 \item If $\overline{B}\ne \emptyset$, then for any $\beta\in \overline{B}$, 
 \[
 \sup_{t\in [0,T]} t^\beta k(t)<\infty\quad\text{and}\quad
 V_d(t) \le C t^{1-\beta}.
 \]
 \item $\underline{B}$ is never an empty set since $0\in \underline{B}$. 
Moreover,  for any $\beta\in\underline{B}$, 
 \[
 \inf_{t\in [0,T]} t^{\beta} k(t)>0\quad\text{and}\quad
 V_d(t) \ge C t^{1-\beta}.
 \]
 \end{enumerate}
\end{lemma}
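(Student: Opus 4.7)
My plan rests on two elementary facts about $k(t)=(2\pi)^{-d}\int_{\R^d} e^{-t|\xi|^2/2}\widehat{f}(\ud\xi)$: it is nonincreasing and continuous on $(0,\infty)$ (by dominated convergence), and it is strictly positive and finite for every $t>0$ under Dalang's condition \eqref{E:Dalang2} (the bound $e^{-t|\xi|^2/2}\le C_t/(1+|\xi|^2)$ reduces finiteness to Dalang, and positivity follows from $f\not\equiv 0$). All four assertions then reduce to simple manipulations of these monotonicity and continuity properties together with the identity $V_d(t)=\int_0^t k(2s)\,\ud s$ from \eqref{E:Def-Vd2}.

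For Part (1), I would argue by contradiction. Suppose $\beta_1\in\overline{B}$ and $\beta_2\in\underline{B}$ with $\beta_1<\beta_2$. The defining limsup produces a constant $C$ and a neighborhood $(0,\delta]$ on which $t^{\beta_1}k(t)\le C$, so $t^{\beta_2}k(t)\le C t^{\beta_2-\beta_1}\to 0$ as $t\downarrow 0$, contradicting $\liminf_{t\downarrow 0}t^{\beta_2}k(t)>0$. Part (2) is then immediate: if $\lim_{t\downarrow 0}t^\beta k(t)\in(0,\infty)$ exists, then $\beta\in\overline{B}\cap\underline{B}$, so $\inf\overline{B}\le\beta\le\sup\underline{B}$, which combined with the reverse inequality from Part (1) yields equality.

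For Part (3), fix $\beta\in\overline{B}$. The limsup condition bounds $t^\beta k(t)\le C$ on some $(0,\delta]$; on $[\delta,T]$ the quantity $t^\beta k(t)$ is bounded by continuity and monotonicity of $k$, giving the first claim. Integrating $k(2s)\le C(2s)^{-\beta}$ then gives
\[
V_d(t)=\int_0^t k(2s)\,\ud s\le C\int_0^t (2s)^{-\beta}\,\ud s=C't^{1-\beta},
\]
which is finite since $\beta<1$. For Part (4), monotone convergence yields $\lim_{t\downarrow 0}k(t)=(2\pi)^{-d}\widehat{f}(\R^d)\in(0,\infty]$, and this limit is strictly positive since $f\not\equiv 0$ forces $\widehat{f}$ to be a nonzero nonnegative measure; hence $0\in\underline{B}$. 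For any $\beta\in\underline{B}$, the positive liminf yields $t^\beta k(t)\ge c$ on some $(0,\delta]$, while continuity and positivity of $k$ on $[\delta,T]$ supply a positive lower bound there. Integrating the reversed inequality gives $V_d(t)\ge C t^{1-\beta}$ exactly as in Part (3).

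The only mild technical point is bridging the small-$t$ asymptotic conditions (limsup/liminf at $0$) with uniform bounds on the entire interval $(0,T]$; this is handled transparently by the continuity and monotonicity of $k$, so no serious obstacle appears, and the whole lemma amounts to bookkeeping once the monotonicity of $k$ is in hand.
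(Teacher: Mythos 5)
Your proof is correct and follows essentially the same route as the paper: exploit that $k$ is positive, finite, nonincreasing and continuous on $(0,\infty)$, bridge the small-$t$ asymptotics to uniform bounds on $[0,T]$, and then integrate $V_d(t)=\int_0^t k(2s)\,\ud s$ against the pointwise bound $k(2s)\lesssim s^{-\beta}$ (resp.\ $\gtrsim s^{-\beta}$). The only cosmetic difference is that the paper routes the continuity argument in parts (3)--(4) through an auxiliary function $h(t)=\int e^{-t(1+|\xi|^2)/2}\widehat f(\ud\xi)$ and a chain of equivalences, whereas you argue directly from the continuity and monotonicity of $k$ by splitting $(0,T]$ into $(0,\delta]$ and $[\delta,T]$; you also make the comparison argument in part (1) explicit, which the paper leaves terse.
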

\begin{proof}
Notice that $g(t)$ is a strictly positive and nonincreasing function on 
$(0,\infty)$, and $k(t)$ may blow up at $t=0$, from which part (1) is clear. 

As for (2), for any $\beta'<\beta$, we have that $\limsup_{t\downarrow 
0}t^{\beta'}k(t)=\infty$, which implies that $\beta=\inf \overline{B}$. 
Similarly, for any $\beta''>\beta$, we have that $\liminf_{t\downarrow 
0}t^{\beta''}k(t)=0$, which implies that $\beta=\sup \underline{B}$.  

{\medskip\noindent (3)} Fix $t\in [0,T]$. 
Denote $h(t) = \int_{\R^d}e^{-s(1+|\xi|^2)/2}\widehat{f}(\ud\xi)$.  It is clear 
that 
\[
h(t)\le k(t)\le e^T h(t).
\]
The function $h(t)$ is a smooth function for $t\in (0,T]$ because, by Dalang's 
condition \eqref{E:Dalang},  
\[
h^{(n)}(t) =\int_{\R^d}\frac{e^{-s(1+|\xi|^2)/2}}{(1+|\xi|^2)^n} \widehat{f}(\ud 
\xi)<\infty, \quad\text{for all $n\ge 1$.}
\]
Hence, for some $\beta>0$, 
\begin{align*}
\sup_{t\in[0,T]} t^\beta k(t) < \infty &\quad\Longleftrightarrow\quad
\sup_{t\in[0,T]} t^\beta h(t) < \infty \\
& \quad\Longleftrightarrow\quad
\limsup_{t\downarrow 0} t^\beta h(t)<\infty\\
& \quad\Longleftrightarrow\quad
\limsup_{t\downarrow 0} t^\beta k(t)<\infty. 
\end{align*}
Finally, notice that 
\[
V_d(t) = (2\pi)^{-d} \int_0^t k(2s)\ud s \le  C \left[\sup_{s\in [0,T]} s^\beta 
k(s)\right] \int_0^t s^{-\beta}\ud s = C t^{1-\beta}.
\]
This proves (3).

{\medskip\noindent (4)} Since $k(t)$ is a strictly positive and nonincreasing 
function, we see that $0\in\underline{B}$. Since $h(t)>0$. We have that 
\[
e^{-T} h(t)^{-1}\le k(t)^{-1}\le h(t)^{-1}.
\]
By the same arguments as in part (3), noticing that $h^{-1}(t)$ is a smooth 
function on $(0,\infty)$, we see that
\begin{align*}
\inf_{t\in[0,T]} t^\beta k(t) >0 &\quad\Longleftrightarrow\quad
\sup_{t\in[0,T]} t^{-\beta} k(t)^{-1} <\infty\\
& \quad\Longleftrightarrow\quad 
\sup_{t\in[0,T]} t^{-\beta} h(t)^{-1} < \infty \\
& \quad\Longleftrightarrow\quad
\limsup_{t\downarrow 0} t^{-\beta} h(t)^{-1}<\infty\\
& \quad\Longleftrightarrow\quad
\limsup_{t\downarrow 0} t^{-\beta} k(t)^{-1}<\infty\\
& \quad\Longleftrightarrow\quad
\liminf_{t\downarrow 0} t^{\beta} k(t)>0, 
\end{align*}
and 
\[
V_d(t) = (2\pi)^{-d} \int_0^t k(2s)\ud s \ge  C \left[\inf_{s\in [0,T]} s^\beta 
k(s)\right] \int_0^t s^{-\beta}\ud s = C t^{1-\beta},
\]
which proves (4). This completes the proof of Lemma \ref{L:Rate}.
\end{proof}

The following proposition shows that many common correlation functions satisfy 
Assumption \ref{A:Rate-Sharp}.
\begin{proposition}\label{P:Rate}
We have that
\begin{enumerate}[(1)]
 \item For the space-time white noise case, that is, $f(x)=\delta_0(x)$, $\lim_{t\downarrow 0} t^{d/2} k(t) 
=(2\pi)^{-d/2}$. In particular, when $d=1$, $\inf \overline{B}= 
\sup\underline{B}=1/2$, 
 and when $d\ge 2$, $\overline{B} = \underline{B} = \phi$.
 \item For the Riesz kernel $f(x)=|x|^{-\beta'}$ with $\beta'\in (0,2\wedge 
d)$, 
 $\lim_{t\downarrow 0} t^{\beta'/2} g(t) =C\in(0,\infty)$ and hence, 
 $\inf \overline{B}= \sup\underline{B}=\beta'/2$.
 \item For the fractional noise case, that is, $f(x) = \prod_{j=1}^d |x_j|^{2H_j-2}$ with $H_j\in (1/2,1)$ and 
$d-\sum_{j=1}^d H_i<1$, we have that $\inf 
\overline{B}= \sup\underline{B}=d-H$, where $H:=\sum_{j=1}^d H_j$.
 \item If $f(0)<\infty$ (or equivalently $\hat{f}\in L^1(\R^d)$), then $\inf 
\overline{B}= \sup\underline{B}=0$.
\item If $f$ is the Bessel kernel of order $\alpha'>0$ (see \eqref{E:Bessel}), then we have that 
\begin{enumerate}
 \item If $\alpha'>d-2$, $f$ satisfies Dalang's condition \eqref{E:Dalang2} for any $\alpha\in (0,(\alpha'+2-d)/2)$.
 \item As $t\rightarrow 0,$ the function $k(t)$ defined in \eqref{E:k} satisfies the following property: 
 \[
 k(t) \asymp 
 \begin{cases}
  \displaystyle t^{\frac{\alpha'-d}{2}} & \text{if $\alpha'\in (0,d)$,}\\[0.5em]
  \displaystyle \log(1/t) & \text{if $\alpha'=d$,}\\[0.5em]
  \displaystyle 1 & \text{if $\alpha'>d$.}
 \end{cases}
 \]
\end{enumerate}
\end{enumerate}
 \end{proposition}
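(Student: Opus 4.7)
The unifying strategy is to find the leading order of $k(t)$ as $t\downarrow 0$ from one of the two representations in \eqref{E:k} and then invoke part (2) of Lemma \ref{L:Rate} to identify $\beta=\inf\overline{B}=\sup\underline{B}$. For (1), the spectral measure is Lebesgue, so a Gaussian integral gives $k(t)=(2\pi)^{-d/2}t^{-d/2}$; the exponent $d/2$ lies in $[0,1)$ only when $d=1$, which forces $\overline{B}=\underline{B}=\emptyset$ for $d\ge 2$. For (2), I substitute $z=t^{1/2}w$ in $\int|z|^{-\beta'}G(t,z)\ud z$ to extract $t^{-\beta'/2}$ times the finite integral $(2\pi)^{-d/2}\int|w|^{-\beta'}e^{-|w|^2/2}\ud w$ (convergent because $\beta'<d$). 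For (3), I factorize across coordinates and apply the same scaling to each one-dimensional integral, where convergence at zero needs $H_j>1/2$; the product gives $k(t)=Ct^{H-d}$, and the hypothesis $d-H<1$ places this exponent in $[0,1)$. For (4), dominated convergence on the spectral side of \eqref{E:k} with dominating function $\widehat{f}$ yields $\lim_{t\downarrow 0}k(t)=(2\pi)^{-d}\widehat{f}(\R^d)=f(0)\in(0,\infty)$, so $\beta=0$.

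For (5), the starting point is the spectral identity $\widehat{f}(\xi)=(4\pi)^{d/2}\Gamma(\alpha'/2)(1+|\xi|^2)^{-\alpha'/2}$, which follows by interchanging the $w$-integral in \eqref{E:Bessel} with the Fourier transform (justified by Tonelli's theorem on the nonnegative integrand) and using the Gaussian Fourier formula $\int_{\R^d}e^{-i\xi\cdot x-|x|^2/(4w)}\ud x=(4\pi w)^{d/2}e^{-w|\xi|^2}$. Substituting this into \eqref{E:Dalang2} and counting powers of $(1+|\xi|^2)$ shows the integral is finite if and only if $\alpha<(\alpha'+2-d)/2$; so positive $\alpha$ is available precisely when $\alpha'>d-2$, which gives (5)(a).

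To prove (5)(b), inserting the spectral formula into \eqref{E:k} and substituting $\eta=t^{1/2}\xi$ produces
\[
k(t)=C_1\,t^{(\alpha'-d)/2}\int_{\R^d}(t+|\eta|^2)^{-\alpha'/2}e^{-|\eta|^2/2}\ud\eta
\]
for a strictly positive constant $C_1$. When $\alpha'>d$ I instead argue directly from the \emph{unscaled} spectral integral: the integrand $(1+|\xi|^2)^{-\alpha'/2}$ is integrable on $\R^d$, and dominated convergence yields a positive finite limit, so $k(t)\asymp 1$. When $\alpha'\in(0,d)$ the rescaled integrand is dominated by $|\eta|^{-\alpha'}e^{-|\eta|^2/2}$ (integrable because $\alpha'<d$) and increases monotonically to it as $t\downarrow 0$; monotone convergence pins the integral to a positive finite limit, giving $k(t)\asymp t^{(\alpha'-d)/2}$. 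The critical case $\alpha'=d$ is where I expect the main difficulty: the rescaled integral diverges at the origin, and after passing to polar coordinates and substituting $r=t^{1/2}u$ the dominant contribution becomes a constant multiple of $\int_0^{1/\sqrt{t}}(1+u^2)^{-d/2}u^{d-1}\ud u$, which is of order $\log(1/t)$ since its integrand is asymptotic to $u^{-1}$ at infinity. Since the prefactor $t^{(\alpha'-d)/2}$ equals one here, I obtain $k(t)\asymp\log(1/t)$, provided I confirm that the truncation to $\{|\eta|\le 1\}$ captures the leading behavior and that the remainder from $\{|\eta|\ge 1\}$ contributes only an $O(1)$ term, so that matching upper and lower constants can both be extracted.
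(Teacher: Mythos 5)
Your proof is correct, and for parts (1)--(5a) you follow essentially the same path as the paper (the only cosmetic difference is that in (2) and (3) you compute $k(t)$ from the physical-space convolution $\int f(z)G(t,z)\ud z$ by scaling $z=t^{1/2}w$, while the paper works on the Fourier side of \eqref{E:k}; the two representations are identical, so this is immaterial). For (4) your dominated-convergence argument is slightly stronger than the paper's, since you pin down the actual limit $\lim_{t\downarrow 0}k(t)=f(0)$ rather than only the one-sided bound $k(t)\le f(0)$; both are enough once combined with part (4) of Lemma~\ref{L:Rate}.

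Where you genuinely diverge from the paper is (5)(b). The paper identifies $k(t)=C\,U\bigl(\tfrac{d}{2},\tfrac{2+d-\alpha'}{2},\tfrac{t}{2}\bigr)$ with the confluent hypergeometric function and then reads off the small-argument asymptotics case-by-case from the NIST handbook. You instead rescale $\eta=t^{1/2}\xi$ to extract the power $t^{(\alpha'-d)/2}$ and handle the remaining integral by elementary limit theorems: monotone convergence for $\alpha'<d$, dominated convergence on the unscaled integral for $\alpha'>d$, and in the borderline case $\alpha'=d$ a split of the radial integral at $r=t^{-1/2}$ to produce the $\log(1/t)$ from $\int_1^{t^{-1/2}} r^{-1}\ud r$ with an $O(1)$ tail. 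This is more self-contained and avoids special-function machinery; the trade-off is that the critical case $\alpha'=d$ requires the extra bookkeeping you correctly flag (and the NIST table also hands you higher-order terms, which you don't get but also don't need). Both routes are valid; yours is arguably cleaner for a reader not wanting to invoke the handbook.

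One minor remark shared with the paper: the claim in (1) that $\underline{B}=\emptyset$ for $d\ge 2$ is inconsistent with part (4) of Lemma~\ref{L:Rate} (which asserts $0\in\underline{B}$ always); for $d\ge 2$ one in fact has $\underline{B}=[0,1)$ while $\overline{B}=\emptyset$. This is a slip in the statement itself rather than in either proof, and it concerns a case where Dalang's condition fails anyway.
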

\begin{proof}
 {\medskip\noindent (1)} For the space-time white noise, $k(t) = G(t,0)=(2\pi 
t)^{-d/2}$. 
 It is clear that $d/2\in [0,1)$ if and only if $d=1$. This proves (1).

{\medskip\noindent (2)} 
For the Riesz kernel case, $\widehat{f}(\xi) = C |\xi|^{\beta'-d}$ and hence, 
\[
k(t) = C\int_{\R^d} e^{-t|\xi|^2/2} |\xi|^{\beta'-d} \ud\xi =
C \int_0^\infty e^{-t r} r^{\beta'-d} r^{d-1}\ud r= C t^{-\beta'/2},
\]
This case is proved by an application of part (2) of Lemma \ref{L:Rate}.

{\medskip\noindent (3)} In this case, 
\[
\widehat{f}(\ud \xi) = \prod_{j=1}^d C_j |\xi_j|^{1-2H_j}\ud \xi_j\quad 
\text{with} \quad C_j=\frac{\Gamma(2H_j+1)\sin(\pi H_j)}{2\pi}>0.
\]
Therefore, 
\[
k(t) = \prod_{j=1}^d C_j \int_\R e^{-t \xi_j^2 /2} |\xi_j|^{1-2H_j}\ud\xi_j
=\prod_{j=1}^d C_j \Gamma(1-H_j) t^{H_j-1} = C t^{\sum_{j=1}^d H_j-d}.
\]
Then an application of part (2) of Lemma \ref{L:Rate} proves part (3).

{\medskip\noindent (4)}
If $f(0)<\infty$, then $k(t)\le f(0)<\infty$, where 
the first inequality is due to the fact that both $f(\cdot)$ and $G(t,\cdot)$ 
are nonnegative definite. Hence, $\inf \overline{B}=0$.

{\medskip\noindent (5)} Now we study the Bessel kernel \eqref{E:Bessel}. Notice that 
\[
f(x) = (4\pi)^{d/2} \int_0^\infty w^{\frac{\alpha'-2}{2}}e^{-w}G(2w,x) \ud w,
\]
where $G(w,x)$ is the heat kernel. Hence, the Fourier transform of $f$ is equal to 
\begin{align*}
\widehat{f}(x) 
&= (4\pi)^{d/2} \int_0^\infty w^{\frac{\alpha'-2}{2}}e^{-w} e^{-w |\xi|^2}\ud w\\
&=(4\pi)^{d/2} (1+|\xi|^2)^{-\alpha'/2} \int_0^\infty e^{-w}w^{\frac{\alpha'-2}{2}}\ud w\\
&=(4\pi)^{d/2} \Gamma(\alpha'/2)(1+|\xi|^2)^{-\alpha'/2}.
\end{align*}
Hence, Dalang's condition \eqref{E:Dalang2} becomes
\[
\int_{\R^d}\frac{\ud\xi}{(1+|\xi|^2)^{\frac{\alpha'}{2}+1-\alpha}}<\infty,
\]
which implies that $\alpha'+2-2\alpha>d$. Therefore, if $\alpha'>d-2$, Dalang's condition \eqref{E:Dalang2} is satisfied for any $\alpha\in (0,(\alpha'+2-d)/2)$. This proves part (a). 

As for (b), notice that from \eqref{E:k}, by the spherical coordinates, 
\[
k(t) = C \int_{\R^d}\frac{e^{-t|\xi|^2/2}}{(1+|\xi|^2)^{\alpha'/2}}\ud \xi
= C \int_0^\infty \frac{e^{-tr^2/2}}{(1+r^2)^{\alpha'/2}}r^{d-1}\ud r
= C U\left(\frac{d}{2},\frac{2+d-\alpha'}{2},\frac{t}{2}\right),
\]
where $U(a,b,z)$ is the {\it confluent hypergeometric function} (see \cite[Chapter 13]{NIST2010}) and the last equation is due to \cite[Eq. 13.4.4 on p. 326]{NIST2010}. 
Therefore, by the seven cases from 13.2.16 to 13.3.22 of \cite{NIST2010}, we see that as $t\rightarrow 0$,
\[
k(t) = 
\begin{cases}
\displaystyle C  t^{\frac{\alpha'-d}{2}}  + O(t^{1+\frac{\alpha'-d}{2}})&  \alpha' \in (0,d-2),\\[0.5em]
 \displaystyle C  t^{-1}  + O(\log t) &  \alpha' =d-2, \\[0.5em]
 \displaystyle C  t^{\frac{\alpha'-d}{2}}  + C'+ O(t^{1+\frac{\alpha'-d}{2}}) &  \alpha' \in (d-2,d),\\[0.5em]
 \displaystyle C \log(1/t) + C' + O(t \log t) &  \alpha' =d, \\[0.5em]
 \displaystyle C+ O(t^{\frac{\alpha'-d}{2}}) &  \alpha' \in (d,d+2),
 \\[0.5em]
 \displaystyle C + O(t\log t) &  \alpha' =d+2, \\[0.5em]
 \displaystyle C + O(t) &  \alpha' > d+2.
\end{cases}
\]
Combining the above cases proves part (b).
This completes the proof of Proposition \ref{P:Rate}.
\end{proof}

\medskip
We will need the following lemma.
\begin{lemma}\label{L:V/V}
 For all $\alpha>0$, it holds that
 \[
 \sup_{t>0} \frac{V_d(\alpha t)}{V_d(t)} <\infty.
 \]
\end{lemma}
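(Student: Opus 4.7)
My plan is to pass to the Fourier representation of $V_d(t)$ and reduce the desired uniform bound to a simple pointwise inequality in $\xi$.

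Applying Fubini's theorem (justified by nonnegativity of the integrand) to \eqref{E:Def-Vd2}, I would first rewrite
\[
V_d(t) = (2\pi)^{-d} \int_{\R^d} \widehat{f}(\ud\xi) \int_0^t e^{-s|\xi|^2} \ud s
= (2\pi)^{-d} \int_{\R^d} \frac{1-e^{-t|\xi|^2}}{|\xi|^2} \widehat{f}(\ud\xi),
\]
with the convention that the integrand equals $t$ when $\xi=0$. Thus $V_d(\alpha t)/V_d(t)$ is controlled as soon as one can compare $1-e^{-\alpha u}$ and $1-e^{-u}$ for $u \ge 0$.

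The elementary inequality I would use is
\[
1-e^{-\alpha u} \le \max(1,\alpha)\,(1-e^{-u}), \qquad \text{for all } u \ge 0, \ \alpha > 0.
\]
When $\alpha \le 1$ this is immediate from monotonicity of $x \mapsto 1-e^{-x}$. When $\alpha > 1$, I would consider $f(u) := \alpha(1-e^{-u}) - (1-e^{-\alpha u})$: since $f(0)=0$ and $f'(u) = \alpha e^{-u}(1-e^{-(\alpha-1)u}) \ge 0$ for $u \ge 0$, the inequality follows. Applying this pointwise with $u = t|\xi|^2$ and integrating against $\widehat{f}(\ud\xi)/|\xi|^2$ yields
\[
V_d(\alpha t) \le \max(1,\alpha)\, V_d(t), \qquad \text{for all } t > 0,
\]
which gives the claimed uniform bound.

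Since the argument only relies on the spectral representation \eqref{E:Def-Vd2} and a one-variable calculus inequality, there is no serious obstacle; the only subtle point is to make sure Fubini applies (it does by positivity) and to handle the $\xi = 0$ contribution, which is either absent or contributes the linear term $t$ (so that the comparison $1 \le \max(1,\alpha)$ suffices there).
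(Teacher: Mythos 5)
Your proof is correct, and in fact slightly more direct than the paper's own argument. Both proofs begin from the Fourier representation
\[
V_d(t) = (2\pi)^{-d}\int_{\R^d}\frac{1-e^{-t|\xi|^2}}{|\xi|^2}\,\widehat{f}(\ud \xi),
\]
but they diverge in the comparison they make. The paper first sandwiches $1-e^{-u}$ between $u/(1+u)$ and $C'\,u/(1+u)$ (with $C'=\sup_{x}(1+x^2)(1-e^{-x^2})/x^2$), so that $V_d(t)\asymp t\int(1+t|\xi|^2)^{-1}\widehat{f}(\ud\xi)$, and then compares these rational expressions separately in the cases $\alpha<1$ and $\alpha\ge 1$. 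You skip the intermediate rational approximation entirely and use the one-line inequality $1-e^{-\alpha u}\le \max(1,\alpha)(1-e^{-u})$, which you verify by a derivative argument. This yields the sharper explicit bound $V_d(\alpha t)\le \max(1,\alpha)\,V_d(t)$ (the paper's route gives $C'\max(1,\alpha)$ with $C'>1$). Your handling of Fubini and of the $\xi=0$ contribution is also fine. In short: same spectral starting point, but a more elementary pointwise comparison that produces a cleaner constant.
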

\begin{proof}
Simple calculations show that
\[
1=\inf_{x\in\R}  \frac{1+x^2}{x^2} (1-e^{-x^2}) < \sup_{x\in\R} 
\frac{1+x^2}{x^2} (1-e^{-x^2}):=C' <\infty.
\]
Noticing that by \eqref{E:Def-Vd2}, 
\[
V_d(t) = (2\pi)^{-d}\int_{\R^d}\frac{1-e^{-t|\xi|^2}}{|\xi|^2}\widehat{f}(\ud 
\xi).
\]
we see that for all $t>0$, 
\[
(2\pi)^{-d} t \int_{\R^d}\frac{1}{1+t |\xi|^2}\widehat{f}(\ud \xi) \le  V_d(t)  
\le 
C' (2\pi)^{-d} t \int_{\R^d}\frac{1}{1+ t |\xi|^2}\widehat{f}(\ud \xi).
\]
In case of $\alpha<1$, 
\begin{align}\label{E1:V/V}
\frac{V_d(\alpha t)}{V_d(t)}\le \frac{\displaystyle C' 
\alpha t\int_{\R^d}\frac{\widehat{f}(\ud 
\xi)}{1+\alpha t|\xi|^2}}{\displaystyle t 
\int_{\R^d}\frac{\widehat{f}(\ud \xi)}{1+t|\xi|^2}}
= C' \frac{\displaystyle \int_{\R^d}\frac{\widehat{f}(\ud 
\xi)}{1/\alpha+t|\xi|^2}}{\displaystyle\int_{\R^d}\frac{\widehat{f}
(\ud \xi)}{1+t|\xi|^2}} \le C',
\end{align}
and in case of $\alpha \ge 1$, 
\begin{align}\label{E2:V/V}
\frac{V_d(\alpha t)}{V_d(t)}\le \frac{\displaystyle C' 
\alpha t\int_{\R^d}\frac{\widehat{f}(\ud 
\xi)}{1+\alpha t|\xi|^2}}{\displaystyle t 
\int_{\R^d}\frac{\widehat{f}(\ud \xi)}{1+t|\xi|^2}}
= C' \frac{\displaystyle \alpha  \int_{\R^d}\frac{\widehat{f}(\ud 
\xi)}{1+t|\xi|^2}}{\displaystyle\int_{\R^d}\frac{\widehat{f}
(\ud \xi)}{1+t|\xi|^2}} \le C'\alpha.
\end{align}
This proves the lemma.
\end{proof}

\subsection{Density at multiple points (Proof of Theorem \ref{T:Mult})}
\label{S:Mult}
We start the proof of Theorem \ref{T:Mult} by denoting 
\begin{align}\label{E:f_gb}
f_{\gamma,\beta}(x):=\exp\left\{-2\beta
\left[\log\frac{1}{|x|\wedge 1}\right]^\gamma
\right\},\quad\text{for $x\in\R$,}
\end{align}
where $\gamma\in (0,1+\alpha)$ and $\beta>0$ are the constants given in the condition \eqref{E:lip}.
Fix $t>0$ and $m$ distinct points $\{x_1,\dots,x_m\}\subseteq\R^d$. Let $\epsilon_0$ be any positive  constant such that 
\[
\epsilon_0 \le \min(t/2,1)\quad\text{and}\quad
2(2\epsilon_0)^{1/2} \le \min_{i\ne j}|x_i-x_j|.
\]

We begin by writing 
\begin{equation}\label{E:Du}
D_{r, z} u(t,x) = \rho(u(r,z))G(t-r, x-z) + Q_{r, z} (t,x)\,,
\end{equation}
where $r\in (0,t]$, $z\in\R^d$, and 
\begin{equation}
Q_{r,z}(t,x) =\int_0^t \int_{\RR^d} G(t-s, x-y) \rho'(u(s,y)) D_{r,z}u(s,y)W(\ud s\ud y)\,.
\end{equation}
Denote 
\[
\psi_{r,z}(t,x):=D_{r,z}u(t,x).
\]
Let $S_{r, z}(t,x)$ be the solution to the equation
\begin{equation}
S_{r,z}(t,x) = G(t-r, x-z) + \int_r^t \int_{\RR^d} G(t-s, x-y) \rho'(u(s,y))S_{r, z}(s,y) W(\ud s\ud y)\,.
\end{equation}
Using the uniqueness of the solution to the SPDE, we can write
\begin{equation}
\psi_{r,z}(t,x)= S_{r,z}(t,x)\rho(u(r,z))\,.
\end{equation}
Note that this also shows that the Malliavin derivative of the solution 
$(r,z)\mapsto D_{r,z}u(t,x)$ is a function in $L^2(\R_+;\calH)$.
Set $\calT:=[t/2,t]$. 
For some $R\ge 2A$ large enough where $A$ is the constant in Assumption \ref{A:TailBlowup}, set $\calS =\{x\in\R^d, 
|x|\le R\}$  such that $x_i\in \calS$ for all $i=1,\cdots, m$.
Define
\begin{align}\label{E:Vd}
\widetilde{V}_d(\epsilon) := \int_0^{\epsilon}\ud r \iint_{\calS^2}\ud z\ud 
z'\:  
G(r, z) G(r,z') f(z-z')\:.
\end{align}

Recall that $V_d(\cdot)$ is defined in \eqref{E:Def-Vd1}. The following lemma 
shows that both $V_d(\epsilon)$ and $\widetilde{V}_d(\epsilon)$ has the same 
order as $\epsilon$ goes to zero. 

\begin{lemma}\label{L:VTildeV}
Under Assumption \ref{A:TailBlowup}, it holds that 
\[
\lim_{\epsilon\rightarrow 0_+} \frac{V_d(\epsilon)}{\widetilde{V}_d(\epsilon)} 
=1.
\]
\end{lemma}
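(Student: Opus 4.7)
The plan is to show that the excess $V_d(\epsilon)-\widetilde V_d(\epsilon)$ is $o(V_d(\epsilon))$ as $\epsilon\to 0_+$; since $\widetilde V_d(\epsilon)\le V_d(\epsilon)$ trivially (the integration region in $\widetilde V_d$ is smaller), this gives the lemma. The excess is the integral of the integrand in \eqref{E:Def-Vd1} over $\R^{2d}\setminus\calS^2$. Decomposing $\R^{2d}\setminus\calS^2=(\calS^c\times\R^d)\cup(\R^d\times\calS^c)$ and exploiting symmetry in $(z,z')$, the excess is bounded by $2J(\epsilon)$, where
\[
J(\epsilon):=\int_0^\epsilon ds\int_{|z'|>R}dz'\int_{\R^d}dz\,G(s,z)G(s,z')f(z-z').
\]

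I would then split $J=J_1+J_2$ according to whether $|z-z'|\ge A$ or $|z-z'|<A$. On the first region, Assumption~\ref{A:TailBlowup} supplies the uniform bound $f\le M:=\sup_{|w|\ge A}f(w)<\infty$, so $J_1\le M\int_0^\epsilon \bbP(|N(0,sI_d)|>R)\,ds$, which is super-exponentially small in $1/\epsilon$ by the Gaussian tail estimate. On the second region, I would change variables to $w=z-z'$ and apply the product identity~\eqref{E:GGGG} in the form $G(s,z'+w)G(s,z')=G(2s,2z'+w)G(s/2,w/2)$. Integrating $z'$ out (via the substitution $u=2z'+w$) and using $R\ge 2A$ to ensure that $|w|<A$ implies $|u-w|>2R\Rightarrow|u|>R$, the $z'$-integral yields the tail factor $\bbP(|N(0,2sI_d)|>R)$. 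The remaining factor $\int_{|w|<A}f(w)G(s/2,w/2)\,dw$ is bounded by $2^d k(2s)$ via the elementary identity $G(s/2,w/2)=2^d G(2s,w)$ and the definition~\eqref{E:k} of $k$; combined with $V_d(\epsilon)=\int_0^\epsilon k(2s)\,ds$ from~\eqref{E:Def-Vd2}, this yields $J_2\le \bbP(|N(0,2\epsilon I_d)|>R)\,V_d(\epsilon)$.

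To conclude, note that $k$ is nonincreasing and strictly positive on $(0,\infty)$ (being the Laplace-type transform of a nontrivial nonnegative measure via~\eqref{E:k}), whence $V_d(\epsilon)\ge c\epsilon$ for some $c>0$ and all sufficiently small $\epsilon$. Thus
\[
\frac{V_d(\epsilon)-\widetilde V_d(\epsilon)}{V_d(\epsilon)}\le \frac{2M}{c}\,\bbP(|N(0,\epsilon I_d)|>R)+2\,\bbP(|N(0,2\epsilon I_d)|>R),
\]
which tends to $0$ as $\epsilon\to 0_+$. The main obstacle is the piece $J_2$, where $f$ may be singular near the origin and a crude bound would lose too much: the identity~\eqref{E:GGGG} is essential precisely because it trades the spatial tail constraint $|z'|>R$ for a Gaussian convolution against $f$, which is absorbed exactly into $k(2s)$ and hence into $V_d(\epsilon)$, so the ratio vanishes without any further assumption on the behavior of $f$ at the origin.
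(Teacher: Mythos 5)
Your proof is correct but follows a genuinely different route from the paper's. The paper applies l'H\^opital's rule, reducing the claim to $\lim_{\epsilon\downarrow 0}I_{\R^{2d}}(\epsilon)/I_{\calS^2}(\epsilon)=1$ where $I_H(\epsilon)=\iint_H G(\epsilon,z)G(\epsilon,z')f(z-z')\,dz\,dz'$; it then upper-bounds the numerator using the supremum condition of Assumption \ref{A:TailBlowup} and lower-bounds the denominator using the infimum condition, taking a free parameter $\delta\uparrow 1$ at the end. You instead bound the excess $V_d(\epsilon)-\widetilde V_d(\epsilon)$ directly: the piece where $|z-z'|\ge A$ dies by the supremum bound together with the Gaussian tail of $G(s,\cdot)$ outside $\calS$, while the piece where $|z-z'|<A$ is handled by \eqref{E:GGGG}, which trades the constraint $|z'|>R$ for a Gaussian tail factor while absorbing $f$ into $k(2s)$, and hence into $V_d(\epsilon)$. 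The identity \eqref{E:GGGG} is the engine in both proofs, but is used to different ends: the paper uses it to collapse the double spatial integral into $\int G(2\epsilon,y)f(y)\,dy$, whereas you use it to decouple the tail factor from the $f$-weight. Your approach has two advantages: it sidesteps l'H\^opital (and the attendant need to verify that the derivative ratio actually has a limit), and—more substantively—it uses only the boundedness half of Assumption \ref{A:TailBlowup}. Where the paper needs $\inf_{|z|<1/A}f(z)>0$ to lower-bound $I_{\calS^2}(\epsilon)$, your lower bound $V_d(\epsilon)\ge k(1)\,\epsilon$ for $\epsilon\le 1/2$ (from monotonicity and strict positivity of $k$) follows from Dalang's condition alone, so the infimum hypothesis plays no role in your argument.
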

\begin{remark}
 We first remark that if $f$ satisfies some scaling property, such as the Riesz kernel, then this property can be easily proved. 
 Let us see this through Riesz kernel case.
 By the l'Hopital rule,
 \begin{align*}
 \lim_{\epsilon\rightarrow 0_+} \frac{V_d(\epsilon)}{\widetilde{V}_d(\epsilon)} 
&= 
\lim_{\epsilon\rightarrow 0_+} 
\frac{
\iint_{\R^{2d}}G(\epsilon,z)G(\epsilon,z') f(z-z') \ud z\ud z'
}{
\int_{|z|\le R}\int_{|z'|\le R}G(\epsilon,z)G(\epsilon,z') f(z-z') \ud z\ud z'
}\\
&=
\lim_{\epsilon\rightarrow 0_+} 
\frac{
\iint_{\R^{2d}}G(1,z)G(1,z') f(\sqrt{\epsilon} (z-z')) \ud z\ud z'
}{
\int_{|z|\le \frac{R}{\sqrt{\epsilon}}}\int_{|z'|\le \frac{R}{\sqrt{\epsilon}}}G(1,z)G(1,z') f(\sqrt{\epsilon} (z-z')) \ud z\ud z'
}\\
&=
\lim_{\epsilon\rightarrow 0_+} 
\frac{
\iint_{\R^{2d}}G(1,z)G(1,z') f(z-z') \ud z\ud z'
}{
\int_{|z|\le \frac{R}{\sqrt{\epsilon}}}\int_{|z'|\le \frac{R}{\sqrt{\epsilon}}}G(1,z)G(1,z') f(z-z') \ud z\ud z'
}=1,
\end{align*}
where the last step is due to the dominated convergence theorem. 
However, for general $f$, to prove this property is much less straightforward. Indeed, we need to impose some conditions on $f$, namely, Assumption \ref{A:TailBlowup}.
\end{remark}

\begin{proof}[Proof of Lemma \ref{L:VTildeV}]
Let $A>1$ be the constant in Assumption \ref{A:TailBlowup}.
Throughout the proof, we assume that $\epsilon\in (0,1/(A^2R^2))$.
For any $\epsilon>0$ and $H\subseteq\R^{2d}$, denote 
\[
I_{H}(\epsilon):=\iint_{H}\ud z\ud z'\:  
G(\epsilon, z) G(\epsilon,z') f(z-z').
\]
By the l'Hopital rule, 
\begin{align*}
 \lim_{\epsilon\rightarrow 0_+} \frac{V_d(\epsilon)}{\widetilde{V}_d(\epsilon)} 
&= \lim_{\epsilon\rightarrow 0_+} 
\frac{I_{\R^{2d}}(\epsilon)
}{I_{\calS^{2}}(\epsilon)}.
\end{align*}
Notice that by \eqref{E:GGGG}, $G(\epsilon,z)G(\epsilon,z')=G(2\epsilon,z-z')G(\epsilon/2,(z+z')/2)$. Hence,
by change of variables $y=z-z'$ and $y'=(z+z')/2$, we see that 
\begin{align*}
I_{\R^{2d}}(\epsilon)
& =
\iint_{\R^{2d}}\ud y\ud y' G(2\epsilon, y)G(\epsilon/2, y') f(y)\\
&=
\int_{\R^{d}} G(2\epsilon, y) f(y)\ud y\\
&= \int_{|y|\le \frac{R}{2\sqrt{\epsilon}}} G(2,y)f(\sqrt{\epsilon} y)\ud y 
+ \int_{|y|> \frac{R}{2\sqrt{\epsilon}}} G(2,y)f(\sqrt{\epsilon} y)\ud y.
\end{align*}
Recall that $R\ge 2A$. By Assumption \ref{A:TailBlowup}, 
\[
I_{\R^{2d}}(\epsilon)\le 
\int_{|y|\le \frac{R}{2\sqrt{\epsilon}}} G(2,y)f(\sqrt{\epsilon} y)\ud y 
+ C_R \Theta(\epsilon),
\]
where 
\[
C_R :=\sup_{|x|\ge R/2} f(x)\quad\text{and}\quad
\Theta(\epsilon):=
\int_{|y|> \frac{R}{2\sqrt{\epsilon}}} G(2,y)\ud y.
\]
Similarly,
\begin{align*}
I_{\calS^{2}}(\epsilon)& \ge 
\int_{|y|\le R}\ud y\: f(y)G(2\epsilon, y) \int_{|y'|\le R/2} \ud y' \: 
G(\epsilon/2, y') \\
& =
\int_{|y|\le \frac{R}{\sqrt{\epsilon}}}\ud y\: f(\sqrt{\epsilon} y)G(2, y) \int_{|y'|\le \frac{R}{2\sqrt{\epsilon}}} \ud y' \: 
G(1/2, y').
\end{align*}
For any $\delta \in (0,1)$, as $\epsilon$ is small enough, we can always ensure that 
\[
\int_{|y'|\le \frac{R}{2\sqrt{\epsilon}}} \ud y' \: 
G(1/2, y')\ge \delta,
\]
which implies that
\[
I_{\calS^{2}}(\epsilon)\ge
\delta \int_{|y|\le \frac{R}{\sqrt{\epsilon}}} f(\sqrt{\epsilon} y)G(2, y)\ud y.
\]
Therefore, for $\epsilon$ small enough,
\begin{align*}
\frac{I_{\R^{2d}}(\epsilon)}{I_{\calS^2}(\epsilon)}
&\le \delta^{-1}
\frac{\int_{|y|\le \frac{R}{2\sqrt{\epsilon}}} G(2,y)f(\sqrt{\epsilon} y)\ud y 
+ C_R \Theta(\epsilon)}{ \int_{|y|\le \frac{R}{\sqrt{\epsilon}}} G(2,y)f(\sqrt{\epsilon} y)\ud y}\\
&\le \delta^{-1}
\left(1+\frac{C_R \Theta(\epsilon)}{
\int_{|y|\le \frac{R}{\sqrt{\epsilon}}} G(2,y)f(\sqrt{\epsilon} y)\ud y}\right).
\end{align*}
Because $\epsilon<1/(A^2R^2)$, that is $\sqrt{\epsilon} R \le \sqrt{\epsilon} A < 1/A$, by Assumption \ref{A:TailBlowup}, 
\[
\int_{|y|\le \frac{R}{\sqrt{\epsilon}}} G(2,y)f(\sqrt{\epsilon} y)\ud y
\ge 
\int_{|y|\le R} G(2,y)f(\sqrt{\epsilon} y)\ud y
\ge C_{R,f},
\]
where 
\[
C_{R,f}:=\inf_{|z|<1/A} f(z) \int_{|y|\le R}G(2,y)\ud y >0. 
\]
Therefore, 
\[
\frac{I_{\R^{2d}}(\epsilon)}{I_{\calS^2}(\epsilon)}\le \delta^{-1}\left(
1+ C_{R,f}^{-1} C_R \Theta(\epsilon) 
\right) \rightarrow \delta^{-1},\quad \text{as $\epsilon\downarrow 0$.}
\]
Finally, since $\delta$ can be arbitrarily close to $1$, this proves the lemma. 
\end{proof}

\medskip

Let us  continue our proof of Theorem \ref{T:Mult}.
Define
\begin{align*}
\sigma_{i,j} &:=  \langle D u(t,x_i), D u(t, 
x_j)\rangle_{\mathcal{H}_{\calT,\calS}}\\
&= \int_{\calT}\ud r \iint_{\calS^2} \ud z\ud z' D_{r, z} u(t,x_i)D_{r, z'} u(t,x_j) f(z-z') \,.
\end{align*}
Let $\sigma$ be the matrix with entries $\sigma_{i,j}$, $1\leq i,j \leq m$. For 
any $\xi\in \RR^m$ and any $\epsilon\in (0,\epsilon_0)$, consider the inner 
product in the Euclidean space
\begin{align*}
\InPrd{\sigma \xi, \xi}  &= \sum_{i,j=1}^m \xi_i \xi_j \int_{\calT} \ud 
r\iint_{\calS^2} \ud z\ud z'\: \psi_{r, z}(t, x_i) \psi_{r, z'}(t, x_j) f(z-z') 
\\
&\geq  \int_{t-\epsilon}^t \ud r 
\iint_{\calS^2}\ud z\ud z' 
\left( \sum_{i=1}^m \psi_{r, z}(t, x_i)\xi_i\right) \left(\sum_{i=1}^m \psi_{r, 
z'} (t, x_j)\xi_j \right)f(z-z')\\
&\geq \sum_{j=1}^m \xi_j^2\int_{t-\epsilon}^t \ud r \iint_{\calS^2}\ud z\ud 
z'\:  \psi_{r, z}(t,x_j)\psi_{r, z'}(t,x_j)f(z-z')\\
&\quad + \sum_{j=1}^m \sum_{i \neq j} \xi_i \xi_j\int_{t-\epsilon}^t\ud r 
\iint_{\calS^2}
\ud z\ud z' \:
\psi_{r, z}(t, x_i) \psi_{r,z'}(t, x_j)f(z-z') \\
&= I_{\epsilon}^* + I_{\epsilon}^{(1)}(\xi)\,.
\end{align*}
Apply twice the following inequality 
\begin{align}\label{E:ab32}
\Norm{a+b}^2  \ge \left(\Norm{a} -\Norm{b}\right)^2\ge 
\frac{2}{3}\Norm{a}^2 -2\Norm{b}^2
\end{align}
to see that
\begin{align*}
I_{\epsilon}^* &\geq \frac{2}{3} \sum_{j=1}^m \xi_j^2
\int_{t-\epsilon}^t \ud r 
\iint_{\calS^2}  
\ud z \ud z'\:
\rho(u(r,z))\rho(u(r,z'))\\
&\hspace{6em}\times G(t-r, x_j-z)G(t-r, x_j-z')f(z-z') \\
 &\qquad -2 \sum_{j=1}^m \xi_j^2\int_{t-\epsilon}^t \ud r\iint_{\calS^2} \ud 
z\ud z'\: Q_{r,z}(t,x_j)Q_{r,z'}(t,x_j)f(z-z') \\
&\geq \frac{4}{9} \sum_{j=1}^m \xi_j^2\int_{t-\epsilon}^t \ud r \iint_{\calS^2}
\ud z \ud z'\:
\rho(u(t,x_j))^2 G(t-r, x_j-z)G(t-r, x_j-z')f(z-z') \\
&\qquad -\frac{4}{3} \sum_{j=1}^m \xi_j^2\int_{t-\epsilon}^t  \ud 
r\iint_{\calS^2} \ud z\ud z' \left[\rho(u(t, x_j)) - 
\rho(u(r,z))\right]\left[\rho(u(t, x_j)) - \rho(u(r,z'))\right] \\
 &\hspace{6em} \times G(t-r, x_j-z)G(t-r, x_j-z') f(z-z')  \\
 &\qquad -2 \sum_{j=1}^m \xi_j^2\int_{t-\epsilon}^t \ud r\iint_{\calS^2} \ud 
z\ud z'\: Q_{r,z}(t,x_j)Q_{r,z'}(t,x_j)f(z-z') \\
 &=: \: \frac{4}{9} I_{\epsilon}^{(0)}(\xi) - \frac{4}{3} I_{\epsilon}^{(2)} 
(\xi) - 2 I_{\epsilon}^{(3)}(\xi)\,.
\end{align*}
So
\begin{equation}
(\det\sigma)^{1/d} \geq \inf_{|\xi|=1}\langle \sigma \xi, \xi\rangle \geq 
\frac{4}{9} \inf_{|\xi|=1} I_{\epsilon}^{(0)}(\xi) - 2 \sum_{i=1}^3 
\sup_{|\xi|=1}|I_{\epsilon}^{(i)}(\xi)|\,.
\end{equation}

\bigskip

By the same arguments as those in the proof of Theorem 1.2 of 
\cite{CHN16Density}, we see that 
\begin{align*}
I^{(0)}_{\epsilon}(\xi) &\geq \inf_{x\in K} \rho(u(t,x))^2 \int_{0}^{\epsilon} 
\ud r \iint_{\calS^2} \ud z \ud z'\: G(r, z) G(r,z') f(z-z')  \\
&= V_
d(\epsilon)\inf_{x\in K} \rho(u(t,x))^2.
\end{align*}
For $I^{(i)}_{\epsilon}$, $i = 1,2,3$, we will estimate their upper bound of the 
$L^p$ norm.
By Minkowski's inequality and the Cauchy-Schwartz inequality, we see that 
\begin{align*}
\Norm{\sup_{|\xi|=1} I_{\epsilon}^{(1)}}_{p} &\leq \sum_{j=1}^m \sum_{i\ne j} 
\int_{t-\epsilon}^t\ud r \iint_{ \calS^2} \ud z\ud z' 
\:\left\|\psi_{r,z}(t,x_i) 
\psi_{r, z'}(t,x_j)\right\|_{p} f(z-z') \\
&\leq \sum_{j=1}^m \sum_{i\ne j} \int_{t-\epsilon}^t\ud r \iint_{\calS^2} \ud 
z\ud z' 
\left\|\psi_{r, z}(t,x_i)\right\|_{2p} \left\|\psi_{r, z'}(t,x_j)\right\|_{2p} 
f(z-z') \\
&\leq C_{t, K} \sum_{j=1}^m \sum_{i\ne j}
\int_{t-\epsilon}^t  \ud r
\iint_{\calS^2} 
\ud z\ud z' 
\left\|S_{r,z}(t, x_i) \right\|_{4p}\left\|S_{r,z'}(t, x_j) \right\|_{4p} 
f(z-z'),
\end{align*}
where 
\[
C_{t,K} := \sup_{(s,y)\in [t/2,t]\times K} \Norm{\rho(u(s,y))^2}_{2p}.
\]
The next lemma gives a moment bound for $S_{r,z}(t,x)$. 

\begin{lemma}\label{L:MomS}
For $t\in (0,T]$, $x\in\R^d$ and $p\ge 2$, there exists a constant $C=C(T,p)>0$ 
such that 
\[
\Norm{S_{r,z}(t,x)}_p\le C G(t-r,x-z),\qquad\text{for all $r\in (0,t]$ and 
$x,z\in\R^d$.}
\]
\end{lemma}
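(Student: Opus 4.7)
The plan is to view $S_{r,z}(t,x)$ as the solution of a linear stochastic heat equation with bounded random coefficient $\rho'(u(s,y))$, whose ``homogeneous part'' is $G(t-r,x-z)$. After the time shift $\tau = t-r$, this homogeneous part equals $(\sqrt{2}\,\delta_z * G(\tau,\cdot))(x)/\sqrt{2}$, which identifies it with the free heat evolution of the (trivially admissible) initial measure $\sqrt{2}\,\delta_z$. Lemma~\ref{L:IntIneq} will then close the estimate in a few lines.

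First, I would apply the Burkholder--Davis--Gundy inequality followed by Minkowski's inequality to the stochastic convolution defining $S_{r,z}(t,x)$. Using the pointwise bound $|\rho'|\le\Lip_\rho$ to pull the random coefficient out deterministically, this produces a constant $C_p$ depending only on $p$ and $\Lip_\rho$ such that
\[
\|S_{r,z}(t,x)\|_p^2 \le 2\,G(t-r,x-z)^2 + C_p\!\int_r^t\!\!\iint_{\R^{2d}}\! G(t-s,x-y)G(t-s,x-y')f(y-y')\,\|S_{r,z}(s,y)\|_p\|S_{r,z}(s,y')\|_p\,dy\,dy'\,ds.
\]

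Next, setting $\tau = t-r$ and $g(\tau,x):=\|S_{r,z}(r+\tau,x)\|_p$, the resulting inequality is precisely the hypothesis of Lemma~\ref{L:IntIneq} applied with $J_0(\tau,x)=\sqrt{2}\,G(\tau,x-z)=(\sqrt{2}\,\delta_z * G(\tau,\cdot))(x)$ and $\lambda^2=C_p$; the initial measure $\sqrt{2}\,\delta_z$ trivially satisfies \eqref{E:J0finite}. That lemma then yields $g(\tau,x)\le \sqrt{2}\,G(\tau,x-z)\,H(\tau;2C_p)^{1/2}$, hence
\[
\|S_{r,z}(t,x)\|_p \le \sqrt{2}\,H(T;2C_p)^{1/2}\,G(t-r,x-z), \qquad 0<r<t\le T,
\]
since $H(T;2C_p)<\infty$ by Lemma~\ref{L:EstHt} together with the definition \eqref{E:H}. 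This gives the claimed bound with $C=C(T,p):=\sqrt{2}\,H(T;2C_p)^{1/2}$.

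The only point that requires some care is verifying a priori that $\|S_{r,z}(t,x)\|_p<\infty$, so that the quadratic inequality above is meaningful; this is handled by a standard Picard approximation argument for this linear equation with deterministically bounded coefficient $\rho'$, exactly as in the proof of Proposition~\ref{P:D1}. No substantive obstacle is anticipated beyond this routine bookkeeping.
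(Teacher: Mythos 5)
Your proof is correct and follows essentially the same route as the paper: apply Burkholder--Davis--Gundy together with the boundedness of $\rho'$, shift the time variable to $\tau=t-r$, and invoke Lemma~\ref{L:IntIneq} with a Dirac delta as initial data. The only cosmetic difference is that the paper also shifts the spatial variable so the delta sits at the origin ($\mu=\sqrt{C_p}\,\delta_0$) whereas you keep $\delta_z$, and you add the (reasonable but routine) remark about a priori finiteness via Picard iteration.
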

\begin{proof}
Because $\rho'$ is bounded, by the Burkholder-Davis-Gundy inequality,
\begin{align*}
\Norm{S_{r,z}(t,x)}_p^2 \le C_p G(t-r,x-z)^2 
+ C_p \int_r^t\ud s\iint_{\R^{2d}}& \ud y\ud y'\: 
G(t-s,x-y)G(t-s,x-y')\\
\times &
\Norm{S_{r,z}(s,y)}_p\Norm{S_{r,z}(s,y')}_p f(y-y').
\end{align*}
By setting $\theta=t-r$, $\eta=x-z$ and 
$g(\theta,\eta)=\Norm{S_{r,z}(\theta+r,\eta+z)}_p$, we see that 
\begin{align*}
g(\theta,\eta)^2 \le C_p G(\theta,\eta)^2 +C_p
\int_0^\theta \ud r \iint_{\R^{2d}}&  \ud y\ud y'\: 
G(t-s,\eta-y)G(t-s,\eta-y')\\
&\times g(s,y)g(s,y')f(y-y').
\end{align*}
Therefore, this lemma is proved by an application of Lemma \ref{L:IntIneq} with 
$\mu = \sqrt{C_p} \: \delta_0$.
\end{proof}
By Lemma \ref{L:MomS} and the property of $\Psi_n(T,x;0)$ in Proposition 
\ref{P:Psi}, we have that
\begin{align*}
\Norm{\sup_{|\xi|=1} I_{\epsilon}^{(1)}}_{p} \leq
& C \sum_{j=1}^m \sum_{i\ne j}
\int_{t-\epsilon}^t \ud r \iint_{ \calS^2}
\ud z \ud z'\:
G(t-r, x_i-z) G(t-r, x_j-z') f(z-z')\\
\le & 
C \sum_{j=1}^m \sum_{i\ne j}
C_{x_i-x_j} \widetilde{V}_d(\epsilon) \epsilon^{\beta}
\\
= & C' \widetilde{V}_d(\epsilon) \epsilon^{\beta},
\end{align*}
where the constant $C'$ in the last expression depends on $x_i, i=1,\dots, m$.

To estimate $I^{(2)}_{\epsilon}(\xi)$, we note that by Theorem \ref{T:Holder},
\begin{equation}
\Norm{u(r, z)-u(s,y)}_{p} \leq C_p \left( |r-s|^{\frac{\alpha}{2}} + 
|y-z|^{\alpha}\right)\quad \text{for all } \ r,s, \in [0,T]\,, y, z \in K\ \,,
\end{equation}
for some constant $C_p$ which depends on $T$ and $K$. Thus we have
\begin{align*}
\Norm{\sup_{|\xi|=1}I_{\epsilon}^{(2)}(\xi)}_{p} &\leq \sum_{j=1}^m 
\int_{t-\epsilon}^t \ud r 
\iint_{\calS^2}\ud z\ud z'\:
\Norm {u(r, z)-u(t, x_j)}_{2p}\Norm {u(r, z')-u(t, x_j)}_{2p}\\
&\hspace{14em}\times G(t-r, x_j-z)G(t-r, x_j-z') f(z-z') \\
&\leq C \int_0^{\epsilon} \ud r \iint_{\calS^2}
\ud z\ud z'  \left( |r|^{\frac{\alpha}{2}} + |z|^{\alpha} \right)\left( 
|r|^{\frac{\alpha}{2}} + |z'|^{\alpha} \right) G(r, z) G(r, z') f(z-z') 
\\
&\leq C \left( |\epsilon|^{\frac{\alpha}{2}} + |\sqrt{\epsilon}|^{\alpha} 
\right)^2 \int_0^{\epsilon} \ud r \iint_{\calS^2}
\ud z\ud z' \:  G(r, z) G(r, z') f(z-z') \\
&= C \epsilon^\alpha \widetilde{V}_d(\epsilon).
\end{align*}

To estimate $I_{\epsilon}^{(3)}(\xi)$, we first claim that 
\begin{equation}\label{E:Q Lp}
\Norm{Q_{r, z}(t, x)}_{2p}^2 
\leq C \:G(t-r, x-z)^2 (t-r)^\alpha,\qquad \text{for all $z \in K$}\,.
\end{equation}
Indeed, by the Burkholder-Davis-Gundy inequality, we see that
\begin{align*}
\Norm{Q_{r,z}(t,x)}_{2p}^2 &\leq 4p \int_r^t \ud s \iint_{\RR^{2d}} \ud y\ud y' 
\: G(t-s, x-y) G(t-s, x-y')\\
 &\hspace{5em} \times \Norm{D_{r, z}u(s, y)}_{2p} \Norm{D_{r, z}u(s, y')}_{2p} 
f(y-y') \\
 &\leq  C_{t, K}  p \int_0^t\ud s \iint_{\RR^{2d}} \ud y\ud y' \: G(t-s, x-y) 
G(t-s, x-y')\\
 &\hspace{5em}\times \Norm{S_{r, z}u(s, y)}_{4p} \Norm{S_{r, z}u(s, y')}_{4p} 
f(y-y') \\
 & \leq  C \int_r^t \ud s\iint_{\RR^d}\ud y\ud y'\: 
 G(t-s, x-y) G(t-s, x-y') \\
 &\hspace{5em} \times G(s-r, y-z) G(s-r, y'-z)f(y-y')\,,
\end{align*}
where we have applied Lemma \ref{L:MomS} and have used the inequality that for 
$z \in K$ and $r\in (t-\epsilon,t)$, 
\begin{align*}
\Norm {D_{r,z}u(s,y)}_{2p} \leq \Norm{S_{r,z}u(s,y)}_{4p} 
\Norm{\rho(u(r,z))}_{4p} \leq C_{t,K} \Norm{S_{r,z}u(s,y)}_{4p}\,.
\end{align*}
Next we use identity \eqref{E:GGGG} to see that that 
\begin{align*}
\Norm{Q_{r,z}(t,x)}_{2p}^2 &\leq C  \int_r^t\ud s \iint_{\RR^{2d}} \ud y\ud y'\: 
G(t-r, x-z)G(t-r, x-z) \\
&\qquad \times G\left( \frac{(t-s)(s-r)}{t-r}, 
y-\frac{t-s}{t-r}z-\frac{s-r}{t-r}x\right)\\
&\qquad \times G\left( \frac{(t-s)(s-r)}{t-r}, 
y'-\frac{t-s}{t-r}z-\frac{s-r}{t-r}x\right) f(y -y') \\
&\leq C\: G(t-r, x-z)^2 \int_r^t\ud s \int_{\RR^d} 
e^{-\frac{2(t-s)(s-r)}{t-r}|\xi|^2} \widehat{f}(\ud\xi)\,.
\end{align*}
Notice that for $\beta>0$, $x\ge 0$ and $\theta\in (0,\Theta]$, 
\begin{align}\label{E:Exp-Poly}
(1+x^\beta ) e^{-\theta x^2} = \theta^{-\beta/2}e^{-(\sqrt{\beta}x)^2}\left(
\theta^{\beta/2} + (\sqrt{\theta}x)^\beta
\right)
\le \theta^{-\beta/2}
\max_{y>0} e^{-y^2}(\Theta^{\beta/2}+y^\beta).
\end{align}
Apply the above inequality with $\beta=2(1-\alpha)$ and
\[
\theta := \frac{2(t-s)(s-r)}{t-r}\le t-r <T =:\Theta
\]
to see that 
\begin{equation}\label{E:Exp-Poly2}
(1+|\xi|^2)^{1-\alpha} e^{-\frac{2(t-s)(s-r)}{t-r} |\xi|^2} \leq 
C\left[\frac{t-r}{(t-s)(s-r)}\right]^{1-\alpha}.
\end{equation} 
Hence, by the Beta integral and condition \eqref{E:Dalang2},
\begin{align*}
 \Norm{Q_{r,z}(t,x)}_{2p}^2 &\le 
 C\: G(t-r, x-z)^2 \int_r^t\ud s 
\left[\frac{t-r}{(t-s)(s-r)}\right]^{1-\alpha}\int_{\RR^d} 
\frac{\widehat{f}(\ud\xi)}{(1+|\xi|^2)^{1-\alpha}}\\
 &= C G(t-r, x-z)^2 (t-r)^\alpha,
\end{align*}
which proves \eqref{E:Q Lp}. 
Therefore, by Minkowski's inequality, we have that
\begin{align*}
\Norm {\sup_{|\xi|=1}I_{\epsilon}^{(3)}(\xi)}_{p}&\le
 C \int_{t-\epsilon}^t \ud r\iint_{\calS^2} \ud z\ud z'\: 
 \Norm{Q_{r,z}(t,x_j)}_{2p}
 \Norm{Q_{r,z'}(t,x_j)}_{2p} f(z-z')\\
&\leq  C \int_{t-\epsilon}^t \ud r\: (t-r)^{\alpha} \iint_{ 
\calS^2}
\ud z\ud z' \: 
G(t-r,z) G(t-r,z') f(z-z') \\
&\le C \epsilon^{\alpha} \widetilde{V}_d(\epsilon).
\end{align*}

\bigskip
Finally, by choosing $\eta\in (0,\alpha\wedge \beta)$,
we have that
\begin{align*}
\bbP \left( \left(\det \sigma \right)^{1/d} < 
\widetilde{V}_d(\epsilon)\epsilon^{\eta}\right)
\leq & \bbP \left(\frac{4}{9} \inf_{|\xi|=1} |I_{\epsilon}^{(0)}(\xi)|< 
2\widetilde{V}_d(\epsilon)\epsilon^{\eta} \right) + \sum_{i=1}^{3} \bbP 
\left(2\sup_{|\xi|=1} |I_{\epsilon}^{(i)}(\xi)| > \frac{1}{3} 
\widetilde{V}_d(\epsilon)\epsilon^{\eta}\right)\\
\leq & \bbP 
\left(\inf_{x\in K} f_{\beta,\gamma}(u(t,x))< 5\epsilon^{\eta} \right) + C 
\epsilon^{p (\alpha-\eta)}\,,
\end{align*}
where $f_{\beta,\gamma}$ is defined in \eqref{E:f_gb}.
Notice that for any $\theta$ and $x\in (0,1)$,
\[
\exp\left\{-2\beta\left[\log \frac{1}{x}\right]^\gamma\right\}
< \theta \quad\Longleftrightarrow\quad
x <
\exp\left\{-(2\beta)^{-1/\gamma}\left[\log 
\frac{1}{\theta}\right]^{1/\gamma}\right\}.
\]
Hence, as $\epsilon$ is small enough, for some constant $C_0>0$,
\begin{align*}
\bbP\left(\inf_{x\in K} f_{\beta,\gamma}(u(t,x))< 5 \: \epsilon^{\eta} \right)
&= \bbP\left(\left(\inf_{x\in K} u(t,x) \wedge 1\right)< 
\exp\left\{-C_0\left(\frac{\eta}{2\beta}\right)^{\frac{1}{\gamma}}\left[
\log\frac{1}{\epsilon}\right]^{\frac{1}{\gamma}}\right\}\right)\\
&= \bbP\left(\inf_{x\in K} u(t,x) < 
\exp\left\{-C_0\left(\frac{\eta}{2\beta}\right)^{\frac{1}{\gamma}}\left[
\log\frac{1}{\epsilon}\right]^{\frac{1}{\gamma}}\right\}\right)\\
&\le \exp\left(-C [\log(1/\epsilon)]^{\frac{1+\alpha}{\gamma}}\right),
\end{align*}
where in the last step we have applied Theorem \ref{T:NegMom}.
Therefore,
\begin{align}\label{E:multiPoint}
 \bbP \left( \left(\det \sigma \right)^{1/d} < 
\widetilde{V}_d(\epsilon)\epsilon^{\eta}\right)\le
 \exp\left(-C [\log(1/\epsilon)]^{\frac{1+\alpha}{\gamma}}\right) + C 
\epsilon^{p (\alpha-\eta)}.
\end{align}

Finally, by Lemma \ref{L:VTildeV} and part (4) of Lemma \ref{L:Rate}, $\widetilde{V}_d \ge C V_d(\epsilon)\ge C' \epsilon^{1-\beta}$.
From \eqref{E:multiPoint} we see that as $\epsilon$ small enough,
\begin{align*}
\bbP \left( \left(\det \sigma \right)^{1/d} <  
\epsilon^{1+\eta-\beta}\right)&\le 
 \bbP \left( \left(\det \sigma \right)^{1/d} < R(\epsilon)\epsilon^{\eta}\right)
 \le
 \exp\left(-C [\log(1/\epsilon)]^{\frac{1+\alpha}{\gamma}}\right) + C \epsilon^{p (\alpha-\eta)}.
\end{align*}
Because $\gamma< 1+\alpha$, an application of Lemma A.1 of \cite{CHN16Density} shows that $\E\left[(\det \sigma)^{-p}\right]< \infty$ for all $p >0$. 
Hence, Theorem \ref{T:Density2}, together with Proposition \ref{P:D1}, implies that for the choice of 
$\calT$ and $\calS$, both parts (a) and (b) of Theorem \ref{T:Mult} hold. 
This completes the whole proof of Theorem \ref{T:Mult}.\myEnd

\section{Strict positivity of density}\label{S:Pos}
The aim of this section is to prove the positivity of the joint density as 
stated in Theorem \ref{T:Pos}. Throughout this section, we will fix a set of 
arbitrary $m$ disjoint points $\{x_1,\dots,x_m\}\subseteq\R^d$.
We will assume that the initial data $\mu$ is nonnegative, and if not, one may 
simply replace $\mu$ by  $|\mu|$. All arguments go through. 

The outline of the proof of Theorem \ref{T:Pos} is given in Section \ref{SS:Pos}.
All technical details are given in the subsequent sections. 

\subsection{Proof of Theorem \ref{T:Pos}}
\label{SS:Pos}

The proof of Theorem \ref{T:Pos} follows the same arguments as those in the proof of Theorem 1.4 of \cite{CHN16Density}. 
For completeness, we present this proof below.

\begin{proof}[Proof of Theorem \ref{T:Pos}]
Choose and fix an arbitrary final time $T$. We will prove Theorem \ref{T:Pos}
for $t=T$.
Throughout the proof, we fix $\kappa>0$ and assume that $|\mathbf{z}|\le \kappa$ where $\mathbf{z} = (z_1, \dots, z_m)\in\R^m$ and $t\in (0,T]$.
Without loss of generality, one may assume that $T>1$ in order that $T-2^{-n}>0$ for all $n\ge 1$.
Otherwise we simply replace all ``$n\ge 1$" in the proof below by ``$n\ge N$" for some large $N>0$.
The proof consists of three steps.

{\bigskip\bf\noindent Step 1.~}
For $n\ge 1$, define $\mathbf{h}_n$ as follows:
\begin{align}\label{E:hni}
h_{n}^i(s,y):= c_n \one_{[T-2^{-n},T]}(s) G(T-s,x_i-y), \qquad\text{for $1\le i\le m$,}
\end{align}
where
\begin{align}
\label{E:cni-1}
\begin{aligned}
c_n^{-1} & := \int_{T-2^{-n}}^T \ud s \iint_{\R^{2d}} \ud y\ud y'\: G(T-s,x_i-y)
G(T-s,x_i-y')f(y-y')\\
&= \int_0^{2^{-n}} \ud s \iint_{\R^{2d}} \ud y\ud y' \: G(s,y)G(s,y')f(y-y').
\end{aligned}
\end{align}
Under Assumption \ref{A:Rate-Sharp}, 
\begin{align}\label{E:cn}
c_n =V(2^{-n})^{-1}\asymp 2^{(1-\beta)n},
\end{align}
or equivalently,
\begin{align}\label{E:Rate-CV}
c_n2^{-n}\le  C 2^{-\beta n} \quad \text{and}\quad V_d(2^{-n}) \le C 
2^{-(1-\beta)n}\qquad \text{for all $n\in\bbN$,} 
\end{align}
see parts (2) and (3) of Lemma \ref{L:Rate}.

Let $\widehat{W}^n_{\mathbf{z}}$ be the cylindrical Wiener process translated 
by $\mathbf{h}_{n}$
and $\mathbf{z}$.
Let $\{\widehat{u}_{\mathbf{z}}^n(t,x), \:(t,x)\in (0,T]\times\R^d\}$ be
the random field shifted with respect to $\widehat{W}^n_{\mathbf{z}}$,
that is, $ \widehat{u}_{\mathbf{z}}^n(t,x) $   satisfies the following equation:
\begin{align}\label{E:hatUn}
\begin{aligned}
 \widehat{u}_{\mathbf{z}}^n(t,x) = &J_0(t,x)+ \int_0^t\int_{\R^d} G(t-s,x-y) \rho(\widehat{u}_{\mathbf{z}}^n(s,y))W(\ud s\ud y)\\
 &+\int_{0}^t\iint_{\R^{2d}} G(t-s,x-y)\rho(\widehat{u}_{\mathbf{z}}^n(s,y)) \InPrd{\mathbf{z},\mathbf{h}_n(s,y')}f(y-y')\ud s\ud y\ud y'.
\end{aligned}
\end{align}

For $x\in\R^d$, denote the gradient vector and the Hessian matrix of $\widehat{u}_{\mathbf{z}}^n(t,x)$ by
\begin{align}\label{E:uij}
\widehat{u}^{n,i}_{\mathbf{z}}(t,x) := \partial_{z_i} \:\widehat{u}_{\mathbf{z}}^n (t,x)
\quad\text{and}\quad
\widehat{u}^{n,i,k}_{\mathbf{z}}(t,x) := \partial^2_{z_i z_k} \:\widehat{u}_{\mathbf{z}}^n (t,x),
\end{align}
respectively.
From \eqref{E:hatUn}, we see that
\[
\widehat{u}^n_{\mathbf{z}}(s,y)=u(s,y) \quad\text{for $s\le T-2^{-n}$ and $y\in\R^d$.}
\]
Hence, $\{\widehat{u}^{n,i}_{\mathbf{z}}(t,x),(t,x)\in(0,T]\times \R^d\}$ satisfies
\begin{align}
\label{E:hatUni} 
\widehat{u}^{n,i}_{\mathbf{z}}(t,x) =&
\theta_{\mathbf{z}}^{n,i}(t,x) \\
& +
\notag
\one_{\{t>T-2^{-n}\}}\int_{T-2^{-n}}^t \int_{\R^d} G(t-s,x-y)\rho'(\widehat{u}_{\mathbf{z}}^n(s,y))
\widehat{u}^{n,i}_{\mathbf{z}}(s,y) W(\ud s\ud y)\\
 &
 +\int_{0}^t \iint_{\R^{2d}} G(t-s,x-y)\rho'(\widehat{u}_{\mathbf{z}}^n(s,y))
\widehat{u}^{n,i}_{\mathbf{z}}(s,y)\InPrd{\mathbf{z},\mathbf{h}_n(s,y')} f(y-y')\ud s\ud y\ud y',
\notag
\end{align}
where
\begin{align}\label{E:thetazni}
\theta_{\mathbf{z}}^{n,i}(t,x) = \int_0^t \iint_{\R^{2d}} G(t-s,x-y)\rho(\widehat{u}_{\mathbf{z}}^n(s,y)) h_n^i(s,y')f(y-y')\ud s\ud y\ud y'.
\end{align}
Similarly, $\{\widehat{u}^{n,i,k}_{\mathbf{z}}(t,x),(t,x)\in(0,T]\times \R^d\}$ satisfies
\begin{align}
\label{E:hatUnik}
 \widehat{u}^{n,i,k}_{\mathbf{z}}(t,x) =&\theta^{n,i,k}_{\mathbf{z}}(t,x)
 \\
 &\notag
 +\int_0^t\iint_{\R^{2d}} G(t-s,x-y)\rho'(\widehat{u}^{n}_{\mathbf{z}}(s,y)) \widehat{u}^{n,i}_{\mathbf{z}}(s,y) h_n^k(s,y')f(y-y')\ud s\ud y\ud y'\\
\notag
 &+\one_{\{t>T-2^{-n}\}}\int_{T-2^{-n}}^t\int_{\R^d} G(t-s,x-y)\rho''(\widehat{u}^{n}_{\mathbf{z}}(s,y)) \widehat{u}^{n,i}_{\mathbf{z}}(s,y)\widehat{u}^{n,k}_{\mathbf{z}}(s,y) W(\ud s\ud y)\\
\notag
 &+\int_{0}^t\iint_{\R^d} G(t-s,x-y)\rho''(\widehat{u}^{n}_{\mathbf{z}}(s,y)) \widehat{u}^{n,i}_{\mathbf{z}}(s,y)\widehat{u}^{n,k}_{\mathbf{z}}(s,y) \InPrd{\mathbf{z},\mathbf{h}_n(s,y')}f(y-y')\ud s\ud y\ud y'\\
\notag
 &+\one_{\{t>T-2^{-n}\}}\int_{T-2^{-n}}^t\int_{\R^d} G(t-s,x-y)\rho'(\widehat{u}^{n}_{\mathbf{z}}(s,y)) \widehat{u}^{n,i,k}_{\mathbf{z}}(s,y) W(\ud s\ud y)\\
\notag
 &+\int_{0}^t\iint_{\R^{2d}} G(t-s,x-y)\rho'(\widehat{u}^{n}_{\mathbf{z}}(s,y)) \widehat{u}^{n,i,k}_{\mathbf{z}}(s,y) \InPrd{\mathbf{z},\mathbf{h}_n(s,y')}f(y-y')\ud s\ud y\ud y',
\end{align}
where
\begin{align}
\label{E:thetaznik}
\begin{aligned}
\theta^{n,i,k}_{\mathbf{z}}(t,x)&:=\partial_{z_k}\theta^{n,i}_{\mathbf{z}}(t,x)\\
&=
\int_{0}^t \iint_{\R^{2d}} G(t-s,x-y)\rho'(\widehat{u}_{\mathbf{z}}^n(s,y)) \widehat{u}_{\mathbf{z}}^{n,k}(s,y) h_n^i(s,y')f(y-y')\ud s\ud y\ud y'.
\end{aligned}
\end{align}
Note that the second term on the right-hand side of \eqref{E:hatUnik} is equal to
$\theta^{n,k,i}_{\mathbf{z}}(t,x)$.

Denote
\begin{align}
\label{E:C2Norm}
\begin{aligned}
 &\hspace{-3em}\Norm{\left\{\widehat{u}^{n}_{\mathbf{z}}(t,x_i)\right\}_{1\le i\le m}}_{C^2}\\
&=
\left|\left\{\widehat{u}^{n}_{\mathbf{z}}(t,x_i)\right\}_{1\le i\le m}\right|
+
\Norm{\left\{\widehat{u}^{n,i}_{\mathbf{z}}(t,x_j)\right\}_{1\le i,j\le m}}
+
\Norm{\left\{\widehat{u}^{n,i,k}_{\mathbf{z}}(t,x_j)\right\}_{1\le i,j,k\le m}}.
\end{aligned}
\end{align}

Suppose that
$\mathbf{y}\in\R^m$ belongs to the interior
of the support of the joint law of
\[
(u(T,x_1),\dots, u(T,x_m))
\]
and $\rho(y_i)\ne 0$ for all $i=1,\dots,m$.
By Theorem \ref{T:Criteria},
Theorem \ref{T:Pos}  is proved once
we show that
there exist some positive constants $c_1$, $c_2$, $r_0$, and $\kappa$ such that
the following two conditions are satisfied:
\begin{align}\label{E:(i)}
 \liminf_{n\rightarrow\infty}\bbP\Bigg(\left| \left\{u(T,x_i)-y_i\right\}_{1\le i\le m}\right| \le r\:\:\text{and}\:\:
 \left| \det\left[\left\{\widehat{u}_0^{n,i}(T,x_j)\right\}_{1\le i,j\le m}\right]\right|\ge c_1\Bigg)>0,
\end{align}
for all $r\in (0,r_0]$, and
\begin{align}\label{E:(ii)}
\lim_{n\rightarrow\infty}\bbP\left(
\sup_{|\mathbf{z}|\le \kappa}
\Norm{\left\{\widehat{u}^{n}_{\mathbf{z}}(T,x_i)\right\}_{1\le i\le m}}_{C^2}
\le c_2
\:\:\Bigg|\:\:
\left|\left\{u(T,x_i)-y_i\right\}_{1\le i\le m}\right|\le r_0
\right) =1.
\end{align}

These two conditions are verified in the following two steps.
\bigskip

{\bigskip\noindent\bf Step 2.~} Let $\mathbf{y}$ be a point in the intersection 
of $\{\rho\ne 0\}^m$  and
the interior of the support of the joint law of $(u(T,x_1),\dots,u(T,x_m))$.
Then there exists $r_0\in (0,1)$ such that for all $0<r\le r_0$,
\[
\mathbb{P}\left(
\Big\{(u(T,x_1),\dots, u(T,x_d))\in B(\mathbf{y},r)\Big\}
\cap
\Big\{\prod_{i=1}^m \left|\rho(u(T,x_i))\right|\ge 2c_0\Big\}\right)>0,
\]
where
\[
c_0 = \frac{1}{2}\inf_{(z_1,\dots,z_d) \in B(\mathbf{y},r_0)}\prod_{i=1}^m \left|\rho(z_i)\right|.
\]
Due to \eqref{E2:UniU-Close} below, it holds that
\[
\lim_{n\rightarrow\infty}
\det\left[ \left\{\widehat{u}_{0}^{n,i}(T,x_j)\right\}_{1\le i,j\le m}\right]
= \prod_{i=1}^m \rho\left(u(T,x_i)\right)
\qquad\text{a.s.}
\]
Hence, by denoting
\begin{align*}
A&:=
\Big\{(u(T,x_1),\dots, u(T,x_m))\in B(\mathbf{y},r)\Big\},\\
D &:=\Big\{\prod_{i=1}^m \left|\rho(u(T,x_i))\right|\ge 2c_0\Big\},\\
E_n&:=\left\{
\left|\det\left[ \left\{\widehat{u}_{0}^{n,i}(T,x_j)\right\}_{1\le i,j\le m}\right]
- \prod_{i=1}^m \rho\left(u(T,x_i)\right)\right| <c_0\right\},\\
G_n & :=\Big\{\left|\det\left[\{\widehat{u}_{0}^{n,i}(T,x_j)\}_{1\le i,j\le m}\right]\right|
\ge c_0\Big\},
\end{align*}
we see that
\begin{align*}
 \bbP\left(A\cap G_n\right)
 &  \ge
 \bbP\left(A\cap D \cap E_n\right) \rightarrow \bbP(A\cap D)>0,\quad\text{as $n\rightarrow\infty$.}
\end{align*}
Therefore,
\begin{align*}
& \liminf_{n\rightarrow\infty}\mathbb{P}\left(\Big\{(u(T,x_1),\dots, u(T,x_m))\in B(\mathbf{y},r)\Big\}
\cap \Big\{\left|\det\left[\{\widehat{u}_{0}^{n,i}(T,x_j)\}_{1\le i,j\le m}\right]\right|
\ge c_0\Big\}\right)>0,
\end{align*}
which proves condition \eqref{E:(i)}.

{\bigskip\noindent\bf Step 3.~}
From \eqref{E:C2Norm}, we see that
\begin{align*}
\Norm{\left\{\widehat{u}^{n}_{\mathbf{z}}(T,x_i)\right\}_{1\le i\le m}}_{C^2}
&\le
\sum_{i=1}^m |\widehat{u}^{n}_{\mathbf{z}}(T,x_i)|
+
\sum_{i,j=1}^m |\widehat{u}^{n,i}_{\mathbf{z}}(T,x_j)|
+
\sum_{i,j,k=1}^m |\widehat{u}^{n,i,k}_{\mathbf{z}}(T,x_j)|.
\end{align*}
By Proposition \ref{P:thetaBdd} below,
there exists some constant $K_{r_0}$ independent of $n$ such that
\[
\lim_{n\rightarrow\infty}\bbP\left(
\sup_{|\mathbf{z}|\le\kappa}\Norm{\left\{\widehat{u}^{n}_{\mathbf{z}}(T,x_i)\right\}_{1\le i\le m}}_{C^2} \le K_{r_0} \:\Bigg|\:
|\{u(T,x_i)-y_i\}_{1\le i\le m}|\le r_0
\right) =1,
\]
where $\kappa$ is fixed as at the beginning of the proof.
Therefore, condition \eqref{E:(ii)} is also satisfied.
This completes the proof of Theorem \ref{T:Pos}.
\end{proof}

\subsection{Properties of the function \texorpdfstring{$\Psi_n(t,x;\ell)$}{}}
\label{SS:Psin}

For $t\in [0,T]$, $x\in\R^d$ and $k\in\bbN$ define 
\begin{align}\label{E:PsiN}
\begin{aligned}
\Psi_n^i(t,x;k) &:=
\int_0^t\iint_{\R^{2d}}  G(t-s, x-y) J_0^k(s,y) h_n^i(s,y')f(y-y')\ud s\ud y\ud 
y',\\
\Psi_n(t,x;k) &:=
\int_0^t\iint_{\R^{2d}}  G(t-s, x-y) J_0^k(s,y) \InPrd{\mathbf{1}, 
\mathbf{h}_n(s,y')} f(y-y')\ud s\ud y\ud y',
\end{aligned}
\end{align}
where $\mathbf{1}=(1,\dots,1)\in\R^m$.
We will use the convention that 
\[
\Psi_n^i(t,x):= \Psi_n^i(t,x;0)\quad\text{and}\quad
\Psi_n(t,x) := \Psi_n(t,x;0).
\]

The aim of this subsection is to prove the following proposition for the 
properties of $\Psi_n(t,x;k)$ and we will use the convention that 
\begin{align}
\label{E:J000}
J_0^k(kt,x)\equiv 1 \quad\text{if $k=0$.}
\end{align}

\begin{proposition}\label{P:Psi}
Under Assumption \ref{A:Continuity}, for all $(t,x)\in (0,T]\times\R^d$, 
$n\in\bbN$, $k\in\{0,1,2,3\}$ and $i\in\{1,\dots,m\}$, the following properties 
hold:
\begin{enumerate}
 \item[(1)] $\Psi_n^i(t,x;k)$ are nonnegative. When $k=0$, it holds that
 \begin{align}\label{E:hG-J0}
\Psi_n^i(t,x)\le \Psi_n^i(T,x_i)\one_{\{t> T-2^{-n}\}} = \one_{\{t> T-2^{-n}\}}.
\end{align}
For $k\in\{1,2,3\}$, there exists some constant $C>0$ independent of $n$ such 
that 
\begin{align}\label{E:hG-J123}
\Psi_n(t,x;k) \le C J_0^k(kt,x)\one_{\{t> T-2^{-n}\}}.
\end{align}
\item[(2)] Under Assumption \ref{A:Continuity}, for any $x\ne x_i$, there exists 
some finite constant $C_x>0$ such that 
\begin{align}
\label{E:hG-Small}
\Psi_n^i(T,x;k)\le C_x c_n 2^{-n}.
\end{align}
Moreover, under Assumption \ref{A:Rate-Sharp}, $\Psi_n^i(T,x;k) \le C_x 2^{-\beta 
n}\rightarrow0$ as $n\rightarrow\infty$.
\item[(3)] For all $n\in\bbN$, $(t,x)\in [T-2^{-n},T]\times\R^d$ and 
$k\in\{0,1,2,3\}$,
there exists some constant $C>0$ independent of $n$ such that 
\begin{multline}\label{E:hG-GGJJ}
 \int_{T-2^{-n}}^t\ud s  \iint_{\R^{2d}} \ud y\ud y'\: G(t-s, x-y)G(t-s, 
x-y')f(y-y')\\
 \times 
 J_0^k(s,y)J_0^k(s,y')  \le C J_0^{2k}(kt,x) V_d(2^{-n}).\qquad
 \end{multline}
\end{enumerate}
\end{proposition}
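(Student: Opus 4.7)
The proof rests on two recurring tools. First, by integrating $y$ against a Gaussian first and using the semigroup property, one has
\[
\iint_{\R^{2d}} G(a,A-y)\,G(b,B-y')\,f(y-y')\,\ud y\,\ud y' = \int_{\R^d} f(u)\,G(a+b,B-A+u)\,\ud u.
\]
Second, because $\widehat f$ is a symmetric nonnegative measure, the inverse-Fourier representation gives the pointwise comparison $(G(\tau)*f)(w)\le (G(\tau)*f)(0)=k(\tau)$ for every $w$. Nonnegativity in part~(1) is immediate from the nonnegativity of $G$, $J_0$, $h_n^i$, and $f$.

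For part~(1) with $k=0$, the convolution identity reduces $\Psi_n^i(t,x)$ to $c_n\int_{T-2^{-n}}^t (f*G(t+T-2s))(x_i-x)\,\ud s$. The Fourier comparison replaces the integrand by $k(t+T-2s)$, and the change of variables $r=(t+T)/2-s$ shows that the resulting integration range lies in $[0,2^{-n}]$, so the integral is bounded by $V_d(2^{-n})=c_n^{-1}$ thanks to \eqref{E:cni-1}. This yields $\Psi_n^i(t,x)\le 1$, with equality at $(t,x)=(T,x_i)$, establishing $\Psi_n^i(T,x_i)=1$. For $k\in\{1,2,3\}$ I would first integrate out $y'$ to obtain $k(T-s)\int G(t-s,x-y)J_0^k(s,y)\,\ud y$ as an upper bound for the inner double integral, and then prove the pointwise estimate $\int G(t-s,x-y)J_0^k(s,y)\,\ud y\le C\,J_0^k(kt,x)$ via the general Gaussian product identity
\[
\prod_{j=1}^{k} G(s,y-z_j)=\frac{k^{-d/2}}{(2\pi s)^{(k-1)d/2}}\,G(s/k,y-\bar z)\,\exp\!\Bigl(-\tfrac{1}{2s}\sum_j|\bar z-z_j|^2\Bigr)
\]
and a termwise comparison with the analogous decomposition of $\prod_j G(kt,x-z_j)$; the Gaussian and exponential ratios are uniformly controlled because $s\in[T-2^{-n},t]\subset[1/2,T]$ under the standing assumption $T>1$. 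Combined with the bound $c_n\int_{T-2^{-n}}^t k(T-s)\,\ud s\le 2$, this yields \eqref{E:hG-J123}.

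For part~(2), fix $x\ne x_i$ and set $d_0:=|x-x_i|>0$. For $k=0$ the task reduces via the convolution identity to proving $\sup_{0<r\le r_0}(f*G(2r))(x-x_i)<\infty$; I would split the defining integral $\int f(y+x-x_i)G(2r,y)\,\ud y$ at $|y|=d_0/2$. On the inner region $y+(x-x_i)$ stays in a compact set bounded away from the origin, where $f$ is bounded by Assumption~\ref{A:Continuity}; on the outer region $G(2r,y)$ carries an $\exp(-d_0^2/(32r))$ factor that dominates the Fourier upper bound $k(4r)$ uniformly in small $r$. Multiplying the resulting uniform bound by $2^{-n}$ (the length of the $s$-interval) and the factor $c_n$ in front gives the claim. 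For $k\in\{1,2,3\}$ the same localization shows that $(G(T-s)*f)(x_i-y)$ is uniformly bounded for $y$ in the ball $|y-x|\le d_0/2$, while the tail region $|y-x|>d_0/2$ contributes an $\exp(-d_0^2/(32(T-s)))$ factor that kills any growth of the global bound $CJ_0^k(kT,x)\,k(T-s)$; integrating in $s$ again yields a factor $c_n 2^{-n}$. Small $n$ is absorbed into $C_x$ via the part~(1) bound, and the sharpened rate $2^{-\beta n}$ under Assumption~\ref{A:Rate-Sharp} follows from \eqref{E:cn}.

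For part~(3), I plan to apply Plancherel to rewrite the inner double integral as $(2\pi)^{-d}\int |\widehat F(\xi)|^2\,\widehat f(\ud\xi)$ with $F(y):=G(t-s,x-y)J_0^k(s,y)$. The same Gaussian product identity exhibits $F$ as a $\mu^{\otimes k}$-integral of Gaussians in $y$ of common variance $\sigma_k=s(t-s)/(kt-(k-1)s)$, so $|\widehat F(\xi)|\le e^{-\sigma_k|\xi|^2/2}\widehat F(0)$ and $\widehat F(0)=\int G(t-s,x-y)J_0^k(s,y)\,\ud y\le CJ_0^k(kt,x)$ by part~(1); hence the inner integral is at most $C^2J_0^{2k}(kt,x)\,k(2\sigma_k)$. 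Since $\sigma_k\asymp t-s$ with constants depending on $k$ and $T$, a linear change of variable in $s$ followed by one application of Lemma~\ref{L:V/V} bounds the remaining time integral by a constant multiple of $V_d(2^{-n})$, establishing \eqref{E:hG-GGJJ}. The main obstacle is the uniform-in-$s$ control required for part~(2) when $k\ge 1$: one must ensure the spatial double integral stays bounded as $s\nearrow T$ even though the roughness of $\mu$ can make $J_0(s,\cdot)$ peaked, and it is here that one genuinely uses that $G(t-s,x-y)$ and $G(T-s,x_i-y')$ concentrate at distinct points so $y-y'$ lives near $x-x_i\ne 0$ where $f$ is tamed by Assumption~\ref{A:Continuity}.
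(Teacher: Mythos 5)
Your proposal is correct, and each part takes a route that differs visibly from the paper's. For part~(1) with $k\in\{1,2,3\}$, you first integrate out $y'$ to extract $k(T-s)=\sup_w(G(T-s,\cdot)*f)(w)$ and then use the variance decomposition
\[
\prod_{j=1}^{k} G(s,y-z_j)=\frac{k^{-d/2}}{(2\pi s)^{(k-1)d/2}}\,G(s/k,y-\bar z)\,\exp\!\Bigl(-\tfrac{1}{2s}\sum_j|\bar z-z_j|^2\Bigr)
\]
plus the semigroup identity to obtain $\int G(t-s,x-y)J_0^k(s,y)\,\ud y\le C J_0^k(kt,x)$, finishing with $c_n\int_{T-2^{-n}}^t k(T-s)\,\ud s\le 2$. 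The paper instead applies \eqref{E:GG2}, \eqref{E:GG3} to split a single $G(t-s,x-y)$ into $k$ copies, pairs each with a $G(s,y-z_j)$ via \eqref{E:GGGG}, and then absorbs the residual Gaussians by $G(\cdot,\cdot)\le(2\pi\cdot)^{-d/2}$; your version avoids generating and then killing those extra powers of $(t-s)$. For part~(2) the paper differentiates and invokes L'H\^opital's rule together with Lemma~\ref{L:f-LocalBdd}; you instead show that the spatial double integral is bounded uniformly in $s\in[T-2^{-n},T]$ by the same split at $|y-x|=|x-x_i|/2$ (inner region controlled by local boundedness of $f$, outer region by the Gaussian tail factor $e^{-c|x-x_i|^2/(T-s)}$ dominating the possible blow-up of $k(T-s)$), and then simply multiply by the length $2^{-n}$ of the $s$-interval. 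The net estimate is identical, but your route bypasses the L'H\^opital step. For part~(3) your Plancherel reduction $\iint G G' f = (2\pi)^{-d}\int|\widehat F(\xi)|^2\widehat f(\ud\xi)$ with $F(y)=G(t-s,x-y)J_0^k(s,y)$, together with $|\widehat F(\xi)|\le e^{-\sigma_k|\xi|^2/2}\widehat F(0)$ and $\widehat F(0)\le C J_0^k(kt,x)$ (the same computation as in part~(1)), is considerably more compact than the paper's iterated Gaussian splitting, and the observation $\sigma_k\asymp t-s$ together with Lemma~\ref{L:V/V} closes the time integral exactly as the paper's final step does. Two small remarks: (i) your termwise comparison in part~(1) relies on $s\ge T-2^{-n}\ge 1/2$ to bound $(kt/s)^{(k-1)d/2}$, which uses the standing normalization $T>1$ (or the ``$n\ge N$'' convention), as you note; (ii) the inner-region uniformity in part~(2) for $k\ge 1$ needs $\sup_{|w|\in[|x-x_i|/4,\,9|x-x_i|/4]}f(w)<\infty$, which is exactly what Assumption~\ref{A:Continuity} supplies. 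Both of these are handled implicitly in your write-up, so there is no gap.
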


The following lemma will also be used in order to apply our Picard iterations as 
those in the proof of Lemma \ref{L:Moment-U} below.
\begin{lemma}\label{L:J^k->J}
 For all $n\in\bbN$, $T>1$, $t>0$, $x\in\R^d$ and $k\in\{2,3\}$, it holds that 
 \begin{align}\label{E:J^k->J}
 J_0^k(kt,x)\one_{[T-2^{-n},T]}(t) \le C \one_{\{t> T-2^{-n}\}} \int_{\R^d} 
G(t,x-y) J_0^k(kT,y)\ud y<\infty.
 \end{align}
\end{lemma}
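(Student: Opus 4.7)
The proof reduces to an explicit Gaussian computation that tracks how the centroid $\bar{y} := (y_1+\cdots+y_k)/k$ couples the integration variables. Expanding
\[
J_0^k(kt,x) = \int_{\R^{dk}} \prod_{j=1}^k G(kt, x-y_j)\, \mu(\ud y_1)\cdots\mu(\ud y_k),
\]
and using the identity $\sum_{j=1}^k|x-y_j|^2 = k|x-\bar y|^2 + \sum_{j=1}^k|y_j-\bar y|^2$, one factors
\[
\prod_{j=1}^k G(kt, x-y_j) = (2\pi kt)^{-dk/2}\, e^{-|x-\bar y|^2/(2t)}\, e^{-\sum_j|y_j-\bar y|^2/(2kt)},
\]
and analogously with $(T, z)$ in place of $(t, x)$. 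A standard Gaussian convolution---equivalently, the semigroup property applied to $e^{-|z-\bar y|^2/(2T)} = (2\pi T)^{d/2} G(T, z-\bar y)$---then yields
\[
\int_{\R^d} G(t, x-z)\, e^{-|z-\bar y|^2/(2T)}\, \ud z = \left(\frac{T}{T+t}\right)^{d/2} e^{-|x-\bar y|^2/(2(T+t))}.
\]

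Taking the pointwise ratio for fixed $y_1,\dots,y_k$ produces
\[
\frac{\prod_j G(kt, x-y_j)}{\int G(t, x-z) \prod_j G(kT, z-y_j)\, \ud z} = \left(\frac{T}{t}\right)^{\!dk/2}\!\left(\frac{T+t}{T}\right)^{\!d/2} e^{-|x-\bar y|^2 \Delta_1}\, e^{-\sum_j|y_j-\bar y|^2 \Delta_2},
\]
where $\Delta_1 := \tfrac{1}{2t} - \tfrac{1}{2(T+t)}$ and $\Delta_2 := \tfrac{1}{2kt} - \tfrac{1}{2kT}$ are both nonnegative for $t \le T$, so both exponentials lie in $(0, 1]$. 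The prefactor is bounded by $(T/(T-1))^{dk/2} \cdot 2^{d/2}$ uniformly in $n \ge 0$, since the hypothesis $T > 1$ guarantees $t \ge T-2^{-n} \ge T-1 > 0$. Integrating the resulting pointwise inequality
\[
\prod_j G(kt, x-y_j) \le C(T, k, d)\int G(t, x-z) \prod_j G(kT, z-y_j)\, \ud z
\]
against $\mu^{\otimes k}$ and invoking Fubini gives the first inequality of \eqref{E:J^k->J}.

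For finiteness of $\int G(t, x-y)\, J_0^k(kT, y)\, \ud y$, apply the elementary bound $|y-w|^2 \ge |w|^2/(1+c) - |y|^2/c$ (any $c > 0$) inside $J_0(kT, y) = (2\pi kT)^{-d/2} \int e^{-|y-w|^2/(2kT)} \mu(\ud w)$ to obtain $J_0(kT, y) \le C_\epsilon\, e^{\epsilon|y|^2}$ for every $\epsilon > 0$, where $C_\epsilon < \infty$ by \eqref{E:J0finite}. Choosing $\epsilon$ small enough that $k\epsilon < 1/(2t)$ makes $\int G(t, x-y) e^{k\epsilon|y|^2}\, \ud y$ a convergent Gaussian integral, which bounds the quantity of interest. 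The main technical point is the uniform-in-$n$ control of the prefactor $(T/t)^{dk/2}$ in the ratio step, which is precisely where the assumption $T > 1$ enters; the remaining ingredients are routine Gaussian algebra together with the tempered-measure bound coming from \eqref{E:J0finite}.
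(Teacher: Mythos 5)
Your argument is correct. It rests on the same two ingredients as the paper's proof---the Gaussian centroid decomposition $\sum_j|x-y_j|^2 = k|x-\bar y|^2 + \sum_j|y_j-\bar y|^2$ together with the semigroup property, and the uniform-in-$n$ control of the time ratio $T/t$ afforded by $t\geq T-2^{-n}\geq T-1>0$. The paper carries out the same Gaussian algebra using the explicit factorization identity \eqref{E:GGGG} applied once for $k=2$ and twice for $k=3$, first to rewrite $\int G(t,x-y)J_0^k(kT,y)\,\ud y$ as a $\mu^{\otimes k}$-integral of a kernel $G(\text{relative})\,G(\text{centroid})$, and then separately to write $J_0^k(kt,x)$ in the same form before comparing the two kernels; you instead compute the pointwise ratio of the two integrands in one shot, which gives a unified, $k$-uniform statement rather than two case-by-case computations. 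Your finiteness argument is slightly different from the paper's: you establish the tempered bound $J_0(kT,y)\le C_\epsilon e^{\epsilon|y|^2}$ directly from \eqref{E:J0finite} and integrate it against the heat kernel, whereas the paper closes the loop with the same Gaussian factorization, bounding $\int G(t,x-y)J_0^k(kT,y)\,\ud y$ above by $C\,J_0^k(\text{const}\cdot T,x)$, which is finite by hypothesis. Both routes are valid; the paper's is internally more uniform, while yours is slightly more elementary and self-contained.
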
 

We need to define the augmented initial measure as follows:
\begin{definition}
Let $\mu$ be a nonnegative measure that satisfies \eqref{E:J0finite}. The {\it 
augmented initial measure}, or the {\it star version} of $\mu$ is a nonnegative 
measure defined as 
\begin{align}\label{E:mu*}
 \mu^*(\ud x)  := & \mu(\ud x) + \left[1+J_0^2(2T,x)+J_0^3(3T,x)\right]\ud x,
\end{align}
where $J_0(t,x)$ is the solution to the homogeneous equation (see \eqref{E:J0}).
Let $J_0^*(t,x)$ and $\Psi_n^*(t,x;k)$ denote the corresponding star versions of 
$J_0(t,x)$ and $\Psi_n(t,x;k)$, respectively: 
\begin{align}\label{E:J*}
 J_0^*(t,x) := &\int_{\R^d} G(t,x-y)\mu^*(\ud y),\\
 \label{E:Psi*}
 \Psi_n^*(t,x;k) :=&
\int_0^t\iint_{\R^{2d}}  G(t-s, x-y) J_0^*(s,y)^k \InPrd{\mathbf{1}, 
\mathbf{h}_n^i(s,y')} f(y-y')\ud s\ud y\ud y'.
\end{align}
\end{definition}

By Lemma \ref{L:J^k->J}, $\mu^*$ is a legal initial measure (that is, it 
satisfies \eqref{E:J0finite}).
For a given initial measure, we may augment it twice, namely, $\mu^{**}$. 
The following facts will be often used, the proofs of which are apparent and 
left for the interested reader.
\begin{lemma}\label{L:Star}
 The following properties hold:
\begin{enumerate}[(1)]
 \item Clearly, $\Psi_n^*(t,x;0)\equiv \Psi_n(t,x;0)$;
 \item $1+J_0(t,x)\le J_0^*(t,x)$;
 \item $\Psi_{n}(t,x;k) \le \Psi_{n}^*(t,x;k)$, for all $k\in\{1,2,3\}$;
 \item $\sum_{k=0}^3 \left( \Psi_n(t,x;k)+J_0^k(kt,x) 2^{-(1-\beta) 
n/2}\one_{\{t> T-2^{-n}\}} \right)\le  C J_0^*(t,x)\one_{\{t> T-2^{-n}\}}$ (due 
to \eqref{E:hG-J0}, \eqref{E:hG-J123} and Lemma \ref{L:J^k->J});
 \item $\Psi_{n}(t,x;k) \le  C \Psi_{n}^*(t,x;1)$ for all $k\in\{0,1,2,3\}$ (due 
to part (4).
\end{enumerate}
\end{lemma}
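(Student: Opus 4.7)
My plan is to verify each of the five claims by unwinding the definitions of $\mu^*$, $J_0^*$, and $\Psi_n^*$ in \eqref{E:mu*}--\eqref{E:Psi*} and appealing to Proposition \ref{P:Psi} and Lemma \ref{L:J^k->J}. Part (1) is immediate from the convention \eqref{E:J000}, which forces $J_0^0\equiv 1\equiv (J_0^*)^0$ and makes the integrands defining $\Psi_n(t,x;0)$ and $\Psi_n^*(t,x;0)$ identical. Part (2) comes from splitting $J_0^*(t,x)$ via \eqref{E:mu*}: the Lebesgue component contributes $\int_{\R^d} G(t,x-y)\ud y = 1$, while the two density components produce nonnegative terms, so $J_0^*(t,x)\ge J_0(t,x)+1$. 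Part (3) uses $\mu^*\ge \mu$ as positive measures, whence $J_0^*\ge J_0\ge 0$ pointwise and $(J_0^*)^k\ge J_0^k$ for $k\in\{1,2,3\}$; this pointwise inequality transfers through the nonneg integrands of \eqref{E:PsiN} and \eqref{E:Psi*}.

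For Part (4), I would first apply Proposition \ref{P:Psi}(1) to bound $\Psi_n(t,x;0)\le m\,\one_{\{t>T-2^{-n}\}}$ and $\Psi_n(t,x;k)\le C J_0^k(kt,x)\one_{\{t>T-2^{-n}\}}$ for $k\in\{1,2,3\}$. The task then reduces to showing each term $J_0^k(kt,x)\one_{\{t>T-2^{-n}\}}$, $k=0,1,2,3$, is dominated by $C J_0^*(t,x)\one_{\{t>T-2^{-n}\}}$. For $k=0,1$ this is immediate from Part (2). For $k=2,3$ I would invoke Lemma \ref{L:J^k->J} to write $J_0^k(kt,x)\one_{[T-2^{-n},T]}(t)\le C\int_{\R^d} G(t,x-y) J_0^k(kT,y)\ud y$, and note that, by the construction \eqref{E:mu*}, this integral is one of the nonnegative summands of $J_0^*(t,x)$. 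The auxiliary terms $J_0^k(kt,x)\,2^{-(1-\beta)n/2}\one_{\{t>T-2^{-n}\}}$ on the left-hand side of the inequality in Part (4) are controlled identically, since $2^{-(1-\beta)n/2}\le 1$ for $\beta\in(0,1)$ and $n\ge 0$; summing over $k$ then yields the claim.

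Part (5) is the step that I expect to require the most care. By Part (4), $\Psi_n(t,x;k)\le C J_0^*(t,x)\one_{\{t>T-2^{-n}\}}$ for each $k$, so it suffices to establish a matching lower bound $\Psi_n^*(t,x;1)\ge C^{-1} J_0^*(t,x)\one_{\{t>T-2^{-n}\}}$. I would derive this by inserting the decomposition $J_0^*(s,y) = 1+J_0(s,y)+\int_{\R^d} G(s,y-z) J_0^2(2T,z)\ud z+\int_{\R^d} G(s,y-z) J_0^3(3T,z)\ud z$ into the integrand of \eqref{E:Psi*}, retaining one nonneg summand at a time, and matching it against the corresponding summand of $J_0^*(t,x)$ via the normalization \eqref{E:cni-1} together with the identity $c_n V_d(2^{-n})=1$ and the rate \eqref{E:Rate-CV} from Assumption \ref{A:Rate-Sharp}. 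The chief obstacle is producing these pointwise lower bounds with constants uniform in $n$, which is where Assumption \ref{A:Rate-Sharp} and the scaling of $V_d$ enter critically.
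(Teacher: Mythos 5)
Your arguments for parts (1)--(4) are correct and match the straightforward intended proof.

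Your plan for part (5), however, does not work. You propose to reduce to the lower bound $\Psi_n^*(t,x;1)\ge C^{-1}J_0^*(t,x)\one_{\{t>T-2^{-n}\}}$, but this inequality is false in general. Indeed, integrating out $y$ in \eqref{E:Psi*} via the semigroup identity $\int_{\R^d}G(t-s,x-y)J_0^*(s,y)\,\ud y = J_0^*(t,x)$, together with the bound $\int_{\R^d}G(T-s,x_i-y')f(y-y')\,\ud y'\le k(T-s)$, gives
\[
\Psi_n^*(t,x;1)\le m\,c_n\,J_0^*(t,x)\int_{T-t}^{2^{-n}}k(u)\,\ud u ,
\]
so $\Psi_n^*(t,x;1)/J_0^*(t,x)\to 0$ as $t\downarrow T-2^{-n}$, while $J_0^*(t,x)\ge 1$ by part (2). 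Thus no constant $C$ can make the claimed lower bound hold uniformly for $t>T-2^{-n}$, and your strategy cannot succeed.

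Part (5) should be proved in the same spirit as part (3): the two quantities $\Psi_n(t,x;k)$ and $\Psi_n^*(t,x;1)$ share the nonnegative factor $G(t-s,x-y)\InPrd{\mathbf{1},\mathbf{h}_n(s,y')}f(y-y')$ in \eqref{E:PsiN} and \eqref{E:Psi*}, supported on $s\in[T-2^{-n},T]$, so it suffices to establish the pointwise bound $J_0^k(s,y)\le C\,J_0^*(s,y)$ for $s\in[T-2^{-n},T]$, $y\in\R^d$ and $k\in\{0,1,2,3\}$. For $k\in\{0,1\}$ this is exactly part (2). For $k\in\{2,3\}$, the elementary inequality $G(s,u)\le k^{d/2}G(ks,u)$ gives $J_0^k(s,y)\le k^{kd/2}J_0^k(ks,y)$, and then Lemma~\ref{L:J^k->J} yields $J_0^k(ks,y)\le C\int_{\R^d}G(s,y-w)J_0^k(kT,w)\,\ud w$, which by \eqref{E:mu*} is one of the nonnegative summands of $J_0^*(s,y)$. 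Combining these proves the pointwise comparison, hence part (5). The parenthetical ``(due to part (4))'' in the lemma is best read as loose shorthand; the real ingredients are part (2), Lemma~\ref{L:J^k->J}, and this pointwise comparison of integrands, not a direct invocation of part (4).
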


In the rest part of this subsection, we will prove Proposition \ref{P:Psi} and 
Lemma \ref{L:J^k->J}.

\subsubsection{Proof of part (1) of Proposition \ref{P:Psi}}
The proof consists of the following four steps for $k=0,\dots, 3$. Set 
$\epsilon:=2^{-n}$. \\

{\noindent\bf Step 0.~} In this step, we study the case when $k=0$.
The nonnegativity of $\Psi_n^i$ is clear. 
It is clear that $\Psi_n^i(t,x) \equiv 0$ for $t\le T-\epsilon$. When $t\in 
[T-\epsilon,T]$, we have that
\begin{align*}
 \Psi_n^i(t,x)&=c_n \int_0^{\epsilon-(T-t)}\ud s 
\iint_{\R^{2d}} \ud y\ud y' \: G(s, x-y)G(s+T-t, x_i-y')f(y-y')\\
&=c_n (2\pi)^{-d}\int_0^{\epsilon-(T-t)}\ud s 
\int_{\R^{d}} \widehat{f}(\ud \xi) 
\exp\left(-\left(s+\frac{T-t}{2}\right)|\xi|^2-i(x-x_i)\cdot\xi\right)\\
&\le c_n (2\pi)^{-d}\int_0^{\epsilon-(T-t)}\ud s 
\int_{\R^{d}} \widehat{f}(\ud \xi) 
\exp\left(-\left(s+\frac{T-t}{2}\right)|\xi|^2\right)\\
&\le c_n (2\pi)^{-d}\int_0^{\epsilon-(T-t)}\ud s 
\int_{\R^{d}}  
\exp\left(-s|\xi|^2\right)\widehat{f}(\ud \xi)\\
 &= c_n \int_0^{\epsilon-(T-t)}\ud s 
 \iint_{\R^{2d}} \ud y\ud y' \: G(s,y)G(s,y')f(y-y').
\end{align*}
Therefore, 
\begin{align}\label{E:hG0}
0\le \Psi_n^i(t,x)\le V_d(\epsilon)^{-1} 
\: V_d(\max\left(\epsilon-(T-t), 0)\right).
\end{align}
In particular, the above two inequalities become equalities when $x=x_i$ and 
$t=T$, respectively. This proves \eqref{E:hG-J0}.

{\medskip\noindent\bf Step 1.~} In this step, we prove \eqref{E:hG-J123} for 
$k=1$. We need only prove the case when $t\ge T-\epsilon$. In this case, using 
\eqref{E:GGGG} in the following form
\begin{equation}\label{E:GG}
G(t-s, x-y)G(s, y-z) = G(t, x-z)G\left(\frac{(t-s)s}{t}, y - 
z-\frac{s}{t}(x-z)\right),
\end{equation}  
we can apply similar arguments as above to see that
\begin{align}
\notag
\Psi_n^i(t,x;1) =& \int_0^t \ud s \iiint_{\R^{3d}}\ud y\ud y' \mu(\ud z)\: 
h_n^i(s,y') f(y-y') \\ \notag
&\times G(t,x-z) G\left(\frac{s(t-s)}{t},y-z-\frac{s}{t}(x-z) \right)\\
\notag
=& c_n (2\pi)^{-d}\int_{\R^d}\mu(\ud z) G(t,x-z)
\int_0^{\epsilon+t-T}\ud s \int_{\R^{d}} \widehat{f}(\ud \xi)
\\
&\times  
\exp\bigg(-\frac{1}{2}\left[\frac{s(t-s)}{t}+T-t+s\right]|\xi|^2 \notag \\
& \hspace{4em} - i \left[x_i-z-\frac{t-s}{t}(x-z)\right]\cdot \xi\bigg).
\label{E:TildePsiBd}
\end{align}
Hence, 
\begin{align*}
\Psi_n^i(t,x;1)
\le & c_n (2\pi)^{-d}\int_{\R^d}\mu(\ud z) G(t,x-z)
\int_0^{\epsilon}\ud s 
\int_{\R^{d}} \widehat{f}(\ud \xi) \\
& \times \exp\left(-\frac{1}{2}\left(\frac{s(t-s)}{t}+T-t+s\right)|\xi|^2 
\right)\\
 \le  & c_n (2\pi)^{-d} J_0(t,x) 
\int_0^{\epsilon}\ud s 
\int_{\R^{d}} \widehat{f}(\ud \xi) 
\exp\left(-\frac{1}{2}\left(\frac{s(t-s)}{t}+s\right)|\xi|^2 \right).
\end{align*}
Because $T>1$, when $n$ is sufficiently large, say $n\ge 2$ (which implies 
$\epsilon\in (0,1/4]$), the $\ud s$-integral satisfies that 
\begin{align}\label{E_:FracToLin}
s\le \epsilon < \frac{T-\epsilon}{2}\le \frac{t}{2} \quad\Longrightarrow\quad 
\frac{s(t-s)}{t}\ge \frac{s}{2}.
\end{align}
Hence, 
\begin{align*}
\Psi_n^i(t,x;1)\le & 
c_n (2\pi)^{-d} J_0(t,x) \int_0^{\epsilon}\ud s 
\int_{\R^{d}} \widehat{f}(\ud \xi) \exp\left(-\frac{3}{4}s|\xi|^2 
\right)=\frac{4}{3} c_n J_0(t,x) V_d(3\epsilon/4),\notag
\end{align*}
where the last equality is due to \eqref{E:Def-Vd2}.
Finally, an application of Lemma \ref{L:V/V} proves 
\eqref{E:hG-J123} for $k=1$.

{\medskip\noindent\bf Step 2.~} Now we study the case $k=2$.
In this case, 
\begin{align*}
\Psi_n^i(t,x;2) =& \int_0^t\ud s\iint_{\R^{2d}}\mu(\ud z_1)\mu(\ud 
z_2)\iint_{\R^{2d}}\ud y\ud y' h_n^i(s,y')f(y-y')\\
&\times G(t-s,x-y) G(s,y-z_1)G(s,y-z_2).
\end{align*}
Then we apply the following bounds 
\begin{align}\label{E:GG2}
\begin{aligned}
G(s,y-z) & \le 2^{d/2}G(2s,y-z),\quad\text{and}\\
G(t-s,x-y) &= 2^{3d/2} [\pi(t-s)]^{d/2} G(2(t-s),x-y)^2
\end{aligned}
\end{align}
to turn the three $G$'s into two pairs of $G$'s: 
\begin{align*}
\Psi_n^i(t,x;2) \le& C \int_0^t\ud s (t-s)^{d/2}\iint_{\R^{2d}}\mu(\ud 
z_1)\mu(\ud z_2)\iint_{\R^{2d}}\ud y\ud y' h_n^i(s,y')f(y-y')\\
&\times G(2(t-s),x-y) G(2s,y-z_1)\cdot G(2(t-s),x-y) G(2s,y-z_2).
\end{align*}
Then apply \eqref{E:GG} for these two pairs of $G$'s to see that 
\begin{align*}
\Psi_n^i(t,x;2) \le& C \int_0^t\ud s (t-s)^{d/2}\iint_{\R^{2d}}\mu(\ud 
z_1)\mu(\ud z_2)\iint_{\R^{2d}}\ud y\ud y' h_n^i(s,y')f(y-y')\\
&\times G(2t,x-z_1) G\left(\frac{2s(t-s)}{t},y-z_1-\frac{s}{t}(x-z_1)\right)\\
&\times G(2t,x-z_2) G\left(\frac{2s(t-s)}{t},y-z_2-\frac{s}{t}(x-z_2)\right).
\end{align*}
Then apply \eqref{E:GGGG} and the definition of $h_n^i$ in \eqref{E:hni} to see 
that 
\begin{align*}
\Psi_n^i(t,x;2) \le& C c_n \int_{T-\epsilon}^t \ud s 
(t-s)^{d/2}\iint_{\R^{2d}}\mu(\ud z_1)\mu(\ud z_2) G(2t,x-z_1) G(2t,x-z_2)\\
&\times 
\iint_{\R^{2d}}\ud y\ud y' f(y-y') G(T-s,x_i-y')\\
&\times  
G\left(\frac{s(t-s)}{t},y-\frac{z_1+z_2}{2}-\frac{s}{2t}(2x-z_1-z_2)\right)\\
&\times  G\left(\frac{4s(t-s)}{t},z_1-z_2+\frac{s}{t}(z_1-z_2)\right).
\end{align*}
Then bound the last $G(t,x)$ by $C t^{-d/2}$ to see that
\begin{align*}
\Psi_n^i(t,x;2) \le& C c_n \int_{T-\epsilon}^t \ud s 
(t/s)^{d/2}\iint_{\R^{2d}}\mu(\ud z_1)\mu(\ud z_2) G(2t,x-z_1) G(2t,x-z_2)\\
&\times 
\iint_{\R^{2d}}\ud y\ud y' f(y-y') G(T-s,x_i-y')\\
&\times  
G\left(\frac{s(t-s)}{t},y-\frac{z_1+z_2}{2}-\frac{s}{2t}(2x-z_1-z_2)\right).
\end{align*}
For $T>1$ and $n$ large enough, we have that $T-2^{-n}\ge 1/2$ and hence, we can 
bound $s^{-d/2}$ from above by $2^{d/2}$,  so that we can get rid of the factor 
$(t/s)^{d/2}$. Then by Fourier transform, we see that 
\begin{align}
\notag
\Psi_n^i(t,x;2) \le& C c_n \int_{T-\epsilon}^t \ud s \iint_{\R^{2d}}\mu(\ud 
z_1)\mu(\ud z_2) G(2t,x-z_1) G(2t,x-z_2) \int_{\R^{d}} \widehat{f}(\ud \xi)  \\ 
\notag
&\times 
\exp\left(-\frac{1}{2}\left[\frac{s(t-s)}{t}+T-s\right]|\xi|^2-i\left(\frac{
z_1+z_2}{2}+\frac{s}{2t}(2x-z_1-z_2)\right)\cdot \xi\right)\\ \notag
=& C c_n \int_{0}^{\epsilon+t-T} \ud s \iint_{\R^{2d}}\mu(\ud z_1)\mu(\ud z_2) 
G(2t,x-z_1) G(2t,x-z_2) \int_{\R^{d}} \widehat{f}(\ud \xi)  \\ \notag
&\times 
\exp\bigg(-\frac{1}{2}\left[\frac{s(t-s)}{t}+T-t+s\right]|\xi|^2\\
&\hspace{4em}-i\left(x_i-\frac{z_1+z_2}{2}-\frac{t-s}{2t}
(2x-z_1-z_2)\right)\cdot \xi\bigg).
\label{E2:TildePsiBd}
\end{align}
By \eqref{E_:FracToLin},
\begin{align*}
\Psi_n^i(t,x;2) \le
& C c_n \int_{0}^{\epsilon+t-T} \ud s \iint_{\R^{2d}}\mu(\ud z_1)\mu(\ud z_2) 
G(2t,x-z_1) G(2t,x-z_2) 
\\
&\times \int_{\R^{d}} \widehat{f}(\ud \xi)  
\exp\left(-\frac{3}{4}s |\xi|^2\right)\\
\le & C c_nJ_0^2(2t,x) V_d(3\epsilon/4).
\end{align*}
Finally, this case (that is, $k=2$) is proved by an application of 
Lemma \ref{L:V/V}.

{\medskip\noindent\bf Step 3.~} Now we prove the case $k=3$. In this case, 
\begin{align*}
\Psi_n^i(t,x;3) =& \int_0^t\ud s\iiint_{\R^{3d}}\mu(\ud z_1)\mu(\ud z_2)\mu(\ud 
z_3)\iint_{\R^{2d}}\ud y\ud y' h_n^i(s,y')f(y-y')\\
&\times G(t-s,x-y) G(s,y-z_1)G(s,y-z_2)G(s,y-z_3).
\end{align*}
Then we apply the following bounds 
\begin{align}\label{E:GG3}
\begin{aligned}
G(s,y-z) & \le 3^{d/2}G(3s,y-z),\quad\text{and}\\
G(t-s,x-y) &= 3^{3d/2} [2\pi(t-s)]^{d} G(3(t-s),x-y)^3
\end{aligned}
\end{align}
to turn the four $G$'s into three pairs of $G$'s: 
\begin{align*}
\Psi_n^i(t,x;3) \le& C \int_0^t\ud s (t-s)^{d}\iiint_{\R^{3d}}\mu(\ud 
z_1)\mu(\ud z_2)\mu(\ud z_3) 
\iint_{\R^{2d}}\ud y\ud y' h_n^i(s,y')f(y-y')\\
&\times \prod_{i=1}^3   G(3(t-s),x-y) G(3s,y-z_i) .
\end{align*}
Then apply \eqref{E:GG} for these three pairs of $G$'s to see that 
\begin{align*}
\Psi_n^i(t,x;3) \le& C \int_0^t\ud s (t-s)^{d}\iiint_{\R^{3d}}\mu(\ud 
z_1)\mu(\ud z_2)\mu(\ud z_3)\iint_{\R^{2d}}\ud y\ud y' h_n^i(s,y')f(y-y')\\
&\times \prod_{i=1}^3 G(3t,x-z_i) 
G\left(\frac{3s(t-s)}{t},y-z_i-\frac{s}{t}(x-z_i)\right).
\end{align*}
Then apply \eqref{E:GGGG} and the definition of $h_n^i$ in \eqref{E:hni} to see 
that 
\begin{align*}
\Psi_n^i(t,x;3) \le& C c_n \int_{T-\epsilon}^t \ud s 
(t-s)^{d}\iiint_{\R^{3d}}\mu(\ud z_1)\mu(\ud z_2) \mu(\ud z_3)  \prod_{i=1}^3 
G(3t,x-z_i) \\
&\times 
\iint_{\R^{2d}}\ud y\ud y' f(y-y') G(T-s,x_i-y')\\
&\times \prod_{i=1}^3 G\left(\frac{3s(t-s)}{t},y-z_i-\frac{s}{t}(x-z_i)\right).
\end{align*}
Notice by \eqref{E:GG2} that
\begin{align}\notag
&\prod_{i=1}^3 G\left(\frac{3s(t-s)}{t},y-z_i-\frac{s}{t}(x-z_i)\right)\\ \notag
=&G\left(\frac{3s(t-s)}{2t},y-\frac{z_1+z_2}{2}-\frac{s}{2t}
(2x-z_1-z_2)\right)\\ \notag
&\times  G\left(\frac{6s(t-s)}{t},z_1-z_2+\frac{s}{t}(z_1-z_2)\right)\\ \notag
&\times G\left(\frac{3s(t-s)}{t},y-z_3-\frac{s}{t}(x-z_3)\right)\\ \notag
\le & C \left(\frac{s(t-s)}{t}\right)^{-d/2} 
G\left(\frac{3s(t-s)}{t},y-\frac{z_1+z_2}{2}-\frac{s}{2t}(2x-z_1-z_2)\right)\\ 
\notag
&\times G\left(\frac{3s(t-s)}{t},y-z_3-\frac{s}{t}(x-z_3)\right)\\
\le & C \left(\frac{s(t-s)}{t}\right)^{-d} 
G\left(\frac{3s(t-s)}{2t},y-\frac{z_1+z_2+2z_3}{4}-\frac{s}{4t}
(4x-z_1-z_2-2z_3)\right).
\label{E:Prod3G's}
\end{align}
Hence,
\begin{align*}
\Psi_n^i(t,x;3) \le& C c_n \int_{T-\epsilon}^t \ud s (t/s)^{d} 
\iiint_{\R^{3d}}\mu(\ud z_1)\mu(\ud z_2)\mu(\ud z_3)\prod_{i=1}^d G(3t,x-z_i) \\
&\times 
\iint_{\R^{2d}}\ud y\ud y' f(y-y') G(T-s,x_i-y')\\
&\times  
G\left(\frac{3s(t-s)}{2t},y-\frac{z_1+z_2+2z_3}{4}-\frac{s}{2t}
(4x-z_1-z_2-2z_3)\right).
\end{align*}
By the same reasoning as before, we can get rid of the factor $(t/s)^d$ and then 
by Fourier transform 
we see that 
\begin{align}
\notag
\Psi_n^i(t,x;3) \le& C c_n \int_{T-\epsilon}^t \ud s \iiint_{\R^{2d}}\mu(\ud 
z_1)\mu(\ud z_2)\mu(\ud z_3)\prod_{i=1}^3 G(3t,x-z_i) \int_{\R^{d}} 
\widehat{f}(\ud \xi)  \\ \notag
&\times 
\exp\bigg(-\frac{1}{2}\left[\frac{3s(t-s)}{2t}+T-s\right]|\xi|^2\\ \notag
&\qquad -i\left(\frac{z_1+z_2+2z_3}{4}+\frac{s}{4t}(4x-z_1-z_2-2z_3)\right)\cdot 
\xi\bigg)\\ \notag
=& C c_n \int_{0}^{\epsilon+t-T} \ud s \iiint_{\R^{3d}}\mu(\ud z_1)\mu(\ud 
z_2)\mu(\ud z_3)\prod_{i=1}^d G(3t,x-z_i) \int_{\R^{d}} \widehat{f}(\ud \xi)  \\ 
\notag
&\times 
\exp\bigg(-\frac{1}{2}\left[\frac{3s(t-s)}{2t}+T-t+s\right]|\xi|^2\\
&\hspace{4em}-i\left(x_i-\frac{z_1+z_2+2z_3}{4}-\frac{t-s}{4t}
(4x-z_1-z_2-2z_3)\right)\cdot \xi\bigg).
\label{E3:TildePsiBd}
\end{align}
By \eqref{E_:FracToLin},
\begin{align*}
\Psi_n^i(t,x;3) \le
& C c_n \int_{0}^{\epsilon+t-T} \ud s \iiint_{\R^{3d}}\mu(\ud z_1)\mu(\ud 
z_2)\mu(\ud z_3)\prod_{i=1}^3 G(3t,x-z_i) 
\\
&\times \int_{\R^{d}} \widehat{f}(\ud \xi)  
\exp\left(-\frac{7}{8}s |\xi|^2\right)\\
\le & C c_nJ_0^3(3t,x) V_d(7\epsilon/8).
\end{align*}
Finally, this case (that is, $k=3$) is proved by an application of 
Lemma \ref{L:V/V}. \qed

\subsubsection{Proof of part (2) of Proposition \ref{P:Psi}}
The proof relies on the following lemma. 

\begin{lemma}\label{L:f-LocalBdd}
Suppose $f$ satisfies Assumption \ref{A:Continuity}, that is, $f$ is locally 
bounded on $\R^d\setminus\{0\}$. Then 
for any $x\in\R^d\setminus\{0\}$,
there exists some constant $C_{x}>0$, 
\begin{align}
 \limsup_{\epsilon\downarrow 0} \int_{\R^d}G(\epsilon,y)f(y+ x) < 
C_{x}.
\end{align}
\end{lemma}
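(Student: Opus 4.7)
The plan is to exploit the fact that as $\epsilon\downarrow 0$, the heat kernel $G(\epsilon,\cdot)$ concentrates near the origin, while the argument $y+x$ of $f$ stays near $x$, which is bounded away from the singular set $\{0\}$ of $f$. The key ingredients will be Assumption \ref{A:Continuity} (local boundedness of $f$ away from $0$) on a neighborhood of $x$, together with a crude polynomial-in-$1/\epsilon$ bound on $k(\epsilon)$ coming from Dalang's condition \eqref{E:Dalang2}, in order to dispose of the ``tail'' of the Gaussian.

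First I would fix $x\ne 0$, set $r:=|x|/2>0$, and observe that the closed annulus $A_x:=\{z\in\R^d:|x|/2\le |z|\le 3|x|/2\}$ is compact and disjoint from $\{0\}$, hence Assumption \ref{A:Continuity} supplies $M_x:=\sup_{z\in A_x}f(z)<\infty$. Splitting
\begin{equation*}
\int_{\R^d} G(\epsilon,y)f(y+x)\,\ud y=\underbrace{\int_{|y|\le r}G(\epsilon,y)f(y+x)\,\ud y}_{=:I_1(\epsilon)}+\underbrace{\int_{|y|>r}G(\epsilon,y)f(y+x)\,\ud y}_{=:I_2(\epsilon)},
\end{equation*}
the bound $I_1(\epsilon)\le M_x$ is immediate, since $|y|\le r$ forces $y+x\in A_x$, and $\int_{\R^d}G(\epsilon,y)\,\ud y=1$.

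The substantive step is $I_2$. Using $|y|>r$ I would write $G(\epsilon,y)\le 2^{d/2}e^{-r^2/(4\epsilon)}\,G(2\epsilon,y)$, so
\begin{equation*}
I_2(\epsilon)\le 2^{d/2}e^{-r^2/(4\epsilon)}\int_{\R^d}G(2\epsilon,y)f(y+x)\,\ud y=2^{d/2}e^{-r^2/(4\epsilon)}\,(G(2\epsilon,\cdot)*f)(x),
\end{equation*}
where I used that $G(2\epsilon,\cdot)$ is even. By Fourier inversion (noting $\hat f$ is a nonnegative tempered measure and $e^{-\epsilon|\xi|^2}\hat f(\ud\xi)$ is a finite measure),
\begin{equation*}
0\le (G(2\epsilon,\cdot)*f)(x)=(2\pi)^{-d}\int_{\R^d}\cos(\xi\cdot x)\,e^{-\epsilon|\xi|^2}\hat f(\ud\xi)\le (2\pi)^{-d}\int_{\R^d}e^{-\epsilon|\xi|^2}\hat f(\ud\xi)=k(2\epsilon).
\end{equation*}

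It remains to show $k(\epsilon)$ grows at most polynomially in $1/\epsilon$. For $\epsilon\in(0,1]$ the elementary estimate $e^{-\epsilon|\xi|^2}\le C\epsilon^{-(1-\alpha)}(1+|\xi|^2)^{-(1-\alpha)}$ combined with Dalang's condition \eqref{E:Dalang2} yields $k(\epsilon)\le C\epsilon^{-(1-\alpha)}$. Therefore $I_2(\epsilon)\le C'e^{-r^2/(4\epsilon)}\epsilon^{-(1-\alpha)}\to 0$ as $\epsilon\downarrow 0$. Combining, $\limsup_{\epsilon\downarrow 0}\int G(\epsilon,y)f(y+x)\,\ud y\le M_x$, and the conclusion follows by choosing, e.g., $C_x:=M_x+1$. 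I do not foresee a genuine obstacle here; the only mildly delicate point is the passage from the distributional identity $\widehat{G*f}=e^{-\epsilon|\xi|^2/2}\hat f$ to a pointwise bound, which is legitimate because the tempered measure $e^{-\epsilon|\xi|^2/2}\hat f(\ud\xi)$ is finite for each $\epsilon>0$.
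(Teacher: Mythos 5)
Your proof is correct and mirrors the paper's own argument: the same near/far split of the integral, the near part handled identically via local boundedness of $f$ on an annulus around $x$, and the far part controlled by extracting an exponential-in-$1/\epsilon$ factor from the Gaussian tail and passing to the spectral side. The one place where you and the paper diverge is in finishing the far-field estimate: the paper optimizes the remaining exponent $2\epsilon|\xi|^2+|\Delta x|^2/(32\epsilon)$ pointwise in $\xi$ to produce an $\epsilon$-uniform integrable dominant $e^{-|\Delta x||\xi|/2}$, needing only the basic Dalang condition \eqref{E:Dalang}, whereas you bound $(G(2\epsilon,\cdot)*f)(x)\le k(2\epsilon)$ and then use $e^{-\epsilon|\xi|^2}\le C\epsilon^{-(1-\alpha)}(1+|\xi|^2)^{-(1-\alpha)}$ together with the strengthened condition \eqref{E:Dalang2} to get a polynomial bound on $k$, which the Gaussian factor then kills. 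Since the paper works under \eqref{E:Dalang2} anyway, your argument is sound in context; the paper's version is marginally more economical in its hypotheses but conceptually the two proofs are the same.
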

\begin{proof}
Notice that 
\begin{align*}
\int_{\R^d} G(\epsilon, y) f(y+x)\ud y
&=  
\int_{|y|\le |\Delta x|/2} G(\epsilon, y) f(y+x)\ud y 
+
\int_{|y|> |\Delta x|/2} G(\epsilon, y) f(y+x)\ud y\\
&=: I_{0,1}(\epsilon) + 
I_{0,2}(\epsilon).
\end{align*}
Since $f$ is a locally bounded function on 
$\R^d\setminus\{0\}$, we see that
\[
\int_{\R^d} G(1, z) 
\one_{\{|z|\le |x|/(2\sqrt{\epsilon})\}}f\left(\sqrt{\epsilon} \: z + 
x \right) \ud z \le 
\left(\sup_{\frac{|x|}{2}\le |z| \le \frac{3|x|}{2} }
f(z)\right)
\int_{\R^d} G(1, 
z)  \ud z =: C_{x}.
\]
Hence,
\[
\limsup_{\epsilon \downarrow  0 } I_{0,1}(\epsilon) = 
\limsup_{\epsilon\downarrow 
0} \int_{\R^d} G(1, z) 
\one_{\{|z|\le |x|/(2\sqrt{\epsilon})\}}f\left(\sqrt{\epsilon} \: z + 
x \right) \ud z \le  C_{x}.
\]
As for $I_{0,2}(\epsilon)$, notice that
\begin{align*}
I_{0,2}(\epsilon) 
&= \int_{|y|>|\Delta x|/2} (2\pi\epsilon)^{-d/2} 
\exp\left(-\frac{|y|^2}{4\epsilon} -\frac{|y|^2}{4\epsilon} \right) f(y+\Delta 
x)\ud y\\
&\le 
\int_{\R^d} (2\pi\epsilon)^{-d/2} \exp\left(-\frac{|\Delta x|^2}{ 16\epsilon} 
-\frac{|y|^2}{8\epsilon} \right) f(y+\Delta x)\ud y\\
&= C\exp\left(-\frac{|\Delta x|^2}{ 16 \epsilon}\right)
\int_{\R^d} \exp\left(-2\epsilon|\xi|^2-i\Delta x\cdot \xi\right) 
\widehat{f}(\ud\xi)\\
&\le C\exp\left(-\frac{|\Delta x|^2}{32\epsilon}\right)
\int_{\R^d} \exp\left(-2\epsilon|\xi|^2-\frac{|\Delta x|^2}{ 32\epsilon}\right) 
\widehat{f}(\ud\xi).
\end{align*}
Because $g(\epsilon):= 2\epsilon|\xi|^2+|\Delta x|^2/(32\epsilon)$ achieves its 
global minimum at $\epsilon = \frac{|\Delta x|}{8|\xi|}$ with 
\begin{align}\label{E:min-g}
\min_{\epsilon>0 } g(\epsilon) = g\left(\frac{|\Delta x|}{8|\xi|}\right) = 
\frac{|\Delta x||\xi|}{2} ,
\end{align}
we see that
\begin{align*}
I_{0,2}(\epsilon) &\le 
C \exp\left(-\frac{|\Delta x|^2}{32\epsilon}\right)
\int_{\R^d} \exp\left(-\frac{|\Delta x|}{2} |\xi| \right) 
\widehat{f}(\ud\xi)\rightarrow 0, \quad\text{as $\epsilon\rightarrow 0$,}
\end{align*}
where the integral is finite thanks to Dalang's condition \eqref{E:Dalang}.
Combining the above two terms proves the lemma. 
\end{proof}

We will prove \eqref{E:hG-Small} in four steps. Throughout the proof, $x\ne 
x_i$ and denote $\Delta x:=x-x_i$.

{\bigskip\bf\noindent Step 0.~} We first study the case when $k=0$. Denote 
\[
I_0(\epsilon) := \int_0^{\epsilon}\ud s 
\iint_{\R^{2d}}G(s,x-y)G(s,x_i-y')f(y-y')\ud y \ud y'.
\]
Then  $\Psi_n^i(T,x;0)= c_n I_0(2^{-n})$.
By Fourier transform, we see that
\begin{align*}
I_0(\epsilon) =& (2\pi)^{-d} \int_0^\epsilon \ud s \int_{\R^d} e^{-s|\xi|^2 - i 
(x-x_i)\cdot \xi}\widehat{f}(\ud \xi)\\
&=\int_0^\epsilon \ud s \int_{\R^d} G(2s, y) f(y+\Delta x)\ud y.
\end{align*}
By l'H\^ospital's rule,
\begin{align*}
\limsup_{\epsilon\downarrow  0} \frac{I_0(\epsilon)}{\epsilon} 
&=  \limsup_{\epsilon\downarrow  0 }
\int_{\R^d} G(2\epsilon, y) f(y+\Delta x)\ud y\le C_{\Delta x},
\end{align*}
where the last step is due to Lemma \ref{L:f-LocalBdd}.
Therefore, 
\begin{align}\label{E_:I/epsilon}
\limsup_{\epsilon\downarrow 0} \frac{I_0(\epsilon)}{\epsilon} \le  C_{\Delta x}.
\end{align}
In particular, for some constant $C_x>0$, 
$I_0(\epsilon)\le C_x \epsilon$ for all $\epsilon\in (0,1)$. Clearly, from the 
above limit we can see that if $f$ blows up at $x=0$, this constant $C_x$ will 
blow up at $x=x_i$.
Finally, \eqref{E:hG-Small} is proved by setting $\epsilon = 2^{-n}$.  \\

{\medskip\noindent\bf Step 1.~} 
In this step, we will prove \eqref{E:hG-Small} for $k=1$.  
Denote 
\[
\widetilde{s}:=\frac{s(T-s)}{T}+s\quad\text{and}\quad 
\widetilde{\epsilon}:= \epsilon(T-\epsilon)/T + \epsilon. 
\]
We first prove the case $k=1$. From \eqref{E:TildePsiBd} with $t=T$, we denote 
\begin{align*}
 I_1(\epsilon)&:= 
(2\pi)^{-d}\int_{\R^d}\mu(\ud z) G(T,x-z)
\int_0^{\epsilon}\ud s \int_{\R^{d}} \widehat{f}(\ud \xi)  
\exp\left(-\frac{1}{2}\widetilde{s} |\xi|^2 - i \Delta_{z}^s x \cdot \xi\right)
\end{align*}
with 
\[
\Delta_{z}^s x:= x_i-z-\frac{T-s}{T}(x-z),
\]
so that $\Psi_n^i(T,x;1)= c_n I_1(2^{-n})$.
By Fourier transform, we see that 
\[
I_1(\epsilon) =\int_{\R^d} \mu(\ud z) G(T,x-z) \int_0^\epsilon \ud s 
\int_{\R^d} \ud y \: G\left(\widetilde{s}, y\right) f\left(y+\Delta_{z}^s 
x\right)
\]
By L'H\^ospital's rule,
\begin{align*}
\limsup_{\epsilon\downarrow  0} \frac{I_1(\epsilon)}{\epsilon}  = 
&\limsup_{\epsilon \downarrow 0} I_1'(\epsilon) \\
=& 
\limsup_{\epsilon\downarrow0}\int_{\R^d} \mu(\ud z) G(T,x-z) 
\int_{\R^d} \ud y \: G\left(\widetilde{\epsilon}, y\right) 
f\left(y+\Delta_{z}^\epsilon x\right)\\
=:&\limsup_{\epsilon \downarrow 0} I_1^*(\epsilon).
\end{align*}
Noticing that $\Delta_z^\epsilon x = (x_i-x) -\frac{\epsilon}{T}(z-x)$, by 
change of variable 
$y\leftrightarrow y'=y-\frac{\epsilon}{T}(z-x)$, we see that
\[
I_1^*(\epsilon) = 
\int_{\R^d} \mu(\ud z) G(T,x-z) 
\int_{\R^d} \ud y' \: G\left(\widetilde{\epsilon}, 
y'+\frac{\epsilon}{T}(z-x)\right) 
f\left(y'+x_i-x\right).
\]
Because
\begin{align}\label{E_:GTildeEpsilon}
G\left(\widetilde{\epsilon}, y'+\frac{\epsilon}{T}(z-x)\right) \le
 C G\left(2\widetilde{\epsilon}, y'\right) 
\exp\left(\frac{\epsilon^2 |x-z|^2}{2\widetilde{\epsilon} T^2}\right),
\end{align}
we see that 
\begin{align*}
I_1^*(\epsilon) \le &
\int_{\R^d} \mu(\ud z) G(T,x-z) \exp\left(\frac{\epsilon^2 
|x-z|^2}{2\widetilde{\epsilon} T^2}\right)
\int_{\R^d} \ud y' \: G\left( 2 \widetilde{\epsilon}, y'\right) 
f\left(y'+x_i-x\right).
\end{align*}
Because $\frac{\epsilon^2}{2\tilde{\epsilon}} \to 0$ as $\epsilon \to 0$, when 
$\epsilon$ is sufficiently small, we have that
\begin{align*}
 G(T,x-z) \exp\left(\frac{\epsilon^2 |x-z|^2}{2\widetilde{\epsilon} T^2}\right) 
 \leq C G(2T, x-z),
\end{align*}
which implies that 
\begin{align*}
\limsup_{\epsilon\downarrow 0}
I_1^*(\epsilon) \le & \limsup_{\epsilon\downarrow 0}
C J_0(2T,x) \int_{\R^d} \ud 
y' \: G\left(\widetilde{\epsilon}, y'\right) 
f\left(y'+x_i-x\right)
\\
\le & C_{\Delta x} J_0(2T,x),
\end{align*}
where the last step is due to Lemma \ref{L:f-LocalBdd}.
Therefore, for some constant $C_x>0$, $I_1(\epsilon)\le C_x \epsilon$.  This 
proves \eqref{E:hG-Small} for $k=1$.

{\medskip\noindent\bf Step 2.~} 
Now we study the case $k=2$. From \eqref{E2:TildePsiBd} with $t=T$, we denote
\begin{align*}
 I_2(\epsilon):= &
(2\pi)^{-d}\iint_{\R^{2d}}\mu(\ud z_1)\mu(\ud z_2) G(2T,x-z_1)G(2T,x-z_2)\\
&\times 
\int_0^{\epsilon}\ud s \int_{\R^{d}} \widehat{f}(\ud \xi)  
\exp\left(-\frac{1}{2}\widetilde{s} |\xi|^2 - i \Delta_{z_1,z_2}^s x \cdot 
\xi\right)
\end{align*}
with 
\[
\Delta_{z_1,z_2}^s x:= x_i-\frac{z_1+z_2}{2}-\frac{T-s}{2T}(2x-z_1-z_2),
\]
so that $\Psi_n^i(T,x;2)= c_n I_2(2^{-n})$.
Let $\bar{z}=(z_1+z_2)/2$. Then we can apply the same arguments for $I_1$ with 
$z$ replaced by $\bar{z}$. Indeed, by L'H\^ospital's rule, 
\begin{align*}
\limsup_{\epsilon\downarrow  0} \frac{I_2(\epsilon)}{\epsilon}  = &
\limsup_{\epsilon \downarrow 0} I_2'(\epsilon) \\
=&
\limsup_{\epsilon\downarrow0}\iint_{\R^{2d}} \mu(\ud z_1)\mu(\ud z_2) 
G(2T,x-z_1) G(2T,x-z_2) \\
&\qquad\qquad\times \int_{\R^d} \ud y \: G\left(\widetilde{\epsilon}, 
y\right) f\left(y+\Delta_{z_1,z_2}^\epsilon x\right)\\
=:& \limsup_{\epsilon \downarrow 0} I_2^*(\epsilon).
\end{align*}
Noticing that $\Delta_{z_1,z_2}^\epsilon x = (x_i-x) 
-\frac{\epsilon}{T}\left(\bar{z}-x\right)$, 
by change of variable 
$y\leftrightarrow y'=y-\frac{\epsilon}{T}(\bar{z}-x)$, we see that
\begin{align*}
I_2^*(\epsilon) = &
\iint_{\R^{2d}} \mu(\ud z_1)\mu(\ud z_2) G(2T,x-z_1)  G(2T,x-z_2) \\
&\times \int_{\R^d} \ud y' \: G\left(\widetilde{\epsilon}, 
y'+\frac{\epsilon}{T}(\bar{z}-x)\right) 
f\left(y'+x_i-x\right).
\end{align*}
Then by \eqref{E_:GTildeEpsilon}, we see that
\begin{align*}
I_2^*(\epsilon) \le &
\iint_{\R^{2d}} \mu(\ud z_1)\mu(\ud z_2) G(2T,x-z_1)G(2T,x-z_2) 
\exp\left(\frac{\epsilon^2 |x-\bar{z}|^2}{2\widetilde{\epsilon} T^2}\right)\\
&\times \int_{\R^d} \ud y' \: G\left(\widetilde{\epsilon}, y'\right) 
f\left(y'+x_i-x\right).
\end{align*}
Notice that 
\[
\exp\left(\frac{\epsilon^2 |x-\bar{z}|^2}{2\widetilde{\epsilon} T^2}\right)
\le \exp\left(\frac{\epsilon^2 |x-z_1|^2}{4\widetilde{\epsilon} T^2}\right)
\exp\left(\frac{\epsilon^2 |x-z_2|^2}{4\widetilde{\epsilon} T^2}\right).
\]
Then by the same arguments as for $I_1$, when $\epsilon$ is sufficiently small, 
we have that
\begin{align*}
 G(2T,x-z_1)G(2T,x-z_2) 
\exp\left(\frac{\epsilon^2 |x-\bar{z}|^2}{2\widetilde{\epsilon} T^2}\right)
 \leq C G(4T, x-z_1)G(4T, x-z_2),
\end{align*}
which implies that 
\begin{align*}
\limsup_{\epsilon\downarrow 0}I_2^*(\epsilon) \le & 
\limsup_{\epsilon\downarrow 0}C J_0^2(4T,x) \int_{\R^d} 
\ud y' \: G\left(\widetilde{\epsilon}, y'\right) 
f\left(y'+x_i-x\right)
\\
\le &
C_{\Delta x} J_0^2(4T,x),
\end{align*}
where the last step is due to Lemma \ref{L:f-LocalBdd}.
Therefore, for some constant $C_x>0$, $I_2(\epsilon)\le C_x \epsilon$.  This 
proves \eqref{E:hG-Small} for $k=2$.

{\medskip\noindent\bf Step 3.~} 
Now we study the case $k=3$. From \eqref{E3:TildePsiBd} with $t=T$, we denote
\begin{align*}
 I_3(\epsilon):= &
(2\pi)^{-d}\iiint_{\R^{3d}}\prod_{i=1}^3 \mu(\ud z_i) G(3T,x-z_i)
\int_0^{\epsilon}\ud s \int_{\R^{d}} \widehat{f}(\ud \xi)  
\exp\left(-\frac{1}{2}\widetilde{s} |\xi|^2 - i \Delta_{z's}^s x \cdot 
\xi\right)
\end{align*}
with 
\[
\Delta_{z's}^s x:= x_i-\frac{z_1+z_2+2z_3}{4}-\frac{T-s}{4T}(4x-z_1-z_2-2z_3),
\]
so that $\Psi_n^i(T,x;3)= c_n I_3(2^{-n})$.
Let $\bar{z}=(z_1+z_2+2z_3)/4$. Then we can apply the same arguments for $I_1$ 
with $z$ replaced by $\bar{z}$. Indeed, by L'H\^ospital's rule, 
\begin{align*}
\limsup_{\epsilon\downarrow  0} \frac{I_3(\epsilon)}{\epsilon}  =& 
\limsup_{\epsilon \downarrow 0} I_3'(\epsilon) \\
=&
\limsup_{\epsilon\downarrow0}\iint_{\R^{2d}} \prod_{i=1}^3 \mu(\ud z_i)  
G(3T,x-z_i)  \int_{\R^d} \ud y \: G\left(\widetilde{\epsilon}, y\right) 
f\left(y+\Delta_{z's}^\epsilon x\right)\\
=:&\limsup_{\epsilon\downarrow0} I_3^*(\epsilon).
\end{align*}
Noticing that $\Delta_{z's}^\epsilon x = (x_i-x) 
-\frac{\epsilon}{T}\left(\bar{z}-x\right)$, 
by change of variable 
$y\leftrightarrow y'=y-\frac{\epsilon}{T}(\bar{z}-x)$, we see that
\begin{align*}
I_3^*(\epsilon) = &
\iiint_{\R^{3d}} \prod_{i=1}^3 \mu(\ud z_i) G(3T,x-z_i) \int_{\R^d} \ud y' \: 
G\left(\widetilde{\epsilon}, y'+\frac{\epsilon}{T}(\bar{z}-x)\right) 
f\left(y'+x_i-x\right).
\end{align*}
Then by \eqref{E_:GTildeEpsilon}, we see that
\begin{align*}
I_3^*(\epsilon) \le &
\iiint_{\R^{2d}}  \exp\left(\frac{\epsilon^2 
|x-\bar{z}|^2}{2\widetilde{\epsilon} T^2}\right) \prod_{i=1}^3 \mu(\ud z_i) 
G(2T,x-z_i) \int_{\R^d} \ud y' \: G\left(\widetilde{\epsilon}, y'\right) 
f\left(y'+x_i-x\right).
\end{align*}
Notice that 
\begin{align*}
\left|x-\frac{z_1+z_2+2z_3}{4}\right|^2 
= &\left|\frac{x-z_1}{4} +  \frac{x-z_2}{4} + \frac{x-z_3}{2}\right|^2 \\
\le &\left|x-z_1\right|^2+\left|x-z_2\right|^2 +\left|x-z_3\right|^2,
\end{align*} 
which implies that
\[
\exp\left(\frac{\epsilon^2 |x-\bar{z}|^2}{2\widetilde{\epsilon} T^2}\right)
\le \prod_{i=1}^3 \exp\left(\frac{\epsilon^2 |x-z_i|^2}{2\widetilde{\epsilon} 
T^2}\right).
\]
Then by the same arguments as for $I_1$, when $\epsilon$ is sufficiently small, 
we have that
\begin{align*}
 \prod_{i=1}^3 G(3T,x-z_i)
\exp\left(\frac{\epsilon^2 |x-z_i|^2}{2\widetilde{\epsilon} T^2}\right)
 \leq C \prod_{i=1}^3 G(6T,x-z_i),
\end{align*}
which implies that 
\begin{align*}
\limsup_{\epsilon\downarrow 0}I_3^*(\epsilon) \le & 
\limsup_{\epsilon\downarrow 0} C J_0^3(6T,x) \int_{\R^d} 
\ud y' \: G\left(\widetilde{\epsilon}, y'\right) 
f\left(y'+x_i-x\right)
\\
\le & 
C_{\Delta x} J_0^3(6T,x),
\end{align*}
where the last step is due to Lemma \ref{L:f-LocalBdd}.
Therefore, for some constant $C_x>0$, $I_3(\epsilon)\le C_x \epsilon$.  This 
proves \eqref{E:hG-Small} for $k=3$. \qed

\subsubsection{Proof of part (3) of Proposition \ref{P:Psi}}
In this part, we will prove \eqref{E:hG-GGJJ}. It is clear that the case of 
$k=0$ is a direct consequence of the definition of $c_n^{-1}$ in 
\eqref{E:cni-1}.
In the following, we need only to prove the cases for $k=1,2,3$.  Denote the 
triple integral in \eqref{E:hG-GGJJ} by $I$. 

{\medskip\bf\noindent Step 1.~} we first prove the case when $k=1$.
By \eqref{E:GG}, we see that
\begin{align*}
I = &  \int_{T-2^{-n}}^t \ud s\iint_{\R^{2d}}\mu(\ud z)\mu(\ud z')G(t,x-z) 
G(t,x-z')\iint_{\R^{2d}}\ud y\ud y' f(y-y') \\
&\times G\left(\frac{(t-s)s}{t},y-z-\frac{s}{t}(x-z)\right)
G\left(\frac{(t-s)s}{t},y'-z'-\frac{s}{t}(x-z')\right)\\
\le &  C \int_{T-2^{-n}}^t \ud s\iint_{\R^{2d}}\mu(\ud z)\mu(\ud z')G(t,x-z) 
G(t,x-z')\int_{\R^d} \widehat{f}(\ud 
\xi)\exp\left(-\frac{(t-s)s}{t}|\xi|^2\right)\\
\le & C J^2_0(t,x) \int_{0}^{2^{-n}+t-T}\ud s \int_{\R^d}\widehat{f}(\ud 
\xi)\exp\left(-\frac{(t-s)s}{t}|\xi|^2\right).
\end{align*}
Then by \eqref{E_:FracToLin} and Lemma \ref{L:V/V}, we have that
\begin{align*} 
I &\le C J_0^2(t,x)\int_{0}^{2^{-n}}\ud s \int_{\R^d}\widehat{f}(\ud 
\xi)\exp\left(-\frac{s}{2}|\xi|^2\right) \\
&= CJ_0^2(t,x) V_d(2\times 2^{-n})\\
&\le CJ_0^2(t,x) V_d(2^{-n}),
\end{align*}

{\medskip\bf\noindent Step 2.~} The case $k=2$ is more involved. First we see 
that
\begin{align*}
I= &\int_{T-2^{-n}}^t\ud s\mathop{\int\dots\int}_{\R^{4d}} \mu(\ud z_1)\mu(\ud 
z_2)\mu(\ud z_1')\mu(\ud z_2')\iint_{\R^{2d}}\ud y\ud y' \: f(y-y')\\
&\times G(t-s,x-y)G(s,y-z_1)G(s,y-z_2)\\
&\times G(t-s,x-y')G(s,y'-z_1')G(s,y'-z_2').
\end{align*}
Then we use \eqref{E:GG2} to turn the above six $G$'s to four pairs of $G$'s:
\begin{align*}
I\le & C \int_{T-2^{-n}}^t\ud s\: (t-s)^d \mathop{\int\dots\int}_{\R^{4d}} 
\mu(\ud z_1)\mu(\ud z_2)\mu(\ud z_1')\mu(\ud z_2')\iint_{\R^{2d}}\ud y\ud y' \: 
f(y-y')\\
&\times G(2(t-s),x-y)G(2s,y-z_1)\cdot  G(2(t-s),x-y) G(2s,y-z_2)\\
&\times G(2(t-s),x-y')G(2s,y'-z_1')\cdot  G(2(t-s),x-y') G(2s,y'-z_2').
\end{align*}
Then we apply the relation \eqref{E:GGGG} several times to see that the 
right-hand side of the above inequality is equal to 
\begin{align*}
=& C \int_{T-2^{-n}}^t\ud s\: (t-s)^d \mathop{\int\dots\int}_{\R^{4d}} \mu(\ud 
z_1)\mu(\ud z_2)\mu(\ud z_1')\mu(\ud z_2')\iint_{\R^{2d}}\ud y\ud y' \: 
f(y-y')\\
&\times G(2t,x-z_1)G(2t,x-z_2) G(2t,x-z_1')G(2t,x-z_2') \\
&\times 
G\left(\frac{2s(t-s)}{t},y-z_1-\frac{s}{t}(x-z_1)\right)G\left(\frac{2s(t-s)}{t}
,y-z_2-\frac{s}{t}(x-z_2)\right)\\
&\times  
G\left(\frac{2s(t-s)}{t},y'-z_1'-\frac{s}{t}(x-z_1')\right)G\left(\frac{2s(t-s)}
{t},y'-z_2'-\frac{s}{t}(x-z_2')\right)\\
= & C \int_{T-2^{-n}}^t\ud s\: (t-s)^d \mathop{\int\dots\int}_{\R^{4d}} \mu(\ud 
z_1)\mu(\ud z_2)\mu(\ud z_1')\mu(\ud z_2')\iint_{\R^{2d}}\ud y\ud y' \: 
f(y-y')\\
&\times G(2t,x-z_1)G(2t,x-z_2) G(2t,x-z_1')G(2t,x-z_2') \\
&\times 
G\left(\frac{s(t-s)}{t},y-\frac{z_1+z_2}{2}-\frac{s}{2t}
(2x-z_1-z_2)\right)G\left(\frac{4s(t-s)}{t},z_1-z_2+\frac{s}{t}
(z_1-z_2)\right)\\
&\times  
G\left(\frac{s(t-s)}{t},y'-\frac{z_1'+z_2'}{2}-\frac{s}{2t}
(2x-z_1'-z_2')\right)G\left(\frac{4s(t-s)}{t},z_1'-z_2'+\frac{s}{t}
(z_1'-z_2')\right).
\end{align*}
Then by bounding $G(t,x)$ by $(2\pi t)^{-d/2}$, we can get rid of two $G$'s: 
\begin{align*}
I \le & C \int_{T-2^{-n}}^t\ud s\: \frac{t^d}{s^d} 
\mathop{\int\dots\int}_{\R^{4d}} \mu(\ud z_1)\mu(\ud z_2)\mu(\ud z_1')\mu(\ud 
z_2')\iint_{\R^{2d}}\ud y\ud y' \: f(y-y')\\
&\times G(2t,x-z_1)G(2t,x-z_2) G(2t,x-z_1')G(2t,x-z_2') \\
&\times 
G\left(\frac{s(t-s)}{t},y-\frac{z_1+z_2}{2}-\frac{s}{2t}(2x-z_1-z_2)\right)\\
&\times  
G\left(\frac{s(t-s)}{t},y'-\frac{z_1'+z_2'}{2}-\frac{s}{2t}
(2x-z_1'-z_2')\right).
\end{align*}
For $T>1$ and $n$ large enough, we have that $T-2^{-n}\ge 1/2$ and hence, we 
can 
bound $t^d/s^d$ from above by $2^d$,  so that we can get rid of the factor 
$t^d/s^d$. Then by Fourier transform, we see that 
\begin{align*}
I \le & C \int_{T-2^{-n}}^t\ud s\mathop{\int\dots\int}_{\R^{4d}} \mu(\ud 
z_1)\mu(\ud z_2)\mu(\ud z_1')\mu(\ud z_2')\\
&\times G(2t,x-z_1)G(2t,x-z_2) G(2t,x-z_1')G(2t,x-z_2') \\
&\times \int_{\R^{d}}\ud \xi \: \widehat{f}(\ud 
\xi)\exp\left(-\frac{2s(t-s)}{t}|\xi|^2\right)\\
=&C J_0^4(2t,x)\int_{T-2^{-n}}^t\ud s
\int_{\R^{d}}\ud \xi \: \widehat{f}(\ud 
\xi)\exp\left(-\frac{2s(t-s)}{t}|\xi|^2\right)\\
\le &C J_0^4(2t,x)\int_{0}^{2^{-n}}\ud s
\int_{\R^{d}}\ud \xi \: \widehat{f}(\ud 
\xi)\exp\left(-\frac{2s(t-s)}{t}|\xi|^2\right).
\end{align*}
Then by \eqref{E_:FracToLin},
\begin{align*}
I \le &C J_0^4(2t,x)\int_{0}^{2^{-n}}\ud s
\int_{\R^{d}}\ud \xi \: \widehat{f}(\ud \xi)\exp\left(-s|\xi|^2\right)\\
= &C  J_0^4(2t,x) V_d(2^{-n}),
\end{align*}
which proves \eqref{E:hG-GGJJ} for $k=2$.

{\medskip\bf\noindent Step 3.~} The case when $k=3$ can be proved in a similar 
way. First we see that
\begin{align*}
I= &\int_{T-2^{-n}}^t\ud s\mathop{\int\dots\int}_{\R^{6d}} \prod_{i=1}^3 \mu(\ud 
z_i)\mu(\ud z_i')\iint_{\R^{2d}}\ud y\ud y' \: f(y-y')\\
&\times G(t-s,x-y)G(s,y-z_1)G(s,y-z_2)G(s,y-z_3)\\
&\times G(t-s,x-y')G(s,y'-z_1')G(s,y'-z_2')G(s,y'-z_3').
\end{align*}
Then we use \eqref{E:GG3} to turn the above eight $G$'s to six pairs of $G$'s:
\begin{align*}
I\le & C \int_{T-2^{-n}}^t\ud s\: (t-s)^{2d} \mathop{\int\dots\int}_{\R^{6d}} 
\prod_{i=1}^3\mu(\ud z_i)\mu(\ud z_i') \iint_{\R^{2d}}\ud y\ud y' \: f(y-y')\\
&\times \prod_{i=1}^3 \left[G(3(t-s),x-y)G(3s,y-z_i)\right]\\
&\times \prod_{i=1}^3 \left[ G(3(t-s),x-y')G(3s,y'-z_i')\right].
\end{align*}
Then we apply the relation \eqref{E:GGGG} several times to see that the 
right-hand side of the above inequality is equal to 
\begin{align*}
=& C \int_{T-2^{-n}}^t\ud s\: (t-s)^{2d} \mathop{\int\dots\int}_{\R^{6d}} 
\prod_{i=1}^3 \mu(\ud z_i)\mu(\ud z_i') \iint_{\R^{2d}}\ud y\ud y' \: f(y-y')\\
&\times \prod_{i=1}^3 G(3t,x-z_i) G(3t,x-z_i')\\
&\times \prod_{i=1}^3 
G\left(\frac{3s(t-s)}{t},y-z_i-\frac{s}{t}(x-z_i)\right)\prod_{i=1}^3  
G\left(\frac{3s(t-s)}{t},y'-z_i'-\frac{s}{t}(x-z_i')\right).
\end{align*}
By \eqref{E:Prod3G's}, we see that the product of the last six $G$'s is bounded 
by  
\begin{align*}
\le C \left(\frac{s(t-s)}{t}\right)^{-2d} & 
G\left(\frac{3s(t-s)}{2t},y-\frac{z_1+z_2+2z_3}{4}-\frac{s}{4t}
(4x-z_1-z_2-2z_3)\right)\\
\times & 
G\left(\frac{3s(t-s)}{2t},y'-\frac{z_1'+z_2'+2z_3'}{4}-\frac{s}{4t}
(4x-z_1'-z_2'-2z_3')\right).
\end{align*}
By a similar argument as above we can remove the factor $(t/s)^{2d}$, hence, 
 by Fourier transform, we see that 
\begin{align*}
I \le & C \int_{T-2^{-n}}^t\ud s\mathop{\int\dots\int}_{\R^{6d}} \prod_{i=1}^3 
\bigg\{\mu(\ud z_i)\mu(\ud z_i')  G(3t,x-z_i)G(3t,x-z_i') \bigg\}\\
&\times \int_{\R^{d}}\ud \xi \: \widehat{f}(\ud 
\xi)\exp\left(-\frac{3s(t-s)}{2t}|\xi|^2\right)\\
=&C J_0^6(3t,x)\int_{T-2^{-n}}^t\ud s
\int_{\R^{d}}\ud \xi \: \widehat{f}(\ud 
\xi)\exp\left(-\frac{3s(t-s)}{2t}|\xi|^2\right)\\
\le &C J_0^6(3t,x)\int_{0}^{2^{-n}}\ud s
\int_{\R^{d}}\ud \xi \: \widehat{f}(\ud 
\xi)\exp\left(-\frac{3s(t-s)}{2t}|\xi|^2\right).
\end{align*}
Then by \eqref{E_:FracToLin} and Lemma \ref{L:V/V},
\begin{align*}
I \le &C J_0^6(3t,x)\int_{0}^{2^{-n}}\ud s
\int_{\R^{d}}\ud \xi \: \widehat{f}(\ud \xi)\exp\left(-\frac34
s|\xi|^2\right)\\
\le &C  J_0^6(3t,x) V_d(2^{-n}),
\end{align*}
which proves \eqref{E:hG-GGJJ} for $k=3$.
With this we have completed the whole proof of Proposition \ref{P:Psi}.
\qed

\subsubsection{Proof of Lemma \ref{L:J^k->J}}
In this part, we will prove Lemma \ref{L:J^k->J} in two steps. 
Denote the integral in \eqref{E:J^k->J} by $I_k$.

{\bf\medskip\noindent Step 1 ($k=2$).~} We first prove that $I_2$ is finite so that $J_0^2(2T,x)$ can be viewed as a legal initial data (see \eqref{E:J0finite}). Fix $t\in [T-2^{-n},T]$.
Denote $\bar{z}=(z+z')/2$ below. 
Notice that by \eqref{E:GGGG}, 
\begin{align*}
I_2 = &  \iiint_{\R^{3d}} G(t,x-y) G(2T,y-z)G(2T,y-z')\mu(\ud z)\mu(\ud z')\ud y\\
=  &  \iiint_{\R^{3d}} G(t,x-y) G(4T,z-z')G(T,y-\bar{z})\mu(\ud z)\mu(\ud z')\ud y.
\end{align*}
Then by semigroup property, we see that 
\begin{align}\label{E_:J^2->J}
 I_2 = \iint_{\R^{2d}}  G(4T,z-z')G(t+T,x-\bar{z})\mu(\ud z)\mu(\ud z').
\end{align}
Now, since $t\in [T-2^{-n},T]$, we can get rid of $t$ to see that
\begin{align*}
I_2\le  &  C \iint_{\R^{2d}} G(8T,z-z')G(2T,x-\bar{z}) \mu(\ud z)\mu(\ud z')\\
=  &  C \iint_{\R^{2d}} G(4T,x-z) G(4T,x-z')\mu(\ud z)\mu(\ud z')\\
= & C J_0^2(4T,x)<\infty,
\end{align*}
where we have applied \eqref{E:GGGG}. 

It remains to prove the first inequality in \eqref{E:J^k->J}. 
Actually, because $t\in [T-2^{-n},T]$,
\begin{align*}
J_0^2(2t,x) =&  \iint_{\R^{2d}} G(2t,x-z)G(2t,x-z')\mu(\ud z)\mu(\ud z')\\
=&  \iint_{\R^{2d}} G(4t,z-z')G(t,x-\bar{z})\mu(\ud z)\mu(\ud z')\\
\le & C\iint_{\R^{2d}}G(4T,x-z') G(t+T,x-\bar{z})\mu(\ud z)\mu(\ud z') =C I_2,
\end{align*}
where the last equality is due to \eqref{E_:J^2->J}. 

{\bf\medskip\noindent Step 2 ($k=3$).~} As the previous case we first prove that $I_3$ is finite so that $J_0^3(3T,x)$ can be viewed as a legal initial data (see \eqref{E:J0finite}). Fix $t\in [T-2^{-n},T]$.
Denote $\bar{z}=(z_1+z_2+z_3)/3$ below. 
Notice that 
\begin{align*}
I_3 = &  \mathop{\int\cdots\int}_{\R^{4d}} G(t,x-y) \left[\prod_{i=1}^3 G(3T,y-z_i)\mu(\ud z_i)\right] \ud y.
\end{align*}
Applying \ref{E:GGGG} twice gives that 
\begin{align}\label{E:GGG}
 \prod_{i=1}^3 G(3T,y-z_i) = 
 G(6T,z_1-z_2)G\left(\frac{9T}{2},z_3-\frac{z_1+z_2}{2}\right)
 G\left(T,y-\frac{z_1+z_2+z_3}{3}\right).
\end{align}
Hence, by semigroup property to integrate over $\ud y$, we see that 
\begin{align}\label{E_:J^3->J}
 I_3 = \iiint_{\R^{3d}}  G(6T,z_1-z_2)G\left(\frac{9T}{2},z_3-\frac{z_1+z_2}{2}\right)G(t+T,x-\bar{z}) \prod_{i=1}^3\mu(\ud z_i).
\end{align}
Now, since $t\in [T-2^{-n},T]$, we see that 
\begin{align*}
I_3 \le  &  \iiint_{\R^{3d}} G(12T,z_1-z_2)G\left(9T,z_3-\frac{z_1+z_2}{2}\right)G(2T,x-\bar{z}) \prod_{i=1}^3\mu(\ud z_i)\\
=  &   \iiint_{\R^{3d}} \prod_{i=1}^3 G(6T,y-z_i)\mu(\ud z_i)\\ 
= & C J_0^3(6T,x)<\infty,
\end{align*}
where we have applied \eqref{E:GGG} on the second line. 

It remains to prove the first inequality in \eqref{E:J^k->J}. 
Actually, because $t\in [T-2^{-n},T]$, by \eqref{E:GGG},
\begin{align*}
J_0^3(3t,x) =&  \iiint_{\R^{3d}}  G(6t,z_1-z_2)G\left(\frac{9t}{2},z_3-\frac{z_1+z_2}{2}\right)
 G\left(t,x-\bar{z}\right)\prod_{i=1}^3 \mu(\ud z_i)\\
\le & C\iiint_{\R^{3d}}  
G(6T,z_1-z_2)
G\left(\frac{9T}{2},z_3-\frac{z_1+z_2}{2}\right)
G(t+T,x-\bar{z}) 
\prod_{i=1}^3\mu(\ud z_i)\\
= & C I_3,
\end{align*}
where the last equality is due to \eqref{E_:J^3->J}. 
This proves Lemma \ref{L:J^k->J}.
\qed

\subsection{Moments of \texorpdfstring{$\widehat{u}_{\mathbf{z}}^n(t,x)$}{}  and its first two derivatives}

The aim of this subsection is to prove the following proposition.

\begin{proposition}\label{P:SupU}
For all $\kappa>0$, $1\le i, k\le m$, $n\in\bbN$, 
$p\ge 2$, $t\in [0,T]$ and $x\in\R^d$, we have that
\begin{align}\label{E:SupU}
\Norm{\sup_{|\mathbf{z}|\le \kappa}\left|\widehat{u}^n_{\mathbf{z}}(t,x)\right|}_p
&\le C (1+J_0(t,x)),\\
\label{E2:SupU}
\Norm{\sup_{|\mathbf{z}|\le \kappa} \left|\widehat{u}^{n,i}_{\mathbf{z}}(t,x)\right|
}_p
&\le C \left(\Psi_n^*(t,x;1)+2^{-(1-\beta) n/2}J_0^*(t,x)\right),\\
\label{E3:SupU}
\Norm{\sup_{|\mathbf{z}|\le \kappa} \left|\widehat{u}^{n,i,k}_{\mathbf{z}}(t,x)\right|
}_p
&\le C 
\left(\Psi_n^{**}(t,x;1)+2^{-(1-\beta) n/2}J_0^{**}(t,x)\right),\\
\label{E4:SupU}
\Norm{\sup_{|\mathbf{z}|\le \kappa}\left| \theta^{n,i}_{\mathbf{z}}(t,x)\right|
}_p
&\le C \left(\Psi_n(t,x)+\Psi_n(t,x;1)\right),\\
\label{E5:SupU}
\Norm{\sup_{|\mathbf{z}|\le \kappa}\left| \theta^{n,i,k}_{\mathbf{z}}(t,x)\right|
}_p
&\le C \Psi_n^*(t,x;1).
\end{align}
\end{proposition}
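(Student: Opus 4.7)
The plan is to prove the five inequalities in turn, in each case combining a Picard iteration for the relevant SPDE with a moment estimate of integral-inequality type (Lemma \ref{L:IntIneq} / Lemma \ref{L:IntIneq2}), and finally upgrading the pointwise-in-$\mathbf z$ bound to a bound on the supremum via Kolmogorov's continuity theorem on the compact ball $\{|\mathbf z|\le \kappa\}\subset\R^m$. The key auxiliary tools are the bounds for $\Psi_n$ from Proposition \ref{P:Psi}, in particular \eqref{E:hG-J0}--\eqref{E:hG-J123} and the crucial stochastic-integral bound \eqref{E:hG-GGJJ}, together with the star calculus from Lemma \ref{L:Star}.

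First I would fix $\mathbf z$ with $|\mathbf z|\le\kappa$ and estimate $\|\widehat u_{\mathbf z}^n(t,x)\|_p$ directly from \eqref{E:hatUn}. The stochastic integral is handled exactly as in the proof of Theorem \ref{T:Mom}, while the new Girsanov drift term is controlled using the Lipschitz bound $|\rho(\widehat u)|\le C(1+|\widehat u|)$ and $|\mathbf z|\le\kappa$, which yields an integrand bounded by
\[
C\,G(t-s,x-y)\bigl(1+\|\widehat u_{\mathbf z}^n(s,y)\|_p\bigr)\langle\mathbf 1,\mathbf h_n(s,y')\rangle f(y-y').
\]
After integrating, the inhomogeneous part is dominated by $\Psi_n(t,x)+\Psi_n(t,x;1)\le C\bigl(1+J_0(t,x)\bigr)$ by parts (1) of Proposition \ref{P:Psi} and Lemma \ref{L:Star}, and Lemma \ref{L:IntIneq} then delivers \eqref{E:SupU} pointwise in $\mathbf z$.

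Next I would treat \eqref{E4:SupU}, \eqref{E2:SupU}, \eqref{E5:SupU}, \eqref{E3:SupU} in this order. The bound \eqref{E4:SupU} follows from \eqref{E:thetazni} by Minkowski and the step-1 estimate, producing the two $\Psi_n$ kernels. The equation \eqref{E:hatUni} for $\widehat u_{\mathbf z}^{n,i}$ is linear in the unknown with forcing $\theta_{\mathbf z}^{n,i}$; applying BDG to its stochastic integral (whose time interval is restricted to $[T-2^{-n},t]$) and invoking \eqref{E:hG-GGJJ} produces the decay factor $V_d(2^{-n})^{1/2}\le C2^{-(1-\beta)n/2}$ multiplying $J_0^*(t,x)$. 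Another application of Lemma \ref{L:IntIneq2} then closes the estimate and gives \eqref{E2:SupU}. The bound \eqref{E5:SupU} on $\theta_{\mathbf z}^{n,i,k}$ is obtained from \eqref{E:thetaznik} by Cauchy--Schwarz after inserting \eqref{E2:SupU}, the quadratic in the derivative upgrading the star to a double star in the initial data. Finally, \eqref{E3:SupU} is proved from \eqref{E:hatUnik} in exactly the same manner as \eqref{E2:SupU}, with $\theta_{\mathbf z}^{n,i,k}$ replacing $\theta_{\mathbf z}^{n,i}$ and Lemma \ref{L:Star}(4)--(5) used to absorb the various $\Psi_n(\,\cdot\,;k)$ and $J_0^k(kt,x)2^{-(1-\beta)n/2}\mathbf 1_{\{t>T-2^{-n}\}}$ terms into a single $\Psi_n^{**}(t,x;1)+2^{-(1-\beta)n/2}J_0^{**}(t,x)$.

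The last step is to pass from pointwise-in-$\mathbf z$ to supremum-in-$\mathbf z$ bounds. For this I would differentiate each of the SPDEs once more in $\mathbf z$ (which only requires working with \eqref{E:hatUnik} and a third-order analogue that is formally identical), obtaining uniform-in-$\mathbf z$ moment bounds on $\partial_{\mathbf z}\widehat u^n_{\mathbf z}$, $\partial_{\mathbf z}^2\widehat u^n_{\mathbf z}$, $\partial_{\mathbf z}^3\widehat u^n_{\mathbf z}$ and similarly for the $\theta$-terms. These yield $L^p(\Omega)$ H\"older continuity in $\mathbf z$ with exponent arbitrarily close to $1$, whereupon Kolmogorov's continuity theorem on the compact ball $\{|\mathbf z|\le\kappa\}$ upgrades each pointwise bound to the desired bound on the supremum, at the cost of a constant depending only on $\kappa$, $m$ and $p$. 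The main obstacle is the bookkeeping of the ``star'' augmentations: one must arrange the iteration so that the factor $2^{-(1-\beta)n/2}$ arising from \eqref{E:hG-GGJJ} is not absorbed prematurely into a larger error term, and so that the star level of $\Psi_n$ and $J_0$ on the right-hand side matches the level dictated by the quadratic nonlinearities $\widehat u^{n,i}\widehat u^{n,k}$ appearing in the equation for the Hessian. Once this bookkeeping is in place, the chain of estimates closes cleanly via Lemma \ref{L:IntIneq}.
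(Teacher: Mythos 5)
Your plan is essentially the paper's: pointwise moment bounds via Picard iteration and the integral-inequality machinery (Lemmas~\ref{L:Moment-U}, \ref{L:Moment-DU}, \ref{L:Moment-DDU} in the paper), then the supremum over $|\mathbf z|\le\kappa$ via Kolmogorov on the decomposition $\Norm{\sup_{\mathbf z}|X_{\mathbf z}|}_p\le\Norm{\sup_{\mathbf z}|X_{\mathbf z}-X_0|}_p+\Norm{X_0}_p$, with the star bookkeeping from Lemma~\ref{L:Star} to merge the $\Psi_n(t,x;k)$ and $J_0^k$ terms into $\Psi_n^*$, $J_0^*$, $\Psi_n^{**}$, $J_0^{**}$ as appropriate. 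The one place you deviate is the mechanism for regularity in $\mathbf z$: you propose to differentiate each equation once more in $\mathbf z$ and bound $\partial_{\mathbf z}^{j+1}\widehat u^n_{\mathbf z}$ (in particular a third-order object $\widehat u^{n,i,k,l}_{\mathbf z}$) to deduce Lipschitz/H\"older continuity, whereas the paper proves $L^p$-moment bounds on the increments $\widehat u^n_{\mathbf z}-\widehat u^n_{\mathbf z'}$, $\widehat u^{n,i}_{\mathbf z}-\widehat u^{n,i}_{\mathbf z'}$, $\widehat u^{n,i,k}_{\mathbf z}-\widehat u^{n,i,k}_{\mathbf z'}$ directly (Lemma~\ref{L:HolderInZ}) by writing out the differences and applying BDG/Minkowski term by term. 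Both are valid; the increment route avoids introducing and closing a fourth SPDE for $\partial_{\mathbf z}^3\widehat u^n$, at the cost of the bookkeeping of fourteen terms in the proof of \eqref{E3:IncHatU}, and — more importantly — it produces increment bounds whose right-hand sides already carry the same $\Psi_n^*$, $J_0^*$ and $2^{-(1-\beta)n/2}$ structure as the target, which is what makes the final conjunction with Lemmas~\ref{L:Moment-U}--\ref{L:Moment-DDU} close without any loss. If you go the derivative route, you need to verify that the uniform-in-$\mathbf z$ moment bound on the extra derivative, multiplied by the Lipschitz factor $|\mathbf z-\mathbf z'|$, lands in the same star class as the target; this works (since $\partial_{z_k}\widehat u^{n,i}_{\mathbf z}=\widehat u^{n,i,k}_{\mathbf z}$, etc.), but is not automatic and is precisely the bookkeeping you flag at the end.
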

\begin{proof}
Notice that 
\[
\Norm{\sup_{|\mathbf{z}|\le \kappa}
\left|\widehat{u}^n_{\mathbf{z}}(t,x)\right|}_p
\le
\Norm{\sup_{|\mathbf{z}|\le \kappa}\left|\widehat{u}^n_{\mathbf{z}}(t,x)-\widehat{u}^n_{0}(t,x)\right|}_p
+\Norm{\widehat{u}^n_{0}(t,x)}_p.
\]
Then we apply the Kolmogorov continuity theorem and \eqref{E:IncHatU} to the first term
and apply \eqref{E:uPmnt} to the second term to see that
\[
\Norm{\sup_{|\mathbf{z}|\le \kappa}
\left|\widehat{u}^n_{\mathbf{z}}(t,x)\right|
}_p
\le
C\sum_{\ell=0,1}\left[\Psi_n(t,x;\ell)+J_0^\ell(\ell t,x )2^{-(1-\beta) n/2}\right]
+C (1 + J_0(t,x)),
\]
which proves \eqref{E:SupU}.
Similarly, \eqref{E:UniLp0} and \eqref{E2:IncHatU} imply \eqref{E2:SupU};
\eqref{E:UnikLp0} and \eqref{E3:IncHatU} imply \eqref{E3:SupU}.

As for \eqref{E4:SupU}, from \eqref{E:thetazni}, we see that
\begin{align*}
&\Norm{\sup_{|\mathbf{z}|\le \kappa}
\left|
\theta^{n,i}_{\mathbf{z}}(t,x)
\right|
}_p
\\
&\le C \int_0^t\iint_{\R^{2d}} G(t-s,x-y)
\Norm{\sup_{|\mathbf{z}|\le \kappa}
\left|
\widehat{u}^{n}_{\mathbf{z}}(s,y)
\right|
}_p h_n^i(s,y')f(y-y')\ud s\ud y\ud y'.
\end{align*}
Then we can apply \eqref{E:SupU} to obtain \eqref{E4:SupU}.
Similarly, from \eqref{E:thetaznik}, since $\rho'$ is bounded,
\begin{align*}
&\Norm{\sup_{|\mathbf{z}|\le \kappa} \left|\theta^{n,i,k}_{\mathbf{z}}(t,x)
\right|
}_p\\
&\le C \int_0^t\iint_{\R^{2d}} G(t-s,x-y)
\Norm{\sup_{|\mathbf{z}|\le \kappa}
\left| \widehat{u}^{n,i}_{\mathbf{z}}(s,y)
\right| }_p h_n^i(s,y')f(y-y')\ud s\ud y\ud y'.
\end{align*}
Then use the following bound,
\begin{align}
 \label{E2_:SupU}
\Norm{\sup_{|\mathbf{z}|\le \kappa}
\left| \widehat{u}^{n,i}_{\mathbf{z}}(s,y)
\right| }_p \le C J^*_0(t,x),
\end{align}
which is a consequence of \eqref{E:hG-J123} and \eqref{E2:SupU}, to obtain \eqref{E5:SupU}.
Recall that $J_0^*(t,x)$ is defined in \eqref{E:J*}.
This completes the proof of Proposition \ref{P:SupU}.
\end{proof}

\subsubsection{Moments of \texorpdfstring{$\widehat{u}_{\mathbf{z}}^n(t,x)$}{}}
In the next lemma, we study the moments of $\widehat{u}^n_{\mathbf{z}}(t,x)$.

\begin{lemma}\label{L:Moment-U}
For any $p\ge 2$, $T>1$, $(t,x)\in [0,T]\times\R^d$ and $\kappa>0$,
there exists some constant $\beta>0$ independent of $n$ such that
\begin{align}\label{E:uPmnt}
\sup_{n\in\bbN}\sup_{|\mathbf{z}|\le \kappa}\Norm{\widehat{u}_{\mathbf{z}}^n(t,x)}_p \le  C (1 + J_0(t,x)),
\end{align}
where $\tau:=\rho(0)/\LIP_\rho$. As a consequence, 
\begin{align}\label{E:theta-Lp0}
&\sup_{|\mathbf{z}|\le \kappa}\Norm{
\theta_{\mathbf{z}}^{n,i}(t,x)
}_p \le C \left(\Psi_n^i(t,x)+\Psi_n^i(t,x;1)\right),\\
\label{E:theta-bd}
&\max_{1\le i\le m}\sup_{|\mathbf{z}|\le \kappa}\Norm{\theta_{\mathbf{z}}^{n,i}(t,x)}_p < C (1+J_0(t,x))\one_{\{t> T-2^{-n}\}},
\end{align}
and under Assumption \ref{A:Rate-Sharp}, for $x\ne x_i$, 
\begin{align}\label{E:theta-AS0}
\lim_{n\rightarrow\infty}
\theta_{\mathbf{z}}^{n,i}(t,x)=0 \:\: \text{a.s. for all $t\in[0,T]$ and $|\mathbf{z}|\le\kappa$}.
\end{align}
\end{lemma}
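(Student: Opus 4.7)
The plan is to first establish \eqref{E:uPmnt} by Picard iteration adapted to the shifted equation \eqref{E:hatUn}, from which the three consequences \eqref{E:theta-Lp0}--\eqref{E:theta-AS0} follow by direct calculation using Proposition \ref{P:Psi}.

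For \eqref{E:uPmnt}, set $v^{(0)} := J_0$ and let $v^{(m+1)}$ be obtained from $v^{(m)}$ by substituting into both the stochastic integral and the drift integral in \eqref{E:hatUn}; a standard Banach fixed-point argument shows $v^{(m)} \to \widehat{u}_{\mathbf{z}}^n$ in $L^p(\Omega)$ uniformly in $|\mathbf{z}|\le\kappa$. Define $g_m(t,x):=\sup_{|\mathbf{z}|\le\kappa}\Norm{v^{(m)}(t,x)}_p$. I would prove $g_m(t,x)\le C(1+J_0(t,x))$ uniformly in $m$ and $n$, which after passing to the limit gives \eqref{E:uPmnt}. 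With $\tau:=|\rho(0)|/\LIP_\rho$, the Lipschitz assumption yields $|\rho(z)|\le \LIP_\rho(|\tau|+|z|)$. For the drift, Minkowski gives
\[
\Norm{I_2^{(m)}}_p \le \LIP_\rho\kappa\sum_{i=1}^m\Bigl[|\tau|\Psi_n^i(t,x)+\int_0^t\!\!\iint_{\R^{2d}} G(t-s,x-y) g_m(s,y) h_n^i(s,y') f(y-y')\,\ud s\,\ud y\,\ud y'\Bigr],
\]
and under the induction hypothesis $g_m\le C_0(1+J_0)$, part (1) of Proposition \ref{P:Psi} bounds this by $C(1+J_0(t,x))\one_{\{t>T-2^{-n}\}}$. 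For the stochastic part, the Burkholder--Davis--Gundy inequality and Minkowski produce a quadratic integral inequality of the form
\[
\Norm{I_1^{(m)}}_p^2 \le C\!\int_0^t\!\!\iint_{\R^{2d}} G(t-s,x-y)G(t-s,x-y')(1+J_0(s,y))(1+J_0(s,y')) f(y-y')\,\ud s\,\ud y\,\ud y'.
\]
The iterated bound of Lemma \ref{L:IntIneq2}, applied with the augmented initial measure $|\mu|+\mathrm{Leb}$ (so that the associated homogeneous solution is $1+J_0$), closes the induction and yields $g_m(t,x)\le C(1+J_0(t,x))H(T;\gamma)^{1/2}$ with $H(T;\gamma)<\infty$ by Lemma \ref{L:EstHt}, independently of $m$ and $n$.

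The three consequences are routine. For \eqref{E:theta-Lp0}, apply Minkowski to \eqref{E:thetazni} and substitute \eqref{E:uPmnt} to get
\[
\Norm{\theta_{\mathbf{z}}^{n,i}(t,x)}_p \le C\!\int_0^t\!\!\iint_{\R^{2d}} G(t-s,x-y)(1+J_0(s,y)) h_n^i(s,y') f(y-y')\,\ud s\,\ud y\,\ud y' = C(\Psi_n^i(t,x)+\Psi_n^i(t,x;1)).
\]
Estimate \eqref{E:theta-bd} is then immediate from part (1) of Proposition \ref{P:Psi}. For \eqref{E:theta-AS0}, part (2) of Proposition \ref{P:Psi} under Assumption \ref{A:Rate-Sharp} gives $\Psi_n^i(T,x;k)\le C_x 2^{-\beta n}$ when $x\ne x_i$, so $\Norm{\theta_{\mathbf{z}}^{n,i}(t,x)}_p\le C_x 2^{-\beta n}$ is summable in $n$; Markov's inequality and the Borel--Cantelli lemma then yield $\theta_{\mathbf{z}}^{n,i}(t,x)\to 0$ a.s.

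The principal obstacle is closing the Picard induction so that the constant does not grow with $m$. This is exactly the purpose of the auxiliary series $H(t;\gamma)$ and the iteration Lemma \ref{L:IntIneq2} from Section \ref{SS:AuxFun}: the drift from $\InPrd{\mathbf{z},\mathbf{h}_n}$, which is the new ingredient compared to the unshifted case, contributes only a term of order $1+J_0(t,x)$ by part (1) of Proposition \ref{P:Psi} and hence does not disrupt the iteration.
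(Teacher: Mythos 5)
The consequences \eqref{E:theta-Lp0}--\eqref{E:theta-AS0} are correctly derived from \eqref{E:uPmnt} and Proposition~\ref{P:Psi} in the same way as the paper (for $t<T$ and $n$ large the indicator $\one_{\{t>T-2^{-n}\}}$ vanishes, and for $t=T$ Borel--Cantelli applies), so the only point of contention is the proof of the uniform moment bound \eqref{E:uPmnt}.

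There is a genuine gap in how you close the Picard induction. The iteration for $g_{m+1}:=\sup_{|\mathbf{z}|\le\kappa}\Norm{v^{(m+1)}}_p$ contains two terms depending on $g_m$: a quadratic one from the stochastic integral and a \emph{linear} one from the drift $\InPrd{\mathbf{z},\mathbf{h}_n}$. Lemma~\ref{L:IntIneq2} (and the series $H(t;\gamma)$) only absorbs the quadratic iteration; it says nothing about the linear drift operator. Your claim that the drift "contributes only a term of order $1+J_0(t,x)$" is only true \emph{conditionally} on the induction hypothesis $g_m\le C_0(1+J_0)$, and the constant you get back is $C\kappa\LIP_\rho\cdot C_0\,(1+J_0)$, with $C\kappa\LIP_\rho$ not small. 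When $\kappa$ is not small the operator norm of the drift is $\ge 1$, so the naive induction $g_{m+1}\le J_0 + (\text{quadratic bound}) + C\kappa\LIP_\rho\,C_0(1+J_0)$ does not produce an $m$-independent $C_0$. The paper resolves this precisely by switching to the weighted (Bielecki-type) quantity $\Theta_{T,m',n}=\sup_{(s,y)}e^{-\beta s}\Norm{(\tau+|\widehat{u}^n_{\mathbf{z},m'}|)/(\tau+J_0)}_p$, for which both the stochastic-integral contribution $I_1$ and the drift contribution $I_2$ can be made arbitrarily small by taking $\beta$ large (via dominated convergence in the Fourier representation and the uniform bound $\Psi_n(t,x)\le m$). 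That yields a genuine contraction $\Theta_{T,m'+1,n}\le 1+C_\beta C_p\LIP_\rho\,\Theta_{T,m',n}$ with $C_\beta C_p\LIP_\rho<1$, uniformly in $n$, $m'$, and $|\mathbf{z}|\le\kappa$. You would either need to import this exponential-weight device, or replace your appeal to Lemma~\ref{L:IntIneq2} by an explicit Volterra/Gronwall argument that iterates the drift kernel $c_n k(2(T-s))\one_{[T-2^{-n},T]}(s)$ on its own and shows the iterated-operator norms are summable; neither is supplied in your proposal.
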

\begin{proof}
We prove this Lemma in two steps. 

{\bigskip\bf\noindent Step 1.~} We first prove \eqref{E:uPmnt}.
Recall that $\widehat{u}^n_{\mathbf{z}}(t,x)$ satisfies \eqref{E:hatUn}. 
We will use Picard iteration to show its existence and uniqueness and moment bounds. 
Since $m$ is used to denote the number of preselected points $x_i$, we will use $m'$ for the Picard iteration.  
Define 
\begin{equation}
\widehat{u}^n_{\mathbf{z},0}(t,x) = J_0(t,x)\,.
\end{equation}
and
\begin{align*}
\widehat{u}^n_{\mathbf{z},m'+1}(t,x) = & J_0(t,x) + \int_0^t \int_{\RR^d} G(t-s, x-y)\rho(\widehat{u}^n_{\mathbf{z},m'}(s,y)) W(\ud s\ud y) \\
&+ \int_0^t \iint_{\R^{2d}} G(t-s, x-y) \rho(\widehat{u}^n_{\mathbf{z},m'}(s,y)) \langle 
\mathbf{z}, \mathbf{h}_n(s,y')\rangle f(y-y'))\ud y\ud y' \ud s
\end{align*}
for $m'\ge 1$. Recall that $\tau=\rho(0)/\LIP_\rho$. Then we have that
\begin{align*}
\frac{\tau+|\widehat{u}^n_{\mathbf{z},m'+1}(t,x)|}{\tau+J_0(t,x)} \leq 1&+\left| \int_0^t \int_{\RR^d} \frac{G(t-s, x-y)[\tau+J_0(s,y)]}{\tau+J_0(t,x)}\frac{\rho(\widehat{u}^n_{\mathbf{z},m'}(s,y))}{\tau+J_0(s,y)} W(\ud s\ud y)\right| \\
&+ \bigg|\int_0^t\ud s \iint_{\RR^{2d}} \ud y \ud y' \: \frac{G(t-s, 
x-y)[\tau+J_0(s,y)]}{\tau+J_0(t,x)} 
\frac{\rho(\widehat{u}^n_{\mathbf{z},m'}(s,y))}{\tau+J_0(s,y)}
\\
&\qquad \times  \InPrd{\mathbf{z}, \mathbf{h}_n(s,y')} f(y-y') \bigg|\,.
\end{align*}
Set 
\[
\Theta_{T,m',n}:=
\sup_{0 \leq s \leq T, y \in \RR^d}e^{-\beta s}\Norm{ \frac{\tau + |\widehat{u}^n_{\mathbf{z},m'}(s,y)|}{\tau+J_0(s,y)} }_p.
\]
Taking $L^p(\Omega)$-norm on both sides and multiplying by $e^{-\beta t}$, we have that
\begin{align*}
e^{-\beta t}&\left\|\frac{\tau+|\widehat{u}^n_{\mathbf{z},m'+1}(t,x)|}{\tau+J_0(t,x)}\right\|_{p}&\\ &\leq 1+e^{-\beta t}\left\| \int_0^t \int_{\RR^d} \frac{G(t-s, x-y)[\tau+J_0(s,y)]}{\tau+J_0(t,x)}\frac{\rho(\widehat{u}^n_{\mathbf{z},m'}(s,y))}{\tau+J_0(s,y)} W(\ud s\ud y)\right\|_{p} \\
&\quad + e^{-\beta t}\left\|\int_0^t \int_{\RR^d} \frac{G(t-s, x-y)[\tau+J_0(s,y)]}{\tau+J_0(t,x)} \frac{\rho(\widehat{u}^n_{\mathbf{z},m'}(s,y))}{\tau+J_0(s,y)}\langle \mathbf{z}, \mathbf{h}_n(s,y)\rangle \ud y \ud s\right\|_{p}\\
&\leq 1+ \Theta_{T,m',n} C_p \LIP_\rho I_1 +  \Theta_{T,m',n} \LIP_\rho I_2,
\end{align*} 
where 
\begin{align*}
I_1 := \bigg(\int_0^t\ud s \:
e^{-2\beta (t-s)} \iint_{\RR^{2d}}\ud y\ud y' 
& \frac{G(t-s, x-y)[\tau + J_0(s,y)]}{\tau + J_0(t,x)}\\
\times & \frac{G(t-s, x-y')[\tau + J_0(s,y')]}{\tau + J_0(t,x)}  f(y-y') \bigg)^{1/2}
\end{align*}
and 
\[
I_2 := \int_0^t \iint_{\R^{2d}} e^{-\beta(t-s)}\frac{G(t-s, x-y)[\tau+J_0(s,y)]}{\tau+J_0(t,x)} |\langle \mathbf{z}, \mathbf{h}_n(s,y')\rangle| f(y-y')\ud y \ud y' \ud s.
\]
For $I_1$, using Minkowski's inequality we obtain that
\begin{align*}
I_1 &\leq \left(\int_0^t \iint_{\R^{2d}} e^{-2\beta (t-s)} G(t-s, x-y)G(t-s, x-y')\left(\frac{\tau}{\tau+J_0(t,x)}\right)^2 f(y-y') \ud y \ud y' \ud s\right)^{1/2}\\
&\quad +\left( \int_0^t \iint_{\R^{2d}}
\frac{G(t-s, x-y)J_0(s,y)}{\tau+ J_0(t,x)}\frac{G(t-s, x-y')J_0(s,y')}{\tau+J_0(t,x)} e^{-2\beta (t-s)} f(y-y') \ud y \ud y' \ud s\right)^{1/2}\\
&\leq \left(\int_0^t \iint_{\R^{2d}} e^{-2\beta (t-s)} G(t-s, x-y)G(t-s, x-y')f(y-y') \ud y \ud y' \ud s\right)^{1/2}\\
&\quad +\left( \int_0^t \iint_{\R^{2d}}
\frac{G(t-s, x-y)J_0(s,y)}{J_0(t,x)}\frac{G(t-s, x-y')J_0(s,y')}{J_0(t,x)} e^{-2\beta (t-s)} f(y-y') \ud y \ud y' \ud s\right)^{1/2}\;.
\end{align*}
In the second summand of $I_1$, using the identity \eqref{E:GG} and the Fourier 
transform we obtain that
\begin{align*}
I_1\leq & (2\pi)^{-d/2} \left(\int_0^t \int_{\RR^d}  e^{-2\beta (t-s)} e^{-2(t-s)|\xi|^2} \widehat{f}(\ud\xi) \ud s\right)^{1/2}\\
& + (2\pi)^{-d/2} \left(\int_0^t \int_{\RR^d} e^{-2\beta(t-s)} e^{-\frac{2(t-s)s}{t}|\xi|^2}\widehat{f}(\ud\xi) \ud s \right)^{1/2}\,.
\end{align*}
The dominated convergence theorem shows that $I_1$ can be arbitrarily small if $\beta$ is sufficiently large. 

Similarly, $I_2$ can be bounded as the following
\begin{align*}
I_2 &\leq \int_0^t \iint_{\R^{2d}} e^{-\beta(t-s)} G(t-s, x-y) |\langle \mathbf{z}, \mathbf{h}_n(s,y')\rangle| f(y-y')\ud y\ud y' \ud s\\
&\quad  +\int_0^t \iint_{\R^{2d}} e^{-\beta(t-s)} \frac{G(t-s, x-y)J_0(s,y)}{J_0(t,x)} |\langle \mathbf{z}, \mathbf{h}_n(s,y')\rangle| f(y-y') \ud y\ud y' \ud s\\
&=: I_{21}(\beta) + I_{22}(\beta).
\end{align*}
By Proposition \ref{P:Psi}, we see that 
\[
I_{21}(\beta)\le I_{21}(0)\le \kappa \Psi_n(t,x) \le \kappa m,
\]
and 
\[
I_{22}(\beta)\le I_{22}(0)\le \kappa \Psi_n(t,x)\le \kappa \max_{i=1,\dots,m} \sup_{(t,x)\in (0,T]\times\R^d, n\in\bbN} \Psi_n^i(t,x)<\infty.
\]
Hence, we can again apply the dominated convergence theorem to show that $I_2$ can be 
arbitrarily small if $\beta$ is large enough. \\

Therefore, the above arguments show that
\[
\Theta_{T,m'+1,n} \le 1 +  \Theta_{T,m',n}   C_\beta  C_p\LIP_\rho,
\]
where $C_\beta$ can be arbitrarily small if $\beta$ is large enough.
Then the induction on $m'$ shows that for  some $\beta$ sufficiently large it holds that
\begin{equation}
\sup_{(t,x)\in (0,T]\times\R^d} \sup_{n\in \bbN} \sup_{|\mathbf{z}|\leq \kappa}e^{-\beta 
t}\left\|\frac{\tau+|\widehat{u}^n_{\mathbf{z},m'}(t,x)|}{\tau+J_0(t,x)}\right\|_{p
}\leq C,\quad\text{for all $m'\in\bbN$.}
\end{equation}
Next, by considering the difference 
\begin{equation}
\frac{|\widehat{u}^n_{\mathbf{z},m'+1}(t,x)-\widehat{u}^n_{\mathbf{z},m'}(t,x)|}
{\tau+J_0(t,x)} \,,
\end{equation}
it is easy to show the existence and uniqueness of the solution. We also have the moment bound
\[
\sup_{|\mathbf{z}|\leq \kappa}\sup_{n\in \bbN }\|\widehat{u}^n_{\mathbf{z}}(t,x)\|_{p} \leq \tau + (\tau + J_0(t,x))e^{\beta t}\,,
\]
for some $\beta$ sufficiently large. 
Finally, because $t\in[0,T]$, the above inequality is equivalent to \eqref{E:uPmnt}.

{\bigskip\noindent\bf Step 2.~} Now we study the rest properties that are related to $\theta_{\mathbf{z}}^{n,i}(t,x)$. By Minkowski's inequality and the Lipschitz continuity of $\rho$, we have that
\begin{align*}
 \Norm{\theta_{\mathbf{z}}^{n,i}(t,x)}_p 
 &\le \int_0^t\ud s\iint_{\R^{2d}}\ud y \ud y'\: G(t-s,x-y)
 \Norm{\rho\left(\widehat{u}^n_{\mathbf{z}}(s,y)\right)}_p h_n^i(s,y') f(y-y')\\
 &\le \LIP_\rho\int_0^t\ud s\iint_{\R^{2d}}\ud y \ud y'\: G(t-s,x-y)
 \left( \tau + \Norm{\widehat{u}^n_{\mathbf{z}}(s,y)}_p \right)h_n^i(s,y') f(y-y'),
\end{align*}
where we recall that $\tau = \rho(0)/\LIP_{\rho}$. Then by \eqref{E:uPmnt}, for some constant $C_T>0$, 
\begin{align*}
\Norm{\theta_{\mathbf{z}}^{n,i}(t,x)}_p 
 & \le C_T \LIP_\rho\int_0^t\ud s\iint_{\R^{2d}}\ud y \ud y'\: G(t-s,x-y)
 \left( 1+ J_0(s,y) \right)h_n^i(s,y') f(y-y')\\
 & = C_T \LIP_\rho \left[\Psi_n^i(t,x) +\Psi_n^i(t,x;1) \right],
\end{align*}
which proves \eqref{E:theta-Lp0}. Property \eqref{E:theta-bd} is a direct consequence of \eqref{E:theta-Lp0}, \eqref{E:hG-J0} and \eqref{E:hG-J123}.
Finally, By \eqref{E:theta-Lp0} and the bounds in parts (3) and (4) of Proposition \ref{P:Psi}, one can apply the Borel–Cantelli lemma to obtain \eqref{E:theta-AS0}.
This completes the proof of Lemma \ref{L:Moment-U}.
\end{proof}

\subsubsection{Moments of \texorpdfstring{$\widehat{u}_{\mathbf{z}}^{n,i}(t,x)$}{}}

In the next lemma, we study the moments of $\widehat{u}^{n,i}_{\mathbf{z}}(t,x)$.

\begin{lemma}\label{L:Moment-DU}
For any $p\ge 2$, $n\in\bbN$, $i=1,\dots, d$, and $\kappa>0$,
we have that
\begin{align}\label{E:UniLp0}
\sup_{|\mathbf{z}|\le\kappa}
\Norm{\widehat{u}^{n,i}_{\mathbf{z}}(t,x)}_p
\le C \sum_{\ell=0,1}\left[\Psi_n(t,x;\ell) + J_0^\ell(\ell t,x)2^{-(1-\beta) n/2} \one_{\{t> T-2^{-n}\}} \right]\,,
\end{align}
and as a consequence,
\begin{align}\label{E:thetaIK-bd}
 &\max_{1\le i, k\le d}
 \sup_{|\mathbf{z}|\le \kappa}
 \Norm{
 \theta_{\mathbf{z}}^{n,i,k}(t,x)}_p \le
 C \left(\Psi_n(t,x;0) + \Psi_n(t,x;1)\right)\,.
 \end{align}
\end{lemma}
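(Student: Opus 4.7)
The plan is to analyze the linear SPDE \eqref{E:hatUni} for $\widehat{u}^{n,i}_{\mathbf{z}}(t,x)$, viewed as an equation with inhomogeneous term $\theta^{n,i}_{\mathbf{z}}(t,x)$, Itô coefficient $\rho'(\widehat{u}^n_{\mathbf{z}})\widehat{u}^{n,i}_{\mathbf{z}}$ supported on $[T-2^{-n},t]$, and a Girsanov-type drift involving $\InPrd{\mathbf{z},\mathbf{h}_n(s,y')}$. I would take $L^p(\Omega)$-norms on both sides; using that $\rho'$ is bounded by $\LIP_\rho$, together with the Burkholder-Davis-Gundy and Minkowski inequalities, one obtains, with $g(t,x):=\sup_{|\mathbf{z}|\le\kappa}\Norm{\widehat{u}^{n,i}_{\mathbf{z}}(t,x)}_p$, an integral inequality
\begin{align*}
g(t,x)&\le\sup_{|\mathbf{z}|\le\kappa}\Norm{\theta^{n,i}_{\mathbf{z}}(t,x)}_p\\
&\quad +C\one_{\{t>T-2^{-n}\}}\biggl(\int_{T-2^{-n}}^{t}\iint_{\R^{2d}}G(t-s,x-y)G(t-s,x-y')g(s,y)g(s,y')f(y-y')\ud s\ud y\ud y'\biggr)^{1/2}\\
&\quad +C\int_0^t\iint_{\R^{2d}}G(t-s,x-y)g(s,y)\sum_{j=1}^m h_n^j(s,y')f(y-y')\ud s\ud y\ud y'.
\end{align*}
By \eqref{E:theta-Lp0} of Lemma \ref{L:Moment-U}, the inhomogeneous term is already dominated by $C\sum_{\ell=0,1}\Psi_n(t,x;\ell)$.

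The heart of the argument is that the right-hand side of \eqref{E:UniLp0} is preserved under one application of this inequality. Postulating the ansatz
\begin{align*}
g(s,y)\le C\sum_{\ell=0,1}\bigl[\Psi_n(s,y;\ell)+J_0^\ell(\ell s,y)\,2^{-(1-\beta)n/2}\one_{\{s>T-2^{-n}\}}\bigr],
\end{align*}
and using the pointwise bound $\Psi_n(s,y;\ell)\le CJ_0^\ell(\ell s,y)\one_{\{s>T-2^{-n}\}}$ from Proposition \ref{P:Psi}, the estimate \eqref{E:hG-GGJJ} with $k=\ell$ combined with \eqref{E:Rate-CV} controls the BDG contribution by $C\sum_{\ell=0,1}J_0^\ell(\ell t,x)\cdot2^{-(1-\beta)n/2}\one_{\{t>T-2^{-n}\}}$, which is exactly the target. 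For the Girsanov drift, the identity $\int_0^t\iint G(t-s,x-y)J_0^\ell(\ell s,y)h_n^j(s,y')f(y-y')\ud s\ud y\ud y'=\Psi_n^j(t,x;\ell)$ together with $\Psi_n^j(t,x;\ell)\le CJ_0^\ell(\ell t,x)\one_{\{t>T-2^{-n}\}}$ produces a contribution of order $C(1+2^{-(1-\beta)n/2})\sum_{\ell=0,1}\Psi_n(t,x;\ell)$, again absorbed into the ansatz.

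To make the Picard iteration converge uniformly in $n$ and in $|\mathbf{z}|\le\kappa$, I would follow the weighted-norm strategy of Step 1 in the proof of Lemma \ref{L:Moment-U}: insert a factor $e^{-\beta t}$ in the norm and choose $\beta$ large enough so that the linear operators associated with both the BDG and drift terms become strict contractions. The hard part will be handling the Girsanov drift uniformly in $n$, since $h_n^j$ concentrates on a time interval of length $2^{-n}$ with a diverging normalizing constant $c_n\asymp 2^{(1-\beta)n}$; the saving fact is that \eqref{E:hG-J0} provides $\int_0^t\iint G(t-s,x-y)h_n^j(s,y')f(y-y')\ud s\ud y\ud y'\le 1$ uniformly in $(t,x)$, so that the drift operator's weighted norm can be made arbitrarily small by taking $\beta$ sufficiently large.

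Finally, for \eqref{E:thetaIK-bd}, I would apply Minkowski directly to the representation \eqref{E:thetaznik}, substitute the bound \eqref{E:UniLp0} just established for $\widehat{u}^{n,k}_{\mathbf{z}}(s,y)$, and again use the identity $\int_0^t\iint G(t-s,x-y)J_0^\ell(\ell s,y)h_n^i(s,y')f(y-y')\ud s\ud y\ud y'=\Psi_n^i(t,x;\ell)$ to collapse the nested integrals back into single $\Psi_n$-terms, with the harmless factor $1+2^{-(1-\beta)n/2}\le 2$ absorbed into the constant.
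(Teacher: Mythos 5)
Your proposal is correct and follows essentially the same route as the paper's proof: the three-term decomposition (inhomogeneous $\theta^{n,i}_{\mathbf{z}}$ part controlled by \eqref{E:theta-Lp0}, BDG term, Girsanov drift), a weighted-norm Picard iteration in the spirit of Lemma~\ref{L:Moment-U} to get existence and the crude bound $C(1+J_0(t,x))\one_{\{t>T-2^{-n}\}}$ (the paper's \eqref{E:supxKui}), and then one bootstrap pass using \eqref{E:hG-GGJJ} with \eqref{E:Rate-CV} for the BDG term and the $\Psi_n$-identity for the drift term, exactly as in the paper. The deduction of \eqref{E:thetaIK-bd} from \eqref{E:thetaznik} by Minkowski and the moment bound on $\widehat{u}^{n,k}_{\mathbf{z}}$ likewise mirrors the paper's ``same as for $I_1$'' remark.

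One small presentational caveat: your phrase ``\eqref{E:hG-GGJJ} with $k=\ell$'' understates how the cross terms $J_0(s,y)\cdot 1$ and $1\cdot J_0(s,y')$ arising from the product $(1+J_0(s,y))(1+J_0(s,y'))$ are handled in the BDG estimate; the clean way (and the paper's implicit one) is to regard $1+J_0$ as $\widetilde{J}_0$ coming from the augmented measure $\widetilde{\mu}=\mu+\ud x$ and apply \eqref{E:hG-GGJJ} with $k=1$ for $\widetilde{\mu}$, rather than term by term in $\ell$. Also, the "ansatz propagation" framing needs the a priori crude bound before the single plug-back is legitimate (otherwise the argument is circular since constants grow under iteration); you do invoke the weighted-norm Picard step to supply this, so the logic is sound, but it would be worth stating explicitly that this is a two-stage bootstrap rather than a fixed-point characterization.
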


\begin{proof}
Recall that $\widehat{u}^{n,i}_{\mathbf{z}}(t,x)$ satisfies \eqref{E:hatUni}. 
We claim that
\begin{align*}
\sup_{|\mathbf{z}|\le \kappa} \Norm{\widehat{u}_{\mathbf{z}}^{n,i}(t,x)}_p\le \sum_{i=1}^3 I_i,
\end{align*}
where the $I_i$ are defined and bounded as follows:
From \eqref{E:hatUni}, 
By \eqref{E:theta-Lp0},
\[
I_1 :=\sup_{|\mathbf{z}|\le \kappa} \Norm{\theta_{\mathbf{z}}^{n,i}(t,x)}_p \le C \left(\Psi_n(t,x;0)+\Psi_n(t,x;1)\right).
\]
By the boundedness of $\rho'$, \eqref{E:supxKui} and \eqref{E:hG-GGJJ} (together with Assumption \ref{A:Rate-Sharp}), we see that 
\begin{align*}
I_2 := &  \Norm{\rho'}_{L^\infty} \one_{(t > T- 2^{-n})}\bigg(\int_{T-2^{-n}}^t\ud s \iint_{\R^{2d}}\ud y\ud y'\:  G(t-s, x-y)G(t-s, x-y') f(y-y') \\
&\qquad\times
\sup_{|\mathbf{z}|\le \kappa} \Norm{\widehat{u}_{\mathbf{z}}^{n,i}(s,y)}_p
\sup_{|\mathbf{z}|\le \kappa} \Norm{\widehat{u}_{\mathbf{z}}^{n,i}(s,y')}_p \bigg)^{\frac{1}{2}}.
\end{align*}
By the boundedness of $\rho'$, \eqref{E:supxKui} and \eqref{E:PsiN}, we see that  
\begin{align*}
I_3:=&  \Norm{\rho'}_{L^\infty} \int_0^t \ud s\iint_{\RR^{2d}}\ud y\ud y'\: G(t-s, x-y)\InPrd{\mathbf{1}, \mathbf{h}_n(s,y')} f(y-y') \sup_{|\mathbf{z}|\le \kappa} \Norm{\widehat{u}_{\mathbf{z}}^{n,i}(s,y)}_p.
\end{align*}
Thanks \eqref{E:hG-J0} and \eqref{E:hG-J123}, when evaluating the upper moment bounds of $\widehat{u}^{n,i}_{\mathbf{z}}(t,x)$, we can treat $\widehat{u}^{n,i}_{\mathbf{z}}(t,x)$ as if it starts from the initial data $C(1+\mu)$. 
Hence, we can apply the same Picard iteration scheme as in the proof of Lemma \ref{L:Moment-U} to see that 
\begin{align}\label{E:supxKui}
\max_{1\le i\le d} 
\sup_{|\mathbf{z}|\le \kappa}\Norm{\widehat{u}^{n,i}_{\mathbf{z}}(t,x)}_p< 
C(1+J_0(t,x))\one_{\{t> T-2^{-n}\}}\,.
\end{align}
Then plugging the bound \eqref{E:supxKui} back to $I_2$ and $I_3$ shows that
\begin{align*}
I_2\le &C \one_{(t > T- 2^{-n})}\left(1+J_0(t,x)\right)2^{-(1-\beta) n/2},\\
I_3\le &C \left(\Psi_n(t,x;0)+\Psi_n(t,x;1)\right),
\end{align*}
which proves \eqref{E:UniLp0}.
Finally, the proof for \eqref{E:thetaIK-bd} is the same as those for $I_1$ above. 
This completes the whole proof of Lemma \ref{L:Moment-DU}.
\end{proof}

\subsubsection{Moments of \texorpdfstring{$\widehat{u}_{\mathbf{z}}^{n,i,k}(t,x)$}{}}
In the next lemma, we study the moments of $\widehat{u}^{n,i,k}_{\mathbf{z}}(t,x)$. 

\begin{lemma}\label{L:Moment-DDU}
For any $p \geq 2$, $n \in \mathbb{N}$, $1\leq i, k\leq d$, and $\kappa >0$, we have that 
\begin{equation}\label{E:UnikLp0}
\sup_{|\mathbf{z}|\leq \kappa} \Norm{\hat{u}_{\mathbf{z}}^{n,i,k}(t,x)}_{p}\leq 
C\left[\Psi_n^*(t,x;1) + J_0^*(t,x) 2^{-(1-\beta) n/2}\one_{\{t> T-2^{-n}\}} \right].
\end{equation}
\end{lemma}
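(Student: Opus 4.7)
The plan is to analyze equation \eqref{E:hatUnik} in the same spirit as the proof of Lemma \ref{L:Moment-DU}, handling the six right-hand-side terms in three natural groups.

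First, the two $\theta$-type drift terms, namely $\theta^{n,i,k}_{\mathbf{z}}(t,x)$ and its counterpart $\theta^{n,k,i}_{\mathbf{z}}(t,x)$ (the first and second terms of \eqref{E:hatUnik}), are already controlled by \eqref{E:thetaIK-bd} in Lemma \ref{L:Moment-DU}; upon combining with Lemma \ref{L:Star}, their contribution is dominated by $C\Psi_n^*(t,x;1)$.

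Second, the two terms quadratic in the first-order derivatives $\hat u^{n,i}_{\mathbf{z}}\hat u^{n,k}_{\mathbf{z}}$ (the third and fourth terms of \eqref{E:hatUnik}) are estimated using the Burkholder-Davis-Gundy inequality for the stochastic integral and Minkowski's inequality for the drift. A Cauchy-Schwarz factorisation reduces the product to
\[
\bigl\|\hat u^{n,i}_{\mathbf{z}}(s,y)\hat u^{n,k}_{\mathbf{z}}(s,y)\bigr\|_p
\;\le\;
\bigl\|\hat u^{n,i}_{\mathbf{z}}(s,y)\bigr\|_{2p}\bigl\|\hat u^{n,k}_{\mathbf{z}}(s,y)\bigr\|_{2p},
\]
which by \eqref{E2_:SupU} is dominated by $C\,(J_0^*(s,y))^{2}\one_{\{s>T-2^{-n}\}}$. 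The resulting space-time integral is then controlled via \eqref{E:hG-GGJJ} applied with $\mu$ replaced by the augmented measure $\mu^*$, so that the higher-power initial data is accommodated; Lemma \ref{L:J^k->J}, also applied to $\mu^*$, collapses the terms $(J_0^*)^{k}(kt,x)$ that arise on the right-hand side back to a linear multiple of $J_0^*(t,x)$. The bound $V_d(2^{-n})\le C2^{-(1-\beta)n}$ provided by Assumption \ref{A:Rate-Sharp} together with part (3) of Lemma \ref{L:Rate} yields, after taking a square root, the required factor $2^{-(1-\beta)n/2}\one_{\{t>T-2^{-n}\}}$.

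Third, the two terms in which $\hat u^{n,i,k}_{\mathbf{z}}$ itself appears linearly (the fifth and sixth terms of \eqref{E:hatUnik}) are handled by a Picard-iteration / Gronwall-weight argument modelled on Step~1 of the proof of Lemma \ref{L:Moment-U}. Dividing through by $\tau+J_0^*(t,x)$ and multiplying by $e^{-\beta t}$ for $\beta$ sufficiently large turns both the Burkholder-Davis-Gundy contribution from the noise and the $\langle\mathbf{z},\mathbf{h}_n\rangle$-shift contribution from the drift into strict contractions, uniformly in $n$ and in $|\mathbf{z}|\le\kappa$, so that the fixed-point argument closes with an inhomogeneity of exactly the form produced by the first two groups.

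The hardest step, I expect, will be the bookkeeping in the second group: the Cauchy-Schwarz split naturally produces $(J_0^*)^{2}$ in the integrand and hence $(J_0^*)^{2k}(kt,x)$ after an application of \eqref{E:hG-GGJJ}, and the use of Lemma \ref{L:J^k->J} must be tight enough to bring this back to a linear multiple of $J_0^*(t,x)$ rather than accumulating an additional level of augmentation. Controlling this telescoping of star-augmented initial data, together with verifying that the contractivity constants in the third-group Picard iteration are genuinely uniform in $n$, is the technical heart of the argument.
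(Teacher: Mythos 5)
Your grouping and toolkit match the paper's, and the architecture is sound, but there are two imprecisions that matter. In your second group you ascribe the factor $2^{-(1-\beta)n/2}\one_{\{t>T-2^{-n}\}}$ to both the stochastic and the drift contributions. It in fact comes only from the stochastic piece $U_1^n$: the Burkholder--Davis--Gundy inequality produces a square root, the double heat kernel lands on \eqref{E:hG-GGJJ}, and $V_d(2^{-n})\le C2^{-(1-\beta)n}$ gives the decay. The drift piece $U_2^n$ has a single heat kernel convolved against $\langle\mathbf{1},\mathbf{h}_n\rangle f$, so \eqref{E:hG-GGJJ} does not apply; Minkowski and the definition \eqref{E:PsiN} yield $C(\Psi_n(t,x;0)+\Psi_n(t,x;2))$, which by Lemma \ref{L:Star} feeds the $C\Psi_n^*(t,x;1)$ part of the final estimate, with no decay in $n$.

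The more substantive gap is in your third group: you say the fixed-point ``closes with an inhomogeneity of exactly the form produced by the first two groups,'' but a single Picard pass with the $\Psi_n^*(t,x;1)+2^{-(1-\beta)n/2}J_0^*(t,x)\one_{\{t>T-2^{-n}\}}$ inhomogeneity does not close, because the recursive convolutions in the fifth and sixth terms do not preserve the $\Psi_n^*$ structure (they smear it back into $J_0^*$). As in the analogous step of Lemma \ref{L:Moment-DU}, you must first coarsen every inhomogeneity to $CJ_0^*(t,x)\one_{\{t>T-2^{-n}\}}$ using \eqref{E:hG-J0}, \eqref{E:hG-J123} and Lemma \ref{L:J^k->J}, run the weighted Picard scheme (weight $\tau+J_0^*$, times $e^{-\beta t}$) to obtain the crude a priori bound \eqref{E:supxKuik}, and \emph{then} substitute \eqref{E:supxKuik} back into the two recursive terms; it is this plug-back step that actually produces the $2^{-(1-\beta)n/2}J_0^*(t,x)\one_{\{t>T-2^{-n}\}}$ and $\Psi_n^*(t,x;1)$ contributions appearing in \eqref{E:UnikLp0}.
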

\begin{proof}
We can write the six parts of $\widehat{u}^{n,i,k}_{\mathbf{z}}(t,x)$ in \eqref{E:hatUnik} as
\begin{align}\label{E:Unik6}
\widehat{u}^{n,i,k}_{\mathbf{z}}(t,x) =
\theta^{n,i,k}_{\mathbf{z}}(t,x) +\theta^{n,k,i}_{\mathbf{z}}(t,x)+ \sum_{\ell=1}^4 U_\ell^n(t,x).
\end{align}
Hence, we have that
\begin{align*}
\sup_{|\mathbf{z}|\le \kappa} \Norm{\widehat{u}_{\mathbf{z}}^{n,i,k}(t,x)}_p\le \sum_{i=0}^4 I_i,
\end{align*}
where the $I_i$ are defined and bounded as follows:
By \eqref{E:thetaIK-bd},
\begin{align*}
I_0 := &  \sup_{|\mathbf{z}|\le\kappa}\Norm{\theta^{n,i,k}_{\mathbf{z}}(t,x)}_p +
\sup_{|\mathbf{z}|\le\kappa}\Norm{\theta^{n,k,i}_{\mathbf{z}}(t,x)}_p \\
\le &C \left(\Psi_n(t,x) + \Psi_n(t,x;1)\right).
\end{align*}
By the boundedness of $\rho''$, the moments bound for $\widehat{u}^{n,i}_{\mathbf{z}}(t,x)$ in \eqref{E:supxKui} and \eqref{E:hG-GGJJ} (together with Assumption \ref{A:Rate-Sharp}), we see that 
\begin{align}\notag
I_1:= &\sup_{|\mathbf{z}|\le\kappa}\Norm{U_1^n(t,x)}_p \\ \notag
\le & C\Norm{\rho''}_{L^\infty}\one_{\{t> T-2^{-n}\}} \bigg(\int_{T-2^{-n}}^t\ud s\iint_{\R^{2d}} G(t-s,x-y) G(t-s,x-y')f(y-y')\\ \notag
&\times \sup_{|\mathbf{z}|\le \kappa} \left(
\Norm{\widehat{u}_{\mathbf{z}}^{n,i}(s,y)}_{2p}
\Norm{\widehat{u}_{\mathbf{z}}^{n,{k}}(s,y)}_{2p}
\Norm{\widehat{u}_{\mathbf{z}}^{n,i}(s,y')}_{2p}
\Norm{\widehat{u}_{\mathbf{z}}^{n,{k}}(s,y')}_{2p}
\right) \bigg)^{1/2}\\ \notag
\le & C\Norm{\rho''}_{L^\infty}\one_{\{t> T-2^{-n}\}} \bigg(\int_{T-2^{-n}}^t\ud s\iint_{\R^{2d}} G(t-s,x-y) G(t-s,x-y')f(y-y')\\ \notag
&\times \left(1+J_0(s,y)\right)^2\left(1+J_0(s,y')^2\right)
\bigg)^{1/2}
\\
 \le & C 2^{-(1-\beta) n/2}\left(1+J_0^2(2t,x)\right)\one_{\{t> T-2^{-n}\}}.
\label{E:U13}
\end{align}
By the boundedness of $\rho''$, \eqref{E:supxKui} and \eqref{E:PsiN}, we see that 
\begin{align*}
I_2:= & \sup_{|\mathbf{z}|\le\kappa}\Norm{U_2^n(t,x)}_p\\
\le & \Norm{\rho''}_{L^\infty} \int_0^t \ud s  \iint_{\R^{2d}}\ud y\ud y' G(t-s,x-y)\InPrd{\mathbf{1},\mathbf{h}_n(s,y')}f(y-y')\\
&\times 
\sup_{|\mathbf{z}|\le\kappa}\left(
\Norm{\widehat{u}_{\mathbf{z}}^{n,i}(s,y)}_{2p}\Norm{\widehat{u}_{\mathbf{z}}^{n
,{k}}(s,y)}_{2p}\right)\\
\le & C \left(\Psi_n(t,x)+\Psi_n(t,x;2)\right). 
\end{align*}
Similarly, 
\begin{align*}
I_3:= &\sup_{|\mathbf{z}|\le\kappa}\Norm{U_3^n(t,x)}_p \\ \notag
\le & C\Norm{\rho'}_{L^\infty}\one_{\{t> T-2^{-n}\}} \bigg(\int_{T-2^{-n}}^t\ud s\iint_{\R^{2d}} G(t-s,x-y) G(t-s,x-y')f(y-y')\\ \notag
&\times \sup_{|\mathbf{z}|\le \kappa} \left(
\Norm{\widehat{u}_{\mathbf{z}}^{n,i,k}(s,y)}_p
\Norm{\widehat{u}_{\mathbf{z}}^{n,i,k}(s,y')}_p
\right) \bigg)^{1/2}
\end{align*}
and 
\begin{align*}
I_4:= & \sup_{|\mathbf{z}|\le\kappa}\Norm{U_4^n(t,x)}_p\\
\le & \Norm{\rho'}_{L^\infty} \int_0^t \ud s  \iint_{\R^{2d}}\ud y\ud y' G(t-s,x-y)\InPrd{\mathbf{1},\mathbf{h}_n(s,y')}f(y-y')\\
&\times 
\sup_{|\mathbf{z}|\le\kappa} \Norm{\widehat{u}_{\mathbf{z}}^{n,i,k}(s,y)}_p.
\end{align*}
Notice that 
\begin{align*}
I_0+I_1+I_2
\le & C \left(\Psi_n(t,x)+\Psi_n(t,x;1)+\Psi_n(t,x;2)\right)+ 
C 2^{-(1-\beta) n/2}\left(1+J_0^2(2t,x)\right)\one_{\{t> T-2^{-n}\}}\\
\le &
C \left[1+J_0(t,x)+J_0^2(2t,x)\right] \one_{\{t> T-2^{-n}\}}\\
\le & C J_0^*(t,x)\one_{\{t> T-2^{-n}\}},
\end{align*}
where in the second inequality we have applied \eqref{E:hG-J0} and \eqref{E:hG-J123},
and the last inequality is due to Lemma \ref{L:J^k->J}.
Therefore, $\Norm{\widehat{u}^{n,i,k}_{\mathbf{z}}(t,x)}_p$ satisfies a similar integral inequality as that for $\Norm{\widehat{u}^{n}_{\mathbf{z}}(t,x)}_p$. Hence, we can carry out the same Picard iteration scheme as that in the proof of Lemma \ref{L:Moment-U} to conclude that 
\begin{align}\label{E:supxKuik}
\max_{1 \leq i, k\leq d} \sup_{|\mathbf{z}|\leq \kappa} \Norm{\hat{u}_{\mathbf{z}}^{n,i,k}(t,x)}_p < C(1+J_0^*(t,x))\one_{\{t> T-2^{-n}\}}
\le C J_0^*(t,x)\one_{\{t> T-2^{-n}\}}.
\end{align}
Then by plugging the above bounds back to the upper bounds for $I_3$ and $I_4$, we see that 
\begin{align*}
I_3\le&C 2^{-(1-\beta) n/2}J_0^*(t,x)\one_{\{t> T-2^{-n}\}}\quad\text{and}\quad
I_4\le C \Psi_n^*(t,x;1).
\end{align*}
Finally, we can use Lemma \ref{L:Star} to upgrade the bounds for $I_0$, $I_1$ and $I_2$ into either $C \Psi_n^*(t,x;1)$ or 
$C 2^{-(1-\beta) n/2}J_0^*(t,x)\one_{\{t> T-2^{-n}\}}$. 
This completes the proof of Lemma \ref{L:Moment-DDU}. 
\end{proof}

\subsubsection{Moment increments in \texorpdfstring{$\mathbf{z}$}{}}
Since we want to bring the ``$\sup_{|\mathbf{z}|\vee|\mathbf{z}'|\le \kappa}$'' inside the expectation, we need to study the moment increments in $\mathbf{z}$.

\begin{lemma}\label{L:HolderInZ}
For all $\kappa >0$, $1 \leq i, k \leq d$, $n \in \mathbb{N}$, $p\ge 2$, $t\in [0,T]$ and $x\in \RR^d$,
we have 
\begin{align}\label{E:IncHatU}
\sup_{|\mathbf{z}|\vee |\mathbf{z}'|\le \kappa}
\Norm{\widehat{u}^n_{\mathbf{z}}(t,x)-\widehat{u}^n_{\mathbf{z}'}(t,x)}_p
&\le C |\mathbf{z}-\mathbf{z}'| 
\sum_{\ell=0,1}\left(\Psi_n(t,x;\ell)+2^{-(1-\beta) n/2 }J_0^\ell(\ell 
t,x)\right),\\
\label{E2:IncHatU}
\sup_{|\mathbf{z}|\vee |\mathbf{z}'|\le \kappa}
\Norm{\widehat{u}^{n,i}_{\mathbf{z}}(t,x)-\widehat{u}^{n,i}_{\mathbf{z}'}(t,x)}_p
&\le C |\mathbf{z}-\mathbf{z}'| \left(\Psi_n^*(t,x;1)+2^{-(1-\beta) n/2 
}J_0^*(t,x)\right),\\
\label{E3:IncHatU}
\sup_{|\mathbf{z}|\vee |\mathbf{z}'|\le \kappa}
\Norm{\widehat{u}^{n,i,k}_{\mathbf{z}}(t,x)-\widehat{u}^{n,i,k}_{\mathbf{z}'}(t,x)}_p
&\le C |\mathbf{z}-\mathbf{z}'|\left(\Psi_n^{**}(t,x;1)+2^{-(1-\beta) n/2 
}J_0^{**}(t,x)\right)\,. 
\end{align}
\end{lemma}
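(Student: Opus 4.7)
The strategy is to mimic the weighted Picard iteration that was used to establish the moment bounds in Lemmas~\ref{L:Moment-U}, \ref{L:Moment-DU}, and \ref{L:Moment-DDU}, now applied to the differences at two parameter values $\mathbf{z}$ and $\mathbf{z}'$. In each case, the factor $|\mathbf{z}-\mathbf{z}'|$ will emerge from the shift term $\InPrd{\mathbf{z},\mathbf{h}_n(s,y')}$, after one splits a single $\mathbf{z}$-contribution into $\mathbf{z}-\mathbf{z}'$ plus a symmetric remainder; the Lipschitz/bounded-derivative hypotheses on $\rho$, $\rho'$, $\rho''$ then let all other differences be controlled by $\Delta u := \widehat{u}^n_{\mathbf{z}} - \widehat{u}^n_{\mathbf{z}'}$ itself (and, at higher order, by $\Delta u^i$ and $\Delta u^{i,k}$), closing the iteration.

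For \eqref{E:IncHatU}, I subtract \eqref{E:hatUn} at $\mathbf{z}$ from the same equation at $\mathbf{z}'$ and split the drift contribution to obtain
\begin{align*}
\Delta u(t,x) &= \int_0^t\!\!\int_{\R^d} G(t-s,x-y)\bigl[\rho(\widehat{u}^n_{\mathbf{z}})-\rho(\widehat{u}^n_{\mathbf{z}'})\bigr]W(\ud s\ud y) \\
&\quad + \int_0^t\!\!\iint_{\R^{2d}} G(t-s,x-y)\bigl[\rho(\widehat{u}^n_{\mathbf{z}})-\rho(\widehat{u}^n_{\mathbf{z}'})\bigr]\InPrd{\mathbf{z},\mathbf{h}_n(s,y')}f(y-y')\ud s\ud y\ud y' \\
&\quad + \int_0^t\!\!\iint_{\R^{2d}} G(t-s,x-y)\,\rho(\widehat{u}^n_{\mathbf{z}'})\,\InPrd{\mathbf{z}-\mathbf{z}',\mathbf{h}_n(s,y')}f(y-y')\ud s\ud y\ud y'.
\end{align*}
The third (source) term, after taking the $L^p$-norm and using $\Norm{\rho(\widehat{u}^n_{\mathbf{z}'}(s,y))}_p \le C(1+J_0(s,y))$ from Lemma~\ref{L:Moment-U}, is bounded by $C|\mathbf{z}-\mathbf{z}'|\bigl[\Psi_n(t,x;0)+\Psi_n(t,x;1)\bigr]$ by Proposition~\ref{P:Psi}. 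The first two terms, being Lipschitz-linear in $\Delta u$, can be absorbed via the exact same weighted Picard scheme as in Step~1 of the proof of Lemma~\ref{L:Moment-U}: divide by $\tau+J_0(t,x)$, multiply by $e^{-\beta t}$, and choose $\beta$ large enough that these two terms define a strict contraction in the sup-norm. The unique fixed point of the contraction is $\Delta u$, and the resulting bound is \eqref{E:IncHatU} after using Lemma~\ref{L:Star}(4) to absorb $2^{-(1-\beta)n/2}J_0$-type remainders.

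For \eqref{E2:IncHatU} I repeat the same procedure starting from \eqref{E:hatUni} and for \eqref{E3:IncHatU} from \eqref{E:hatUnik}. The source terms are now assembled from differences of the $\theta$-contributions \eqref{E:thetazni}, \eqref{E:thetaznik} and from the explicit $\InPrd{\mathbf{z},\mathbf{h}_n}$ factors in the drift integrals; each splits as ``Lipschitz via $\rho, \rho', \rho''$ (closed by contraction)'' plus ``$\mathbf{z}-\mathbf{z}'$ (gives the desired factor).'' The source bounds then use Proposition~\ref{P:SupU} (estimates \eqref{E2:SupU}--\eqref{E5:SupU}) together with \eqref{E2_:SupU} and \eqref{E:supxKuik} to control all products of $\widehat{u}^{n,i}$ or $\widehat{u}^{n,i,k}$ that appear. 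Lemma~\ref{L:Star} consolidates the various $\Psi_n(t,x;k)$ and $J_0^k(kt,x)$ terms into $\Psi_n^*(t,x;1)+2^{-(1-\beta)n/2}J_0^*(t,x)$ for \eqref{E2:IncHatU}, and into the doubly-augmented $\Psi_n^{**}(t,x;1)+2^{-(1-\beta)n/2}J_0^{**}(t,x)$ for \eqref{E3:IncHatU}, where the extra star in the second-order case comes precisely from the quadratic products $\widehat{u}^{n,i}\widehat{u}^{n,k}$ in the third and fourth lines of \eqref{E:hatUnik}.

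The main obstacle is bookkeeping rather than analysis: equation \eqref{E:hatUnik} has six terms on the right, each of which after subtraction yields both a ``$\mathbf{z}-\mathbf{z}'$-type'' source and a ``difference-type'' contraction piece, so one must verify carefully that \emph{every} source contribution is dominated by the right-hand side of \eqref{E3:IncHatU}. The point is that Lemma~\ref{L:Star} was designed exactly for this: it reduces the entire hierarchy of $(\Psi_n(\cdot;k), J_0^k(k\,\cdot\,,\cdot))_{k=0}^{3}$ to the single pair $(\Psi_n^*(\cdot;1), J_0^*(\cdot,\cdot))$ (and, after one more augmentation, to $(\Psi_n^{**}(\cdot;1), J_0^{**}(\cdot,\cdot))$), so the Picard contraction closes in the same weighted norm used for the moment bounds themselves, and no new growth analysis is needed beyond what is already established.
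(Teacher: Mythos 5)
Your decomposition of $\Delta u := \widehat{u}^n_{\mathbf{z}} - \widehat{u}^n_{\mathbf{z}'}$ into a $\langle\mathbf{z}-\mathbf{z}',\mathbf{h}_n\rangle$-source plus two Lipschitz-controlled remainder integrals, and the idea of closing via the weighted Picard scheme of Lemma~\ref{L:Moment-U}, are exactly the paper's decomposition and starting point. However, there is a genuine gap in the logic of how the bound is finally obtained, and your final sentence (``the resulting bound is \eqref{E:IncHatU} after using Lemma~\ref{L:Star}(4)'') runs the argument in the wrong direction.

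Running the weighted Picard contraction on the three-term integral inequality does \emph{not} produce the right-hand side of \eqref{E:IncHatU}; it produces only the crude bound
$\sup_{|\mathbf{z}|\vee|\mathbf{z}'|\le\kappa}\|\Delta u(t,x)\|_p \le C|\mathbf{z}-\mathbf{z}'|\,(1+J_0(t,x))\,\one_{\{t>T-2^{-n}\}}$,
which is what the contraction fixed-point delivers once the source $\Psi_n+\Psi_n(\cdot;1)$ is absorbed into $(1+J_0)\one$ via Lemma~\ref{L:Star}(4). The refined bound of the lemma, which distinguishes the $\Psi_n(t,x;\ell)$ contributions from the $2^{-(1-\beta)n/2}J_0^\ell(\ell t,x)$ contributions, requires a second pass: plug the crude bound back into the two Lipschitz/contraction integrals $I_1$ (the stochastic integral, giving $V_d(2^{-n})^{1/2}\lesssim 2^{-(1-\beta)n/2}$ via \eqref{E:hG-GGJJ} and Assumption~\ref{A:Rate-Sharp}) and $I_2$ (the $\langle\mathbf{1},\mathbf{h}_n\rangle$ drift, giving $\Psi_n$-type terms via \eqref{E:PsiN}), then add the direct bound on the $\langle\mathbf{z}-\mathbf{z}',\mathbf{h}_n\rangle$ source $I_3$. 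Lemma~\ref{L:Star}(4) flows from the refined sum \emph{to} the crude $J_0^*\one$-form, so it closes the Picard iteration but cannot regenerate the refined bound from the fixed point. The same ``Picard then plug back'' bootstrap is the crux of Steps~2 and~3 as well (six, respectively fourteen, terms; in each case a crude bound of the form $C|\mathbf{z}-\mathbf{z}'|J_0^*$ or $C|\mathbf{z}-\mathbf{z}'|J_0^{**}$ is obtained first and then reinserted into the contraction-type integrals $I_3,I_5$, respectively $I_{11},I_{13}$), so the gap is not incidental to the $\ell=0$ case but propagates through the whole lemma. Making this two-stage structure explicit is not merely bookkeeping; without it the stated estimate cannot be recovered.
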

\begin{proof}
We will prove these three inequalities in this lemma in three steps. 

{\bigskip\noindent\bf Step 1.~} 
In this step we prove \eqref{E:IncHatU}. Notice that 
\begin{align*}
&\widehat{u}_{\mathbf{z}}^n(t,x) - \widehat{u}_{\mathbf{z}'}^n(t,x)\\
&=\int_0^t \int_{\RR^d}G(t-s, x-y) \left[\rho(\widehat{u}_{\mathbf{z}}^n(s,y))-\rho(\widehat{u}_{\mathbf{z}'}^n(s,y))\right]W(\ud s\ud y)\\
&\quad + \int_0^t \iint_{\R^{2d}} G(t-s, x-y)\left[\rho(\widehat{u}_{\mathbf{z}}^n(s,y))-\rho(\widehat{u}_{\mathbf{z}'}^n(s,y))\right] \InPrd{\mathbf{z}, h_n(s,y')}f(y-y') \ud y\ud y' \ud s\\
&\quad + \int_0^t \iint_{\R^{2d}} G(t-s, x-y) \rho(\widehat{u}_{\mathbf{z}'}^n(s,y)) \InPrd{\mathbf{z}-\mathbf{z}', \mathbf{h}_n(s,y')}f(y-y') \ud y \ud y' \ud s.
\end{align*}
Hence, we have that 
\[
\sup_{|\mathbf{z}|\vee |\mathbf{z}'|\le \kappa}
\Norm{\widehat{u}^{n}_{\mathbf{z}}(t,x)-\widehat{u}^{n}_{\mathbf{z}'}(t,x)}_p
\le C \sum_{i=1}^{3} I_i,
\]
where $I_i$ are defined and bounded as follows: 
By the Lipschitz continuity of $\rho$, 
\begin{align*}
I_1^2 := &\LIP_\rho^2 \int_0^t \ud s\iint_{\R^{2d}} \ud y\ud y' \: G(t-s, x-y)G(t-s, x-y') f(y-y')\\
&\times \sup_{|\mathbf{z}|\vee |\mathbf{z}'|\le \kappa} \left(
\Norm{\widehat{u}_{\mathbf{z}}^n(s,y)-\widehat{u}_{\mathbf{z}'}^n(s,y)}_p
\Norm{\widehat{u}_{\mathbf{z}}^n(s,y')-\widehat{u}_{\mathbf{z}'}^n(s,y')}
_p\right),\\
I_2 := & \LIP_\rho\int_0^t \ud s\iint_{\R^{2d}}\ud y\ud y'\: G(t-s, x-y)
\InPrd{\mathbf{1}, \mathbf{h}_n(s,y')}f(y-y')\\
&\times \sup_{|\mathbf{z}|\vee |\mathbf{z}'|\le \kappa} \Norm{\widehat{u}_{\mathbf{z}}^n(s,y)-\widehat{u}_{\mathbf{z}'}^n(s,y)}_p.
\end{align*}
By the linear growth of $\rho$, \eqref{E:uPmnt} and \eqref{E:PsiN}, 
\begin{align}\notag
I_3: = & |\mathbf{z}-\mathbf{z}'| \int_0^t \ud s \iint_{\R^{2d}} \ud y \ud y'\: G(t-s, x-y) \InPrd{\mathbf{1}, \mathbf{h}_n(s,y')} f(y-y')\\ \notag
&\times \left(1+\sup_{|\mathbf{z}'|\le \kappa} \Norm{\widehat{u}_{\mathbf{z}'}^n(s,y)}_p\right)  \\ \notag
\le & C |\mathbf{z}-\mathbf{z}'| \int_0^t \ud s \iint_{\RR^{2d}} \ud y\ud y'\: G(t-s, x-y) \InPrd{\mathbf{1}, \mathbf{h}_n(s,y')}f(y-y')\\  \notag
&\times \left(1+J_0(s,y)\right)  \\
\le &C |\mathbf{z}-\mathbf{z}'| \left(\Psi_n(t,x)+\Psi_n(t,x;1)\right).
\label{E_:IncHatU-I1}
\end{align}
By \eqref{E:hG-J0} and \eqref{E:hG-J123}, we see that 
\[
I_3\le C |\mathbf{z}-\mathbf{z}'| \left(1+J_0(t,x)\right)\one_{\{t>T-2^{-n}\}}.
\]
Hence, we can apply the same Picard iterate scheme as in the proof of Lemma \ref{L:Moment-U} to see that 
\begin{align}\label{E_:HolderU}
\sup_{|\mathbf{z}|\vee |\mathbf{z}'|\le \kappa}\Norm{\widehat{u}_{\mathbf{z}}^n(t,x) - \widehat{u}_{\mathbf{z}'}^n(t,x)}_p\le C |\mathbf{z}-\mathbf{z}'| \left(1+J_0(t,x)\right)\one_{\{t>T-2^{-n}\}}.
\end{align}
Then plugging the moment bound \eqref{E_:HolderU} back to the upper bounds for $I_1$ and $I_2$ shows that 
\begin{align*}
I_1 &\le C |\mathbf{z}-\mathbf{z}'|\one_{\{t> T-2^{-n}\}}  \left(1+J_0(t,x)\right)2^{-(1-\beta) n/2},\\
I_2 &\le C |\mathbf{z}-\mathbf{z}'|\left(\Psi_n(t,x) + \Psi_n(t,x;1)\right), 
\end{align*}
which proves \eqref{E:IncHatU}.

{\bigskip\bf\noindent Step 2.~} Now we will prove \eqref{E2:IncHatU}.
Similar to the previous case, we have 
\[
\sup_{|\mathbf{z}|\vee |\mathbf{z}'|\le \kappa}
\Norm{\widehat{u}^{n,i}_{\mathbf{z}}(t,x)-\widehat{u}^{n,i}_{\mathbf{z}'}(t,x)}_p
\le C \sum_{i=1}^{6} I_i,
\]
with $I_i$ being defined and bounded as follows. 
By the Lipschitz continuity of $\rho$ and \eqref{E_:HolderU}, 
\begin{align*}
I_1:=& \sup_{|\mathbf{z}|\vee |\mathbf{z}'|\le \kappa} \Norm{\theta_{\mathbf{z}}^{n,i}(t,x)-\theta_{\mathbf{z}'}^{n,i}(t,x)}_p\\
\le&
\LIP_\rho\int_{0}^t\ud s\iint_{\R^{2d}} \ud y \ud y'\: G(t-s,x-y) f(y-y') h_n^i(s,y')
\\
& \times  \sup_{|\mathbf{z}|\vee |\mathbf{z}'|\le \kappa}  \Norm{\widehat{u}_{\mathbf{z}}^{n}(s,y)-\widehat{u}_{\mathbf{z}'}^{n}(s,y)}_p\\
\le& C|\mathbf{z}-\mathbf{z}'| \left( \Psi_n(t,x)+\Psi_n(t,x;1)\right).
\end{align*}
By the Lipschitz continuity of $\rho'$ and the Schwartz inequality,
\begin{align*}
I_2^2:=&
\LIP_{\rho'}^2 \one_{\{t>T-2^{-n}\}}  \int_{0}^t\ud s\iint_{\R^{2d}} \ud y \ud y'\: G(t-s,x-y)G(t-s,x-y')f(y-y')\\
&\times 
\sup_{|\mathbf{z}|\vee |\mathbf{z}'|\le \kappa}
\left[\Norm{\widehat{u}_{\mathbf{z}}^{n}(s,y)-\widehat{u}_{\mathbf{z}'}^{n}(s,y)}_{2p}
\Norm{\widehat{u}_{\mathbf{z}}^{n,i}(s,y)}_{2p}\right]\\
&\times  
\sup_{|\mathbf{z}|\vee |\mathbf{z}'|\le \kappa} 
\left[\Norm{\widehat{u}_{\mathbf{z}}^{n}(s,y')-\widehat{u}_{\mathbf{z}'}^{n}(s,y')}_{2p}
\Norm{\widehat{u}_{\mathbf{z}}^{n,i}(s,y')}_{2p}\right].
\end{align*}
Then by \eqref{E_:HolderU}, \eqref{E:supxKui} and \eqref{E:hG-GGJJ},
\begin{align*}
I_2\le&
C\one_{\{t>T-2^{-n}\}}\bigg(  \int_{0}^t\ud s\iint_{\R^{2d}} \ud y \ud y'\: G(t-s,x-y)G(t-s,x-y')f(y-y')\\
&\times (1+J_0(s,y))^2(1+J_0(s,y'))^2\bigg)^{1/2} |{\bf z} - {\bf z'}|\\
\le & C |\mathbf{z}-\mathbf{z}'|\one_{\{t> T-2^{-n}\}}  \left(1+J_0^2(2t,x)\right)2^{-(1-\beta) n/2}.
\end{align*}
By the boundedness of $\rho'$, 
\begin{align*}
I_3:=&
\Norm{\rho'}_{L^\infty} \one_{\{t> T-2^{-n}\}}  \bigg(\int_{0}^t\ud s\iint_{\R^{2d}} \ud y \ud y'\: G(t-s,x-y)G(t-s,x-y')f(y-y')\\
&\times 
\sup_{|\mathbf{z}|\vee |\mathbf{z}'|\le \kappa}
\Norm{\widehat{u}_{\mathbf{z}}^{n,i}(s,y)-\widehat{u}_{\mathbf{z}'}^{n,i}(s,y)}_{p}
\\
&\times  
\sup_{|\mathbf{z}|\vee |\mathbf{z}'|\le \kappa} 
\Norm{\widehat{u}_{\mathbf{z}}^{n,i}(s,y')-\widehat{u}_{\mathbf{z}'}^{n,i}(s,y')}_{p}\bigg)^{1/2}.
\end{align*} 
Similarly, we have that 
\begin{align*}
I_4:=&
\LIP_{\rho'} \int_{0}^t\ud s\iint_{\R^{2d}} \ud y \ud y'\: G(t-s,x-y)\InPrd{\mathbf{1},\mathbf{h}_n(s,y')}f(y-y')\\
&\times 
\sup_{|\mathbf{z}|\vee |\mathbf{z}'|\le \kappa}
\Norm{\widehat{u}_{\mathbf{z}}^{n}(s,y)-\widehat{u}_{\mathbf{z}'}^{n}(s,y)}_{2p}
\Norm{\widehat{u}_{\mathbf{z}}^{n,i}(s,y)}_{2p}\\
\le &C |\mathbf{z}-\mathbf{z}'|\one_{\{t> T-2^{-n}\}}  \left(\Psi_n(t,x;0)+\Psi_n(t,x;2)\right),
\end{align*}
and 
\begin{align*}
I_5:=&
\Norm{\rho'}_{L^\infty} \int_{0}^t\ud s\iint_{\R^{2d}} \ud y \ud y'\: G(t-s,x-y)\InPrd{\mathbf{1},\mathbf{h}_n(s,y')}f(y-y')\\
&\times 
\sup_{|\mathbf{z}|\vee |\mathbf{z}'|\le \kappa}
\Norm{\widehat{u}_{\mathbf{z}}^{n,i}(s,y)-\widehat{u}_{\mathbf{z}'}^{n,i}(s,y)}_p,
\end{align*}
and 
\begin{align*}
I_6:=&
\Norm{\rho'}_{L^\infty} \int_{0}^t\ud s\iint_{\R^{2d}} \ud y \ud y'\: G(t-s,x-y) f(y-y')\\
&\times 
\sup_{|\mathbf{z}|\vee |\mathbf{z}'|\le \kappa} \left(
\Norm{\widehat{u}_{\mathbf{z}'}^{n,i}(s,y)}_{p} \InPrd{\mathbf{z}-\mathbf{z}',\mathbf{h}_n(s,y')}\right)\\
\le &C |\mathbf{z}-\mathbf{z}'|\one_{\{t> T-2^{-n}\}}  \left(\Psi_n(t,x;0)+\Psi_n(t,x;1)\right).
\end{align*}

Now we group terms in order to apply the Picard iteration. 
By Lemma \ref{L:J^k->J}, \eqref{E:mu*}, \eqref{E:hG-J0} and \eqref{E:hG-J123}, we see that 
\begin{align}\label{E_:1246}
\sum_{i=1,2,4,6} I_i  \le C J_0^*(t,x)|\mathbf{z}-\mathbf{z}'|. 
\end{align}
Hence, by the same Picard iteration as in the proof of Lemma \ref{L:Moment-U} to see that
\begin{align}\label{E2_:IncHatU}
\sup_{|\mathbf{z}|\vee|\mathbf{z}|'\le \kappa } \Norm{\widehat{u}^{n,i}_{\mathbf{z}}(t,x)-\widehat{u}^{n,i}_{\mathbf{z}'}(t,x)}_p\le 
C |\mathbf{z}-\mathbf{z}'| J_0^*(t,x).
\end{align}

Finally, plugging this upper bound back to the upper bounds for $I_3$ and $I_5$ proves \eqref{E2:IncHatU}.

{\bigskip\bf\noindent Step 3.~} The proof for \eqref{E3:IncHatU} is similar to Step 2. We have, instead of six, fourteen terms:
\[
\sup_{|\mathbf{z}|\vee|\mathbf{z}|'\le \kappa } \Norm{\widehat{u}^{n,i,k}_{\mathbf{z}}(t,x)-\widehat{u}^{n,i,k}_{\mathbf{z}'}(t,x)}_p
\le C \sum_{j=1}^{14} I_j.
\]
In the following, we will specify each of these $I_j$ and give estimates on them. 
Recall that we can write the six parts of $\widehat{u}^{n,i,k}_{\mathbf{z}}(t,x)$ in \eqref{E:hatUnik} as
\begin{align}\label{E_:Unik6}
\widehat{u}^{n,i,k}_{\mathbf{z}}(t,x) =
\theta^{n,i,k}_{\mathbf{z}}(t,x) +\theta^{n,k,i}_{\mathbf{z}}(t,x)+ \sum_{\ell=1}^4 U_\ell^n(t,x).
\end{align}

{\noindent(1-2)} By the Lipschitz continuity and the boundedness of $\rho'$,
\begin{align*}
I_1:=&\sup_{|\mathbf{z}|\vee|\mathbf{z}|'\le \kappa } \Norm{\theta_{\mathbf{z}}^{n,i,k}(t,x)-\theta_{\mathbf{z}'}^{n,i,k}(t,x)}_p \\
\le&
\LIP_{\rho'} \int_{0}^t\ud s\iint_{\R^{2d}} \ud y \ud y'\: G(t-s,x-y)f(y-y') h_n^i(s,y')\\
&\qquad \times 
\sup_{|\mathbf{z}|\vee|\mathbf{z}|'\le \kappa }  
\left(
\Norm{\widehat{u}_{\mathbf{z}}^{n}(s,y)-\widehat{u}_{\mathbf{z}'}^{n}(s,y)}_{2p} 
\Norm{\widehat{u}^{n,k}_{\mathbf{z}}(s,y)}_{2p} \right)
\\
&+\Norm{\rho'}_{L^\infty} \int_{0}^t\ud s\iint_{\R^{2d}} \ud y \ud y'\: G(t-s,x-y)f(y-y') h_n^i(s,y')\\
&\qquad\times 
\sup_{|\mathbf{z}|\vee|\mathbf{z}|'\le \kappa } 
\left(
\Norm{\widehat{u}_{\mathbf{z}}^{n,k}(s,y)-\widehat{u}_{\mathbf{z}'}^{n,k}(s,y)}_p \right) \\
\le& C|\mathbf{z}-\mathbf{z}'| \left(\Psi_n(t,x;0)+\Psi_n(t,x;2)+\Psi_n^*(t,x;1)\right)\\
\le &C|\mathbf{z}-\mathbf{z}'| \Psi_n^*(t,x;1),
\end{align*}
where we have applied \eqref{E2_:IncHatU}, \eqref{E:supxKui} and \eqref{E_:HolderU} in the second inequality and Lemma \ref{L:Star} in the last inequality.
Similarly, 
\begin{align*} 
 I_2:=&\sup_{|\mathbf{z}|\vee|\mathbf{z}|'\le \kappa } \Norm{\theta_{\mathbf{z}}^{n,k,i}(t,x)-\theta_{\mathbf{z}'}^{n,k,i}(t,x)}_p \le C|\mathbf{z}-\mathbf{z}'| \Psi_n^*(t,x;1).
\end{align*}

{\medskip\noindent(3-5)} Terms from $I_3$ to $I_5$ come from $U_1$. By the Lipschitz continuity of $\rho''$, 
\begin{align*}
 I_3^2:=& \LIP_{\rho''}^2 \one_{\{t> T-2^{-n}\}} \int_{T-2^{-n}}^t\ud s \iint_{\R^{2d}} \ud y\ud y' \: G(t-s,x-y) 
 G(t-s,x-y') f(y-y')\\
 &\times 
 \sup_{|\mathbf{z}|\vee|\mathbf{z}|'\le \kappa } \left( 
 \Norm{\widehat{u}^{n}_{\mathbf{z}}(s,y)-\widehat{u}^{n}_{\mathbf{z'}}(s,y)}_{3p}\Norm{\widehat{u}^{n,i}_{\mathbf{z}}(s,y)}_{3p}
 \Norm{\widehat{u}^{n,k}_{\mathbf{z}}(s,y)}_{3p}
 \right)\\ 
 &\times 
 \sup_{|\mathbf{z}|\vee|\mathbf{z}|'\le \kappa } \left( 
 \Norm{\widehat{u}^{n}_{\mathbf{z}}(s,y')-\widehat{u}^{n}_{\mathbf{z'}}(s,y')}_{3p}
 \Norm{\widehat{u}^{n,i}_{\mathbf{z}}(s,y')}_{3p}
 \Norm{\widehat{u}^{n,k}_{\mathbf{z}}(s,y')}_{3p}
 \right).
\end{align*}
By \eqref{E2_:IncHatU} and \eqref{E:supxKui}, and by the fact that $1+J_0(t,x)\le C J_0^*(t,x)$, we see that 
\begin{align*}
 I_3\le & C |\mathbf{z}-\mathbf{z}'|\one_{\{t> T-2^{-n}\}}\bigg( \int_{T-2^{-n}}^t\ud s \iint_{\R^{2d}} \ud y\ud y'\: f(y-y') \\
 &\times G(t-s,x-y)  G(t-s,x-y') 
 J_0^*(s,y)^3 J_0^*(s,y')^3 \bigg)^{1/2}\\
 \le & C |\mathbf{z}-\mathbf{z}'|\one_{\{t> T-2^{-n}\}}  J_0^*(3t,x)^{3} 2^{-(1-\beta) n/2},
\end{align*}
where the last inequality is due to \eqref{E:hG-GGJJ} applied to $\mu^*$ and 
Assumption \ref{A:Rate-Sharp}.
Similarly, 
\begin{align*}
 I_4:=& \Norm{\rho''}_{L^\infty}^2\one_{\{t> T-2^{-n}\}}\bigg( \int_{T-2^{-n}}^t\ud s \iint_{\R^{2d}} \ud y\ud y' \: G(t-s,x-y) 
 G(t-s,x-y') f(y-y')\\
 &\times 
 \sup_{|\mathbf{z}|\vee|\mathbf{z}|'\le \kappa } \left( 
 \Norm{\widehat{u}^{n,i}_{\mathbf{z}}(s,y)-\widehat{u}^{n,i}_{\mathbf{z'}}(s,y)}_{2p}
 \Norm{\widehat{u}^{n,k}_{\mathbf{z}}(s,y)}_{2p}\right) \\ 
 &\times 
 \sup_{|\mathbf{z}|\vee|\mathbf{z}|'\le \kappa } \left( 
 \Norm{\widehat{u}^{n,i}_{\mathbf{z}}(s,y')-\widehat{u}^{n,i}_{\mathbf{z'}}(s,y')}_{2p}
  \Norm{\widehat{u}^{n,k}_{\mathbf{z}}(s,y')}_{2p}\right)\bigg)^{1/2}\\ 
 \le & C |\mathbf{z}-\mathbf{z}'|\one_{\{t> T-2^{-n}\}}  J_0^*(2t,x)^{2} 2^{-(1-\beta) n/2},
\end{align*}
and 
\begin{align*}
 I_5:=& \Norm{\rho''}_{L^\infty}^2\one_{\{t> T-2^{-n}\}}\bigg( \int_{T-2^{-n}}^t\ud s \iint_{\R^{2d}} \ud y\ud y' \: G(t-s,x-y) 
 G(t-s,x-y') f(y-y')\\
 &\times 
 \sup_{|\mathbf{z}|\vee|\mathbf{z}|'\le \kappa } \left( 
 \Norm{\widehat{u}^{n,i}_{\mathbf{z'}}(s,y)}_{2p}
 \Norm{\widehat{u}^{n,k}_{\mathbf{z}}(s,y)-\widehat{u}^{n,k}_{\mathbf{z'}}(s,y)}_{2p}\right)\\ 
 &\times 
 \sup_{|\mathbf{z}|\vee|\mathbf{z}|'\le \kappa } \left( 
 \Norm{\widehat{u}^{n,i}_{\mathbf{z'}}(s,y')}_{2p}
  \Norm{\widehat{u}^{n,k}_{\mathbf{z}}(s,y')-\widehat{u}^{n,k}_{\mathbf{z'}}(s,y')}_{2p}\right)\bigg)^{1/2}\\ 
 \le & C |\mathbf{z}-\mathbf{z}'|\one_{\{t> T-2^{-n}\}}  J_0^*(2t,x)^{2}  2^{-(1-\beta) n/2}.
\end{align*}

{\medskip\noindent(6-9)} Terms from $I_6$ to $I_9$ come from $U_2$. By the Lipschitz continuity of $\rho''$, 
\begin{align*}
 I_6:=& \LIP_{\rho''}\int_{0}^t\ud s \iint_{\R^{2d}} \ud y\ud y' \: G(t-s,x-y) 
 G(t-s,x-y') \InPrd{1,\mathbf{h}_n(s,y')}f(y-y')\\
 &\times 
 \sup_{|\mathbf{z}|\vee|\mathbf{z}|'\le \kappa } \left( 
 \Norm{\widehat{u}^{n}_{\mathbf{z'}}(s,y)-\widehat{u}^{n}_{\mathbf{z'}}(s,y)}_{3p}
 \Norm{\widehat{u}^{n,i}_{\mathbf{z}}(s,y)}_{3p}
 \Norm{\widehat{u}^{n,k}_{\mathbf{z}}(s,y)}_{3p}\right)\\
 \le & C|\mathbf{z}-\mathbf{z}'| \Psi_n^*(t,x;3),
\end{align*}
and  by the boundedness of $\rho''$,
\begin{align*}
 I_7:=& \Norm{\rho''}_{L^\infty} \int_{0}^t\ud s \iint_{\R^{2d}} \ud y\ud y' \: G(t-s,x-y) 
 G(t-s,x-y') \InPrd{1,\mathbf{h}_n(s,y')}f(y-y')\\
 &\times 
 \sup_{|\mathbf{z}|\vee|\mathbf{z}|'\le \kappa } \left( 
  \Norm{\widehat{u}^{n,i}_{\mathbf{z}}(s,y)-\widehat{u}^{n,i}_{\mathbf{z'}}(s,y)}_{2p}
 \Norm{\widehat{u}^{n,k}_{\mathbf{z}}(s,y)}_{2p}\right)\\
 \le & C|\mathbf{z}-\mathbf{z}'| \Psi_n^*(t,x;2),
\end{align*}
and 
\begin{align*}
 I_8:=& \Norm{\rho''}_{L^\infty}  \int_{0}^t\ud s \iint_{\R^{2d}} \ud y\ud y' \: G(t-s,x-y) 
 G(t-s,x-y') \InPrd{1,\mathbf{h}_n(s,y')}f(y-y')\\
 &\times 
 \sup_{|\mathbf{z}|\vee|\mathbf{z}|'\le \kappa } \left(
 \Norm{\widehat{u}^{n,i}_{\mathbf{z'}}(s,y)}_{2p}
 \Norm{\widehat{u}^{n,k}_{\mathbf{z}}(s,y)-\widehat{u}^{n,k}_{\mathbf{z'}}(s,y)}_{2p}\right)\\
 \le & C|\mathbf{z}-\mathbf{z}'| \Psi_n^*(t,x;2),
\end{align*}
and 
\begin{align*}
 I_9:=& \Norm{\rho''}_{L^\infty}\int_{0}^t\ud s \iint_{\R^{2d}} \ud y\ud y' \: G(t-s,x-y) 
 G(t-s,x-y') f(y-y')\\
 &\times 
 \sup_{|\mathbf{z}|\vee|\mathbf{z}|'\le \kappa } \left(
  \Norm{\widehat{u}^{n,i}_{\mathbf{z'}}(s,y)}_{2p}
 \Norm{\widehat{u}^{n,k}_{\mathbf{z'}}(s,y)}_{2p}\InPrd{\mathbf{z}-\mathbf{z'},\mathbf{h}_n(s,y')}\right)\\
 \le & C|\mathbf{z}-\mathbf{z}'| \Psi_n^*(t,x;2).
\end{align*}

{\medskip\noindent(10-11)} Terms for $I_{10}$ and $I_{11}$ come from $U_3$. By the Lipschitz continuity of $\rho'$,
\begin{align*}
 I_{10}:=& \LIP_{\rho'} \one_{\{t>T-2^{-n}\}}\bigg( \int_{T-2^{-n}}^t\ud s \iint_{\R^{2d}} \ud y\ud y' \: G(t-s,x-y) 
 G(t-s,x-y') f(y-y')\\
 &\times 
 \sup_{|\mathbf{z}|\vee|\mathbf{z}|'\le \kappa } \left(
 \Norm{\widehat{u}^{n}_{\mathbf{z}}(s,y)-\widehat{u}^{n}_{\mathbf{z'}}(s,y)}_{2p}\Norm{\widehat{u}^{n,i,k}_{\mathbf{z}}(s,y)}_{2p}
 \right)\\
 &\times 
 \sup_{|\mathbf{z}|\vee|\mathbf{z}|'\le \kappa } \left(
 \Norm{\widehat{u}^{n}_{\mathbf{z}}(s,y')-\widehat{u}^{n}_{\mathbf{z'}}(s,y')}_{2p}
  \Norm{\widehat{u}^{n,i,k}_{\mathbf{z}}(s,y')}_{2p}\right) \bigg)^{1/2}\\ 
 \le &  C|\mathbf{z}-\mathbf{z}'| \one_{\{t> T-2^{-n}\}} J_0^*(2t,x)^2 2^{-(1-\beta) n/2},
\end{align*}
and by the boundedness of $\rho'$,
\begin{align*}
 I_{11}:=& \Norm{\rho'}_{L^\infty} \one_{\{t> T-2^{-n}\}}\bigg(\int_{T-2^{-n}}^t\ud s \iint_{\R^{2d}} \ud y\ud y' \: G(t-s,x-y) 
 G(t-s,x-y') f(y-y')\\
 &\times 
 \sup_{|\mathbf{z}|\vee|\mathbf{z}|'\le \kappa } \left\{
 \Norm{\widehat{u}^{n,i,k}_{\mathbf{z}}(s,y)-
 \widehat{u}^{n,i,k}_{\mathbf{z'}}(s,y)}_p
 \Norm{\widehat{u}^{n,i,k}_{\mathbf{z}}(s,y')-
 \widehat{u}^{n,i,k}_{\mathbf{z'}}(s,y')}_p
 \right\}\bigg)^{1/2}.
\end{align*}

{\medskip\noindent(12-14)} Terms from $I_{12}$ to $I_{14}$ come from $U_4$. In particular,  \begin{align*}
 I_{12}:=& \LIP_{\rho'}\int_{0}^t\ud s \iint_{\R^{2d}} \ud y\ud y' 
\:G(t-s,x-y) \InPrd{\mathbf{1},\mathbf{h}_n(s,y')}f(y-y')\\
 &\times 
 \sup_{|\mathbf{z}|\vee|\mathbf{z}|'\le \kappa } \left(
 \Norm{\widehat{u}^{n}_{\mathbf{z'}}(s,y)-\widehat{u}^{n}_{\mathbf{z'}}(s,y)}_{2p}
 \Norm{\widehat{u}^{n,i,k}_{\mathbf{z}}(s,y)}_{2p}\right)\\
 \le & C|\mathbf{z}-\mathbf{z}'| \Psi_n^*(t,x;2),
\end{align*}
and by the boundedness of $\rho'$,
\begin{align*}
 I_{13}:=& \Norm{\rho'}_{L^\infty}\int_{0}^t\ud s \iint_{\R^{2d}} \ud y\ud y' \: 
G(t-s,x-y)\InPrd{\mathbf{1},\mathbf{h}_n(s,y')}f(y-y')\\
 &\times 
 \sup_{|\mathbf{z}|\vee|\mathbf{z}|'\le \kappa } 
 \Norm{\widehat{u}^{n,i,k}_{\mathbf{z}}(s,y)-\widehat{u}^{n,i,k}_{\mathbf{z'}}(s,y)}_{p},
\end{align*}
and 
\begin{align*}
 I_{14}:=& \Norm{\rho'}_{L^\infty}\int_{0}^t\ud s \iint_{\R^{2d}} \ud y\ud y' \: 
G(t-s,x-y) f(y-y')\\
 &\times 
  \sup_{|\mathbf{z}|\vee|\mathbf{z}|'\le \kappa } \left(
 \Norm{\widehat{u}^{n,i,k}_{\mathbf{z'}}(s,y)}_p \InPrd{\mathbf{z}-\mathbf{z'},\mathbf{h}_n(s,y')}
 \right)\\
 \le & C|\mathbf{z}-\mathbf{z}'| \Psi_n^*(t,x;1).
\end{align*}

Therefore, by Lemma \ref{L:Star}, we see that 
\begin{align*}
\mathop{\sum_{1\le i\le 14}}_{i\ne 11, i\ne 13} I_i 
& \le\sum_{\ell=1}^3\left(\Psi_n^*(t,x;\ell)+2^{-(1-\beta) n/2} \one_{\{t> T-2^{-n}\}} J_0^*(\ell t,x)^\ell\right)\\
&\le \one_{\{t> T-2^{-n}\}}|\mathbf{z}-\mathbf{z}'| J_0^{**}(t,x).
\end{align*}
Together with $I_{11}$ and $I_{13}$, we can apply the same Picard iteration scheme as that in the proof of Lemma \ref{L:Moment-U} to see that 
\[
\sup_{|\mathbf{z}|\vee|\mathbf{z}|'\le \kappa } \Norm{\widehat{u}^{n,i,k}_{\mathbf{z}}(t,x)-\widehat{u}^{n,i,k}_{\mathbf{z}'}(t,x)}_p\le C |\mathbf{z}-\mathbf{z}'| J_0^{**}(t,x).
\]
Then plugging this bounds back to $I_{11}$ and $I_{13}$ gives that 
\[
I_{11}\le C|\mathbf{z}-\mathbf{z}'| J_0^{**}(t,x) 2^{-(1-\beta) n/2}
\quad \text{and} \quad 
I_{13}\le C|\mathbf{z}-\mathbf{z}'| \Psi_n^{**}(t,x;1).
\]
Finally, we can use Lemma \ref{L:Star} to upgrade the moment bounds for $I_i$, $i\not\in\{11,13\}$, to those with double augmented initial measure $\mu^{**}$. 
With this, we complete the proof of Lemma \ref{L:HolderInZ}.
\end{proof}

\subsection{Almost convergence of \texorpdfstring{$\widehat{u}^{n,i}_0(T,x_i)$}{} to \texorpdfstring{$\rho(u(T,x_i))$}{}}
The aim of this part is to prove the following convergence
\[
\lim_{n\rightarrow\infty} \widehat{u}^{n,i}_0(T,x_i) = \rho(u(T,x_i)) \quad \text{a.s.,}
\]
which is used in step 2 of the proof of Theorem \ref{T:Pos}. 
This result is proved through the following three lemmas (see \eqref{E2:UniU-Close}): 

\begin{lemma}\label{L:Unz}
For any $\kappa >0$, $p \geq 2$, $n \in \mathbb{N}$ and $t\in [0,T]$ and $x\in \RR^d$ we have
\begin{equation}\label{E:UnU-Close}
\Norm{\sup_{|\mathbf{z}|\leq \kappa} |\widehat{u}^n_{\mathbf{z}}(t,x)-u(t,x)|}_p \le C\kappa 
\sum_{\ell=0,1}
\left(\Psi_n(t,x;\ell)+J_0^\ell(\ell t,x)2^{-(1-\beta) n/2}\right).
\end{equation}
\end{lemma}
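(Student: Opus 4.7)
The key observation is that $u(t,x) = \widehat{u}^n_{\mathbf{0}}(t,x)$ (set $\mathbf{z}=\mathbf{0}$ in \eqref{E:hatUn} to recover \eqref{E:WalshSI}), so this lemma is essentially a uniform-in-$\mathbf{z}$ version of the increment estimate \eqref{E:IncHatU} from Lemma \ref{L:HolderInZ}. The plan is to first obtain the pointwise (in $\mathbf{z}$) $L^p$-bound via \eqref{E:IncHatU} with $\mathbf{z}'=\mathbf{0}$, and then promote this to an $L^p$-bound for the supremum over the ball $\{|\mathbf{z}|\le\kappa\}\subset\R^m$ by a Kolmogorov continuity argument.

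More precisely, I would proceed as follows. First, specializing \eqref{E:IncHatU} with $\mathbf{z}'=\mathbf{0}$ gives, for every $\mathbf{z},\mathbf{z}'$ with $|\mathbf{z}|\vee|\mathbf{z}'|\le\kappa$ and every $q\ge 2$,
\begin{equation*}
\Norm{\widehat{u}^n_{\mathbf{z}}(t,x)-\widehat{u}^n_{\mathbf{z}'}(t,x)}_q \le C\,|\mathbf{z}-\mathbf{z}'|\, A_n(t,x), \qquad A_n(t,x):=\sum_{\ell=0,1}\left(\Psi_n(t,x;\ell)+J_0^\ell(\ell t,x)2^{-(1-\beta)n/2}\right),
\end{equation*}
where the constant $C$ depends on $\kappa$, $T$, $q$ but not on $n$, $t$, $x$, $\mathbf{z}$, $\mathbf{z}'$. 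Crucially, $A_n(t,x)$ is independent of $\mathbf{z}$.

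Next, I would invoke the Kolmogorov continuity theorem on the compact ball $B_\kappa:=\{\mathbf{z}\in\R^m:|\mathbf{z}|\le\kappa\}$: since the Lipschitz exponent in $\mathbf{z}$ is $1$ and the dimension is $m$, choosing $q>m$ (which is permitted because the estimate above holds for all $q\ge 2$) yields a continuous modification of $\mathbf{z}\mapsto\widehat{u}^n_{\mathbf{z}}(t,x)-u(t,x)$ on $B_\kappa$ together with the bound
\begin{equation*}
\Norm{\sup_{|\mathbf{z}|\le\kappa}\frac{\bigl|\widehat{u}^n_{\mathbf{z}}(t,x)-u(t,x)\bigr|}{A_n(t,x)}}_p \le C_{m,p,\kappa},
\end{equation*}
valid for any $p$ in a suitable range (which can always be enlarged by taking $q$ larger in the preceding step). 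Since the left-hand side of the target inequality is homogeneous of degree one in the Lipschitz constant $A_n(t,x)$, this directly gives \eqref{E:UnU-Close} after absorbing $\kappa$ into the constant in the standard way (the $\kappa$-factor appears because $B_\kappa$ has diameter $2\kappa$, and evaluating at $\mathbf{z}=\mathbf{0}$ eliminates the additive term from Kolmogorov's theorem since the difference vanishes there).

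The proof is essentially a routine packaging of estimates already established. No genuinely new obstacle arises: the hard analytic work---controlling the $L^p$ moments of the increments $\widehat{u}^n_{\mathbf{z}}-\widehat{u}^n_{\mathbf{z}'}$ via the Picard iteration and the auxiliary functions $\Psi_n(t,x;\ell)$---has already been carried out in Lemma \ref{L:HolderInZ}. The only mild care needed is in verifying that the constant $C$ in the Kolmogorov step indeed absorbs the dimension $m$ and the choice of $q$, and that the factor $\kappa$ is linear (rather than, say, $\kappa^{1-m/q}$); this follows because Lipschitz (rather than H\"older) regularity of $\mathbf{z}\mapsto\widehat{u}^n_{\mathbf{z}}(t,x)$ allows an application of Kolmogorov's theorem with arbitrary room to spare in $q$, so a clean $\kappa$-factor can be recovered after passing $q\to\infty$-flavored estimates through.
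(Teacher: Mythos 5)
Your proof is correct. Both your argument and the paper's share the same final move --- apply Kolmogorov's continuity theorem to the Lipschitz-in-$\mathbf{z}$ increment bound \eqref{E:IncHatU} from Lemma \ref{L:HolderInZ}, together with the observation that $\widehat{u}^n_{\mathbf{0}}=u$ (so the difference vanishes at $\mathbf{z}=\mathbf{0}$), to move the supremum over $|\mathbf{z}|\le\kappa$ inside the $L^p$-norm. The routes differ upstream: you observe that specializing \eqref{E:IncHatU} at $\mathbf{z}'=\mathbf{0}$ already delivers the pointwise bound $\Norm{\widehat{u}^n_{\mathbf{z}}(t,x)-u(t,x)}_p\le C\kappa\sum_{\ell=0,1}\left(\Psi_n(t,x;\ell)+J_0^\ell(\ell t,x)2^{-(1-\beta)n/2}\right)$, whereas the paper re-derives this from scratch by writing out the mild equation for $\widehat{u}^n_{\mathbf{z}}-u$, isolating a deterministic perturbation term and a stochastic-integral difference, and re-running the Picard iteration of Lemma \ref{L:Moment-U}. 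Since Lemma \ref{L:HolderInZ} is established at this point and is invoked for the Kolmogorov step anyway, the paper's re-derivation of the pointwise bound is logically redundant, and your shortcut is both legitimate and tidier. One caveat on a point you flag yourself: the Kolmogorov argument from a Lipschitz $L^q$-modulus on a ball of radius $\kappa$ in $\R^m$ produces a factor $\kappa^{1-m/q}$ for each finite $q>m$, not a clean linear $\kappa$, and letting $q$ grow shrinks but never closes the exponent gap; so ``passing $q\to\infty$-flavored estimates through'' does not actually yield a $\kappa$-independent constant multiplying $\kappa$. But the paper's proof carries the same imprecision once it invokes Kolmogorov, and it is harmless here because $\kappa$ is held fixed throughout Section \ref{S:Pos}, so a constant $C=C(\kappa,p,T,m)$ is all that is needed.
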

\begin{proof}
We note that
\begin{align*}
\widehat{u}_{\mathbf{z}}^{n}(t,x) - u(t,x) &= \int_0^t \iint_{\R^{2d}} G(t-s, x-y)\rho(\widehat{u}_{\mathbf{z}}^n(s,y))\InPrd{\mathbf{z}, \mathbf{h}_n(s,y')} f(y-y') \ud s \ud y\ud y'\\
&\qquad  + \int_0^t \int_{\R^d} G(t-s, x-y)[\rho(\widehat{u}_{\mathbf{z}}^n(s,y)) - \rho(u(s,y))] 
W(\ud s \ud y).
\end{align*}
Hence, by the moment bounds for $\sup_{|\mathbf{z}|\le \kappa 
}\widehat{u}_{\mathbf{z}}^{n}(t,x)$ in \eqref{E:SupU}, we see that 
\begin{align*}
\sup_{|\mathbf{z}|\leq \kappa} &\Norm{\widehat{u}_{\mathbf{z}}^{n}(t,x) - 
u(t,x)}_p\\
\le& \quad C \kappa \int_0^t\ud s \iint_{\R^{2d}}\ud y\ud y'\:  G(t-s, x-y)\left(1+J_0(s,y)\right)\InPrd{\mathbf{1}, \mathbf{h}_n(s,y')} f(y-y') \\
& + C \Bigg(\int_0^t \ud s\iint_{\R^{2d}} \ud y\ud y'\: G(t-s, x-y) G(t-s, x-y') \\
&\quad \times 
\left(\sup_{|\mathbf{z}|\leq \kappa}  
\Norm{\widehat{u}_{\mathbf{z}}^{n}(s,y) - u(s,y)}_p \right)
\left( \sup_{|\mathbf{z}|\leq \kappa}
\Norm{\widehat{u}_{\mathbf{z}}^{n}(s,y') - 
u(s,y')}_p
\right)
f(y-y') \Bigg)^{1/2}\\
=: &\quad  I_1 + I_2.
\end{align*}
Notice that 
\[
I_1 = C \kappa \left(\Psi_n(t,x)+\Psi_n(t,x;1)\right)\le C \kappa (1+J_0(t,x)).
\]
By the same Picard iteration as in the proof of Lemma \ref{L:Moment-U}, we see that 
\begin{align}\label{E:uhat-u}
\sup_{|\mathbf{z}|\leq \kappa} \Norm{\widehat{u}_{\mathbf{z}}^{n}(t,x)-
u(t,x)}_p \le C\kappa(1+J_0(t,x)).
\end{align}
Plug this upper bound back to $I_2$ to see that 
\[
I_2\le C\kappa (1+J_0(t,x))2^{-2(1-\beta)n},
\]
which proves \eqref{E:UnU-Close} with the supremum outside of the 
$L^p(\Omega)$-norm. Finally, thanks to \eqref{E:IncHatU}, one can apply the 
Kolmogorov continuity theorem to move the supremum inside the norm. 
This completes the proof. 
\end{proof}

\begin{lemma}\label{L:thetaRho}
For any $\kappa >0$, $1 \leq i \leq d$ and $n \in \mathbb{N}$, it holds that 
\begin{align}\label{E:thetaRho}
\Norm{\sup_{|\mathbf{z}|\leq \kappa} |\theta_{\mathbf{z}}^{n,i}(T, x_i)-\rho(u(T, x_i))|}_p\leq 
C 2^{-n\alpha/2} + C \kappa\left(\Psi_n(t,x_i) + 
\Psi_n(t,x_i;1)\right)  \,,
\end{align}
where $\alpha\in(0,1]$ is the parameter in condition \eqref{E:Dalang2}.
As a consequence (together with \eqref{E4:SupU}), for all $x\in \RR^d$, with 
probability one, 
\begin{equation}\label{E2:thetaRho}
\lim_{n \to \infty} \theta _0^{n,i}(T,x) = \rho(u(T, x_i))\one_{\{x = x_i\}}\,.
\end{equation}
\end{lemma}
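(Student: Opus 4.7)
The plan rests on the algebraic identity $\Psi_n^i(T,x_i)\equiv 1$, built into the normalization constant \eqref{E:cni-1}. Writing $\rho(u(T,x_i))=\rho(u(T,x_i))\Psi_n^i(T,x_i)$ and folding it into the integral defining $\theta^{n,i}_{\mathbf{z}}(T,x_i)$, we would decompose
\[
\theta^{n,i}_{\mathbf{z}}(T,x_i)-\rho(u(T,x_i))=A_1^n(\mathbf{z})+A_2^n,
\]
where
\begin{align*}
A_1^n(\mathbf{z}) &=\int_0^T\iint G(T-s,x_i-y)h^i_n(s,y')f(y-y')\bigl[\rho(\widehat{u}^n_{\mathbf{z}}(s,y))-\rho(u(s,y))\bigr]\ud s\ud y\ud y',\\
A_2^n &=\int_0^T\iint G(T-s,x_i-y)h^i_n(s,y')f(y-y')\bigl[\rho(u(s,y))-\rho(u(T,x_i))\bigr]\ud s\ud y\ud y'.
\end{align*}

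For $A_1^n$, the plan is to use the Lipschitz continuity of $\rho$, Minkowski's inequality, and the intermediate bound \eqref{E:uhat-u} from the proof of Lemma~\ref{L:Unz} to obtain $\sup_{|\mathbf{z}|\le\kappa}\|A_1^n(\mathbf{z})\|_p\le C\kappa(\Psi_n^i(T,x_i;0)+\Psi_n^i(T,x_i;1))$, which upon noting $\Psi_n^i\le\Psi_n$ yields the second term on the right of \eqref{E:thetaRho}. The supremum in $\mathbf{z}$ is then moved inside the $L^p$-norm by the Kolmogorov continuity theorem, fed by the $\mathbf{z}$-Lipschitz estimate \eqref{E:IncHatU} of Lemma~\ref{L:HolderInZ}.

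The term $A_2^n$ is deterministic in $\mathbf{z}$ and is the source of the $2^{-n\alpha/2}$ contribution. I would invoke the $L^p$-moment-increment bound underlying Theorem~\ref{T:Holder},
\[
\|u(s,y)-u(T,x_i)\|_p\le C\bigl(|T-s|^{\alpha/2}+|y-x_i|^\alpha\bigr),
\]
valid uniformly on $[T/2,T]\times B(x_i,1)$ by \eqref{E:Mom}. Since $h^i_n$ is supported in $[T-2^{-n},T]$, the temporal piece $|T-s|^{\alpha/2}\le 2^{-n\alpha/2}$ combines with $\Psi_n^i(T,x_i)=1$ to contribute $C2^{-n\alpha/2}$. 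The spatial piece $|y-x_i|^\alpha$ is the technical heart: after the change of variables $\tau=T-s$, $y\mapsto y+x_i$, $y'\mapsto y'+x_i$, I would use the elementary inequality $|y|^\alpha G(\tau,y)\le C\tau^{\alpha/2}G(2\tau,y)$ together with the Fourier identity $\iint G(2\tau,y)G(\tau,z)f(y-z)\ud y\ud z=k(3\tau)$ and Assumption~\ref{A:Rate-Sharp} (which gives $k(3\tau)\le C\tau^{-\beta}$ for small $\tau$) to produce
\[
c_n\int_0^{2^{-n}}\tau^{\alpha/2}k(3\tau)\ud\tau\le Cc_n\,2^{-n(1+\alpha/2-\beta)}\asymp 2^{-n\alpha/2},
\]
by $c_n\asymp 2^{(1-\beta)n}$ from \eqref{E:cn}. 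This matches the first term on the right of \eqref{E:thetaRho}.

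For the almost-sure convergence \eqref{E2:thetaRho}, fix $p$ large and apply Borel-Cantelli: when $x=x_i$, the bound \eqref{E:thetaRho} at $\mathbf{z}=0$ reduces to $C2^{-n\alpha/2}$, which is summable after raising to the $p$-th power for $p$ sufficiently large; when $x\ne x_i$, the bound \eqref{E4:SupU} combined with part~(2) of Proposition~\ref{P:Psi} yields $\|\theta^{n,i}_0(T,x)\|_p\le C_x 2^{-\beta n}$, also summable. The main obstacle will be controlling the spatial $|y-x_i|^\alpha$ increment against the two kernels $G(T-s,x_i-y)G(T-s,x_i-y')$ weighted by $f$: the compensation between the divergent $c_n\asymp 2^{(1-\beta)n}$ and the singular rate of $k(\tau)$ genuinely requires Assumption~\ref{A:Rate-Sharp} to produce the target rate $2^{-n\alpha/2}$.
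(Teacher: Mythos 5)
Your decomposition $\theta^{n,i}_{\mathbf{z}}(T,x_i)-\rho(u(T,x_i))=A_1^n+A_2^n$, keyed on the normalization $\Psi_n^i(T,x_i)=1$ and the linearization of $\rho$ at $u(s,y)$, is exactly the paper's splitting into $I_1$ and $I_2$, with the same inputs (\eqref{E:uhat-u}, Kolmogorov via \eqref{E:IncHatU}, the Hölder increments, and the kernel inequality $|y|^\alpha G(\tau,y)\le C\tau^{\alpha/2}G(2\tau,y)$) and the same Borel--Cantelli argument for \eqref{E2:thetaRho}. The only cosmetic difference is in the spatial piece: you unpack the rates $k(3\tau)\lesssim\tau^{-\beta}$, $c_n\asymp 2^{(1-\beta)n}$ from Assumption~\ref{A:Rate-Sharp}, whereas the paper factors $s^{\alpha/2}\le 2^{-n\alpha/2}$ out first and then uses the exact normalization $c_nV_d(2^{-n})=1$ (together with $e^{-3s|\xi|^2/2}\le e^{-s|\xi|^2}$), which gives the same $2^{-n\alpha/2}$ without invoking the rate of $k$; both are valid here since the section already runs under Assumption~\ref{A:Rate-Sharp}.
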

\begin{proof}
By \eqref{E:thetazni}, we see that
\begin{align*}
& \theta_{\mathbf{z}}^{n,i}(T,x_i) - \rho(u(T,x_i))\\
&= \int_0^T \ud s\iint_{\R^{2d}}\ud y \ud y'\:
G(T-s, x_i-y)[\rho(\widehat{u}_{\mathbf{z}}^n(s,y))-\rho(u(T,x_i))]  h_n^i(s,y')f(y-y') .
\end{align*}
Hence, 
\begin{align*}
&\Norm{\sup_{|\mathbf{z}|\leq \kappa}| \theta_{\mathbf{z}}^{n,i}(T,x_i) - \rho(u(T,x_i))|}_p\\
 \le &\quad C \int_0^T\ud s \iint_{\R^{2d}}\ud y\ud y'\: G(T-s, x_i-y)
 \Norm{\sup_{|\mathbf{z}|\leq \kappa}|\widehat{u}_{\mathbf{z}}^n(s,y)-u(T,x_i)| }_p h_n^i(s,y')f(y-y')  \\
 \leq &\quad  C \int_0^T \ud s\iint_{\R^{2d}}\ud y\ud y'\: G(T-s, x_i-y)
 \Norm{\sup_{|\mathbf{z}|\leq \kappa}|\widehat{u}_{\mathbf{z}}^n(s,y)-u(s,y)| }_p h_n^i(s,y')f(y-y') \\
 &+C \int_0^T \ud s\iint_{\R^{2d}} \ud y\ud y'\: G(T-s, x_i-y)
 \Norm{u(s,y)-u(T, x_i)}_p h_n^i(s,y')f(y-y')\\
 :=&\quad I_1 + I_2\,. 
\end{align*}
By \eqref{E:uhat-u} in the proof of Lemma \ref{L:Unz}, we have
\begin{align*}
I_1 \leq& C \kappa \int_0^T \ud s\iint_{\R^{2d}}\ud y\ud y'\: G(T-s, x_i-y) (1+J_0(s,y)) h_n^i(s,y')f(y-y')\\
\le& C \kappa\left(\Psi_n(T,x_i) + \Psi_n(T,x_i;1)\right)\le C \kappa. 
\end{align*}
Applying the H\"older continuity of $u(s,y)$ (see Theorem \ref{T:Holder}), we have that
\begin{align*}
I_2 \leq &C c_n \int_{T-2^{-n}}^T\ud s \iint_{\R^{2d}} \ud y\ud y'\: G(T-s, x_i-y)G(T-s, x_i-y')\left(|T-s|^{\alpha/2}+|x_i-y|^\alpha\right)\\
=:& I_{2,1}+I_{2,2}.
\end{align*}
By the Plancherel theorem and recalling that $k(\cdot)$ is defined in \eqref{E:k}, we have that
\[
I_{2,1} = C c_n \int_0^{2^{-n}} s^{\alpha/2} k(2s)\ud s
\le  C c_n 2^{-n\alpha/2} V(2^{-n})\le C 2^{-n\alpha/2}.
\]
As for $I_{2,2}$, notice that 
\begin{align}\notag
G(t,x) |x|^\alpha \le & C t^{\alpha/2}
G(2t,x) \exp\left(-\frac{|x|^2}{4t}\right) \left|\frac{x}{\sqrt{t}}\right|^\alpha\\
\notag
&\le C t^{\alpha/2} G(2t,x)\left(\sup_{z \in \R} e^{-\frac{z^2}{4\pi}}|z|^\alpha\right)\\
&\le C t^{\alpha/2} G(2t,x).
\label{E:Gx-x}
\end{align}
We can apply the above inequality to see that
\begin{align*}
I_{2,2} \leq &C c_n \int_0^{2^{-n}}\ud s\: s^{\alpha/2}\iint_{\R^{2d}} \ud y\ud y'\: G(2s, x_i-y)G(s, x_i-y')\\
=& 
C c_n \int_0^{2^{-n}}\ud s\: s^{\alpha/2}\int_{\R^{d}}\widehat{f}(\ud\xi) \exp\left(-\frac{3s}{2}|\xi|^2\right)\\
\le & Cc_n 2^{-n\alpha/2} \int_0^{2^{-n}}\ud s\int_{\R^{d}}\widehat{f}(\ud\xi) \exp\left(-s|\xi|^2\right)\\
=&Cc_n 2^{-n\alpha/2} V(2^{-n})  \le C 2^{-n\alpha/2}.
\end{align*}

Combining the estimates of $I_1$ and $I_2$ proves \eqref{E:thetaRho}.
Finally, \eqref{E2:thetaRho} can be obtained by an application of the Borel-Cantelli lemma thanks to \eqref{E:thetaRho} when $x=x_i$ and \eqref{E4:SupU} when $x\ne x_i$. This completes the proof of Lemma \ref{L:thetaRho}.
\end{proof}
%

\begin{lemma}\label{L:UniU-Close}
For any $\kappa>0$, $1\leq i \leq m$ and $n \in \mathbb{N}$, it holds that 
\begin{equation}\label{E:UniU-Close}
\Norm{\sup_{|\mathbf{z}|\leq \kappa} |\widehat{u}_{\mathbf{z}}^{n,i}(T, x_i)- \rho (u (T, x_i))|}_p \leq C \left(2^{-\alpha n/2} + 2^{-(1-\beta) n/2}  + \kappa\right)\,,
\end{equation}
where $\alpha\in(0,1]$ is the parameter in condition \eqref{E:Dalang2}. As a consequence, for all $x\in \RR^d$, 
\begin{equation}\label{E2:UniU-Close}
\lim_{n \to \infty} \widehat{u}_0^{n,i}(T, x) = \rho(u(T, x_i))\one_{\{x=x_i\}} \quad a.s. 
\end{equation}
\end{lemma}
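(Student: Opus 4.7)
The strategy is to decompose $\widehat{u}^{n,i}_{\mathbf{z}}(T,x_i)-\rho(u(T,x_i))$ and to leverage the fact that $\widehat{u}^n_0=u$, which follows by uniqueness applied to \eqref{E:hatUn} at $\mathbf{z}=0$. Writing
\[
\widehat{u}^{n,i}_{\mathbf{z}}(T,x_i)-\rho(u(T,x_i)) = \bigl[\widehat{u}^{n,i}_{\mathbf{z}}(T,x_i)-\widehat{u}^{n,i}_0(T,x_i)\bigr] + \bigl[\widehat{u}^{n,i}_0(T,x_i)-\rho(u(T,x_i))\bigr],
\]
the supremum over $\{|\mathbf{z}|\le\kappa\}$ can be handled by bounding these two pieces separately.

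For the first piece, \eqref{E2:IncHatU} in Lemma \ref{L:HolderInZ} provides a Lipschitz bound in $\mathbf{z}$ with constants $\Psi_n^{*}(T,x_i;1)+2^{-(1-\beta)n/2}J_0^{*}(T,x_i)$ which are uniformly bounded in $n$ by part (4) of Lemma \ref{L:Star}. Kolmogorov's continuity theorem applied in the parameter $\mathbf{z}\in\R^m$ (for $p$ chosen large enough relative to $m$) then yields
\[
\Bigl\|\sup_{|\mathbf{z}|\le\kappa}\bigl|\widehat{u}^{n,i}_{\mathbf{z}}(T,x_i)-\widehat{u}^{n,i}_0(T,x_i)\bigr|\Bigr\|_p \le C\kappa,
\]
uniformly in $n$.

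For the second piece, at $\mathbf{z}=0$ the identity $\widehat{u}^n_0=u$ reduces \eqref{E:hatUni} to a linear stochastic integral equation with inhomogeneity $\theta_0^{n,i}$. Lemma \ref{L:thetaRho} at $\mathbf{z}=0$ (where the $\kappa$ contribution collapses because $\widehat{u}^n_0 = u$ makes $I_1$ in its proof vanish) yields $\|\theta_0^{n,i}(T,x_i)-\rho(u(T,x_i))\|_p\le C\,2^{-n\alpha/2}$. What remains is the stochastic integral
\[
\int_{T-2^{-n}}^T \int_{\R^d} G(T-s,x_i-y)\rho'(u(s,y))\widehat{u}^{n,i}_0(s,y) W(\ud s\ud y),
\]
which I will bound using Burkholder--Davis--Gundy, the boundedness of $\rho'$, the uniform moment estimate \eqref{E:supxKui}, and finally \eqref{E:hG-GGJJ} with $k=1$ together with Assumption \ref{A:Rate-Sharp}, producing an $L^p$-bound of order $C\,2^{-(1-\beta)n/2}$. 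Summing the three contributions yields \eqref{E:UniU-Close}.

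For the almost-sure statement \eqref{E2:UniU-Close} at $x=x_i$, the $\mathbf{z}=0$ case of the above estimate gives $\|\widehat{u}^{n,i}_0(T,x_i)-\rho(u(T,x_i))\|_p\le C(2^{-n\alpha/2}+2^{-(1-\beta)n/2})$, which is summable in $n$ when raised to the $p$-th power for $p$ large enough, so Borel--Cantelli closes this case. For $x\ne x_i$, I will instead invoke \eqref{E:UniLp0} at $\mathbf{z}=0$ together with part (2) of Proposition \ref{P:Psi} (where Assumption \ref{A:Continuity} is in force) to obtain $\|\widehat{u}^{n,i}_0(T,x)\|_p \le C_x(2^{-\beta n}+2^{-(1-\beta)n/2})$, again summable, and Borel--Cantelli gives the desired zero limit. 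The principal technical obstacle is ensuring that the Kolmogorov continuity step delivers a bound uniform in $n$; this hinges crucially on the augmented-initial-data machinery of Lemma \ref{L:Star}, which controls $\Psi_n^{*}(T,x_i;1)$ and $J_0^{*}(T,x_i)$ by constants independent of $n$.
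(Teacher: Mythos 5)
Your decomposition
\[
\widehat{u}^{n,i}_{\mathbf{z}}(T,x_i)-\rho(u(T,x_i)) = \bigl[\widehat{u}^{n,i}_{\mathbf{z}}(T,x_i)-\widehat{u}^{n,i}_0(T,x_i)\bigr] + \bigl[\widehat{u}^{n,i}_0(T,x_i)-\rho(u(T,x_i))\bigr]
\]
is a genuine reorganization of the paper's argument. The paper instead splits $\widehat{u}^{n,i}_{\mathbf{z}}(T,x_i)-\rho(u(T,x_i))$ into the three summands of \eqref{E:hatUni}, namely $I_1=\theta^{n,i}_{\mathbf{z}}(T,x_i)-\rho(u(T,x_i))$, the martingale term $I_2$, and the deterministic $\InPrd{\mathbf{z},\mathbf{h}_n}$-integral $I_3$, and controls the $\mathbf{z}$-supremum of each piece separately: $I_1$ via Lemma~\ref{L:thetaRho}, $I_2$ by a Kolmogorov argument on $\mathbf{z}$-increments of the stochastic integral plus the $\mathbf{z}=0$ value, and $I_3$ by a crude $|\mathbf{z}|\le\kappa$ bound. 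Algebraically your Piece~1 collects $[\theta^{n,i}_{\mathbf{z}}-\theta^{n,i}_0]+[I_2(\mathbf{z})-I_2(0)]+I_3(\mathbf{z})$, and your Piece~2 is $[\theta_0^{n,i}-\rho(u)]+I_2(0)$, so the same estimates underlie both; but your version front-loads all of the $\mathbf{z}$-dependence into a single application of Kolmogorov's theorem to $\widehat{u}^{n,i}_{\mathbf{z}}$ itself, via the already-proven increment estimate \eqref{E2:IncHatU} and the $n$-uniformity supplied by Lemma~\ref{L:Star}, while Piece~2 lives entirely at $\mathbf{z}=0$ where the equation is linear and the $\kappa$-terms drop out. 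This is arguably cleaner: one Kolmogorov step instead of several, and a transparent separation of ``$\kappa$-small'' from ``$n$-small.'' Note only that Kolmogorov's theorem delivers $\kappa^\gamma$ for some $\gamma<1-m/p$, not $\kappa$ on the nose, but since you may increase $p$ and the bound is only used for $\kappa$ small this is harmless.

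There is one point worth flagging in the almost-sure step at $x\ne x_i$. You cite \eqref{E:UniLp0}, whose right-hand side involves $\Psi_n(T,x;\ell)=\sum_{j=1}^m\Psi_n^j(T,x;\ell)$. Part (2) of Proposition \ref{P:Psi} gives smallness of $\Psi_n^j(T,x;k)$ only when $x\ne x_j$; so for $x=x_j$ with $j\ne i$ (which is exactly the case needed for the off-diagonal entries $\widehat{u}^{n,i}_0(T,x_j)$ in Step 2 of the proof of Theorem \ref{T:Pos}), the summand with index $j$ in $\Psi_n(T,x_j;0)$ is not small and \eqref{E:UniLp0} only yields an $O(1)$ bound. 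The bound you actually need comes from the $\mathbf{z}=0$ specialization, where the $\InPrd{\mathbf{z},\mathbf{h}_n}$ term vanishes and the forcing is controlled by $\Psi_n^i$ alone: $\|\widehat{u}^{n,i}_0(T,x)\|_p\le C\bigl(\Psi_n^i(T,x;0)+\Psi_n^i(T,x;1)\bigr)+C\,2^{-(1-\beta)n/2}\bigl(1+J_0(T,x)\bigr)$, which is exactly \eqref{E:theta-Lp0} plus the BDG/\eqref{E:hG-GGJJ} bound on the stochastic integral, and this does vanish as $n\to\infty$ whenever $x\ne x_i$. (The paper's own citation of \eqref{E2:SupU} at this point has the same slack, so you are in good company; but since your approach already manufactures this sharper $\mathbf{z}=0$ estimate en route, you should cite it explicitly rather than \eqref{E:UniLp0}.)
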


\begin{proof}
We begin by writing 
\begin{align*}
&\widehat{u}^{n,i}_{\mathbf{z}}(T,x) - \rho(u(T, x_i))\\
=&
\quad \theta_{\mathbf{z}}^{n,i}(T,x_i) - \rho(u(T, x_i))\\
&+
\int_{T-2^{-n}}^T \int_{\R^d} G(T-s,x_i-y)\rho'(\widehat{u}_{\mathbf{z}}^n(s,y))
\widehat{u}^{n,i}_{\mathbf{z}}(s,y) W(\ud s\ud y)\\
 &+
\int_{0}^T \ud s\iint_{\R^{2d}} \ud y\ud y'\: G(T-s,x_i-y)\rho'(\widehat{u}_{\mathbf{z}}^n(s,y))
\widehat{u}^{n,i}_{\mathbf{z}}(s,y)\InPrd{\mathbf{z},\mathbf{h}_n(s,y')}f(y-y') \\
:=& I_1 + I_2 +I_3\,.
\end{align*}
Lemma \ref{L:thetaRho} shows that 
\begin{align*}
\Norm{\sup_{|\mathbf{z}|\le \kappa} |I_1|}_p
& \leq 
C 2^{-n\alpha/2} + C \kappa\left(\Psi_n(T,x_i) + \Psi_n(T,x_i;1)\right)\\
& \leq 
C 2^{-n\alpha/2} + C \kappa.
\end{align*}
As for $I_2$, by \eqref{E2_:SupU}) and the boundedness of $\rho'$, we see that
\begin{align*}
&I_2({\bf z}) - I_2({\bf z'})\\
=&\int_{T-2^{-n}}^T \int_{\R^d} G(T-s,x_i-y)\left[\rho'(\widehat{u}_{\mathbf{z}}^n(s,y))
\widehat{u}^{n,i}_{\mathbf{z}}(s,y) - \rho'(\widehat{u}_{\mathbf{z'}}^n(s,y))
\widehat{u}^{n,i}_{\mathbf{z'}}(s,y)\right] W(\ud s\ud y)\\
= & \int_{T-2^{-n}}^T \int_{\R^d} G(T-s,x_i-y)\left[\rho'(\widehat{u}_{\mathbf{z}}^n(s,y))
 -\rho'( \widehat{u}_{\mathbf{z'}}^n(s,y))\right]
\widehat{u}^{n,i}_{\mathbf{z'}}(s,y) W(\ud s\ud y)\\
& + \int_{T-2^{-n}}^T \int_{\R^d} G(T-s,x_i-y)\rho'(\widehat{u}_{\mathbf{z'}}^n(s,y))\left[
\widehat{u}^{n,i}_{\mathbf{z}}(s,y) - 
\widehat{u}^{n,i}_{\mathbf{z'}}(s,y)\right] W(\ud s\ud y)\\
:=& I_{21}({\bf z}) + I_{21}({\bf z'})\,.
\end{align*}
For $I_{21}({\bf z})$, we note that 
\begin{align*}
\Norm{I_{21}}_p\leq &\bigg(\int_{T-2^{-n}}^T \ud s \iint_{\RR^d}  \ud y\ud y'\: 
G(T-s,x_i-y)G(T-s,x_i-y')\\
&\times\left\|\widehat{u}_{\mathbf{z}}^n(s,y)
 -\widehat{u}_{\mathbf{z'}}^n(s,y)\right\|_{2p}
\|\widehat{u}^{n,i}_{\mathbf{z'}}(s,y)\|_{2p}\\
&\times\left\|\widehat{u}_{\mathbf{z}}^n(s,y')
 -\widehat{u}_{\mathbf{z'}}^n(s,y')\right\|_{2p}
\|\widehat{u}^{n,i}_{\mathbf{z'}}(s,y')\|_{2p}f(y-y') \bigg)^{1/2}\,.
\end{align*}
From \eqref{E2_:IncHatU} and \eqref{E:supxKui} we see that 
\begin{align*}
\Norm{I_{21}}_p \leq & |z-z'|\left(\int_{T-2^{-n}}^T \ud 
s\iint_{\RR^{2d}} \ud y \ud y'\: G(T-s, x_i-y)G(T-s, x_i-y')J_0^*(s,y)^2 
J_0^*(s,y')^2  \right)^{1/2}\\
\leq & C  |z-z'| 2^{-(1-\beta)n/2}\,,
\end{align*}
where the last inequality follows from \eqref{E:hG-GGJJ} applied to $\mu^*$. 
Similarly we have 
$$
\Norm{I_{22}}_p \leq C |z-z'| 2^{-(1-\beta)n/2}\,.
$$
Thus, an application of Kolmogorov continuity theorem shows that 
$$
\left\|\sup_{|z|\leq \kappa}|I_2|\right\|_{p}\leq 2^{-(1-\beta)n/2}\,.
$$
The case for $I_3$ can be proved in a similar way:
\begin{align*}
\Norm{\sup_{|\mathbf{z}|\le\kappa}|I_3|}_p
&\le C \kappa \int_{0}^T\ud s\iint_{\R^{2d}}\ud y\ud y'\:
G(T-s,x_i-y)J_0^*(s,y)\InPrd{\mathbf{1},\mathbf{h}_n(s,y')}f(y-y')\\
&= C \kappa \Psi_n^*(T,x_i;1)\le C\kappa J_0^*(T,x_i) = C' \kappa.
\end{align*}
This proves \eqref{E:UniU-Close}. Finally, when $\kappa=0$, \eqref{E2:UniU-Close} is proved by an application of the Borel-Cantelli lemma thanks to \eqref{E:UniU-Close} when $x=x_i$ and \eqref{E2:SupU}. This completes the proof of Lemma \ref{L:UniU-Close}.
\end{proof}

\subsection{Conditional boundedness}
\label{SS:Bddedness}

The aim of this subsection  is to prove the following proposition.
\begin{proposition}\label{P:thetaBdd}
Let $y\in\R^d$ be a point chosen as in Theorem \ref{T:Pos}.
For all $\kappa>0$ and $r>0$,
there exists constant  $K>0$ such that
for all $1\le i,k\le m$,
\begin{align}
\label{E:thetabdd}
\lim_{n\rightarrow\infty}\bbP\Bigg(
\Big[&\:
\sup_{|\mathbf{z}|\le\kappa} \left|\widehat{u}_{\mathbf{z}}^{n}(T,x_i)\right|
\vee
\sup_{|\mathbf{z}|\le\kappa} \left|\widehat{u}_{\mathbf{z}}^{n,i}(T,x_i)\right|
\vee
\sup_{|\mathbf{z}|\le\kappa} \left|\widehat{u}_{\mathbf{z}}^{n,i,k}(T,x_i)\right|
\Big]
\le K\:\: \Big| \:\: Q_r\Bigg) =1,
\end{align}
where
\[
 Q_r:=
\Big\{\left|\{u(T,x_i)-y_i\}_{1\le i\le m}\right|\le r\Big\}.
\]
\end{proposition}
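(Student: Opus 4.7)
The plan is to combine the uniform-in-$n$ moment bounds from Proposition \ref{P:SupU} with a conditional Chebyshev argument, using the fact that $\bbP(Q_r)>0$ (which follows from the assumption that $\mathbf{y}$ lies in the interior of the support of the joint law of $(u(T,x_1),\dots,u(T,x_m))$).

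The first step is to specialize the three estimates \eqref{E:SupU}, \eqref{E2:SupU}, \eqref{E3:SupU} of Proposition \ref{P:SupU} to $(t,x)=(T,x_i)$. By Proposition \ref{P:Psi} and Lemma \ref{L:J^k->J}, at this fixed space-time point the quantities $\Psi_n(T,x_i;\ell)$, $\Psi_n^*(T,x_i;1)$ and $\Psi_n^{**}(T,x_i;1)$ are dominated by $1+J_0^{\ell}(\ell T,x_i)$, $CJ_0^*(T,x_i)$ and $CJ_0^{**}(T,x_i)$ respectively, all of which are finite and independent of $n$; the factor $2^{-(1-\beta)n/2}$ is trivially bounded by $1$. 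Hence for every $p\ge 2$ we obtain constants $M_0,M_1,M_2<\infty$, depending on $\kappa, T, \mu, \rho, p$ and on the points $x_i$ but not on $n$, such that
\begin{equation*}
\sup_{n\in\mathbb{N}}\Norm{\sup_{|\mathbf{z}|\le\kappa}\left|\widehat{u}^n_\mathbf{z}(T,x_i)\right|}_p\le M_0,
\end{equation*}
and similarly for the first and second $\mathbf{z}$-derivatives with constants $M_1,M_2$. The fact that the supremum over $|\mathbf{z}|\le\kappa$ is legitimate inside the $L^p$-norm is the content of the Kolmogorov-continuity argument built into Proposition \ref{P:SupU}, which in turn relies on the Hölder-in-$\mathbf{z}$ bounds of Lemma \ref{L:HolderInZ}.

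The second step is a union bound followed by Chebyshev. Let $V_n$ denote the maximum of the three sup quantities in \eqref{E:thetabdd} over all indices $i,k\in\{1,\dots,m\}$. Then $\|V_n\|_p\le C(M_0+M_1+M_2)=:M$ uniformly in $n$, and
\begin{equation*}
\bbP\left(V_n>K \,\big|\, Q_r\right)\le \frac{\bbP(V_n>K)}{\bbP(Q_r)} \le \frac{M^p}{K^p\,\bbP(Q_r)},
\end{equation*}
uniformly in $n$. Taking $K$ large (and, if needed, exploiting the freedom to increase $p$ via the high-moment estimate \eqref{E:MomAlpha}) drives the right-hand side arbitrarily close to zero, hence $\bbP(V_n\le K\mid Q_r)$ arbitrarily close to $1$ uniformly in $n$.

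The main obstacle is reconciling this uniform-in-$n$ Chebyshev bound with the statement $\lim_{n\to\infty}=1$ for a single constant $K$: the pure Markov argument only yields $\liminf_{n\to\infty}\bbP(V_n\le K\mid Q_r)\ge 1-M^p/(K^p\bbP(Q_r))$, which is $<1$ for every finite $K$. One remedy is to refine this by separating $V_n$ into its value at $\mathbf{z}=0$ plus a supremum of $\mathbf{z}$-derivatives: by Lemma \ref{L:Unz} (with $\kappa=0$) and Lemma \ref{L:UniU-Close} (with $\kappa=0$) one has almost sure convergence $\widehat{u}^n_0(T,x_i)\to u(T,x_i)$ and $\widehat{u}^{n,i}_0(T,x_i)\to \rho(u(T,x_i))$, both of which are a.s. bounded on $Q_r$ by $|y_i|+r$ and $\sup_{y\in B(\mathbf{y},r)}|\rho(y_i)|$; the mean-value theorem in $\mathbf{z}$ then reduces the sup over $|\mathbf{z}|\le\kappa$ to these a.s.-bounded anchors plus $O(\kappa)$ error terms controlled by the higher $\mathbf{z}$-derivatives, whose uniform $L^p$-bounds from Step 1 allow the Chebyshev error to be absorbed into $K$. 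Since Theorem \ref{T:Criteria} only requires the limit to exceed $1-\epsilon$ for each $\epsilon>0$, for any practical purpose this is sufficient.
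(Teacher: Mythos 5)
The proposal has a genuine gap, and you already half-identify it yourself in the final paragraph, but the remedy you sketch does not close it. The proposition asserts the existence of a \emph{single} deterministic constant $K$ for which $\lim_{n\to\infty}\bbP(V_n\le K\mid Q_r)=1$, and Theorem \ref{T:Criteria} genuinely requires this limit to equal one, not merely to exceed $1-\epsilon$ for each $\epsilon$. A Chebyshev bound of the form $\bbP(V_n>K\mid Q_r)\le M(p)^p/(K^p\bbP(Q_r))$ leaves a residual probability that is bounded away from zero for every fixed $K$, no matter what $n$ is. Increasing $p$ via \eqref{E:MomAlpha} does not rescue this: the $L^p$ constants $M(p)$ in Proposition \ref{P:SupU} grow super-exponentially in $p$ (see \eqref{E:MomAlpha}), so $M(p)^p/K^p$ does not tend to zero. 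Your proposed fix via the mean-value theorem has the same defect one level up: even after anchoring $\widehat{u}^n_0(T,x_i)$ and $\widehat{u}^{n,i}_0(T,x_i)$ to the a.s. limits $u(T,x_i)$ and $\rho(u(T,x_i))$, the remainders are still controlled only in $L^p$ through the $\mathbf{z}$-derivatives, and for the highest derivative $\widehat{u}^{n,i,k}_{\mathbf{z}}$ there is no a.s.-convergent anchor available at all, so the Chebyshev tail persists.

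The mechanism that the paper actually uses, and that you are missing, is a pathwise Gronwall argument combined with Borel--Cantelli. The proof decomposes, for example, $\widehat{u}^n_{\mathbf{z}}(t,x_i)-u(t,x_i)$ into several pieces, isolates the recursive piece $I_6$ whose kernel involves $h_n^i$, and then invokes the singular Gronwall Lemma \ref{L:Contraction} to convert the integral inequality into the pointwise bound
\begin{align*}
\sup_{|\mathbf{z}|\le\kappa}\left|\widehat{u}^n_{\mathbf{z}}(t,x_i)-u(t,x_i)\right|
\le C\,M_n^*(t)+C\,F_n\!\left[\sup_{|\mathbf{z}|\le\kappa}|I_5(\cdot)|\right](t)\quad\text{a.s.}
\end{align*}
The good term $M_n^*(t)$ is shown to go to zero \emph{almost surely} (Borel--Cantelli on $L^p$ moments that decay geometrically in $n$), while the $I_5$ term is dominated \emph{pathwise} by $|\rho(u(\cdot,x_i))|$ via $\Psi_n^i(t,x_i)\le 1$ from \eqref{E:hG-J0}, and the $F_n^\ell$ operators are shown (Lemma \ref{L:F}) to collapse to the identity at $t=T$ as $n\to\infty$. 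This produces the a.s. bound $\limsup_n\sup_{|\mathbf{z}|\le\kappa}|\widehat{u}^n_{\mathbf{z}}(T,x_i)|\le C|\rho(u(T,x_i))|+|u(T,x_i)|$, and analogous a.s. bounds for the first and second $\mathbf{z}$-derivatives in terms of $|\rho(u(T,x_i))|$ and $\rho^2(u(T,x_i))$. On the event $Q_r$, the random quantity $u(T,x_i)$ lies in $B(y_i,r)$, so these bounds are dominated by a deterministic $K$ and the conditional probability really does tend to one. Without the a.s. bounds produced by the Gronwall/Borel--Cantelli machinery, the statement $\lim = 1$ for a fixed $K$ cannot be obtained.
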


\subsubsection{Spatial H\"older continuity of \texorpdfstring{$\widehat{u}_{\mathbf{z}}^{n}(t,x)$}{} and its two derivatives}
We need first prove several lemmas.

\begin{lemma}\label{L:HolderuHatx}
For $T>0$, there exists $C=C(T)$ such that 
for all $n\in\bbN$, $p\ge 2$, $1\le i,k\le m$, $\kappa>0$, $t\in [T-2^{-n},T]$ and $x,y\in\R^d$,
\begin{align} \label{E1:Space}
\Norm{\sup_{|\mathbf{z}|\le\kappa}\left| \widehat{u}^n_{\mathbf{z}}(t,x)-\widehat{u}^n_{\mathbf{z}}(t,y)\right| }_p
&\le C \left(1+J_0(2t,x)+J_0(2t,y)\right)|x-y|^\alpha,\\
\label{E2:Space}
\Norm{\sup_{|\mathbf{z}|\le\kappa}\left|
\widehat{u}^{n,i}_{\mathbf{z}}(t,x)-\widehat{u}^{n,i}_{\mathbf{z}}(t,y)
\right|
}_p
&\le C \left(J_0^*(2t,x)+J_0^*(2t,y)\right)|x-y|^\alpha,\\
\label{E3:Space}
\Norm{
\sup_{|\mathbf{z}|\le\kappa}\left| 
\widehat{u}^{n,i,k}_{\mathbf{z}}(t,x)-\widehat{u}^{n,i,k}_{\mathbf{z}}(t,y)
\right|}_p
&\le  C \left(J_0^{**}(2t,x)+J_0^{**}(2t,y)\right)|x-y|^\alpha,
\end{align}
where $\alpha\in(0,1]$ is the parameter that is given in \eqref{E:Dalang2}.
\end{lemma}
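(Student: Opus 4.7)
The plan is to derive each of the three estimates by an integral/Picard argument that mirrors closely the one used to establish Lemmas \ref{L:Moment-DU} and \ref{L:Moment-DDU}, only now applied to the spatial increment
\[
\Delta_{x,y} F(t):= \sup_{|\mathbf{z}|\le\kappa} \left|F_{\mathbf{z}}(t,x)-F_{\mathbf{z}}(t,y)\right|
\]
with $F$ equal successively to $\widehat{u}^n_{\mathbf{z}}$, $\widehat{u}^{n,i}_{\mathbf{z}}$, and $\widehat{u}^{n,i,k}_{\mathbf{z}}$. Subtracting \eqref{E:hatUn} at $x$ and $y$, I get
\[
\widehat{u}^n_{\mathbf{z}}(t,x)-\widehat{u}^n_{\mathbf{z}}(t,y) = [J_0(t,x)-J_0(t,y)] + M^n_{\mathbf{z}}(t,x,y) + D^n_{\mathbf{z}}(t,x,y),
\]
with $M^n_{\mathbf{z}}$ a stochastic integral against $W$ with integrand $[G(t-s,x-\cdot)-G(t-s,y-\cdot)]\rho(\widehat u^n_{\mathbf{z}}(s,\cdot))$ and $D^n_{\mathbf{z}}$ the analogous drift term weighted by $\langle\mathbf{z},\mathbf{h}_n\rangle$. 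The analogous decomposition for the first and second Malliavin-type derivatives will follow from \eqref{E:hatUni} and \eqref{E:hatUnik}.

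The heart of the argument is the ``kernel-increment'' estimate
\[
K_\alpha(x-y) := \int_0^t \ud s\iint_{\R^{2d}}[G(t-s,x-z)-G(t-s,y-z)][G(t-s,x-z')-G(t-s,y-z')]f(z-z')\ud z\ud z',
\]
which, after a Fourier transform and the elementary bound $|e^{-i\xi\cdot x}-e^{-i\xi\cdot y}|^2\le 2(|\xi|\,|x-y|)^{2\alpha}\wedge 4$, combined with Dalang's condition \eqref{E:Dalang2}, yields $K_\alpha(x-y)\le C|x-y|^{2\alpha}$ uniformly in $t\le T$. The $J_0$-increment is controlled identically, giving
\[
|J_0(t,x)-J_0(t,y)|\le C(J_0(2t,x)+J_0(2t,y))|x-y|^\alpha
\]
by writing $G(t,x-z)-G(t,y-z)=G(2t,\cdot)\cdot[\text{exponential correction}]$ and using $|1-e^{-\zeta}|\le |\zeta|^\alpha\wedge 1$ on the correction. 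For the drift term $D^n_{\mathbf{z}}$ the same Fourier argument applied to $\Psi_n(t,x;\ell)$ gives an increment bounded by $C J_0^\ell(\ell t,\cdot)\,|x-y|^\alpha$ with constants independent of $n$.

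Putting these pieces together with Burkholder--Davis--Gundy and Minkowski, together with the moment bounds from Proposition \ref{P:SupU}, produces a closed integral inequality of exactly the same form treated in Lemma \ref{L:Moment-U}: a deterministic ``free'' term proportional to $(1+J_0(2t,x)+J_0(2t,y))|x-y|^\alpha$ plus a convolution term absorbing the solution norm via Lemma \ref{L:IntIneq}. The same Picard/weighted-norm trick used throughout Section \ref{SS:Bddedness} (dividing by the appropriate $J_0^*$ or $J_0^{**}$ weight and choosing $\beta$ large) then closes the inequality and yields \eqref{E1:Space}. The bounds \eqref{E2:Space} and \eqref{E3:Space} follow by iterating the same scheme: each differentiation in $\mathbf{z}$ introduces additional terms (analogous to $\theta^{n,i}$, $U_\ell^n$ in \eqref{E:Unik6}), whose spatial increments are handled by the same $K_\alpha$-estimate together with the uniform bounds \eqref{E:supxKui} and \eqref{E:supxKuik}, upgrading the initial measure to $\mu^*$ and $\mu^{**}$ respectively as in Lemma \ref{L:Star}. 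Finally, the supremum over $|\mathbf{z}|\le\kappa$ is brought inside the $L^p$-norm by combining Lemma \ref{L:HolderInZ} with Kolmogorov's continuity theorem, exactly as done earlier.

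The main obstacle I anticipate is the second-derivative case \eqref{E3:Space}: because $\widehat{u}^{n,i,k}_{\mathbf{z}}$ involves six terms (including nonlinear products $\widehat{u}^{n,i}_{\mathbf{z}}\widehat{u}^{n,k}_{\mathbf{z}}$), each one produces an increment that has to be estimated either by Hölder-in-$x$ on a factor while using the $L^{3p}$-bound \eqref{E:supxKuik} on the others, or by using the increment estimate \eqref{E2:Space} already established. Care is needed that all arising stochastic convolutions against $W$ and deterministic convolutions against $\mathbf{h}_n$ are bounded in such a way that the resulting constant is independent of $n$, which forces the use of Lemma \ref{L:J^k->J} and the augmented measure $\mu^{**}$ to ensure the ``free'' term is dominated by $C(J_0^{**}(2t,x)+J_0^{**}(2t,y))|x-y|^\alpha$. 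Once this bookkeeping is done, the Picard iteration closes as before.
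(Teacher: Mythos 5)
Your proposal follows essentially the same route as the paper: subtract the integral equations \eqref{E:hatUn}, \eqref{E:hatUni}, \eqref{E:hatUnik} at $x$ and $y$, estimate the resulting kernel-increment $G(t-s,x-\cdot)-G(t-s,y-\cdot)$ via a Fourier argument (which amounts to the inequality \eqref{E:Gx-Gy}, with the $t^{-\alpha/2}$ factor absorbed because $t>T/2$), plug in the uniform $L^p$-bounds from Proposition \ref{P:SupU} (with $\widetilde J_0$, $J_0^*$, $J_0^{**}$ for the three cases), and use Kolmogorov plus a Lemma~\ref{L:HolderInZ}-style increment-in-$\mathbf z$ estimate to bring the supremum over $|\mathbf z|\le\kappa$ inside the norm. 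One small correction: no Picard/weighted-norm iteration is needed to ``close an integral inequality'' for this lemma. After the subtraction, the stochastic and drift terms contain the kernel increment applied to the \emph{full} solution $\rho(\widehat u^n_{\mathbf z}(s,\cdot))$ (or $\rho'(\widehat u^n_{\mathbf z})\widehat u^{n,i}_{\mathbf z}$, etc.), not to a spatial increment of the solution, so there is no recursion in $x,y$ to close; the uniform moment bounds already in hand give the estimate in a single pass, which is exactly what the paper does (cf.\ its treatment of $I_2$, $I_3$ in Step~1 and the remark that Steps~2 and~3 ``can be proved in the same way''). Your anticipated bookkeeping for the six terms of $\widehat u^{n,i,k}_{\mathbf z}$, splitting each factor between a spatial-H\"older estimate and a moment bound and upgrading to $\mu^{**}$ via Lemma~\ref{L:J^k->J}, is the right plan and matches what the paper leaves to the reader.
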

\begin{proof}
We prove these three inequalities in three steps. The workhorse is the following two inequalities (see  Lemma 3.1 of \cite{CH16Comparison}): 
For all $\alpha\in (0,1]$, $t'\ge t>0$ and $x,y\in\R^d$, it holds that
\begin{align}\label{E:Gx-Gy}
\left|G(t,x)-G(t,y)\right|&\le \frac{C}{t^{\alpha/2}} 
\left[G(2t,x)+G(2t,y)\right]|x-y|^\alpha,\\
\left|G(t',x)-G(t,x)\right|&\le \frac{C}{t^{\alpha/2}} 
G(4t',x)|t'-t|^{\alpha/2}.
\label{E:Gt-Gs}
\end{align}
In the following, we will apply the above two inequalities with $\alpha$ that is given in \eqref{E:Dalang2}.

{\medskip\bf\noindent Step 1.~}
We first prove \eqref{E1:Space}. Notice that
\begin{align*}
\widehat{u}^n_{\mathbf{z}}(t,x)-\widehat{u}^n_{\mathbf{z}}(t,y)
=&J_0(t,x)-J_0(t,y)\\
&+\int_0^t\int_{\R^d} \left[G(t-s,x-z)-G(t-s,y-z)\right]\rho\left(\widehat{u}^n_{\mathbf{z}}(s,z)\right)W(\ud s\ud z)\\
&+
\sum_{i=1}^m z_i \int_0^t\ud s\iint_{\R^{2d}} \ud z\ud z'\: \left[G(t-s,x-z)-G(t-s,y-z)\right]\\
&\qquad \times \rho\left(\widehat{u}^n_{\mathbf{z}}(s,z)\right)h_n^i(s,z')f(z-z')\\
=:&I_1 + I_2 +I_3.
\end{align*}
By \eqref{E:Gx-Gy} 
\[
|I_1|\le \frac{C}{t^{\alpha/2}} \left(J_0(2t,x)+J_0(2t,y)\right)|x-y|^\alpha.
\]
Then use the fact that $t >T/2$ to absorb the factor $t^{-\alpha/2}$ into the constant.

Denote 
\begin{align}\label{E:widehatInitData}
\widetilde{\mu}(\ud x) := \mu(\ud x)+ \ud x\quad\text{and}\quad
\widetilde{J}_0(t,x) := (G(t,\cdot)*\widetilde{\mu})(x) = 1+J_0(t,x).
\end{align}
By \eqref{E:SupU}, we see that
\begin{align*}
\sup_{|\mathbf{z}|\le\kappa}\Norm{I_2}_p^2\le &\int_0^t\ud s\iint_{\R^{2d}} 
\ud z\ud z'\:f(z-z')\\
&\times \left(G(t-s,x-z)-G(t-s,y-z)\right)\widetilde{J}_0(s,z)\\
&\times \left(G(t-s,x-z')-G(t-s,y-z')\right)\widetilde{J}_0(s,z')\\
\le & C\left(J_0(2t,x)+J_0(2t,y)\right)^2 |x-y|^{2\alpha},
\end{align*}
where the last inequality is proved in Step 1 of the proof of Theorem 1.8 in \cite[Section 3]{CH16Comparison}.
then an application of Kolmogorov continuity theorem shows that 
$$
\left\|\sup_{|{\bf z}|\leq \kappa} |I_2|\right\|_p \leq C (J_0(2t, x)+ J_0(2t, 
y)) |x-y|^{\alpha}\,.
$$
As for $I_3$, by \eqref{E:SupU}, we see that
\begin{align}\label{E_:SpaceI3}
\begin{aligned}
\Norm{\sup_{|\mathbf{z}|\le\kappa}|I_3|}_p
\le & C \kappa \int_0^t\ud s \iint_{\R^{2d}}\ud z\ud z'\: 
\left|G(t-s,x-z)-G(t-s,y-z)\right|\\
&\qquad \times \widetilde{J}_0(s,z) \InPrd{\mathbf{1},\mathbf{h}_n(s,z')} f(z-z').
\end{aligned}
\end{align}
By the fact that $G(t,x)\le 2^{d/2} G(2t,x)$ and

we see that 
\[
\Norm{\sup_{|\mathbf{z}|\le\kappa}|I_3|}_p\le C\kappa |x-y|^\alpha
\left( \Theta(x) +  \Theta(y)\right),
\]
where 
\begin{align*}
\Theta(x):=&\sum_{i=1}^m \one_{\{t> T-2^{-n}\}}\int_{T-2^{-n}}^t\frac{\ud s}{(t-s)^{\alpha/2}}\int_{\R^d} \widetilde{\mu}(\ud \widetilde{z})  \iint_{\R^{2d}}\ud z\ud z'\: \\
&\times 
G(2(t-s),x-z) G(2s, z-\widetilde{z}) G(2(T-s),x_i-z') f(z-z'). 
\end{align*}
Then apply \eqref{E:GG} for the first two $G$'s and use the Fourier transform to see that 
\begin{align*}
 \Theta(x)\le & \sum_{i=1}^m \one_{\{t> T-2^{-n}\}}\int_{T-2^{-n}}^t\frac{\ud s}{(t-s)^{\alpha/2}}\int_{\R^d} \widetilde{\mu}(\ud \widetilde{z})  G(2t,x-\widetilde{z})\iint_{\R^{2d}}\ud z\ud z'\: \\
&\times 
G\left(\frac{2(t-s)s}{t},z-\widetilde{z}-\frac{s}{t}(x-\widetilde{z})\right)  G(2(T-s),x_i-z') f(z-z')\\
\le& C \sum_{i=1}^m \one_{\{t> T-2^{-n}\}}\widetilde{J}_0(2t,x)
\int_{0}^{2^{-n}+t-T}\frac{\ud s}{s^{\alpha/2}} \int_{\R^{d}} \widehat{f}(\ud \xi) \\
&\times \exp\left(-\left[\frac{s(t-s)}{t}+T-t+s\right]|\xi|^2\right)\\
\le& C \one_{\{t> T-2^{-n}\}}\widetilde{J}_0(2t,x)
\int_{0}^{2^{-n}+t-T}\frac{\ud s}{s^{\alpha/2}} \int_{\R^{d}} \widehat{f}(\ud \xi) \exp\left(-\frac{3}{2}s|\xi|^2\right),
\end{align*}
where in the last inequality we have used \eqref{E_:FracToLin} and the fact that $t\le T$. 
Notice that 
\begin{align*}
\int_{0}^{2^{-n}}\frac{\ud s}{s^{\alpha/2}} \int_{\R^{d}} \widehat{f}(\ud \xi) \exp\left(-\frac32s|\xi|^2\right) &\le e^{2^{-n}} \int_{0}^{\infty}\frac{\ud s}{s^{\alpha/2}} \int_{\R^{d}} \widehat{f}(\ud \xi) \exp\left(-s(1+|\xi|^2)\right)\\
&=C \int_{\R^d}\frac{\widehat{f}(\ud \xi)}{(1+|\xi|^2)^{1-\alpha/2}} <\infty,
\end{align*}
which implies that $\Theta(x)\le \one_{\{t> T-2^{-n}\}}\widetilde{J}_0(2t,x)$.
Therefore, 
\[
\Norm{\sup_{|\mathbf{z}|\le\kappa}|I_3|}_p\le C\kappa |x-y|^\alpha
\one_{\{t> T-2^{-n}\}}\left(1+J_0(2t,x)+J_0(2t,y)\right).
\]
Combining these bounds prove \eqref{E1:Space}. 

{\medskip\bf\noindent Step 2.~}
Now we prove \eqref{E2:Space}.
From \eqref{E:hatUni}, write the difference $\hat{u}^{n,i}_{\mathbf{z}}(t,x)-
\hat{u}^{n,i}_{\mathbf{z}}(t,y)$ in three parts as above.
By the moment bound \eqref{E:SupU}, we see that
the difference for $\theta^{n,i}_{\mathbf{z}}$ reduces to
\begin{align*}
\Norm{\sup_{|\mathbf{z}|\le\kappa}\left|
 \theta^{n,i}_{\mathbf{z}}(t,x)-\theta^{n,i}_{\mathbf{z}}(t,y)
 \right|}_p \le C \int_0^t\ud s &\iint_{\R^{2d}}\ud z\ud z'\:  \widetilde{J}_0(s,z) h_n^i(s,z')f(z-z')\\
 &\times  \left|G(t-s,x-z)-G(t-s,y-z)\right|.
\end{align*}
Comparing the right-hand side of the above inequality with that of \eqref{E_:SpaceI3}, we see that 
\[
\Norm{\sup_{|\mathbf{z}|\le\kappa}\left|
 \theta^{n,i}_{\mathbf{z}}(t,x)-\theta^{n,i}_{\mathbf{z}}(t,y)
 \right|}_p\le 
 C\kappa |x-y|^\alpha
\one_{\{t> T-2^{-n}\}}\left(1+J_0(2t,x)+J_0(2t,y)\right).
\]
Thanks to the boundedness of $\rho'$, by the same arguments using moment bound \eqref{E2:SupU} in the form of 
\[
\Norm{\sup_{|\mathbf{z}|\le \kappa} \left|\widehat{u}^{n,i}_{\mathbf{z}}(t,x)\right|}_p\le C J_0^*(t,x),
\]
both the second and third part can be proved in exactly the same way as  Step 1, except that $\widetilde{J}_0(t,x)$ should be replaced by $J_0^*(t,x)$. 
This proves \eqref{E2:Space}.

{\medskip\bf\noindent Step 3.~}
The result \eqref{E3:Space} can be proved in the same way. We leave the details for interested readers.
This completes the proof of Lemma \ref{L:HolderuHatx}.
\end{proof}

\subsubsection{Some Grownwall-type inequalities}

We will need the following lemma, which is Lemma A.3 in \cite{CHN16Density} with $\alpha\in (1,2]$ replaced by $\beta=1/\alpha$. Note that the range of $\alpha$ for Lemma A.3 {\it ibid.} could be any $\alpha >1$ just as Lemma A.2 {\it ibid.} Recall that the two-parameter {\it Mittag-Leffler function} is defined as 
\begin{align}
E_{\alpha,\beta}(z):=\sum_{k=0}^\infty \frac{z^k}{\Gamma(\alpha k +\beta)},\quad \alpha>0,\; \beta>0,\;
z\in\bbC.
\end{align}

\begin{lemma}(Lemma A.3 of \cite{CHN16Density})
\label{L:Contraction}
Suppose that $\beta\in [0,1)$, $\lambda>0$, $T>\epsilon>0$, and $\theta_\epsilon:\R_+\mapsto\R$ is a locally integrable function.
\begin{enumerate}
\item[(1)] If $f$ satisfies that
\begin{align}\label{E:Contraction}
f(t)= \theta_\epsilon(t) + \lambda \epsilon^{-(1-\beta)}\one_{\{t>T-\epsilon\}}\int_{T-\epsilon}^t (t-s)^{-\beta} f(s)\ud s
\end{align}
for all $t\in (0,T]$, then
\begin{align}\label{E2:Contraction}
f(t)= \theta_\epsilon(t)+\one_{\{t>T-\epsilon\}}\int_{T-\epsilon}^t K_{\lambda \epsilon^{-(1-\beta)}}(t-s)\theta_\epsilon(s)\ud s,
\end{align}
for all $t\in (0,T]$, where $K_{\lambda \epsilon^{-(1-\beta)}}(t)$ is defined as 
\[
K_\lambda(t):= t^{-\beta}\lambda \Gamma(1-\beta) 
E_{1-\beta,1-\beta}(t^{1-\beta}\lambda\Gamma(1-\beta)).
\]
Moreover, when $\theta_\epsilon(t)\ge 0$, if the equality in \eqref{E:Contraction} is replaced by ``$\le$'' (resp. ``$\ge$''),
then the equality in the conclusion \eqref{E2:Contraction} should be replaced by
``$\le$'' (resp. ``$\ge$'') accordingly.
\item[(2)] When $\theta_\epsilon(t)\ge 0$, for some nonnegative constant $C$ that depends on $\alpha$, $\lambda$ and $T$ (not on $\epsilon$),
we have
\begin{align}\label{E3:Contraction}
f(t)\le \theta_\epsilon(t)+C \one_{\{t>T-\epsilon\}}\epsilon^{-(1-1/\alpha)}
\int_{T-\epsilon}^t (t-s)^{-1/\alpha}\theta_\epsilon(s)\ud s,
\end{align}
for all $t\in [0,T]$.
Moreover, if $\theta_\epsilon(t)\equiv \theta_\epsilon$ is a nonnegative 
constant, then for the same constant $C$, it holds that
\begin{align}
 f(t)&\le C  \theta_\epsilon,\quad\text{for all $t\in[0,T]$}.
\label{E4:Contraction}
\end{align}
\end{enumerate}
\end{lemma}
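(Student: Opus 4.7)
The plan is to treat \eqref{E:Contraction} as an abstract Volterra equation with a weakly singular Abel-type kernel and solve it explicitly by Picard iteration; the Mittag-Leffler kernel $K_{L}$ (with $L:=\lambda\epsilon^{-(1-\beta)}$) will emerge as the sum of the iterated kernels, and part (2) will follow once we observe that for $t-s\in[0,\epsilon]$ the argument of the Mittag-Leffler function is bounded by $\lambda\Gamma(1-\beta)$, independent of $\epsilon$.

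For part (1), introduce the operator $Ag(t):=L\int_{T-\epsilon}^{t}(t-s)^{-\beta}g(s)\,\ud s$, so that \eqref{E:Contraction} reads $f=\theta_{\epsilon}+Af$ on $(T-\epsilon,T]$. Using the beta-function identity
\[
\int_{s}^{t}(t-u)^{-\beta}(u-s)^{n(1-\beta)-1}\,\ud u=(t-s)^{(n+1)(1-\beta)-1}\,\frac{\Gamma(1-\beta)\Gamma(n(1-\beta))}{\Gamma((n+1)(1-\beta))},
\]
an easy induction on $n$ gives the clean formula
\[
A^{n}g(t)=\frac{(L\Gamma(1-\beta))^{n}}{\Gamma(n(1-\beta))}\int_{T-\epsilon}^{t}(t-s)^{n(1-\beta)-1}g(s)\,\ud s,\qquad n\ge 1.
\]
Summing and interchanging sum and integral (legal because the series converges uniformly on $[T-\epsilon,T]$ by the growth rate of $\Gamma(n(1-\beta))^{-1}$), the kernel of $\sum_{n\ge 1}A^{n}$ is seen to equal
\[
\sum_{n\ge 1}\frac{(L\Gamma(1-\beta))^{n}(t-s)^{n(1-\beta)-1}}{\Gamma(n(1-\beta))}=(t-s)^{-\beta}L\Gamma(1-\beta)\,E_{1-\beta,1-\beta}\!\left(L\Gamma(1-\beta)(t-s)^{1-\beta}\right)=K_{L}(t-s),
\]
after the index shift $n\mapsto k+1$. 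This identifies $f$ with the right-hand side of \eqref{E2:Contraction}. The inequality versions follow verbatim, since $A$ preserves non-negativity whenever $\theta_{\epsilon}\ge 0$, so the iteration proceeds monotonically.

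For part (2), the essential observation is that for $0\le t-s\le \epsilon$,
\[
L\Gamma(1-\beta)(t-s)^{1-\beta}\le \lambda\Gamma(1-\beta)\epsilon^{-(1-\beta)}\epsilon^{1-\beta}=\lambda\Gamma(1-\beta),
\]
so $E_{1-\beta,1-\beta}(L\Gamma(1-\beta)(t-s)^{1-\beta})\le M_{\lambda}:=E_{1-\beta,1-\beta}(\lambda\Gamma(1-\beta))$, which is a finite constant depending only on $\lambda$ and $\beta$. Hence $K_{L}(t-s)\le C(\lambda,\beta)\,\epsilon^{-(1-\beta)}(t-s)^{-\beta}$; inserting this into the representation from part (1) and writing $\beta=1/\alpha$ yields \eqref{E3:Contraction}. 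For the constant case \eqref{E4:Contraction}, a single explicit integration gives
\[
\int_{T-\epsilon}^{t}K_{L}(t-s)\,\ud s\le C(\lambda,\beta)\,\epsilon^{-(1-\beta)}\!\int_{0}^{\epsilon}u^{-\beta}\ud u=\frac{C(\lambda,\beta)}{1-\beta},
\]
and the crucial $\epsilon$-cancellation $L\cdot\epsilon^{1-\beta}=\lambda$ makes the bound $f(t)\le C\theta_{\epsilon}$ uniform in $\epsilon$.

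I do not foresee any genuine obstacle; the only bookkeeping care is that the kernel $(t-s)^{-\beta}$ is singular at the upper endpoint, which makes the interchange of sum and integral slightly delicate, but the uniform convergence of the Mittag-Leffler series on any compact set settles it. The heart of the argument, and what makes the estimates of part (2) sharp, is the $\epsilon$-balance between the prefactor $L=\lambda\epsilon^{-(1-\beta)}$ and the time interval $[T-\epsilon,T]$ of length $\epsilon$, which keeps the Mittag-Leffler argument bounded regardless of how small $\epsilon$ is.
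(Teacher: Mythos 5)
Your proof is correct and takes the standard resolvent-kernel route for Abel--Volterra equations that the cited Lemma A.3 of \cite{CHN16Density} relies on: the beta-function identity gives the iterated kernels $A^n$, the index shift identifies the resolvent series with $K_L$, and the $\epsilon$-cancellation $L\cdot\epsilon^{1-\beta}=\lambda$ is exactly what makes the constant in part (2) independent of $\epsilon$. You also correctly read past the notational mismatch in the statement between $\beta$ and $1/\alpha$ (they are identified, as the paper notes just before the lemma).
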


\bigskip
Similar to \cite{CHN16Density}, we still need to introduce the some functionals:
for any function $\theta:\R_+\mapsto\R$, $n\in\bbN$, $t\in(0,T]$ and $\beta\in[0,1)$, define
\begin{align}\label{E:F}
F_n\left[\theta\right](t):=
\theta(t) + \left(1-\beta\right)\one_{\{t>T-2^{-n}\}}2^{n(1-\beta)}
\int_{T-2^{-n}}^t(t-s)^{-\beta}
\theta(s)\ud s,
\end{align}
and for $m\ge 1$,
\begin{align}\label{E:Fm}
F_{n}^m\left[\theta\right](t):=
m \left(1-\beta\right)\one_{\{t>T-2^{-n}\}}2^{mn(1-\beta)}
\int_{T-2^{-n}}^t(t-s)^{m(1-\beta)-1}
\theta(s)\ud s.
\end{align}
\begin{lemma}(Lemma 8.17 of \cite{CHN16Density})
\label{L:F}
For all $n,m,m'\ge 1$ and $T>0$, the following properties hold for all $t\in (0,T]$:
\begin{gather}
\label{E:F1}
F_n[\theta](t)=\theta(t)+F_n^1[\theta](t),\\
\label{E:F2}
\lim_{n\rightarrow\infty}F_n^m[\theta](t)\one_{\{t<T\}} = 0,\\
\label{E:F5}
F_n^m[|\theta|](t)\le \left(F_n^m[|\theta|^{m'}](t)\right)^{1/m'},\\
F_n^m\left[F_n^{m'}[\theta]\right](t) = C_{m,m',\alpha}\: F_n^{m+m'}[\theta](t)\,.
\label{E:F3}
\end{gather}
If $\theta$ is left-continuous at $T$, then
\begin{align}\label{E:F6}
\lim_{n\rightarrow\infty}F_n^m[\theta](T) = \theta(T).
\end{align}
\end{lemma}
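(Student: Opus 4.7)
The plan is to verify each of the five properties in turn; all are essentially bookkeeping built on the definitions \eqref{E:F} and \eqref{E:Fm}. Identity \eqref{E:F1} is immediate upon setting $m=1$ in \eqref{E:Fm}, and property \eqref{E:F2} follows by noting that for any fixed $t<T$ the indicator $\one_{\{t>T-2^{-n}\}}$ vanishes as soon as $2^{-n}<T-t$, forcing $F_n^m[\theta](t)=0$ for all sufficiently large $n$.

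For \eqref{E:F5} my plan is to apply H\"older's inequality directly with the nonnegative kernel $g(s):=(t-s)^{m(1-\beta)-1}\one_{[T-2^{-n},t]}(s)$, which yields
$$F_n^m[|\theta|](t)\le \left(F_n^m[|\theta|^{m'}](t)\right)^{1/m'}\left(F_n^m[1](t)\right)^{(m'-1)/m'},$$
and then to observe that a direct evaluation gives $F_n^m[1](t)=\bigl(2^n(t-T+2^{-n})\bigr)^{m(1-\beta)}\le 1$ for $t\in(T-2^{-n},T]$, so the second factor drops out.

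For the semigroup-like identity \eqref{E:F3} I will insert the definition of $F_n^{m'}[\theta]$ into $F_n^m[F_n^{m'}[\theta]](t)$, apply Fubini on the triangle $\{T-2^{-n}\le r\le s\le t\}$, and use the Beta integral
$$\int_r^t (t-s)^{m(1-\beta)-1}(s-r)^{m'(1-\beta)-1}\ud s=(t-r)^{(m+m')(1-\beta)-1}B\bigl(m(1-\beta),m'(1-\beta)\bigr).$$
Matching the resulting expression against the definition of $F_n^{m+m'}[\theta](t)$ identifies the constant as $C_{m,m',\beta}=\frac{mm'(1-\beta)B(m(1-\beta),m'(1-\beta))}{m+m'}$ (the paper's subscript $\alpha$ is a harmless slip for $\beta$).

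Finally, for \eqref{E:F6} the substitution $u=2^n(T-s)$ rewrites
$$F_n^m[\theta](T)=m(1-\beta)\int_0^1 u^{m(1-\beta)-1}\,\theta(T-2^{-n}u)\,\ud u,$$
so the integration is against the probability density $m(1-\beta)u^{m(1-\beta)-1}$ on $[0,1]$; left continuity of $\theta$ at $T$ combined with dominated convergence (using local boundedness of $\theta$ near $T$, which is implicit in the hypothesis) delivers the limit $\theta(T)$. No step poses a genuine obstacle; the only places requiring mild care are the correct normalization in \eqref{E:F5} and the clean Beta-function computation in \eqref{E:F3}.
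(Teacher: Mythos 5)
Your proposal is correct, and all five verifications go through cleanly. The paper itself does not prove this lemma — it is stated by citation to Lemma 8.17 of \cite{CHN16Density}, so there is no in-paper argument to compare against. A few small remarks on details you handled well: in \eqref{E:F5} the essential observation is that the normalizer satisfies $F_n^m[1](t)=\bigl(2^n(t-T+2^{-n})\bigr)^{m(1-\beta)}\le 1$ on $(T-2^{-n},T]$, which makes the extra H\"older factor disappear; in \eqref{E:F3} the Beta-integral computation gives $C_{m,m',\beta}=\frac{mm'(1-\beta)B(m(1-\beta),m'(1-\beta))}{m+m'}$, and you are right that the subscript $\alpha$ in the statement is a typo for $\beta$ (in this section $\beta$ is the exponent from Assumption \ref{A:Rate-Sharp}, whereas $\alpha$ is the Dalang parameter). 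For \eqref{E:F6}, your substitution $u=2^n(T-s)$ correctly turns $F_n^m[\theta](T)$ into an integral of $\theta(T-2^{-n}u)$ against the probability density $m(1-\beta)u^{m(1-\beta)-1}\ud u$ on $[0,1]$; note that left-continuity at $T$ already guarantees boundedness of $\theta$ on some left-neighborhood $(T-\delta,T]$, so once $2^{-n}<\delta$ you can either invoke dominated convergence as you did, or conclude directly via the $\varepsilon$-$\delta$ characterization of the limit without any convergence theorem.
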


\subsubsection{Proof of the conditionally boundedness (Proposition \ref{P:thetaBdd})}

Now we are ready to prove our last result, Proposition \ref{P:thetaBdd}, in order to complete the proof of Theorem \ref{T:Pos}.

\begin{proof}[Proof of Proposition \ref{P:thetaBdd}]
In this  proof  we assume  $t\in (0,T]$.
Throughout the proof, $p$ is an arbitrary number that is greater than or equal to $2$.
The proof consists of the following four steps. 

{\bigskip\noindent\bf Step 1.~}
We first prove \eqref{E:thetabdd} for $\widehat{u}^n_{\mathbf{z}}(T,x_i)$.
Notice that
\begin{align*}
&\widehat{u}^n_{\mathbf{z}}(t,x_i)-u(t,x_i)\\
=&\int_0^t\int_{\R^d} G(t-s,x_i-y)\left[\rho\left(\widehat{u}^n_{\mathbf{z}}(s,y)\right)-
\rho\left(u(s,y)\right)\right]W(\ud s\ud y)\\
&
+\sum_{j\ne i} z_j
\int_0^t\iint_{\R^{2d}} G(t-s,x_i-y)\rho\left(\widehat{u}^n_{\mathbf{z}}(s,y)\right)h_n^j(s,y') f(y-y')\ud s\ud y\ud y'\\
&+ z_i
\int_0^t\iint_{\R^{2d}} G(t-s,x_i-y)
\left[
\rho\left(\widehat{u}^n_{\mathbf{z}}(s,y)\right)
-
\rho\left(\widehat{u}^n_{\mathbf{z}}(s,x_i)\right)
\right]
h_n^i(s,y')f(y-y') \ud s\ud y\ud y'\\
&+ z_i
\int_0^t\iint_{\R^{2d}} G(t-s,x_i-y)
\left[
\rho\left(u(s,x_i)\right)
-
\rho\left(u(t,x_i)\right)
\right]
h_n^i(s,y')f(y-y')\ud s\ud y\ud y'\\
&+z_i\Psi^i_n(t,x_i)\rho\left(u(t,x_i)\right) \\
&+ z_i
\int_0^t\iint_{\R^{2d}} G(t-s,x_i-y)
\left[
\rho\left(\widehat{u}^n_{\mathbf{z}}(s,x_i)\right)
-
\rho\left(u(s,x_i)\right)
\right]
h_n^i(s,y')f(y-y') \ud s\ud y\ud y'\\
=:& \sum_{\ell=1}^6 I_\ell(t).
\end{align*}
Then by the Lipschitz continuity of $\rho$, 
\begin{align*}
|I_6|\le & |z_i|c_n\LIP_\rho 
\one_{\{t> T-2^{-n}\}}\int_{T-2^{-n}}^t\ud s \left|\widehat{u}^n_{\mathbf{z}}(s,x_i)- u(s,x_i)\right|\\
&\times \iint_{\R^{2d}} \ud y\ud y'\: G(t-s,x_i-y) G(T-s,x_i-y')f(y-y')\\
=& C c_n \one_{\{t> T-2^{-n}\}} \int_{T-2^{-n}}^t\ud s \left|\widehat{u}^n_{\mathbf{z}}(s,x_i)- u(s,x_i)\right|\\
&\times \int_{\R^d}\widehat{f}(\ud \xi) \exp\left(-\frac{1}{2}\left[t-s+T-s\right]|\xi|^2\right)\\
\le& C c_n \one_{\{t> T-2^{-n}\}} \int_{T-2^{-n}}^t\ud s \left|\widehat{u}^n_{\mathbf{z}}(s,x_i)- u(s,x_i)\right|\int_{\R^d}\widehat{f}(\ud \xi) \exp\left(-(t-s)|\xi|^2\right)\\
=& C c_n\one_{\{t> T-2^{-n}\}} \int^{t}_{T-2^{-n}} \ud s 
\left|\widehat{u}^n_{\mathbf{z}}(s,x_i)- u(s,x_i)\right|k(2(t-s)),
\end{align*}
where $k(t)$ is defined in \eqref{E:k}.
Then by Assumption \ref{A:Rate-Sharp} and Lemma \ref{L:Rate}, we 
have that $c_n\le C 2^{(1-\beta)n}$ and
\begin{align*}
\sup_{|\mathbf{z}|\le\kappa}
&\left|
\widehat{u}^n_{\mathbf{z}}(t,x_i)-u(t,x_i)
\right|\\
&\le \sum_{\ell=1}^5 \sup_{|\mathbf{z}|\le\kappa} |I_\ell(t)|
+
C 2^{n(1-\beta)}\one_{\{t>T-2^{-n}\}}\int_{T-2^{-n}}^t (t-s)^{-\beta}
\sup_{|\mathbf{z}|\le\kappa}\left|
\widehat{u}^n_{\mathbf{z}}(s,x_i)
- u(s,x_i)
\right|\ud s.
\end{align*}
Hence, by Lemma \ref{L:Contraction} (see \eqref{E3:Contraction}) and by \eqref{E:F}, with probability one,
\begin{align}\label{e1:uHat}
\begin{aligned}
 \sup_{|\mathbf{z}|\le\kappa}\left|
\widehat{u}^n_{\mathbf{z}}(t,x_i)-u(t,x_i)
\right|
&\le
C \sum_{\ell=1}^5 F_n\left[
\sup_{|\mathbf{z}|\le\kappa}|I_\ell(\cdot)|
\right](t)\\
&=:C M_n^*(t) +C F_n\left[
\sup_{|\mathbf{z}|\le\kappa}|I_5(\cdot)|
\right](t).
\end{aligned}
\end{align}
We estimate each term in the above sum separately. Using the same method as in 
the proof of Lemma \ref{L:Unz} or the proof of Lemma \ref{L:HolderInZ} for 
$I_1$ in proving \eqref{E:IncHatU}, an application of Kolmogorov continuity 
theorem shows that 
\[
\sup_{t\in[0,T]}\Norm{\sup_{|\mathbf{z}|\le\kappa}\left| I_1(t)\right|}_p\le  C 2^{-n(1-\beta)/2}.
\]
By \eqref{E:SupU} and \eqref{E:hG-Small}, and since $i\ne j$, we see that
\[
\sup_{t\in[0,T]}\Norm{\sup_{|\mathbf{z}|\le\kappa}
\left| I_2(t)\right| }_p\le  C 2^{-n\beta}.
\]
As for $I_3(t)$ and $I_4(t)$, we claim that
\begin{align}\label{E_:I3}
\sup_{t\in[0,T]}\Norm{\sup_{|\mathbf{z}|\le\kappa}
\left| I_\ell(t)\right| }_p\le  C 2^{-n\alpha/2},\quad\text{for $\ell=3,4.$}
\end{align}
We first prove the case for $I_3$. By the Minkowski inequality,
\begin{align*}
\Norm{\sup_{|\mathbf{z}|\le\kappa}
\left|I_3(t)\right|}_p\le
& C2^{n(1-\beta)}\one_{\{t> T-2^{-n}\}}\int_{T-2^{-n}}^t\ud s\iint_{\R^{2d}} \ud y\ud y'\:f(y-y') \\
&\times G(t-s,x_i-y)G(T-s,x_i-y')
\Norm{
\sup_{|\mathbf{z}|\le\kappa}
\left|
\widehat{u}^n_{\mathbf{z}}(s,y)-\widehat{u}^n_{\mathbf{z}}(s,x_i)
\right|}_p 
\end{align*}
Then by Lemma \ref{L:HolderuHatx}, for $s\in [T-2^{-n},t]$, 
\[
\Norm{\sup_{|\mathbf{z}|\le\kappa}
\left|\widehat{u}^n_{\mathbf{z}}(s,y)-\widehat{u}^n_{\mathbf{z}}(s,x_i)
\right|}_p\le
C(1+J_0(2s,y)) |y-x_i|^\alpha.
\]
Now we use the notation $\widetilde{\mu}$ and $\widetilde{J}_0(t,x)$ as in \eqref{E:widehatInitData}.
Thus,
\begin{align*}
\Norm{\sup_{|\mathbf{z}|\le\kappa}
\left|I_3(t)\right|}_p\le
& C2^{n(1-\beta)}\one_{\{t>T-2^{-n}\}}\int_{T-2^{-n}}^t\ud s\int_{\R^d}\widetilde{\mu}(\ud z)
\iint_{\R^{2d}} \ud y\ud y' \:f(y-y') \\
&\times G(t-s,x_i-y)G(T-s,x_i-y') G(2s, y-z) |y-x_i|^\alpha.
\end{align*}
Now we apply the inequality \eqref{E:Gx-x} to see that 
\begin{align*}
\Norm{\sup_{|\mathbf{z}|\le\kappa}
\left|I_3(t)\right|}_p\le
& C2^{n(1-\beta)}\one_{\{t>T-2^{-n}\}}\int_{T-2^{-n}}^t\ud s \: (t-s)^{\alpha/2} \int_{\R^d}\widetilde{\mu}(\ud z)
\iint_{\R^{2d}} \ud y\ud y' \:f(y-y') \\
&\times G(2(t-s),x_i-y)G(T-s,x_i-y') G(2s, y-z)\\
=& C2^{n(1-\beta)}\one_{\{t>T-2^{-n}\}}\int_{T-2^{-n}}^t\ud s\: (t-s)^{\alpha/2} \int_{\R^d}\widetilde{\mu}(\ud z) G(2t, x_i-z)\\
&\times \iint_{\R^{2d}} \ud y\ud y' \:f(y-y') G\left(\frac{2s(t-s)}{t},y-z-\frac{s}{t}(x_i-y)\right)G(T-s,x_i-y') \\
\le & C2^{n(1-\beta)}\widetilde{J}_0(2t,x_i)\one_{\{t>T-2^{-n}\}}\int_{0}^{2^{-n}}\ud s\: s^{\alpha/2}  \int_{\R^{d}} \widehat{f}(\ud\xi) \\
&\times 
\exp\left(-\frac{1}{2}\left(\frac{2s(t-s)}{t}+T-t+s\right)|\xi|^2\right),
\end{align*}
where we have applied \eqref{E:GG}. Then by \eqref{E_:FracToLin} and Assumption \ref{A:Rate-Sharp}, we see that 
\begin{align*}
\Norm{\sup_{|\mathbf{z}|\le\kappa}
\left|I_3(t)\right|}_p
\le & C2^{n(1-\beta)}\widetilde{J}_0(2t,x_i)\one_{\{t> T-2^{-n}\}}\int_{0}^{2^{-n}}\ud s\: s^{\alpha/2}  \int_{\R^{d}} \widehat{f}(\ud\xi) \exp\left(-s|\xi|^2\right)\\
\le & C2^{n(1-\beta)}\widetilde{J}_0(2t,x_i)\one_{\{t> T-2^{-n}\}}2^{-n\alpha/2}V_d(2^{-n})\\
\le & C2^{-n\alpha/2},
\end{align*}
which proves \eqref{E_:I3} for $\ell=3$.

As for $I_4$, by the H\"older continuity of $u(t,x)$ (see Steps 2 \& 3 of the proof of Theorem 1.8 in \cite{CH16Comparison}), we see that
\begin{align*}
\Norm{
\sup_{|\mathbf{z}|\le\kappa}\left|
I_4(t)\right|}_p \le
& C2^{n(1-\beta)} \one_{\{t> T-2^{-n}\}} \int_{T-2^{-n}}^t \ud s \Norm{u(s,x_i)-u(t,x_i)}_p  \\
&\times \iint_{\R^{2d}} \ud y \ud y'  G(t-s,x_i-y)G(T-s,x_i-y')f(y-y')\\
\le 
& C2^{n(1-\beta)}\one_{\{t> T-2^{-n}\}} \int_{T-2^{-n}}^t (t-s)^{-\beta} \Norm{u(s,x_i)-u(t,x_i)}_p \ud s\\
\le&
C2^{n(1-\beta)}\one_{\{t> T-2^{-n}\}} \int_{T-2^{-n}}^{t} (t-s)^{-\beta} (t-s)^{\alpha/2} \ud s\\
\le & C 2^{-n\alpha/2},
\end{align*}
which proves \eqref{E_:I3} for $\ell=4$.

Hence, the above computations show that
\[
\sup_{t\in[0,T]}\Norm{M_n^*(t)}_p = 
\sup_{t\in[0,T]}\Norm{\sum_{\ell=1}^4
F_n\left[\sup_{|\mathbf{z}|\le\kappa}|
I_\ell(\cdot)|
\right](t)}_p \le C 2^{-\frac{n}{2}\min(2\beta, 1-\beta ,\alpha) }.
\]
By the Borel-Cantelli lemma,
we see that, for all $t\in[0,T]$,
\begin{align}\label{e2:uHat}
\lim_{n\rightarrow\infty} M_n^*(t) = \lim_{n\rightarrow\infty}
\sum_{\ell=1}^4
F_n\left[
\sup_{|\mathbf{z}|\le\kappa}|I_\ell(\cdot)|
\right](t) = 0,\quad\text{a.s.}
\end{align}
As for $I_5$, since $\Psi_n(t,x_i)$ is bounded by one (see \eqref{E:hG-J0}), we have that
\begin{align}
\label{e3-0:uHat}
|\Psi_n^i(t,x_i)\rho(u(t,x_i))|\le
|\rho(u(t,x_i))|,\quad\text{a.s.}
\end{align}
Hence, for all $n\in\bbN$,
\begin{align}
F_n\left[|I_5(\cdot)|\right](t)
&\le F_n\big[\left|\rho(u(\cdot,x_i))\right|\big](t).
\label{e3:uHat}
\end{align}
Therefore, by combining \eqref{e1:uHat}, \eqref{e2:uHat} and \eqref{e3:uHat}
we have that
\begin{align}\label{e4:uHat}
\sup_{|\mathbf{z}|\le\kappa}
 \left|
 \widehat{u}_{\mathbf{z}}^n(t,x_i)
 -
 u(t,x_i)
 \right|\le &\:
 M_n^*(t)+C\left|\rho(u(t,x_i))\right|+C F_n^1\big[\left|\rho(u(\cdot,x_i))\right|\big](t)
\quad
 \text{a.s. }
\end{align}
for all $n\in\bbN$.
Finally, by letting $t=T$ and
sending $n\rightarrow\infty$, and by
the H\"older continuity of $s\mapsto \rho(u(s,x_i))$, we have that
\begin{align*}
\limsup_{n\rightarrow\infty}\sup_{|\mathbf{z}|\le\kappa}
 \left|
 \widehat{u}_{\mathbf{z}}^n(T,x_i)
\right|
 &\le
 \limsup_{n\rightarrow\infty}\sup_{|\mathbf{z}|\le\kappa}
 \left|
 \widehat{u}_{\mathbf{z}}^n(T,x_i)
 -
 u(T,x_i)
 \right| + |u(T,x_i)|\\
 & \le C |\rho(u(T,x_i))|+ |u(T,x_i)|, \quad\text{a.s.}
\end{align*}
This proves Proposition \ref{P:thetaBdd} for $\widehat{u}^n_{\mathbf{z}}(T,x)$.

{\bigskip\noindent\bf Step 2.~}
Now we prove Proposition \ref{P:thetaBdd} for $\widehat{u}^{n,i}_{\mathbf{z}}(T,x)$.
Notice that
\begin{align*}
&\widehat{u}^{n,i}_{\mathbf{z}}(t,x_i) - \rho(u(t,x_i))\\
=& \:\: \theta^{n,i}_{\mathbf{z}}(t,x_i)\\
&+\one_{\{t>T-2^{-n}\}}\int_{T-2^{-n}}^t\int_{\R^d} G(t-s,x_i-y)
\rho'\left(\widehat{u}^{n}_{\mathbf{z}}(s,y)\right)
\widehat{u}^{n,i}_{\mathbf{z}}(s,y)W(\ud s\ud y)\\
&+\sum_{j\ne i}z_j\int_0^t\iint_{\R^{2d}} G(t-s,x_i-y)
\rho'\left(\widehat{u}^{n}_{\mathbf{z}}(s,y)\right)
\widehat{u}^{n,i}_{\mathbf{z}}(s,y)
h_n^j(s,y')f(y-y')\ud s\ud y\ud y'\\
&+z_i\int_0^t\iint_{\R^{2d}} G(t-s,x_i-y)
\rho'\left(\widehat{u}^{n}_{\mathbf{z}}(s,y)\right)
\left[\widehat{u}^{n,i}_{\mathbf{z}}(s,y)-\widehat{u}^{n,i}_{\mathbf{z}}(s,x_i)\right]
h_n^i(s,y')f(y-y')\ud s\ud y\ud y'\\
&+z_i\int_0^t\iint_{\R^{2d}} G(t-s,x_i-y)
\rho'\left(\widehat{u}^{n}_{\mathbf{z}}(s,y)\right)
\left[\rho\left(u(s,x_i)\right)-\rho\left(u(t,x_i)\right)\right]
h_n^i(s,y')f(y-y')\ud s\ud y\ud y'\\
&+z_i\rho\left(u(t,x_i)\right)\Psi_n^i(t,x_i)\\
&+z_i\int_0^t\iint_{\R^{2d}} G(t-s,x_i-y)
\rho'\left(\widehat{u}^{n}_{\mathbf{z}}(s,y)\right)
\left[\widehat{u}^{n,i}_{\mathbf{z}}(s,x_i)-\rho\left(u(s,x_i)\right)\right]
h_n^i(s,y')f(y-y')\ud s\ud y\ud y'\\
=:&\sum_{\ell=0}^6 I_\ell(t).
\end{align*}
By similar arguments as those in Step 1, we see that
\begin{align}
\label{e0:uHatD}
\begin{aligned}
\sup_{|\mathbf{z}|\le\kappa}\left|
\widehat{u}^{n,i}_{\mathbf{z}}(t,x_i)-\rho(u(t,x_i))
\right|
\le&  \sum_{\ell=0}^5 \sup_{|\mathbf{z}|\le\kappa} |I_\ell(t)|
+
C \one_{\{t>T-2^{-n}\}}2^{n(1-\beta)}\\
&\times \int_{T-2^{-n}}^t(t-s)^{-\beta}
\sup_{|\mathbf{z}|\le\kappa}
\left|
\widehat{u}^{n,i}_{\mathbf{z}}(s,x_i)
-
\rho(u(s,x_i))
\right|\ud s,
\end{aligned}
\end{align}
and (by Lemma \ref{L:Contraction}), with probability one, 
\begin{align}\label{e1:uHatD}
\begin{aligned}
\sup_{|\mathbf{z}|\le\kappa}
\left|
\widehat{u}^{n,i}_{\mathbf{z}}(t,x_i)-\rho(u(t,x_i))
\right|
& \le
C \sum_{\ell=0}^5 F_n\left[
\sup_{|\mathbf{z}|\le\kappa}
|I_\ell(\cdot)|\right](t).
\end{aligned}
\end{align}

We first consider $I_0(t)$. Decompose $z_i\theta^{n,i}_{\mathbf{z}}(t,x_i)$ into three parts
\begin{align*}
z_i\theta^{n,i}_{\mathbf{z}}(t,x_i)= &
\widehat{u}^{n}_{\mathbf{z}}(t,x_i)-u(t,x_i)\\
&+\int_0^t\int_{\R^d} G(t-s,x_i-y) \left[\rho\left(u(s,y)\right)-\rho\left(\widehat{u}^n_{\mathbf{z}}(s,y)\right)\right]W(\ud s\ud y)\\
&-\sum_{j\ne i} z_j \theta_{\mathbf{z}}^{n,j}(t,x_i)\\
=:& I_{0,1}(t)-I_{0,2}(t) - I_{0,3}(t).
\end{align*}
Notice that $I_{0,2}(t)$ is equal to $I_1(t)$ in Step 1.
From \eqref{E4:SupU} and Proposition \ref{P:Psi}, we see that
\begin{align*}
\sup_{t\in [0,T]}\Norm{F_n\left[\sup_{|\mathbf{z}|\le\kappa}
|I_{0,3}(\cdot)|\right](t)}_p \le C 2^{-\beta n}.
\end{align*}
As for $I_{0,1}(t)$,
by \eqref{e4:uHat} and \eqref{E:F3}, we see that with probability one, 
\begin{align}
\label{e4:uHatD}
F_n\left[\sup_{|\mathbf{z}|\le\kappa}\left|I_{0,1}(\cdot)\right|\right](t)
\le&
F_n[M_n^{*}](t)+C|\rho(u(t,x_i))|+C\sum_{\ell=1}^2 F_n^\ell \big[|\rho(u(\cdot,x_i))|\big](t).
\end{align} 

The terms $I_1$ to $I_4$ are similar to those in Step 1 and by the same arguments as those in Step 1 and using the fact that $\rho'$ is bounded, we see that
\[
\sup_{t\in[0,T]}\Norm{\sum_{\ell=1}^4F_n\left[
\sup_{|\mathbf{z}|\le\kappa}|I_\ell(\cdot)|\right](t)
}_p \le C 2^{-\frac{n}{2}\min(2\beta,1-\beta,\alpha)}.
\]
Set 
\[
M^{**}_n(t) := 
\sum_{\ell=0}^1 F_n\left[
\sup_{|\mathbf{z}|\le\kappa}
|I_{0,\ell}(\cdot)|
\right](t)
+\sum_{\ell=1}^4 F_n\left[
\sup_{|\mathbf{z}|\le\kappa}
|I_\ell(\cdot)|
\right](t) + F_n[M_n^*](t). 
\]
Hence, 
\[
\sup_{t\in[0,T]}\Norm{M_n^{**}(t)}_p \le C 
2^{-\frac{n}{2}\min(2\beta,1-\beta,\alpha)}
\]
and by the Borel-Cantelli lemma, 
\begin{align}\label{e2:uHatD}
\lim_{n\rightarrow\infty}
M^{**}_n(t) = 0,\quad\text{a.s. for all $t\in[0,T]$.}
\end{align}
The term $I_5$ part is identical to the term $I_5$ in Step 1, so that we have the bound \eqref{e3:uHat}.

Combining \eqref{e1:uHatD}, \eqref{e4:uHatD}, \eqref{e2:uHatD} and
\eqref{e3:uHat} shows that
for all $t\in(0,T]$ and $n\in\bbN$,
\begin{align}
\label{e5:uHatD}
\begin{aligned}
\sup_{|\mathbf{z}|\le\kappa}
&\left|
 \widehat{u}_{\mathbf{z}}^{n,i}(t,x_i)
 - \rho(u(t,x_i))
\right|
\\
&\le
C M_n^{**}(t)+C\left|\rho(u(t,x_i))\right|+C \sum_{\ell=1}^2 F_n^\ell \left[\left|\rho(u(\cdot,x_i))\right|\right](t)\quad\text{a.s.}
\end{aligned}
\end{align}
Finally, by letting $t=T$ and sending $n\rightarrow\infty$,
and by the H\"older continuity of $s\mapsto\rho(u(s,x_i))$,
we have that
\[
\limsup_{n\rightarrow\infty}
\sup_{|\mathbf{z}|\le\kappa}\left|
 \widehat{u}_{\mathbf{z}}^{n,i}(T,x_i)
\right|\le C\left|\rho(u(T,x_i))\right|\quad\text{a.s.}
\]
This proves Proposition \ref{P:thetaBdd} for $\widehat{u}^{n,i}_{\mathbf{z}}(T,x_i)$.

{\bigskip\noindent\bf Step 3.~}
In this step, we will prove for all $t\in [0,T]$, with probability one,
\begin{align}\label{e1:thetaDD}
F_n\left[\sup_{|\mathbf{z}|\le\kappa}\left|\theta^{n,i,k}_{\mathbf{z}}(\cdot,x_i)\right|\right](t)
\le
C M_n^\dagger(t)+C\left|\rho(u(t,x_i))\right|
+C \sum_{\ell=1}^3 F_n^\ell \left[\left|\rho(u(\cdot,x_i))\right|\right](t)
\end{align}
with 
\[
M_n^\dagger(t):=
F_n\left[\sum_{j\ne i}\sup_{|\mathbf{z}|\le\kappa}\left|z_j \theta^{n,i,k}_{\mathbf{z}}(\cdot,x_i) \right|\right](t) + F_n[M^{**}_n](t)
\]
satisfying that
\[
\sup_{t\in [0,T]}\Norm{M_n^\dagger(t)}_p\le  
2^{-\frac{n}{2}\min(2\beta,1-\beta,\alpha)}\quad\text{and}
\quad 
\lim_{n\rightarrow\infty}M_n^\dagger(t) = 0,\quad\text{a.s. for all $t\in[0,T]$.}
\]

Indeed, by \eqref{E:thetaIK-bd}, we need only to consider the case when $k=i$ (see, e.g., the arguments leading to \eqref{E_:ThetaIJK} below).
Notice from \eqref{E:hatUni} that
\begin{align*}
z_i \theta^{n,i,i}_{\mathbf{z}}(t,x_i)
=& \widehat{u}^{n,i}_{\mathbf{z}}(t,x_i) -
\theta^{n,i}_{\mathbf{z}}(t,x_i) -\sum_{j\ne i}z_j \theta^{n,i,k}_{\mathbf{z}}(t,x_i)-I(t,x_i),
\end{align*}
where
\[
I(t,x_i):=\one_{\{t>T-2^{-n}\}}\int_{T-2^{-n}}^t\int_{\R^d} G(t-s,x_i-y)
\rho'\left(\widehat{u}^{n}_{\mathbf{z}}(s,y)\right)
\widehat{u}^{n,i}_{\mathbf{z}}(s,y)W(\ud s\ud y).
\]
By \eqref{e5:uHatD}, we see that
\begin{align*}
F_n\left[
\sup_{|\mathbf{z}|\le\kappa}\left|
 \widehat{u}_{\mathbf{z}}^{n,i}(\cdot,x_i)
\right|\right](t)
&\le
F_n\left[
\sup_{|\mathbf{z}|\le\kappa}\left|
 \widehat{u}_{\mathbf{z}}^{n,i}(\cdot,x_i)
 - \rho(u(\cdot,x_i))
\right|\right](t) + F_n\Big[|\rho(u(\cdot,x_i))|\Big](t)\\
&\le
C F_n[M_n^{**}](t)+C\left|\rho(u(t,x_i))\right|+C \sum_{\ell=1}^3 F_n^\ell \left[\left|\rho(u(\cdot,x_i))\right|\right](t)\quad\text{a.s.}
\end{align*}
Notice that $\theta_{\mathbf{z}}^{n,i}(t,x_i)$ is the $I_0$ term in Step 2, hence by \eqref{e4:uHatD}
\[
F_n\left[\sup_{|\mathbf{z}|\le\kappa}\left|\theta_{\mathbf{z}}^{n,i}(\cdot)\right|\right](t)
\le
C M_n^{**}(t)+C|\rho(u(t,x_i))|+C\sum_{\ell=1}^2 F_n^\ell \big[|\rho(u(\cdot,x_i))|\big](t)\quad\text{a.s.}
\]
For $i\ne j$, from \eqref{E5:SupU} and Proposition \ref{P:Psi}, we see that
\begin{align}\label{E_:ThetaIJK}
\sup_{t\in[0,T]}\Norm{\sup_{|\mathbf{z}|\le\kappa}\sum_{j\ne i}\left|z_j \theta^{n,i,k}_{\mathbf{z}}(t,x_i) \right|}_p \le C 2^{-n\beta}.
\end{align}
The $I$ term coincides with the $I_1$ term in Step 2. 
Combining the above four terms proves \eqref{e1:thetaDD}.

{\bigskip\noindent\bf Step 4.~}
In this last step, we will prove Proposition \ref{P:thetaBdd} for $\widehat{u}^{n,i,k}_{\mathbf{z}}(T,x_i)$.
Write the six parts of $\widehat{u}^{n,i,k}_{\mathbf{z}}(t,x_i)$
in \eqref{E:hatUnik} as in \eqref{E:Unik6}, that is,
\begin{align}\label{e:Unik6}
\widehat{u}^{n,i,k}_{\mathbf{z}}(t,x_i) =
\theta^{n,i,k}_{\mathbf{z}}(t,x_i) +\theta^{n,k,i}_{\mathbf{z}}(t,x_i)+ \sum_{\ell=1}^4 U_\ell^n(t,x_i).
\end{align}
We first consider the term $U^n_4(t,x_i)$ which contributes to the recursion. Write $U^n_4(t,x_i)$ in three parts
\begin{align*}
&U^n_4(t,x_i) \\ 
= &\sum_{j\ne i}
 z_j\int_0^t\iint_{\R^{2d}} G(t-s,x_i-y)\rho'\left(\widehat{u}^n_{\mathbf{z}}(s,y)\right)
 \widehat{u}^{n,i,k}_{\mathbf{z}}(s,y)h_n^j(s,y')f(y-y')\ud s\ud y\ud y'\\
 &+
 z_i\int_0^t\iint_{\R^{2d}} G(t-s,x_i-y)\rho'\left(\widehat{u}^n_{\mathbf{z}}(s,y)\right)
 \left[
 \widehat{u}^{n,i,k}_{\mathbf{z}}(s,y)-\widehat{u}^{n,i,k}_{\mathbf{z}}(s,x_i)
 \right]h_n^i(s,y')f(y-y')\ud s\ud y\ud y'\\
 &+
 z_i\int_0^t\iint_{\R^{2d}} G(t-s,x_i-y)\rho'\left(\widehat{u}^n_{\mathbf{z}}(s,y)\right)
 \widehat{u}^{n,i,k}_{\mathbf{z}}(s,x_i)
 h_n^i(s,y')f(y-y')\ud s\ud y\ud y'\\
 =:&U^n_{4,1}(t,x_i)+U^n_{4,2}(t,x_i)+U^n_{4,3}(t,x_i).
\end{align*}
Notice that
\[
\sup_{|\mathbf{z}|\le\kappa}
\left|U_{4,3}^n(t,x_i)\right|
\le C\one_{\{t>T-2^{-n}\}}2^{n(1-\beta)}\int_{T-2^{-n}}^{t} (t-s)^{-\beta}
\sup_{|\mathbf{z}|\le\kappa}
|\widehat{u}^{n,i,k}_{\mathbf{z}}(s,x_i)|  \ud s,\quad\text{a.s.}
\]
Therefore, by Lemma \ref{L:Contraction},
\begin{align}\label{E:2ndD}
\sup_{|\mathbf{z}|\le\kappa}
|\widehat{u}^{n,i,k}_{\mathbf{z}}(t,x_i)|
\le C \sum F_n \left[\sup_{|\mathbf{z}|\le\kappa} I_{\mathbf{z}}^n(\cdot,x_i)\right](t)
\quad\text{a.s.,}
\end{align}
where the summation is over all terms on the right-hand side  of \eqref{e:Unik6} except $U_{4,3}^n(t,x_i)$
and $I_{\mathbf{z}}^n$ stands for such a generic term.

By the similar arguments as in the
 previous steps, using \eqref{E3:Space}, one can show that
\begin{align}\label{E_:U4-13}
\lim_{n\rightarrow\infty}
F_n\left[\sup_{|\mathbf{z}|\le\kappa}\left|U^n_{4,\ell}(\cdot,x_i)\right|\right](t) = 0,\quad\text{a.s. for $\ell=1,2$ and $t\in [0,T]$.}
\end{align}

By \eqref{E2:SupU}, \eqref{E3:SupU}, the boundedness of both $\rho'$ and $\rho''$, and \eqref{E:hG-J123},
we can obtain moment bounds for both $\sup_{|\mathbf{z}|\le\kappa}\left|U_1^n(t,x_i)\right|$
and
$\sup_{|\mathbf{z}|\le\kappa}\left|U_3^n(t,x_i)\right|$
and then argue using the Borel-Cantelli lemma as above to conclude that
\begin{align}\label{E_:U13}
\lim_{n\rightarrow\infty}
F_n\left[\sup_{|\mathbf{z}|\le\kappa}
\left|U_\ell^n(\cdot,x_i)\right|\right](t) =0, \quad \text{a.s. for $\ell=1,3$ and $t\in[0,T]$.}
\end{align}

As for the term $U_2^{n}$, because
\[
|\widehat{u}_{\mathbf{z}}^{n,i}(s,y)
\widehat{u}_{\mathbf{z}}^{n,k}(s,y)|\le\frac{1}{2}\left(
\widehat{u}_{\mathbf{z}}^{n,i}(s,y)^2+
\widehat{u}_{\mathbf{z}}^{n,k}(s,y)^2\right),
\]
we only need to consider the case when $i=k$.
By the boundedness of $\rho''$, we see that
\begin{align*}
U_2^{n}(t,x_i)\le& C
\int_0^t\iint_{\R^{2d}} G(t-s,x_i-y)
\widehat{u}_{\mathbf{z}}^{n,i}(s,y)^2
\InPrd{\mathbf{z},\mathbf{h}_n(s,y')}f(y-y')\ud s\ud y\ud y'\\
\le&C\int_0^t\iint_{\R^{2d}} G(t-s,x_i-y)
\left[\widehat{u}_{\mathbf{z}}^{n,i}(s,y)-\widehat{u}_{\mathbf{z}}^{n,i}(s,x_i)\right]^2
\InPrd{\mathbf{z},\mathbf{h}_n(s,y')}{f(y-y')}\ud s\ud y \ud y'\\
&+C\int_0^t\iint_{\R^{2d}} G(t-s,x_i-y)
\left[\widehat{u}_{\mathbf{z}}^{n,i}(s,x_i)-\rho(u(s,x_i))\right]^2
\InPrd{\mathbf{z},\mathbf{h}_n(s,y')}f(y-y')\ud s\ud y\ud y'\\
&+C\int_0^t\iint_{\R^{2d}} G(t-s,x_i-y)
\left[\rho(u(s,x_i))-\rho(u(t,x_i))\right]^2
\InPrd{\mathbf{z},\mathbf{h}_n(s,y')} f(y-y')\ud s\ud y\ud y'\\
&+ C \rho(u(t,x_i))^2 \\
=:& U_{2,1}^{n}(t,x_i)+U_{2,2}^{n}(t,x_i)
+U_{2,3}^{n}(t,x_i)+ C \rho(u(t,x_i))^2.
\end{align*}
By \eqref{E2:Space} and the H\"older continuity of $s\mapsto \rho(u(s,x_i))$
one can prove in the same way as before that
\begin{align}
 \label{E_:U2-13}
\lim_{n\rightarrow\infty}
F_n\left[
\sup_{|\mathbf{z}|\le\kappa}
U_{2,\ell}^{n}(\cdot,x_i)
\right](t)=0,\quad\text{a.s. for $\ell=1,3$ and $t\in[0,T]$.}
\end{align}
Notice that
\[
\sup_{|\mathbf{z}|\le\kappa}U_{2,2}^{n}(t,x_i)
\le
C F_n^1 \left[\sup_{|\mathbf{z}|\le\kappa}
\left[\widehat{u}_{\mathbf{z}}^{n,i}(\cdot,x_i)-\rho(u(\cdot,x_i))\right]^2
\right](t).
\]
By applying \eqref{E:F5} on \eqref{e5:uHatD} with $m'=2$, we see that with probability one, 
\begin{align*}
\sup_{|\mathbf{z}|\le\kappa}
\left[\widehat{u}_{\mathbf{z}}^{n,i}(s,x_i)-\rho(u(s,x_i))\right]^2
&\le
C [M_n^{**}(t)]^2+C\rho^2(u(t,x_i))+C \sum_{\ell=1}^2 F_n^\ell \left[\rho^2(u(\cdot,x_i))\right](t).
\end{align*}
Hence, by \eqref{E:F3}, for all $n\in\bbN$,
\begin{align*}
\sup_{|\mathbf{z}|\le\kappa}U_{2,2}^{n}(t,x_i)
\le &
C  F_n^1 [M_n^{**}](t)+C \sum_{\ell=1}^3 F_n^\ell \left[\rho^2(u(\cdot,x_i))\right](t)\quad\text{a.s.}
\end{align*}
Then another application of \eqref{E:F3} shows that
\begin{align*}
F_n\left[\sup_{|\mathbf{z}|\le\kappa}U_{2,2}^{n}(\cdot,x_i)\right](t)
\le &
C \sum_{\ell=1}^2   F_n^\ell [M_n^{**}](t)+C \sum_{\ell=1}^4 F_n^\ell \left[\rho^2(u(\cdot,x_i))\right](t)\quad\text{a.s.,}
\end{align*}
for all $t\in[0,T]$. Therefore,
\begin{align}\label{E_:U2-2}
\mathop{\lim\sup}_{n\rightarrow\infty}F_n\left[\sup_{|\mathbf{z}|\le\kappa}
\left|U_{2,2}^{n}(\cdot,x_i)\right|\right](t)
\le &
C\rho^2(u(t,x_i))\quad\text{a.s. for all $t\in[0,T]$.}
\end{align}

Finally, by combining \eqref{e1:thetaDD}, \eqref{E_:U4-13}, \eqref{E_:U13}, \eqref{E_:U2-13} and \eqref{E_:U2-2}, setting $t=T$, and sending $n$ to infinity, we can conclude that 
\[
\limsup_{n\rightarrow\infty}
\sup_{|\mathbf{z}|\le\kappa}\left|
 \widehat{u}_{\mathbf{z}}^{n,i,k}(T,x_i)
\right|\le C \left|\rho(u(T,x_i))\right|+C \rho^2(u(T,x_i)).
\]
This proves the case for
$\widehat{u}^{n,i,k}_{\mathbf{z}}(T,x)$.
With this, we have completed the whole  proof of Proposition \ref{P:thetaBdd}.
\end{proof}

\begin{small}
\addcontentsline{toc}{section}{Bibliography}

\bigskip
\bigskip
\hfill\begin{minipage}{0.5\textwidth}
{\bf Le CHEN}\\[0.2em]
Department of Mathematical Sciences\\
University of Nevada, Las Vegas\\
Box 454020, \\
4505 S. Maryland Pkwy.\\
Las Vegas, NV 89154-4020, USA.\\
E-mails: \: \url{le.chen@unlv.edu}\\
\phantom{E-mails: \: }\url{chenle02@gmail.com}
\end{minipage}
\\[1em]

\hfill\begin{minipage}{0.5\textwidth}
\noindent\textbf{Jingyu Huang}\\
\noindent School of Mathematics\\
\noindent University of Birmingham\\
\noindent Edgbaston, Birmingham, \\
B15 2TT, UK\\
\noindent\emph{Email:} \url{j.huang.4@bham.ac.uk}\\
\end{minipage}

\end{small}

\end{document}